\newtheorem{theorem}{Theorem}[section]
\newtheorem{lemma}[theorem]{Lemma}
\newtheorem{proposition}[theorem]{Proposition}
\newtheorem{corollary}[theorem]{Corollary}
\newtheorem{assumption}[theorem]{Assumption}
\theoremstyle{definition}
\newtheorem{definition}[theorem]{Definition}
\newtheorem{example}[theorem]{Example}
\theoremstyle{remark}
\newtheorem{remark}[theorem]{Remark}
\numberwithin{equation}{section}
  \newcommand{\cA}{{\mathcal A }}
  \newcommand{\cB}{{\mathcal B}}
  \newcommand{\cI}{{\mathcal I }}
  \newcommand{\cK}{{\mathcal K}}
  \newcommand{\cF}{{\mathcal F}}
  \newcommand{\cE}{{\mathcal E}}
  \newcommand{\cM}{{\mathcal M}}
   \newcommand{\cN}{{\mathcal N}}
  \newcommand{\cU}{{\mathcal U}}
   \newcommand{\cV}{{\mathcal V}}
    \newcommand{\cW}{{\mathcal W}}
  \newcommand{\cG}{{\mathcal G }}
  \newcommand{\cC}{{\mathcal C }}
\newcommand{\cO}{{\mathcal O }}
\newcommand{\cT}{{\mathcal T }}
\newcommand{\cX}{{\mathcal X }}
\newcommand{\cS}{{\mathcal S }}
\newcommand{\cY}{{\mathcal Y }}
  \newcommand{\bU}{{\mathbf U }}
   \newcommand{\bE}{{\mathbf E }}
\newcommand{\bo}{{\mathbf o }}
   \newcommand{\ba}{\begin{eqnarray}}
   \newcommand{\na}{\end{eqnarray}}
\newcommand{\supp}{{\text  supp}}
  \newcommand{\C}{{\mathbb C}}
  \newcommand{\R}{{\mathbb R}}
  \newcommand{\Z}{{\mathbb Z}}
  \newcommand{\ind}{{\bf  Index }}
  \renewcommand{\a}{\alpha}
  \renewcommand{\b}{\beta}
  \newcommand{\eps}{\epsilon}
\def \wt{\widetilde}
\def \wtB{\widetilde{\mathcal B}}
\def \wtE{\widetilde{\mathcal E}}
\def \wtM{\widetilde{\mathcal M}}
\def \dbar{\bar{\partial}}
\def \xto{\xrightarrow}
    \newcommand{\disp}{\displaystyle}
  \newcommand{\nin}{\noindent}
\begin{document}

\title{Virtual neighborhood technique  for  pseudo-holomorphic spheres}

  \author{Bohui Chen, An-Min Li}
  \address{School of mathematics\\
  Sichuan University\\
 Chengdu, China}
  \email{bohui@cs.wisc.edu \\
  math$\_$li@yahoo.com.cn}

  \author{Bai-Ling Wang}
  \address{Department of Mathematics\\
  Australian National University\\
  Canberra ACT 0200 \\
  Australia}
  \email{bai-ling.wang@anu.edu.au}

%    General info
\subjclass{}
\date{}

%\dedicatory{}

%\keywords{ }

   \begin{abstract} This is the first  part of three series papers in which    we apply the  theory of virtual manifold/orbifolds developed by the first named author and Tian to study the Gromov-Witten moduli spaces.  In this  paper, we resolve the main  analytic issue arising from the lack of differentiability of the $PSL(2, \C)$-action on spaces of $W^{1, p}$-maps from  the Riemann sphere to a symplectic manifold  $(X, \omega, J)$ with a non-zero homology class $A$.  In particular, we establish the slice  and tubular neighbourhood theorems for the $PSL(2, \C)$-action  along smooth maps.  In  Sections 2 and 3 of this paper, we explain  an integration  theory on   virtual orbifolds using proper \'etale groupoids and establish the virtual neighborhood technique for a general orbifold  Fredholm system.  When
 the moduli space $\cM_{0, 0}(X, A)$ of pseudo-holomorphic spheres in $(X, \omega, J)$  is compact, applying  the   virtual neighborhood technique developed  in Section 3,  we obtain   a virtual system for the moduli space $\cM_{0, 0}(X, A)$ of pseudo-holomorphic spheres in $(X, \omega, J)$  and show that  the genus zero Gromov-Witten invariant is well-defined.
  \end{abstract}

\maketitle

  \tableofcontents

% \newpage

\section{Introduction and statements of main theorems}

Pseudo-holomorphic curves in a symplectic manifold $(X, \omega)$ with a compatible almost complex structure
$J$  were  first  introduced by Gromov in  his seminal paper \cite{Gro}. This has been followed by    deep results in   symplectic topology.  The  Gromov-Witten invariants   ``{\em count }"  stable pseudo-holomorphic  curves  of genus $g$ and   $n$-marked points in a symplectic manifold $(X, \omega, J)$.  The Gromov-Witten invariants   for semi-positive symplectic manifolds were  defined   by Ruan in  \cite{Ruan93} and Ruan-Tian in \cite{RT95} and  \cite{RT96}.      These invariants can  be applied to define a quantum  product on the cohomolgy groups of $X$ which leads  the notion of quantum cohomolog  in  \cite{RT95} for  semi-positive symplectic manifolds.   Since then the  Gromov-Witten  invariants  have  found many applications in symplectic geometry and symplectic topology, see the book  by   McDuff-Salamon \cite{McS} (and  references therein).

The main analytical  difficulty  in defining the   Gromov-Witten invariants for general  symplectic manifolds  is  the  failure of the transversality   of the  compactified moduli space of pseudo-holomorphic curves.   The  foundation to resolve this issue is to construct  a    virtual  fundamental cycle   for  the compactified moduli space.  For  smooth projective varieties, the construction of this  virtual  fundamental cycle was carried out   by Li-Tian in \cite{LiTian98} where they showed that  the Gromov-Witten invariants can be defined purely algebraically.   For  general symplectic manifolds, the virtual fundamental cycle was constructed by Fukaya-Ono in \cite{FO99}, Li-Tian \cite{LiTian96}, Liu-Tian \cite{LiuTian98} and Siebert \cite{Sie}.   Ruan  in \cite{Ruan} proposed  a virtual neighbourhood technique as a dual approach using the Euler class of a virtual neighbourhood,  in which   the  compactified moduli space is  treated as a zero set of a smooth section of  a finite dimensional orbfiold vector bundles over an open orbifold.

 Further developments in Gromov-Witten theory and its applications require more refined  structures  on  the  moduli spaces  involved. Some of the analytical  details have been  provided by Ruan \cite{Ruan}, Li-Ruan \cite{LiRuan} and  Fukaya-Oh-Ohta-Ono
 \cite{FOOO}.  Other methods like the polyfold theory  by Hofer-Wysocki-Zehnder  \cite{HWZ} are developed  to deal with this issue.
Recently, there are some renew interests on a variety of technical
issues in Gromov-Witten theory by McDuff-Wehrheim \cite{McW} and Fukaya-Oh-Ohta-Ono \cite{FOOO12}.  These  technical  issues include the differentiable structures on  Kuranish models of
the Gromov-Witten moduli spaces and  the non-differentiable  issue  of the action of automorphism groups in the infinite dimensional
non-linear Fredholm framework.

Our aim   is motivated by how to define the K-theoretical Gromov-Witten invariants for  general symplectic manifolds. This amounts to the full machinery of the  virtual neighborhood technique   to study  the Gromov-Witten moduli spaces  using the virtual manifold/orbifolds developed  in \cite{ChenTian}.  The language of virtual orbifolds  provides an alternative and simpler approach to establish the required  differentiable structure on  moduli spaces arising from the Gromov-Witten theory.
   We remark that the theory of  virtual manifold/orbifolds and the general framework   of the  virtual neighborhood technique   were established by the first author and Tian in \cite{ChenTian}.

The  virtual neighborhood method avoids  some of subtle and difficult issues  when  further perturbations  are needed.  We use the  following example to demonstrate the idea that  further perturbations are not needed to define Euler  invariants.
Let $E$ be a real oriented vector bundle over a compact smooth manifold $U$, and   let $\Theta$
be a Thom form of $E$ then  for $\omega \in \Omega^*(U)$
\ba\label{ex:Thom}
\int_U s^*\Theta \wedge \omega   = \int_{s^{-1}(0)} \iota^* \omega
\na
for any section $s$ of $E$ which is transversal to the zero section.  Here $\iota: s^{-1}(0) \to U$ is the inclusion map. This formula says that for those
integrands  from the ambient space $U$, there is no need to perturb the section to achieve the transversality in order to calculate the right hand side of (\ref{ex:Thom}), as we know that the final result is given by  the left  hand side of (\ref{ex:Thom})
\ba\label{Thom:2}
\int_U s^*\Theta \wedge \omega
\na
for any  (not necessarily transversal) section  $s$.  Similar  results hold  for  a compact orbifold $U$ with an oriented real orbifold
 vector bundle.  Note that  $U$ can be replaced by  a non-compact  orbifold, then
 the integrand $s^*\Theta \wedge \omega $ needs to be compactly supported.

  In the original proposal of Ruan in \cite{Ruan}, it was proposed that
the compactified moduli space $\cM$ of stable maps
 in a closed symplectic manifold $X$ can be realised
as $ s ^{-1}(0) $ for a    section  $s$  of an orbifold vector
bundle $E$  over a finite dimensional  $C^1$-orbifold $U$. The
triple $(U, E, s)$ is called  a virtual neighbourhood of $\cM$ in
\cite{Ruan}. In practice,  it turns out that such an elegant
triple  is too idealistic to obtain  an orbifold
   Fredholm system for general cases.  Here  an {\bf  orbifold
   Fredholm system }   consists of a triple
   \[
   (\cB, \cE, S),
   \]
 where   $\cE$ is a Banach orbifold bundle over a Banach orbifold  $\cB$ together with a Fredholm section $S$. There  does not exist a single virtual neighbourhood $(U, E, s)$ even  for an orbifold
   Fredholm system. We remark that for  certain 
 orbifold    Fredholm systems, the virtual Euler cycles and their properties have been  developed systematically in an   abstract setting by Lu and Tian in \cite{LuTian}.

 To remedy this,  the theory of virtual manifolds and virtual orbifolds   were  developed  by the first author and Tian,  see \cite{ChenTian}.
 We will give a self-contained review of the theory of  virtual manifolds and virtual orbifolds in Section \ref{2} and further establish the
 integration theory on virtual orbifolds using the language of proper \'etale groupoids.  The language of groupiods is very useful when   an orbifold  atlas  can not be   clearly  described, in particular, in the case of stable maps where the patching data (arrows in the language of groupoids) are induced from biholomorphisms  appearing in  the universal families of curves over the
 Teichm\"uller spaces.

 It was proposed in \cite{ChenTian} that the single  virtual neighbourhood  proposed in \cite{Ruan} should be replaced by a system of virtual neighbourhoods
 \[
 \{(\cV_I, \bE_I, \sigma_I)  \}
 \]
 indexed by a partial ordered index set $\cI =\{I\subset \{1, 2, \cdots, n\}\}$. Here $\{\cV_I\}$ is a virtual orbifold, and $\{\bE_I\}$ is a finite rank virtual orbifold bundle with a  virtual section $\{\sigma_I\}$ such that the zero sets $\{\sigma_I^{-1} (0)\}$ form a cover of  the underlying moduli space $\cM$.  If  the sections $\{\sigma_I\}_I$ were   all 
 transversal, then the  moduli space $\cM$ would be a smooth orbifold. In the general,   this    system of  virtual neighbourhoods will be called a {\bf virtual system} (Cf. Definitions \ref{virtual-system} and  \ref{virtual-sys:orb}).
 The invariants associated to the moduli space can be obtained by applying the integration theory on  $\{\cV_I\}$  to certain  virtual differential forms.
In particular, the collection of Euler forms for $\{\bE_I\}$, called a {\bf virtual Euler form},  is  a   virtual differential form on $\{\cV_I\}$.
In essence, this  virtual Euler form is the dual version of the so-called virtual fundamental class.

 To demonstrate this idea,  global stabilizations({\bf cf. \S3.3.2}) were  introduced  in \cite{ChenTian} to get a virtual system
 from a single   Fredholm system $(\cB, \cE, S)$.  These will be reviewed and further developed to orbifold cases  in terms of  proper \'etale groupoids in
 Section \ref{3}. We also explain how the integration theory for virtual orbifolds (proper \'etale  virtual groupoids)  can be applied to get a well-defined invariant from  the  virtual system.   This process of going  from a  Fredholm system $(\cB, \cE, S)$ to a well-defined invariant will be called the {\bf virtual neighborhood technique}. The main result in
 Section  \ref{3} is to establish  the virtual neighborhood technique for any   orbifold Fredholm system.

\vspace{2mm}

\nin{\bf Theorem A}    (Theorem \ref{thm:orbi-vir-sys})  {\em  Given  an orbifold Fredholm system  $(\cB, \cE, S)$
such that $\cM = S^{-1}(0)$ is compact, then
there exists  a finite dimensional virtual  orbifold system  for $(\cB, \cE, S)$ which is   a collection of triples
 \[
  \{(\cV_I, \bE_I, \sigma_I)|  I\subset \{1, 2, \cdots, n\} \}
  \]
  indexed by a  partially ordered
set $(\cI =2^{\{1, 2, \cdots, n\}}, \subset )$, where
  \begin{enumerate}
\item $\cV=\{\cV_I  \}$ is a finite dimensional  proper  \'etale virtual groupoid,
\item $\bE=\{\bE_I\}$ is a  finite rank virtual  orbifold vector bundle over $\{\cV_I\}$
\item  $\sigma= \{\sigma_I\}$ is a virtual  section  of   $\{\bE_I\}$
whose zero sets $\{ \sigma_I^{-1} (0)\}$ form a  cover of $\cM$.
  \end{enumerate}
 Moreover, under   Assumptions  \ref{assump:orbi} and    \ref{assump:orbi-2},  there is  a choice of a partition of unity  $\eta=\{\eta_I\}$
on $\cV$ and  a  virtual Euler  form $\theta=\{\theta_I\}$  of $ \bE $ such that  each
$\eta_I\theta_I$ is compactly supported in $\cV_I$. Therefore, the virtual integration
\[
\int^{vir}_\cV \alpha   = \sum_I \int_{\cV_I}\eta_I\theta_I\alpha_I
\]
 is well-defined for any virtual differential form $\alpha =\{\alpha_I\}$ on $\cV$ when    both
 $ \cV $ and $ \bE $ are  oriented.

 }

 For the full version of the Gromov-Witten theory,  there are some further  technical issues to
 implement the  virtual neighborhood technique  for the moduli spaces of stable maps, mainly
 \begin{enumerate}
\item[(1)]  the non-differentiable action of non-discrete  automorphism groups in general,  for example,  the  failure of differentiability for the $PSL(2, \C)$-action in the genus zero case, here  $PSL(2, \C)$ is the automorphism group of the standard Riemann sphere.
 \item[(2)] the issue of smoothness in the compactified space  of stable maps where there are lower strata  present.
\end{enumerate}

 In  the second part of this paper,   we   resolve the analytical  issue of the non-differentiable $PSL(2, \C)$-action    and
 establish a  virtual system  for   this genus zero moduli space. We now outline  the main  ideas on how to resolving this issue and
 summarise the main results as Theorems B-D.

 Let $\Sigma=(S^2,j_o)$ be the standard Riemann  sphere with the round metric and  $(X,\omega,J)$
be a compact symplectic manifold with a compatible almost complex structure.  Let $\wtB$ be the  infinite dimensional Banach manifold  of $W^{1,p}$-maps from $\Sigma$ to $X$ with a fixed homology class $A\in H_2(X, \Z)$, that is,
\[
\wtB =\{ u\in W^{1, p}(\Sigma, X)| f_*([\Sigma]) =A\},
\]
where $p$ is an {\bf even }  integer greater than $2$.  Let $\wtM= \wtM_{0, 0}(X, A, J)$ be the solution space to the Cauchy-Riemann equation
\[
\dbar_J u=0
\]
for $u\in \wtB$. Note that  $\wtM$ would be empty if $\omega (A) < 0$.
 The automorphism group
$G=Aut(\Sigma  )$ acts on $\wtB$ by  reparametrization of  the domain $\Sigma$. Under the identification
$\Sigma = \C\cup\{\infty\}$,
\[
G \cong PSL(2, \C) =  GL(2, \C)/\C^\times Id
\]
acts on $  \C\cup\{\infty\}$ as M\"obius transformations. This action preserves  the solution space $\wtM$. The Gromov-Witten moduli space  of pseudo-holomorphic spheres, denoted by $\cM_{0, 0}(X, A, J)$, is the quotient space $\wtM$ by this $G$-action.
 In this paper, we assume that
\[
\cM= \cM_{0, 0}(X, A, J)
\]
is compact which is guaranteed  if  $A$ is indecomposable, see
Theorem 4.3.4 in \cite{McS}.

One of the main analytical issues  in the study of the Gromov-Witten moduli space $\cM_{0, 0}(X, A, J)$ is that the action
\[
G \times \wtB  \longrightarrow \wtB
\]
is not  differentiable (as pointed out by Ruan  in \cite{Ruan}). This non-differentiability issue was extensiveley  discussed in
\cite[Remark 3.15]{McW}.  We can interpret $\wtM$ as a zero set of an orbifold Fredholm system $(\wtB,  \wtE, S)$
where   $\wtE$ is the infinite dimensional Banach bundle over $\wtB$ whose fiber at $u$ is  the space of
$L^p$-sections  of the bundle $ \Lambda^{0,1} T^*\Sigma\otimes_\C u^*TX$ and the section $S=\dbar_J$ is defined by the Cauchy-Riemann operator.
This system $(\wtB,  \wtE, S)$ is $G$-invariant in the sense that $\wtE$ is a topological $G$-equivariant Banach bundle and $S$ is $G$-invariant.

  Due to the non-smooth  action,  the quotient of $(\wtB,  \wtE, S)$ is only a topological Fredholm system $(\cB,  \cE, S)$.
Nevertheless,  the  virtual neighborhood technique  in Section \ref{3} can still be implemented  to this topologically
$G$-equivariant  Fredholm system. This is explained in   Section  \ref{5} which  is   based the analytical preparation in Section \ref{4}.  So we   obtain   a virtual orbifold system
with a partition of unity $\{\eta_I\}$
 and  a  virtual differential form   $\{\theta_I\}$   such that the virtual integration can be applied to get    genus zero Gromov-Witten invariants.  We remark that
 Assumption  \ref{assump:orbi}  is satisfied for any positive even integer $p>2$ and Assumption
   \ref{assump:orbi-2} is guaranteed by the elliptic regularity of the Cauchy-Riemann equations.

In Section \ref{4},   we establish a slice theorem for  the $G$-action at each smooth map   and study the tubular neighbourhood structure along its  orbit.
The following     slice and neighbourhood theorem    plays a
central role in the development of a virtual orbifold system for the moduli space $ \cM_{0, 0}(X, A, J)$ in $\cB$.

\vspace{2mm}

\nin{\bf Theorem B}    (Theorem \ref{slice-theorem:orbifold}) {\em  Let $u_0: \Sigma \to X$ be a smooth  map in $\wtB$ with  a finite isotropy group $G_{u_0}$. Let $\pi_\cB$ be the   quotient map $\pi_\cB: \wtB \to \cB$.
\begin{enumerate}
\item The $G$-orbit $\cO_{u_0}$ through $u_0$ is a smooth sub-manifold of $\wtB$. The  normal bundle  $\cN_{\cO_{u_0}}$ is a smooth
Banach bundle and  can be identified with a topological  $G$-equivariant
smooth sub-bundle of $  T\wtB|_{\cO_{u_0}}$.
\item   There exists a sufficiently small $\eps_0$  such that the exponential map $\exp_{u_0}:  \cN^{\eps_0}_{\cO_{u_0}} \to \wtB$
is well-defined. Here $\cN^{\eps_0}_{\cO_{u_0}}$ is the $\eps_0$-ball bundle of $\cN_{\cO_{u_0}}$. Moreover,
the tubular neighbourhood    $\cT^{\eps_0}_{u_0} = \exp_{u_0} (\cN^{\eps_0}_{\cO_{u_0}})$   is a $G$-invariant open set in $\wtB$ containing
$\cO_{u_0}$.
\item There exists  a  $G_{u_0}$-invariant slice  $\cS^{\epsilon_0}_{u_0} \subset \wtB$ at $u_0$
such that the quotient space  $\cS^{\epsilon_0}_{u_0}/G_{u_0}$ is homeomorphic to
$ \pi_\cB(\cS^{\epsilon_0}_{u_0})$  for which we use the notation
$\bU_{u_0}^{\epsilon_0}$.
The  map
\[
\phi_{u_0}: \qquad  \cN^{\epsilon_0}_{u_0}:   =   \cN^{\epsilon_0}_{\cO_{u_0}}|_{u_0}  \xto{\ \ \exp_{u_0} \ \ } \cS^{\epsilon_0}_{u_0}  \xto{\pi_\cB}  \bU_{u_0}^{\epsilon_0}
\]
provides  a {\bf Banach  orbifold  chart} $( \cN^{\epsilon_0}_{u_0}, G_{u_0}, \phi_{u_0})$  for $\bU_{u_0}^{\epsilon_0}$.
\end{enumerate}
}

\vspace{2mm}

Due to the non-differentiable  action of $G$, these Banach orbifold charts do not provide a smooth orbifold structure on a neighbourhood
\[
\cW= \bigcup_{[u_0]\in \cM}  \bU_{u_0}^{\eps_0}
\]
of $\cM$ in $\cB$,
instead  they define a topological orbifold strucure  on $\cW$ with certain smoothness  in the following sense. Suppose that
$ \bU_{u_0}^{\eps_0} \subset  \bU_{u_1}^{\eps_1}$ for $u_0, u_1 \in \wtM$,
even though the  embedding of orbifold charts
\[
\phi_{u_0, u_1}:  ( \cN^{\epsilon_0}_{u_0}, G_{u_0}, \phi_{u_0}) \longrightarrow ( \cN^{\epsilon_1}_{u_1}, G_{u_1}, \phi_{u_1})
\]
and the induced embedding of their slices
\[
\Phi_{u_0, u_1} = \exp_{u_1} \circ  \phi_{u_0, u_1} \circ \exp_{u_0}^{-1}:
\cS_{u_0}^{\eps_0}  \longrightarrow  \cS_{u_1}^{\eps_1}
\]
are  only  topological embeddings (both defined by the $G$-action).
  We have
\[
\cT^{\eps_0}_{u_0}\subset \cT^{\eps_1}_{u_1}
\]
as a smooth inclusion for   their  tubular neighbourhoods as  prescribed by Theorem A.
We remark that when $f:  \cS_{u_1}^{\eps_1} \to \R$  has a $G$-invariant smooth  extension
$\hat f:  \cT^{\eps_1}_{u_1} \to \R$,  then  $f \circ \Phi_{u_0, u_1}$ is actually a smooth function on
$\cS_{u_0}^{\eps_0}$, as it is the restriction of
 $\hat f$  to  $\cS_{u_0}^{\eps_0}$.   This observation plays a  fundamental role  in overcoming the non-differential
 action of $PSL(2, \C)$ in our study of the moduli space of pseudo-holomorphic spheres.

Under the assumption that  the moduli space $\cM = \cM_{0, 0}(X, J, A)$ is  compact, there exist  finitely  many  points $\{[u_1], [u_2],  \cdots, [u_n]\}$ in $\cM$ with their smooth representatives
 \ba\label{fin:pts}
 u_1,  u_2,  \cdots, u_n
 \na
 in $\wtB$  whose   slices  $\{\cS_{u_i}^{\eps_i}\}$   and  tubular neighbourhoods
 $\{\cT_{u_i}^{\eps_i}\}$ prescribed by Theorem \ref{slice-theorem:orbifold}  define
  a system of Banach orbifold charts
    \ba\label{orbifold:system}
 \{(\cN_{u_i}^{\eps_i}, G_{u_i}, \phi_{u_i}, \bU^{\eps_i}_{u_i})|   i=1, \cdots, n  \}
 \na
  such that
 $
   \bigcup_{i=1}^n \bU ^{\eps_i/3}_{u_i} \supset \cM.
$
Let    $\cU =  \bigcup_{i=1}^n \bU ^{\eps_i}_{u_i}$, then the system of Banach orbifold charts is not compatible in the sense that the  orbifold coordinate changes are not smooth. Nevertheless, as we mentioned earlier,
  we propose  a notion of smooth functions  on $\cU$.  For an   even  positive integer $p>2$ we
construct  a system of  smooth cut-off  functions
\ba\label{cut-off:system}
\{\beta_i:  \bU_{u_i}^{\eps_i}  \to [0, 1]  | i= 1, \cdots, n \}
\na
  with respect to the above
system of Banach orbifold charts in Subsection \ref{4.2}, such that, each
\[
\tilde\beta_i = \beta\circ \pi_\cB: \cT_{u_i}^{\eps_i}  \to [0, 1]\]
satisfies the following properties:
\begin{enumerate}
\item[(i)] $\tilde \beta_i|_{ \cT_{u_i}^{\eps_i/3}}\equiv 1$,
\item[(ii)]  $\supp (\tilde \beta_i)\subset \cT_{u_i}^{2\eps_i/3}$,
 \item[(iii)]   $\tilde \beta_i|_{ \cS_{u_i}^{\eps_i}}$ is $G_{u_i}$-invariant.
  \item[(iv)]   $\tilde \beta_i|_{ \cT_{u_i}^{\eps_i}}$ is $G$-invariant.
  \end{enumerate}

In  Subsection \ref{4.3},   we  construct  a   system of local  obstruction bundles associated to   the system of Banach orbifold charts  in (\ref{orbifold:system}). As an application of Theorem B, we establish the existence of $G$-invariant local virtual neighbourhoods   and their slices
in Subsection \ref{4.4}  (See Propositions \ref{G-virtual:neigh} and \ref{virtual:neigh}).

\vspace{2mm}

\nin{\bf Theorem C}    (Propositions \ref{G-virtual:neigh} and \ref{virtual:neigh}) {\em
 For each point $u_i \in  \wtM$ from (\ref{fin:pts}),
there is  a  $G$-invariant virtual neighbourhood
  \[
  (\wt\cV_{u_i},  \wt E_{u_i},\wt\sigma_{u_i})
  \]
  of $\wtM \cap  \cT_{u_i}^{\eps_i}$  in the following sense.
  \begin{enumerate}
\item    $\wt\cV_{u_i}$ is  a  finite dimensional  smooth  $G$-manifold.
\item  $\wt E_{u_i}$ is a   $G$-equivariant vector  bundle.
\item  $\wt\sigma_{u_i} $  is   a canonical $G$-invaraint section  of $\wt E_{u_i}$ whose zero set is
$ \wtM \cap\cT_{u_i}^{\eps_i}$.
\end{enumerate}
Moreover, there is $G$-slice of $ (\wt\cV_{u_i},  \wt E_{u_i},\wt\sigma_{u_i})$
which provides a $G_{u_i}$-invariant virtual  neighbourhood
 \[
 (\cV_{u_i},    E_{u_i}, \sigma_{u_i})
 \]
  for $\wtM\cap \cS_{u_i}^{\eps_i}$   in the following  sense.
  \begin{enumerate}
\item   The $G$-slice  $ \cV_{u_i}$ is  a  finite dimensional  smooth  $G_{u_i}$-manifold.
\item    $E_{u_i}$ is a   $G_{u_i}$-equivariant  vector bundle.
\item    $\sigma_{u_i}$ is    a canonical $G_{u_i}$-invaraint section whose zeros set is $ \wtM \cap \cS_{u_i}^{\eps_i}$.
  \end{enumerate}
  }

\vspace{2mm}

This  collection of virtual  neighbourhoods
 \[
\{ (\cV_{u_i},    E_{u_i}, \sigma_{u_i} )\}
 \]
is a set  of Kuranishi neighbourhoods for $\cM$ in the sense of \cite{FO99}, but they are not compatible, so do not define
a Kuranishi  structure on $\cM$.   In Section \ref{5},  despite that the action of reparametrization group is non-differentiable, we can  apply the   virtual neighborhood technique  (the local and global stabilizations)
developed in  Section \ref{3} to $(\wtB, \wtE, \dbar_J)$, equipped with the  analytical foundation prepared in Section \ref{4},
 to get a virtual orbifold system. The main theorem about
pseudo-holomorphic spheres in this section
 is  the existence  of a  $G$-invariant virtual system for $(\wtB, \wtE, \dbar_J)$ whose slice is a finite dimensional virtual orbifold system for $(\cB, \cE, \dbar_J)$.

\vspace{2mm}

\nin{\bf Theorem D}  (Theorem \ref{main:thm})
{\em
 Let $(\wtB, \wtE, \dbar_J)$ be the Fredholm system  associated to pseudo-holomorphic spheres with the topological action
 of the reparametrization group $G=PSL(2, \C)$. Then the virtual  neighborhood
 technique in Theorem A can be applied in a $G$-equivariant way such that the following two statements hold.
  \begin{enumerate}
\item  The collection of triples $\{(\wt\cV_I, \wt  E_I,\wt\sigma_I)| I\subset  \{1, 2, \cdots, n\}\}$ is a finite dimensional virtual system  in the $G$-equivariant sense of Definition \ref{virtual-system} such the zero  sets $\{\wt\sigma_I^{-1}(0) \}_I$ form a cover of $\wtM$.
\item The collection of triples $\{( \cV_I, \bE_I, \sigma_I)| I\subset  \{1, 2, \cdots, n\}\}$ is a finite dimensional virtual orbifold
 system  in the  sense of Definition \ref{virtual-sys:orb} such the zero  sets $\{\sigma_I^{-1}(0) \}_I$ form a cover of $\cM$. \end{enumerate}

  }

\vspace{2mm}

 This paper is organised as follows. In Section \ref{2} we review and summarize the theory of  virtual manifolds/orbifolds    and  the integration theory on proper \'stale virtual groupoids.    Many examples  and properties of  virtual manifolds and virtual vector bundles are listed herefor  later use. In Section \ref{3}, we introduce a notion of virtual system  for a Fredholm system $(\cB, \cE, S)$, and develop a full machinery of virtual neighborhood techniques.  Theorem A is proved in Subsections \ref{3.3} and \ref{3.4}.
In Section 4, we study the issue of the  non-differentiable action of $PSL(2, \C)$ and establish the slice and neighbourhood theorem
(Theorem B) in Subsection 4.1. We also introduce a  class of smooth  cut-off functions and smooth local obstruction orbifold bundles in
Subsections 4.2 and 4.3 which are utilised to establish local virtual neighbourhoods of the moduli space of pseudo-holomorphic
spheres (Theorem C).
 In Section 5,  we  prove Theorem D by applying the virtual neighborhood technique developed in Section  \ref{3}
  to get a   $G$-invariant  virtual   system for   the Fredholm system  $(\wtB, \wtE, \dbar_J)$,
 whose slice  forms a  virtual  orbifold   system  for   $(\cB,  \cE,   \dbar_J)$. The invariants associated to this virtual  orbifold   system follow from the general integration theory on proper \'etale virtual groupoids.

  \section{Theory of Virtual manifolds and virtual orbifolds}\label{2}

  In this  section, we    give a self-contained review of  the theory of virtual manifolds/orbifolds  in  \cite{ChenTian} and  further develop the integration theory on proper \'stale virtual groupoids.

  \subsection{Basic Definitions}

\begin{definition}  \label{def:virmfd}  A {\em virtual manifold}  is a collection of  smooth manifolds $\{X_I\}_{I\in \cI} $ indexed by a partially ordered set
$(\cI = 2^{\{1, 2, \cdots, n\}}, \subset )$, together with  patching data
\[
\{(\Phi_{I, J}, \phi_{I, J}) |  I, J\in \cI, I \subset  J \},
\]
 where $\Phi_{I, J}: X_{J, I}\to X_{I, J}$ is a vector bundle with the zero  section $ \phi_{I, J}: X_{I, J}\to X_{J, I}$
 for open sub-manifolds $X_{I, J}$ and $X_{J, I}$ of $X_I$ and $X_J$ respectively,  whenever $I\subset J$. The patching datum
$(\Phi_{I, J}, \phi_{I, J}) $ for $I \subset  J$    will be  simply denoted by
 \[
   \xymatrix{X_{J, I}   \ar[d]_{\Phi_{I, J}} \ar[r]^\subset &  X_J   \\
   X_{I, J}     \ar@/_1pc/[u]_{ \phi_{I, J}   }  \ar[r]^\subset & X_I  . }
 \]
 Moreover,  the
 patching data  $\{ (\Phi_{I, J}, \phi_{I, J})|  I \subset  J \} $
satisfies  the following coherent conditions:\begin{enumerate}
\item Given any ordered triple $I \subset J\subset K$ with  the patching data
\[
 \xymatrix{  X_{J, I}   \ar[d]_{\Phi_{I, J}}   \ar[r]^\subset &X_J      & X_{K, J}   \ar[d]_{\Phi_{ J,K}} \ar[r]^\subset &X_K   &X_{K, I}   \ar[d]_{\Phi_{ I,K}} \ar[r]^\subset &X_K
  \\
X_{I, J}      \ar@/_1pc/[u]_{ \phi_{I, J}   }  \ar[r]^\subset & X_I ,          &X_{J, K}    \ar@/_1pc/[u]_{ \phi_{J, K}  }  \ar[r]^\subset &X_J,    & X_{I, K}   \ar@/_1pc/[u]_{ \phi_{I, K}} \ar[r]^\subset &X_I ,   }
\]
we have \begin{itemize}
\item
$X_{K, I}\subset X_{K, J}$ in $X_K$,
\item $X_{I, K}\subset X_{I, J}$ in $X_I$,
\item $ \Phi_{J, K} (X_{K, I})  = \Phi^{-1}_{I, J} (X_{I, K})  $  in $X_J$,
\item  and  the  cocycle condition  $\Phi_{I, K} =\Phi_{I, J}\circ \Phi_{J, K}$ as   given by  the following diagram
\ba\label{cohe:1}
  \xymatrix{  & X_{K, I}   \ar[dd]^{\Phi_{I, K}} \ar[dl]_{\Phi_{J, K}}    \\
\Phi_{J, K} (X_{K, I}) \ar[dr]^{\Phi_{I, J}}  &  \\
&  X_{I, K} .
   }
   \na
\end{itemize}
   \item Given a pair  $I$ and $J$,   $X_{I\cup J , I\cap J} =  X_{I \cup J, I}\cap  X_{I \cup J, J}$ and  $X_{I\cap J, I\cup J} =  X_{I \cap J, I}\cap  X_{I \cap J, J}$ hold,   and   the  diagram
   \ba\label{fiber:product}
\xymatrix{  & X_{I\cup J, I\cap J}   \ar[dl]_{\Phi_{I, I\cup J}}   \ar[dr]^{\Phi_{ J, I\cup J}} &  \\
X_{I,  J}  \ar[dr]_{\Phi_{I\cap J, I}}  &    &  X_{J, I} \ar[dl]^{\Phi_{I\cap J, J}}    \\
&  X_{I\cap J, I\cup J}  &
   }
\na
is a fiber product  of vector bundles over $ X_{I\cap J, I\cup J}$.  Here $X_{I, J} = \Phi_{I, I\cup J} (X_{I\cup J, I\cap J}) $
and  $X_{J, I} = \Phi_{J, I\cup J} (X_{I\cup J, I\cap J}) $ for any pair $I$ and $J$.

\end{enumerate}

\end{definition}

\begin{remark} \begin{enumerate}
\item   Definition \ref{def:virmfd} is  equivalent to Definitions 2.1 and 2.2 in \cite{ChenTian}.
  \item We can replace the collection of smooth manifolds $\{X_I\}$ by a collection of topological spaces where
  the patching data $\{\Phi_{I, J}:  X_{J, I}\to X_{I, J}\}$ consist of topological vector bundles, then the coherent conditions
  (\ref{cohe:1}) and (\ref{fiber:product}) still make sense in the category of  topological vector bundles. The resulting object is
  called a {\em topological virtual space}.
\end{enumerate}
\end{remark}

\begin{definition} \label{vir:bundle} ({\em Vector bundles and virtual vector bundles})   Let
$\{ X_I, \Phi_{I, J}, \phi_{I, J}| I\subset J \in 2^{\{1, 2, \cdots, n\}}\}$ be a  virtual manifold.
 \begin{enumerate}
 \item A {\bf  vector bundle}  over a virtual manifold
$\{ X_I\}$
is  a virtual manifold $\cF= \{F_I\}$ such that $F_I$ is a vector bundle over $X_I$  for each $I$,  and
 for any ordered pair $I\subset J$,
\ba\label{vir:bundle=1}
F_J|_{X_{J, I}} \cong  \Phi_{I, J}^* ( F_I|_{X_{I, J}}),
\qquad F_{I, J} =  F_I|_{X_{I, J}}, \qquad  F_{J, I} = F_J|_{X_{J, I}}.
\na

A section of a  vector bundle $\cF$ over  $\{ X_I\}$
 is a collection of sections
$
\{
S_I:  X_I \to F_I
\}
$
such that for any ordered pair $I\subset J$,  under the identification (\ref{vir:bundle=1}),
\ba\label{sect:vb}
S_J|_{X_{J, I}} =  \Phi_{I, J}^*\big (   S_I|_{X_{I, J}}\big).
\na
A section $\{ S_I\}$  of a    vector bundle $\cF$ is transversal if each $S_I$ is a transversal section of $F_I\to X_I$.

\item A {\bf  virtual vector bundle}  over a virtual manifold
$\{ X_I\}$ is a virtual manifold $
\bE=\{
E_I \to X_I \}
$
 such that $E_I$ is a vector bundle over $X_I$ for each $I$,  and
 for any ordered pair $I\subset J$,  $E_{I, J} = E_I|_{X_{I, J}}$, $E_{J, I} = E_J|_{X_{J, I}} $ and
\ba\label{vir:bundle=2}
E_J|_{X_{J, I}} \cong  \Phi_{I, J}^* (X_{J, I}\oplus E_I|_{X_{I, J}})
\na
as vector bundles over $X_{J, I}$.

A section of a virtual vector bundle $\bE$ over
$\{ X_I\}$
is   a collection of sections
$
\{
\sigma_I:  X_I \longrightarrow E_I,
\}
$
  called a virtual section of  $\bE$ if
  any ordered pair $I\subset J$,  under the identification (\ref{vir:bundle=2}),
\ba\label{sect:vir}
\sigma_J|_{X_{J, I}} =    (s_{X_{J, I}},  \sigma_I|_{X_{I, J}}\circ   \Phi_{I, J}  \big),
\na
where $s_{X_{J, I}}$ is the canonical section of  the bundle $ \Phi_{I, J}^* (X_{J, I}) $ over $X_{J, I}$.
A section $\sigma = \{ \sigma_I\}$  of a   virtual vector bundle $\bE$ is transversal if each $\sigma_I$ is a transversal section of $E_I\to X_I$.
\end{enumerate}
\end{definition}

\begin{example}\label{example:vir}
\begin{enumerate}
\item For a smooth  compact manifold $X$ with an open cover $\{U_i |  i=1, 2, \cdots, n\}$, there is a
canonical virtual manifold
structure
\ba\label{mfd-is-virfld}
\{ U_I, \Phi_{I, J}, \phi_{I, J}| I\subset J \in 2^{\{1, 2, \cdots, n\}}\}
\na
given by $U_I = \bigcap_{i\in I} U_i$ for any nonempty $I$ and $U_\emptyset = X$,
\[
U_{I, J} =U_{J, I}  = U_I\cap U_J
\]
and for $I\subset J$, $\Phi_{I, J}= \phi_{I, J} =Id_{U_I\cap U_J}$. Both coherent conditions
(\ref{cohe:1}) and  ( \ref{fiber:product})  are trivially satisfied. A virtual vector bundle over the virtual manifold
(\ref{mfd-is-virfld}) corresponds to a usual vector bundle over $X$.

\item ({\em Example 1 in \cite{ChenTian} })
The following example of virtual manifold structure on a compact manifold $X$ plays a central  role in the study of moduli spaces arising from Fredholm systems  in Section \ref{3}.

Let $\{U^{(2)}_i |  i=1, 2, \cdots, n\}$ be an open cover of $X$ such that for each  $U^{(2)}_i$ contains the  closure of an open set $U_i^{(1)}$.   For any   $I\subset \{1, 2, \cdots, n\}$, define
\ba\label{CT:1}
X_I =\big(X \cap\bigcap_{i\in I} U_{ i}^{(2)}\big) \setminus
\big(\bigcup_{j\notin I}\overline{U^{(1)}_{ i}}\big).
\na
Define  $X_{I, J} = X_{J, I} = X_I \cap X_J$ and $\Phi_{I, J} =\phi_{I, J} = Id_{X_I \cap X_J}$ for any
$I\subset J$.  Then  $\{ X_I,   \Phi_{I, J}, \phi_{I, J}\}$  is a virtual manifold as verified in \cite{ChenTian}. We   point out that
$X_I$ was  incorrectly  defined  for $I=\emptyset$ in \cite{ChenTian}.  By (\ref{CT:1}), we know that
\[
X_\emptyset = X \  \setminus
\big(\bigcup_{j\notin I}\overline{U^{(1)}_{ i}}\big).
\]

\item Given a virtual manifold $\{ X_I, \Phi_{I, J}, \phi_{I, J}| I\subset J \in 2^{\{1, 2, \cdots, n\}}\}$, its tangent
bundle
\[
\{
TX_I \longrightarrow X_I\}
\]
is a virtual  vector bundle as for any ordered pair $I\subset J$
\[
TX_J|_{X_{J, I}} \cong \Phi_{I, J}^* (X_{J, I}\oplus TX_I|_{X_{I, J}}),
\]
once a connection  on $\Phi_{I, J}: X_{J, I}\to X_{I, J}$ is chosen.  We remark that  the cotangent bundle
$\{T^*X_I\}$ and its $k$-th exterior power  $\{\bigwedge^k(T^*X_I)\}$ are  neither vector bundles nor  virtual  vector bundles in
 the strict sense
of Definition \ref{vir:bundle}.  As we will use later,
a section  $\omega =\{\omega_I\}$ of $\{\bigwedge^k(T^*X_I)\}$ will   be called
a degree $k$ {\bf  differential form} on $\{X_I\}$ if the following condition holds
\[
\omega_J|_{X_{J, I} } = \Phi_{I, J}^*  \big(\omega_I|_{X_{J, I} }\big).
\]
\end{enumerate}

The following proposition plays a key role in the construction of virtual system in this paper.
\begin{proposition}\label{key:prop}    Let
$\{ X_I, \Phi_{I, J}, \phi_{I, J}| I\subset J \in 2^{\{1, 2, \cdots, n\}}\}$ be a  virtual manifold.
\begin{enumerate}
\item Given a vector bundle $\cF =\{F_I \to X_I\}$ with a transversal section $\{S_I\}$, then the  collections of zero  sets
\[
\{Z_I =S_I^{-1}(0)\}
\]
admits a canonical  virtual manifold structure.  where  If $\{S_I\}$ is not transversal to the  zero section, then  $\{Z_I =S_I^{-1}(0)\}$ is only   a topological virtual space.

\item Given a virtual vector bundle $\{E_I\to X_I\}$ with a transversal  section $\{\sigma_I\}$,  then the collection of zero sets
\[
\{Y_I= \sigma_I^{-1}(0)\}, 
\]
under  the induced patching data,  forms a smooth manifold. In the absence of  transversality,
$\{Y_I= \sigma_I^{-1}(0)\}$ forms  a topological space under  the homeomorphism  $  \phi_{I, J}: Y_{J, I} \cong Y_{I, J}$.
\end{enumerate}

\end{proposition}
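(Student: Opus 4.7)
The plan is to treat the two parts in parallel, exploiting in each case the special pullback form that the compatibility conditions (\ref{sect:vb}) and (\ref{sect:vir}) impose on the sections. The conceptual point is that these two conditions reflect the two different bundle patching conditions (\ref{vir:bundle=1}) and (\ref{vir:bundle=2}): the former forces the zero sets to inherit exactly the same bundle-projection patching data as $\{X_I\}$ itself, while the latter forces the zero sets to sit entirely inside the images of the zero sections $\phi_{I,J}$, so the patching data collapse to isomorphisms and the pieces glue into a single manifold. In both cases, transversality of each individual $S_I$ (respectively $\sigma_I$) yields a smooth submanifold $Z_I$ (respectively $Y_I$) of $X_I$ by the implicit function theorem, and the remaining work is to track how the patching data restrict.

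For part (1), the compatibility $S_J|_{X_{J,I}} = \Phi_{I,J}^*(S_I|_{X_{I,J}})$ says that $S_J$ on $X_{J,I}$ is the pullback of $S_I|_{X_{I,J}}$ along the bundle projection $\Phi_{I,J}$; since $\Phi_{I,J}$ is a submersion, taking zero sets gives
\[
Z_J \cap X_{J,I} = \Phi_{I,J}^{-1}(Z_I \cap X_{I,J}).
\]
Setting $Z_{I,J} := Z_I \cap X_{I,J}$ and $Z_{J,I} := Z_J \cap X_{J,I}$, the restriction of $\Phi_{I,J}$ to $Z_{J,I} \to Z_{I,J}$ is a vector bundle (the restriction of a vector bundle to the preimage of a submanifold), with zero section obtained by restricting $\phi_{I,J}$. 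The cocycle condition (\ref{cohe:1}) and the fiber-product condition (\ref{fiber:product}) for $\{X_I\}$ then restrict to the analogous conditions for $\{Z_I\}$, because restriction commutes with intersections and pullbacks along bundle maps. When transversality fails, the preimage identity still holds set-theoretically and the restrictions are continuous vector-bundle projections, giving a topological virtual space.

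For part (2), writing $E_J|_{X_{J,I}}$ via (\ref{vir:bundle=2}) as $\Phi_{I,J}^*(X_{J,I} \oplus E_I|_{X_{I,J}})$ and decomposing $\sigma_J|_{X_{J,I}} = (s_{X_{J,I}}, \sigma_I|_{X_{I,J}} \circ \Phi_{I,J})$ by (\ref{sect:vir}), the equation $\sigma_J(x) = 0$ for $x \in X_{J,I}$ is equivalent to the simultaneous vanishing of the tautological section $s_{X_{J,I}}$ and of $\sigma_I \circ \Phi_{I,J}$. But $s_{X_{J,I}}$ vanishes precisely on the image of the zero section $\phi_{I,J}(X_{I,J}) \subset X_{J,I}$, so
\[
Y_J \cap X_{J,I} = \phi_{I,J}(Y_I \cap X_{I,J}),
\]
and $\phi_{I,J}$ restricts to a bijection $Y_I \cap X_{I,J} \to Y_J \cap X_{J,I}$, smooth in the transversal case and continuous in general. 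The cocycle $\phi_{I,K} = \phi_{J,K} \circ \phi_{I,J}$ follows from $\Phi_{I,K} = \Phi_{I,J} \circ \Phi_{J,K}$ by taking zero sections, and the fiber-product condition (\ref{fiber:product}) supplies the compatibility on pairwise meets and joins. Hence the family $\{Y_I\}$ glues along these identifications into a single smooth manifold, or a single topological space in the absence of transversality.

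The main obstacle I expect is the careful bookkeeping required to verify that the fiber-product coherence (\ref{fiber:product}) really descends to the restricted data: both that the restricted diagram in part (1) remains a fiber product of vector bundles, and that the restricted gluings $\{\phi_{I,J}\}$ in part (2) are mutually compatible on all triple intersections so that the induced equivalence relation is transitive and the quotient is Hausdorff with smoothly compatible charts. Once the pointwise descriptions of $Z_J \cap X_{J,I}$ and $Y_J \cap X_{J,I}$ are in hand, these verifications are essentially functorial, but they demand careful tracking of the open sets $X_{I,J}$ throughout.
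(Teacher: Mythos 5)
Your proposal is correct and follows essentially the same route as the paper: restrict the patching bundles to the zero sets in part (1), and in part (2) observe that vanishing of the canonical component $s_{X_{J,I}}$ confines $Y_{J,I}$ to the image of the zero section so that $\Phi_{I,J}$ (equivalently $\phi_{I,J}$) restricts to a diffeomorphism $Y_{J,I}\cong Y_{I,J}$. Your explicit identification $Y_J\cap X_{J,I}=\phi_{I,J}(Y_I\cap X_{I,J})$ in fact spells out the step the paper only asserts.
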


\begin{proof}  \begin{enumerate}
\item  Being a transversal section $\{S_I\}$ , each zero set $S_I^{-1}(0)$ for a smooth manifold. For $I\subset J$, we have 
\begin{itemize}
\item  $Z_{I, J} = Z_I \cap X_{I, J} = (S_I|_{X_{I, J}} )^{-1}(0)$,
\item $Z_{J, I} = Z_J\cap X_{J, I} = (S_J|_{X_{J, I}} )^{-1}(0)$,
\item The conditions (\ref{vir:bundle=1}) and  (\ref{sect:vb})  imply that $\Phi_{I, J}|_{ Z_{J, I}}: Z_{I, J} \to Z_{I, J}$ is a vector bundle.
\end{itemize}
It is easy to check  that the induced patching data satisfy the coherent conditions  (\ref{cohe:1}) and
(\ref{fiber:product}).   If $ \{S_I\}$ is not transversal to the  zero section, then   each $Z_I =S_I^{-1}(0)$ is only   a topological  space. The induced patching data defines a virtual topological structure. That is, 
n  $\{Z_I =S_I^{-1}(0)\}$ is only   a topological virtual space.

\item In this case, the collection of zero sets $\{Y_I= \sigma_I^{-1}(0)\}$ consists of smooth manifolds of the same dimention. Note that    for $I\subset J$, we have 
\begin{itemize}
\item  $Y_{I, J} = Y_I \cap X_{I, J} = (\sigma_I|_{X_{I, J}} )^{-1}(0)$,
\item $Y_{J, I} = Y_J\cap X_{J, I} = (\sigma_J|_{X_{J, I}} )^{-1}(0)$, under $\Phi_{I, J}$, is diffeomorphic  to $Y_{I, J}$.
\end{itemize}
Therefore, $\{Y_I= \sigma_I^{-1}(0)\}$ forms a smooth manifold.

Without the transversality condition, obviously,   $\{Y_I= \sigma_I^{-1}(0)\}$ forms a topological space, as
$Y_{J, I} = Y_J\cap X_{J, I} = (\sigma_J|_{X_{J, I}} )^{-1}(0)$, under $\Phi_{I, J}$, is only homeomorphic   to $Y_{I, J}$. \end{enumerate}
\end{proof}

\end{example}

\begin{remark} \label{virtual:bdy}
 As explained in  \cite{ChenTian}, we can define a virtual manifold   with boundary.  Here we require that each manifold $X_I$ is a manifold with a boundary $\partial X_I$, and the patching data satisfy the following condition
\[
\partial X_{J, I} = \Phi_{I, J}^{-1} (\partial X_{I, J}).
\]
Then $ \{\partial X_I\}_I$ is a virtual manifold. Moreover, Proposition \ref{key:prop} takes the following form.
 Let
$\{ X_I, \Phi_{I, J}, \phi_{I, J}| I\subset J \in 2^{\{1, 2, \cdots, n\}}\}$ be a  virtual manifold with a boundary
$\{\partial X_I\}_I$.
\begin{enumerate}
\item Given a vector bundle $\cF =\{F_I \to X_I\}$ with a transversal section $\{S_I\}$, then the  collections of zero  sets
\[
\{Z_I =S_I^{-1}(0), \partial Z_I\}
\]
is a virtual manifold with boundary.  where  If $\{S_I\}$ is not transversal to the  zero section, then  $\{Z_I =S_I^{-1}(0)\}$ is only   a topological virtual space with boundary.

\item Given a virtual vector bundle $\{E_I\to X_I\}$ with a transversal  section $\{\sigma_I\}$,  then the collection of zero sets
\[
\{Y_I= \sigma_I^{-1}(0)\}, 
\]
under  the induced patching data,  forms a smooth manifold with boundary. In the absence of  transversality,
$\{Y_I= \sigma_I^{-1}(0)\}$ forms  a topological space with boundary
\end{enumerate}
\end{remark}

\begin{remark} The definitions of virtual manifold  and virtual vector bundle  can be easily generalised to
equivariant cases   under Lie group actions and to orbifold cases. The virtual manifold with boundary can be defined in these set-up too. 
\begin{enumerate}
\item  ({\em Virtual $G$-manifold}) Given a Lie group $G$, a virtual $G$-manifold is  a collection of $G$-manifolds $\{X_I\}_{I\in \cI} $ indexed by a partially ordered set
$(\cI = 2^{\{1, 2, \cdots, n\}}, \subset )$, together with  patching data
\[
\{(\Phi_{I, J}, \phi_{I, J}) |  I, J\in \cI, I \subset  J \},
\]
 where $\Phi_{I, J}: X_{J, I}\to X_{I, J}$ is a $G$-equivariant vector bundle with the zero  section $ \phi_{I, J}: X_{I, J}\to X_{J, I}$
 for open $G$-invariant  sub-manifolds $X_{I, J}$ and $X_{J, I}$ of $X_I$ and $X_J$ respectively,  whenever $I\subset J$.  Moreover,  the
 patching data $\{ (\Phi_{I, J}, \phi_{I, J})|  I \subset  J \} $
satisfy  the  coherent conditions (\ref{cohe:1}) and  ( \ref{fiber:product})   in the  $G$-equivariant sense.
\item  ({\em Virtual orbifold})  A virtual orbifold is  a collection of orbifolds $\{\cX_I\}_{I\in \cI} $ indexed by a partially ordered set
$(\cI = 2^{\{1, 2, \cdots, n\}}, \subset )$, together with  patching data
\[
\{(\Phi_{I, J}, \phi_{I, J}) |  I, J\in \cI, I \subset  J \},
\]
 where $\Phi_{I, J}: X_{J, I}\to X_{I, J}$ is an orbifold  vector bundle with the zero  section $ \phi_{I, J}: \cX_{I, J}\to \cX_{J, I}$
 for open   sub-orbifolds $\cX_{I, J}$ and $\cX_{J, I}$ of $\cX_I$ and $X_J$ respectively,  whenever $I\subset J$.  Moreover,  the
 patching data $\{ (\Phi_{I, J}, \phi_{I, J})|  I \subset  J \} $
satisfy  the  coherent conditions (\ref{cohe:1}) and  ( \ref{fiber:product}) in the category of orbifold vector bundles.
\end{enumerate}
\end{remark}

It becomes  more evident  that  the language of  proper \'etale groupoids provides a convenient and economical  way to describe orbifolds.      We briefly recall the   definition of    a proper \'etale groupoid.

\begin{definition}  \label{Lie-gpoid:def} ({\em Lie groupoids and proper \'etale groupoids})
 A Lie groupoid
 $\cG = (\cG^0,  \cG^1)$  consists of two smooth manifolds $\cG^0$ and $\cG^1$, together with five smooth maps $(s, t, m, u, i)$ satisfying the following properties.
  \begin{enumerate}
\item  The source map  and the target map $s, t: \cG^1 \to  \cG^0$ are submersions.
\item The composition map
\[
m:  \cG^{[2]}: =\{(g_1, g_2) \in  \cG^1 \times  \cG^1: t(g_1) = s(g_2)\} \longrightarrow \cG^1
\]
written as $m(g_1, g_2) = g_1\circ  g_2$ for composable elements $g_1$ and $g_2$,
satisfies the obvious associative property.
\item The unit map $u: \cG^0 \to \cG^1$ is a two-sided unit for the composition.
\item The inverse map $i: \cG^1 \to \cG^1$, $i(g) = g^{-1}$,  is a two-sided inverse for the composition.
\end{enumerate}
In this paper,   a groupoid $\cG=(\cG^0, \cG^1)$   will be denoted by  $\cG = (\cG^1 \rightrightarrows \cG^0)$ where $\cG^0$ will be called the space of objects or units, and $\cG^1$ will be called the space of arrows.
A Lie groupoid $\cG = (\cG^1 \rightrightarrows \cG^0)$   is {\bf  proper } if $(s, t): \cG^1 \to \cG^0 \times \cG^0$ is proper, and called {\bf   \'etale } if $s$ and $t$ are local diffeomorphisms.   Given a proper \'etale groupoid $ (\cG^1 \rightrightarrows \cG^0)$, for any $x\in \cG^0$,  $(s, t)^{-1}(x, x) = s^{-1}(x)\cap t^{-1}(x)$ is a finite group, called the {\bf isotropy group}  at $x$.

\end{definition}

\begin{remark}  Given a proper \'etale Lie groupoid $\cG$, there is  a canonical orbifold structure on its orbit space
$|\cG|$ (\cite[Prop. 1.44]{ALR}).   Here the orbit space $|\cG|$ is the quotient space of $\cG^0$ by the equivalent relation from $\cG^1$.  Two Morita equivalent
proper \'etale Lie groupoids define two diffeomorphic  orbifolds  (\cite[Theorem 1.45]{ALR}).
Conversely,     given an orbifold $\cX$ with a given  orbifold  atlas, there is a canonical proper \'etale Lie groupoid $\cG_{\cX}$,  locally given by the action groupoid  of the orbifold charts.  See \cite{MoePr} and \cite{LU}.    Two equivalent  orbifold atlases define
two  Morita equivalent proper \'etale Lie groupoids.  Due to this correspondence, a proper \'etale Lie groupoid  will also be
called an orbifold groupoid. Often this same notion will be used particularly for the  proper \'etale Lie groupoid $\cG_\cX$
constructed from an orbifold atlas on an orbifold $\cX$.

\end{remark} \label{equ:orbi}

 An orbifold vector bundle $E \to \cX$  corresponds to a vector bundle  over  the  groupoid  $\cG_\cX$, which is a vector bundle  $\pi: E^0\to \cG^0$ with a fiberwise linear action
\[
\mu:  \cG^1\times_{(s, \pi)} E^0 \longrightarrow E^0
\]
covering the canonical action of $\cG^1$ on $\cG^0$ and
satisfying some obvious compatibility conditions. Here
\[
 \cG^1\times_{(s, \pi)} E^0 = \{ (g, v) \in \cG^1\times E^0|  s(g) = \pi(v)\}.
 \]
 In general, a vector bundle over a Lie groupoid $(\cG^1 \rightrightarrows \cG^0)$
is a vector bundle $E^0$ over $\cG^0$
with a fiberwise linear action
\ba\label{action:bundle}
\mu:  \cG^1\times_{(s, \pi)} E^0 \longrightarrow E^0,
\na
and a section of a vector bundle over $\cG$ is a section of $E^0$ which is invariant under the action of (\ref{action:bundle}).

   Note that the action  (\ref{action:bundle}) is defined by a section $\xi$ of the bundle $Iso(s^*E^0, t^*E^0) \to \cG^1$  where $Iso(s^*E^0, t^*E^0)$ is the bundle of bundle isomorphisms from $s^*E$ to
$t^*E$.   That means, given an arrow $\alpha \in \cG^1$, there is a linear  isomorphism of vector spaces
\[
\xi(\alpha):  E^0_{s(\a)}  \longrightarrow E^0_{t(\a)}
\]
such that $\xi (\a\circ \b) = \xi (\a) \circ \xi(\b)$.  In fact, the action  (\ref{action:bundle}) defines a  Lie groupoid
\[  E_1  :=    \cG^1\times_{(s, \pi)} E^0  \rightrightarrows  E^0
\]
with the source map $\tilde  s$  given by the projection, and the target map $\tilde  t$ given by the action.
This motivates the following characterisation of  vector bundles in the language of Lie  groupoids.

\begin{proposition} \label{bundle:gpoid}
Given a Lie groupoid  $\cG=(\cG^1 \rightrightarrows \cG^0)$,  a Lie groupoid
$( E^1  \rightrightarrows E^0)$ is a vector bundle over $\cG$ if and only if there is
 a strict Lie groupoid morphism $\pi:  ( E^1  \rightrightarrows E^0) \to (\cG^1 \rightrightarrows \cG^0)$ given by
 the commutative diagram
\ba\label{pull-back:pgoid}
\xymatrix{
E^1 \ar@<.5ex>[d]\ar@<-.5ex>[d] \ar[r]^{\pi_1} & \cG^1\ar@<.5ex>[d]\ar@<-.5ex>[d]\\
E^0 \ar[r]_{\pi_0}& \cG^0 }
\na
in the category of Lie groupoids with strict morphisms (this means,  the maps $\pi_1$ and $\pi_0$ commute with the source maps   and
 the target maps, and  are compatible to the composition, unit and inverse maps), such that
\begin{enumerate}
\item  the diagram (\ref{pull-back:pgoid}) is a pull-back groupoid diagram,
\item both $\pi_1: E^1\to \cG^1$ and $ \pi_0: E^0\to \cG^0$
are vector bundles.
\item for $ \a \in \cG^1$,  the pull-back arrows
\[
\{(v_x, \a, v_y) | (v_x, v_y) \in E_{s(\a)} \times E_{t(\a)} \}
\]
define  a linear isomorphism  $\xi (\a):  E_{s(\a)} \to E_{t(\a)} $ sending $v_x$ to $x_y$.
\end{enumerate}
A section  $s$ of a vector bundle $( E^1  \rightrightarrows E^0)$ over $(\cG^1 \rightrightarrows \cG^0)$ is  given by  a pair of sections
$(s_1, s_0)$ such that the following diagram commutes in category of Lie groupoids with strict morphism
\[
\xymatrix{
E^1 \ar@<.5ex>[d]\ar@<-.5ex>[d] \ar[rr]_{\pi_1} && \cG^1\ar@<.5ex>[d]\ar@<-.5ex>[d]  \ar@/_1pc/[ll]_{s_1}\\
E^0 \ar[rr]_{\pi_0}&& \cG^0  \ar@/_1pc/[ll]_{s_0} .}
\]
\end{proposition}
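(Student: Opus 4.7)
The plan is to prove both directions of the equivalence by explicit construction, with the key translation being between the action map $\mu$ and the fibered groupoid structure on $E^1$ over $\cG^1$.

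First I would handle the forward direction. Starting from a vector bundle $\pi: E^0 \to \cG^0$ with fiberwise linear action $\mu$ in the sense of (\ref{action:bundle}), I would define $E^1 := \cG^1 \times_{(s, \pi)} E^0$ with source $\tilde{s}(g, v) = v$, target $\tilde{t}(g, v) = \mu(g, v)$, composition $(g, v) \cdot (h, w) = (g \circ h, w)$ whenever $\tilde{t}(h, w) = v$, and units and inverses induced from $\cG$. The axioms for $(E^1 \rightrightarrows E^0)$ then follow directly from the action axioms for $\mu$. Setting $\pi_1(g, v) = g$ and $\pi_0 = \pi$ gives a strict Lie groupoid morphism: the pullback property (1) is tautological from the fibered product definition, property (2) holds because $E^1 \to \cG^1$ is the pullback bundle $s^* E^0$, and property (3) is a restatement of the fiberwise linearity of $\mu$.

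For the reverse direction, I would exploit the pullback condition (1) together with (2) to identify $E^1$ canonically with $\cG^1 \times_{(s, \pi_0)} E^0$ via the map $e \mapsto (\pi_1(e), \tilde{s}(e))$. Under this identification the target $\tilde{t}$ becomes a map $\mu: \cG^1 \times_{(s, \pi_0)} E^0 \to E^0$. The composition and unit axioms of the groupoid $E^1$ translate precisely into associativity of $\mu$ and triviality of the identity action, while strictness of $\pi$ ensures that $\mu$ covers the canonical action of $\cG^1$ on $\cG^0$. Condition (3) provides the missing fiberwise linearity, so $(E^0, \pi_0, \mu)$ is a vector bundle over $\cG$ in the original sense. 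The characterization of sections is then routine: given $s_0: \cG^0 \to E^0$, define $s_1(\alpha) = (\alpha, s_0(s(\alpha))) \in E^1$; invariance of $s_0$ under $\mu$, namely $\mu(\alpha, s_0(s(\alpha))) = s_0(t(\alpha))$, is equivalent to $s_1$ intertwining the source and target maps, i.e., to $(s_1, s_0)$ being a strict groupoid morphism section of $\pi$.

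The main subtlety lies in extracting the fiberwise linearity and the full action property from the abstract Lie groupoid data. Condition (3) is doing most of the work: it guarantees that the map $\tilde{t}$, read through the pullback identification, is linear in the fiber variable of $E^1 \to \cG^1$; without it, the groupoid morphism $\pi$ could a priori consist of arbitrary diffeomorphisms on fibers rather than linear isomorphisms, destroying the vector bundle structure. The pullback condition (1) is equally indispensable, since it is precisely what allows $E^1$ to be rewritten as a fibered product and hence $\mu$ to be defined unambiguously from $\tilde{t}$. The associativity $\xi(\alpha \circ \beta) = \xi(\alpha) \circ \xi(\beta)$ then emerges automatically from the strict compatibility of $\pi_1$ with the composition in both groupoids.
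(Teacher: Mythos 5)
Your construction is correct and is exactly the argument the paper has in mind: the forward direction is the action groupoid $\cG^1\times_{(s,\pi)}E^0\rightrightarrows E^0$ already set up in the paragraph preceding the proposition, and the reverse direction correctly uses the pullback condition to recover $\mu$ from the target map, with condition (3) supplying fiberwise linearity. The paper itself omits the proof as ``straightforward,'' and your proposal fills it in along the intended lines with no gaps.
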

\begin{proof}
The proof is straightforward, so  it is omitted.
  \end{proof}

  With this  proposition  understood,  the definition of virtual $G$-manifolds and virtual orbifolds given in Remark \ref{equ:orbi}
   can be  written in terms of   Lie groupoids.  The resulting geometric object is called a virtual Lie groupoid.  We leave
   the explicit definition  of general virtual Lie groupoids to the  readers.  In this paper, we will only be interested in the study of
   virtual $G$-manifolds and virtual orbifolds (or equivalently, proper   \'etale  virtual groupoids).
   Here  a   {\bf proper   \'etale  virtual groupoid}   is  a collection of  proper   \'etale   groupoids
   $\{\cG_I\}_{I\in \cI} $ indexed by a partially ordered set
$(\cI = 2^{\{1, 2, \cdots, n\}}, \subset )$, together with  patching data
\[
\{(\Phi_{I, J}, \phi_{I, J}) |  I, J\in \cI, I \subset  J \},
\]
 where $\Phi_{I, J}: \cG_{J, I}\to \cG_{I, J}$ is a  vector bundle with the zero  section $ \phi_{I, J}: \cG_{I, J}\to \cG_{J, I}$
 for open   sub-groupoids $\cG_{I, J}$ and $\cG_{J, I}$ of $\cG_I$ and $\cG_J$ respectively,  whenever $I\subset J$.  Moreover,  the
 patching data $\{ (\Phi_{I, J}, \phi_{I, J})|  I \subset  J \} $
satisfy  the coherent conditions (\ref{cohe:1}) and  ( \ref{fiber:product}) in the category of    proper   \'etale    groupoids  with  the strict morphisms.

We remark that a finite rank  (virtual) vector bundle over a proper   \'etale  virtual groupoid can be defined in a similar manner.  Examples in Example \ref{example:vir} all have their proper   \'etale  virtual groupoid counterparts. In particular, we have the foliowing proposition  whose proof is is analogous to  the proof of Proposition \ref{key:prop}.  There is a version of this proposition for a   proper   \'etale  virtual groupoid with boundary.

\begin{proposition}   Let
$\{ \cG_I, \Phi_{I, J}, \phi_{I, J}| I\subset J \in 2^{\{1, 2, \cdots, n\}}\}$ be a   proper   \'etale  virtual groupoid.
\begin{enumerate}
\item Given a vector bundle $\cF =\{F_I \to \cG_I\}$ with a transversal section $\{S_I\}$, then the  collections of zero  sets
\[
\{Z_I =S_I^{-1}(0)\}
\]
defines  a canonical   proper   \'etale  virtual groupoid.  where  If $\{S_I\}$ is not transversal to the  zero section, then  $\{Z_I =S_I^{-1}(0)\}$ is only   a topological proper   \'etale  virtual groupoid.

\item Given a virtual vector bundle $\{E_I\to \cG_I\}$ with a transversal  section $\{\sigma_I\}$,  then the collection of zero sets
\[
\{Y_I= \sigma_I^{-1}(0)\}, 
\]
under  the induced patching data,  forms a smooth   proper   \'etale  groupoid.  In the absence of  transversality,
$\{Y_I= \sigma_I^{-1}(0)\}$ forms  a topological   proper   \'etale  groupoid,  under  the homeomorphism  $  \phi_{I, J}: Y_{J, I} \cong Y_{I, J}$.
\end{enumerate}
\end{proposition}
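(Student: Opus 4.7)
The strategy is to mimic the proof of Proposition \ref{key:prop} but carry out each step in the category of proper \'etale Lie groupoids, using the fact that sections and vector bundle morphisms over a groupoid are by definition equivariant with respect to the arrow space.

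First I would set up the local model. For each $I$, write $\cG_I=(\cG_I^1\rightrightarrows \cG_I^0)$ and let $\pi_I:F_I^0\to\cG_I^0$ be the underlying bundle. By Proposition \ref{bundle:gpoid}, a section $S_I$ of the groupoid vector bundle $F_I\to\cG_I$ is a pair $(S_I^0,S_I^1)$ of sections of $\pi_I$ and of $F_I^1\to\cG_I^1$ that intertwines source and target, so $S_I^0$ is invariant under the $\cG_I^1$-action. Consequently $Z_I^0:=(S_I^0)^{-1}(0)$ is a saturated subset of $\cG_I^0$, and if $S_I^0$ is transversal to the zero section it is a smooth submanifold. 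Setting $Z_I^1:=s^{-1}(Z_I^0)=t^{-1}(Z_I^0)\subset \cG_I^1$, which by \'etaleness of $s,t$ is smooth of the same dimension, one gets a Lie subgroupoid $Z_I=(Z_I^1\rightrightarrows Z_I^0)$ that is again proper and \'etale (the source/target maps remain local diffeomorphisms, and properness passes to saturated closed subsets). In the non-transversal case $Z_I^0$ and $Z_I^1$ are only topological spaces, but they still form a topological proper \'etale groupoid.

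Next I would restrict the patching data. For $I\subset J$, condition (\ref{vir:bundle=1}) gives $F_J|_{\cG_{J,I}}\cong\Phi_{I,J}^*(F_I|_{\cG_{I,J}})$ and (\ref{sect:vb}) says $S_J|_{\cG_{J,I}}=\Phi_{I,J}^*S_I|_{\cG_{I,J}}$ (in the strict-morphism sense on both object and arrow levels). Taking zero sets commutes with pullback of bundles, so $Z_{J,I}:=Z_J\cap \cG_{J,I}=\Phi_{I,J}^{-1}(Z_{I,J})$, and therefore $\Phi_{I,J}$ restricts to a vector bundle $Z_{J,I}\to Z_{I,J}$ in the category of proper \'etale groupoids with strict morphisms, with zero section the restriction of $\phi_{I,J}$. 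The cocycle condition (\ref{cohe:1}) and the fiber product condition (\ref{fiber:product}) for the restricted data follow immediately from the corresponding conditions for $\{\Phi_{I,J},\phi_{I,J}\}$, simply by intersecting everything in sight with the invariant subset $Z_I^0\sqcup Z_I^1$.

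For part (2) I would argue analogously, using (\ref{vir:bundle=2}) and (\ref{sect:vir}). Under the identification $E_J|_{\cG_{J,I}}\cong\Phi_{I,J}^*(\cG_{J,I}\oplus E_I|_{\cG_{I,J}})$, the section $\sigma_J|_{\cG_{J,I}}$ has the tautological component $s_{\cG_{J,I}}$ in the $\Phi_{I,J}^*\cG_{J,I}$ direction, which vanishes exactly on the zero section $\phi_{I,J}(\cG_{I,J})$. Hence $Y_{J,I}\subset \phi_{I,J}(\cG_{I,J})$ and $\Phi_{I,J}|_{Y_{J,I}}\colon Y_{J,I}\to Y_{I,J}$ is a diffeomorphism of groupoids (homeomorphism in the non-transversal case). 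By the cocycle condition these restricted maps compose coherently, so $\{Y_I\}$ glues to a single smooth proper \'etale groupoid; the dimensions match because the rank jump from $E_I$ to $E_J$ is exactly the fiber dimension of $\Phi_{I,J}$.

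The main obstacle I anticipate is checking that the groupoid-theoretic restriction really preserves the proper \'etale property and the strict-morphism structure on all of the patching bundles simultaneously, rather than merely the naive manifold-level verification in Proposition \ref{key:prop}; once one notes that $Z_I^1=s^{-1}(Z_I^0)$ (by invariance of $S_I^0$) and that $\Phi_{I,J}$ is given by a strict morphism, both issues reduce to the corresponding statements on object spaces and the proof of Proposition \ref{key:prop} transfers verbatim.
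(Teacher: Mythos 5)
Your proof is correct and follows exactly the route the paper intends: the paper simply declares the proposition ``analogous to the proof of Proposition \ref{key:prop},'' and you carry out that analogy, supplying the groupoid-level verifications (saturation of $Z_I^0$, the identity $Z_I^1=s^{-1}(Z_I^0)=t^{-1}(Z_I^0)$, preservation of properness and \'etaleness, and restriction of the patching data) that the paper leaves implicit.
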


 As virtual manifolds are special cases of virtual orbifolds, we only  develop  the integration theory for   virtual orbifolds using the language of proper   \'etale  groupoids, following closely the corresponding integration theory for  virtual manifolds in \cite{ChenTian}. This will done in Subsection \ref{2.2} after we discuss the orientation  on proper   \'etale  virtual groupoids.

\subsection{Differential forms  and orientations on proper   \'etale  virtual groupoids} \label{2.2}

A differential form on a proper   \'etale   groupoid $\cG=  (\cG^1 \rightrightarrows \cG^0)$ is a differential form on
$\cG^0$ which is invariant under the action of $\cG^1$.  By Proposition
 \ref{bundle:gpoid}, a differential $k$-form $\omega$  on $\cG$ is a pair of sections
 $(\omega_0, \omega_1)$  of the $k$-th exterior  power of the cotangent bundles
 such that the diagram commutes
 \ba\label{diag:form}
 \xymatrix{
\left( \bigwedge^k  T^*\cG\right)^1 \ar@<.5ex>[d]\ar@<-.5ex>[d] \ar[rr]_{ \pi_1 } && \cG^1\ar@<.5ex>[d]\ar@<-.5ex>[d] \ar@/_1pc/[ll]_{ \omega_1} \\
\left(  \bigwedge^k T^*\cG\right)^0 \ar[rr]_{ \pi_0 }&& \cG^0 \ar@/_1pc/[ll]_{ \omega_0}.}
 \na
We remark that  $\left(  \bigwedge^k T^*\cG\right)^0 =  \bigwedge^k T^*\cG^0$ and $ \left(  \bigwedge^k T^*\cG\right)^1$   defined  as in  Proposition  \ref{bundle:gpoid} agrees with $ \left(  \bigwedge^k T^*\cG^1\right) $,  the existence of the section $\omega_1$ is guaranteed by $\omega_0$ and the
invariance property under the action
  $\cG^1$.

  A differential form $\omega$ on $\cG$ is {\bf  compactly  supported} if the support of
  $\omega$ is $\cG$-compactly support in the sense that the $\cG^1$-quotient  of the support  is compact in $|\cG|$.
   The space of differential $k$-forms (with $\cG$-compact support, respectively) will be denoted by
   $\Omega^k (\cG)$ and $\Omega_c^k (\cG)$ respectively.

 In order to define the integral of certain  differential forms (to be specified later) over    proper   \'etale  virtual groupoids, we need to define a notion of orientation on it.  First we recall  the orientable condition  on a  proper   \'etale  groupoid.  Given a proper   \'etale  groupoid $\cG$,  the  tangent bundle $T\cG$,  by applying Proposition
 \ref{bundle:gpoid},   is a vector bundle over $\cG$ given by  a strict morphism $\pi:  T\cG \to \cG$,  that is,  a commutative diagram in the category of  \'etale  virtual groupoids
 \[
 \xymatrix{
(T\cG)^1 \ar@<.5ex>[d]\ar@<-.5ex>[d] \ar[r]^{\pi_1} & \cG^1\ar@<.5ex>[d]\ar@<-.5ex>[d]\\
(T\cG)^0 \ar[r]_{\pi_0}& \cG^0 }
\]
where $(T\cG)^0 = T\cG^{0}$ is the usual tangent bundle of $\cG^{0}$.    An orientation of an  oriented  proper  \'etale    groupoid $\cG$ is equivalent to a choice of a trivialisation of the determinant (the highest exterior power) bundle
\[
 \pi:  \big( ( \det  T\cG)^1 \rightrightarrows  (\det  T\cG)^0  \big) \longrightarrow   \big(\cG^1 \rightrightarrows \cG^0 \big)
\]
of  the tangent bundle $T\cG$,
with $(\det  T\cG)^0 = \det (T\cG^0)$ and $(\det  T\cG)^1 = \det (T\cG^1)$. That is, a pair of nowhere vanishing sections $\bo_0$ and $\bo_1$ of
$\pi_i: ( \det  T\cG)^i \to \cG^i, i=1, 2$, such that
the diagram
\ba\label{orient:gpoid}
\xymatrix{
(\det  T\cG)^1 \ar@<.5ex>[d]\ar@<-.5ex>[d] \ar[rr]_{ \pi_1 } && \cG^1\ar@<.5ex>[d]\ar@<-.5ex>[d] \ar@/_1pc/[ll]_{\bo_1} \\
(\det T\cG)^0 \ar[rr]_{ \pi_0 }&& \cG^0 \ar@/_1pc/[ll]_{\bo_0}}
\na
commutes in the category of  groupoids  with  strict morphisms. An orientation on $\cG$  will be  denoted by a nowhere vanishing section  $\bo_\cG:   \cG \longrightarrow  \det T\cG$.

Similarly, given a real vector bundle  $E$ over a groupoid $\cG$, $E$ is orientable if and only if $\det E$ is trivializable in the sense that there is a nowhere
vanishing section $\bo_E: \cG \to \det E$ (defining an orientation on $E$).

 Now we can give a definition of an orientation on  a proper   \'etale  virtual groupoid.

\begin{definition} \label{orient:vir} Given  a proper   \'etale  virtual groupoid $\{(\cG_I, \Phi_{I, J}, \phi_{I, J}) | I\subset J\}$, we say that
it is orientable if there exists a coherent  choice of orientations on $\{\cG_I\}$ and $\{\Phi_{I, J}: \cG_{I, J}
\to \cG_{I, J} | I\subset J\}$. That is, a system of nowhere vanishing sections (called an orientation on $\{\cG_I\}$)
\[
\bo_I:  \cG_I \longrightarrow  \det T\cG_I,   \qquad \bo_{I, J}:  \cG_{I, J}  \longrightarrow  \det \cG_{J, I}
\]
for every $I$   and   every  ordered pair  $I\subset J$, satisfying
\[
\bo_J|_{\cG_{J, I}} =  \Phi_{I, J}^* (\bo_I|_{\cG_{I, J}}  \otimes \bo_{I, J}   ),
\]
   under the  isomorphisms
\[
(\det  T\cG_J) |_{\cG_{J, I}}
\cong  \Phi_{I, J}^*\big( ( \det    T\cG_I) |_{\cG_{I, J}} \otimes  (\det  \cG_{J, I})    \big).
\]
A virtual vector bundle $\{E_I\}$ over an oriented proper  \'etale  virtual groupoid $\{\cG_I\}$ is called oriented if
 there is a collection of orientations $\{\bo_{E_I}\}$ such that  the bundle isomorphisms (\ref{vir:bundle=2})
 preserve the orientations.
\end{definition}

  Given an oriented  proper \'etale  groupoid $\cG$, introduce the notation
   \[
   \Omega^*_c(\cG) =\bigoplus_k \Omega_c^k (\cG).
   \]
  Then  there is a well-defined linear functional
   \[
   \int_\cG:   \Omega^*_c(\cG) \longrightarrow \R
   \]
   given by
   \ba\label{int:top}
   \int_\cG \omega = \int_\cG \omega^{top},
   \na
    where $\omega^{top}$ is the  degree $m$ component of $\omega$ with $m =\dim \cG_0$.
We now recall  the definition of integration over a  proper
     \'etale  groupoid   for example as  in \cite{ALR}.  Denote by $|\cG|$ the quotient space of $\cG^0$ by the equivalence  relation from $\cG^1$. Let  $\{(G_i\ltimes \tilde U_i \rightrightarrows \tilde U_i)\}$ be
 a collection of sub-groupoids  with each $G_i$ being a finite group such that  the collection of quotient spaces $\{ U_i =  \tilde U_i /G_i\}$ forms an open cover of $|\cG|$. There is a collection of smooth functions
 $\{\rho_i\}$ with the following property
 \begin{enumerate}
 \item $\rho_i$ is a $G_i$-invariant function on $\tilde U_i$, hence defines a  continuous function $\bar\rho_i$
 on  $U_i$.
\item $ \supp (\rho_i) \subset \tilde U_i$ is compact.
\item $0\leq \rho_i \leq 1$.
\item For every $x\in |\cG|$, $\sum_i \bar \rho_i (x) =1.$
\end{enumerate}
A partition of unity subordinate to $\{(\tilde U_i, G_i)\}$ can be obtained from a  routine construction as  the manifold case. Then the integration (\ref{int:top})
\ba\label{int:def}
\int_\cG \omega = \sum_{i} \dfrac{1}{|G_i|} \int_{\tilde U_i} \rho_i \cdot \omega
\na
  is independent of the choice of $\{\rho_i\}$ and the covering sub-groupoids $\{(G_i\ltimes \tilde U_i \rightrightarrows \tilde U_i)\}$.  For simplicity, we write
\ba\label{int:simple}
\dfrac{1}{|G_i|} \int_{\tilde U_i} \rho_i \cdot \omega  = \int_{ U_i} \bar \rho_i \cdot \omega .
\na

  In order to develop  a meaningful integration theory on proper   \'etale   virtual  groupoids, we will introduce a special
  class of differential forms, called  twisted virtual forms below.  Recall that a {\bf Thom form}   for a rank $m$ oriented  vector bundle
  $\pi:  \cE = ( E^1  \rightrightarrows E^0) \to  \cG = (\cG^1 \rightrightarrows \cG^0)$ is a differential $m$-form  $\Theta_\cE$ on $\cE$
  with compact  vertical support such that under the integration along the fiber
  \[
  \pi_*:   \Omega^k_{cv} (\cE) \longrightarrow  \Omega^{k-m}  (\cG),
  \]
  we have
  \[
  \pi_* (\Theta_\cE) =1  \in  \Omega^{0}  (\cG).
  \]
  Here $  \Omega^k_{cv} (\cE) $ is the space of differential $k$-forms on $\cE$ with compact  vertical support. The corresponding
  cohomology will be denoted by $H^{*}_{cv}(\cE)$.
  The construction of Thom forms with support in an  arbitrary small neighbourhood of the zero section is quite standard, one can adapt  the construction in Chapter 1.6 in \cite{BGV} to  the cases of   oriented  proper   \'etale  groupoids.

    The Thom form plays an important role in differential geometry through  the Thom isomorphism,  the  projective and product  formulae.
  \begin{enumerate}
\item ({\bf Thom isomorphism})  The map sending  $\omega$ in $ \Omega^*  (\cG)$  to $\pi^*\omega \wedge \Theta_\cE
\in  \Omega^{*+k}_{cv} (\cE)$ defines a  functorial isomorphism
\[
H^*(\cG) \cong H^{*+m}_{cv}(\cE)
\]
whose inverse map is   $\pi_*$.
\item ({\bf Projection formula}) Suppose that $\cG$ is oriented,  then  for every closed differential form $\omega \in \Omega^{*+k}_{c} (\cG)
$,
 \ba\label{proj}
 \int_\cE \Theta_\cE \wedge \pi^*  \omega  = \int_\cG \omega
 \na
% where $\phi: \cG\to \cE$ is the inclusion of $\cG$ into $\cE$ as the zero section.
 \item ({\bf Product  formula})   Let $\cE_1$ and $ \cE_2$ be  two oriented vector bundles over $\cG$, and $\pi_1$ and $\pi_2$ be  two projections from
 $\cE_1\oplus  \cE_2$ to   $\cE_1$ and  $\cE_2$. If  $\Theta_{\cE_1}$ and $\Theta_{\cE_2}$  are  Thom forms of  $\cE_1$ and  $\cE_2$  with compact  vertical
 support, then
  \[
 \pi_1^* \Theta_{\cE_1} \wedge \pi_2^*  \Theta_{\cE_2}
 \]
 is a   Thom form   of  $\cE_1 \oplus\cE_2$ with compact  vertical
 support.
\end{enumerate}
We remark that  Thom isomorphism and the Projective/Product formulae follow from the same proofs for  smooth manifold cases as in
\cite{BottTu}, as the existence of a partition of unity and the Mayer-Vietoris principle hold for  proper   \'etale   groupoids.

  \begin{definition} \label{Thom:form} Let  $\{\cG_I\}   = \{(\cG_I, \Phi_{I, J}, \phi_{I, J}) | I\subset J\}$ be a  proper   \'etale oriented   virtual groupoid  with an
   orientation, that is, a coherent orientation system $\{\bo_I, \bo_{I, J}: I\subset J\}$ as in Definition \ref{orient:vir}.
  \begin{enumerate}
\item A collection of Thom forms
$
\Theta = \{\Theta_{I, J} |  I\subset J \},
$
where $\Theta_{I, J}$ is a Thom form of   the bundles  $\{\Phi_{I, J}:  \cG_{J, I}\to \cG_{I, J}\}$,  is called a
 {\bf  transition Thom form }  for  $\{\cG_I\}$,  if the following two conditions are satisfied.
\begin{enumerate}
\item For any ordered triple $I\subset J\subset K$, $\Theta_{I, K} = \Theta_{J, K} \wedge \Phi_{J, K}^* \big( \Theta_{I, J}\big)$ as differential forms
on $X_{K, I}$.
\item For any $I$ and $J$, $\Theta_{I\cap J, I\cup J} = \Phi_{I, I\cup J}^* \big(\Theta_{I\cap J, I}\big)
\wedge \Phi^*_{J, I\cup J} \big( \Theta_{I\cap J, J}\big)$
on $\cG_{I\cup J, I \cap J}$.
\end{enumerate}
\item Given  a transition  Thom form $\Theta  =\{\Theta_{I, J} |  I\subset J \}$, a collection of differential forms
\[
\omega =\{\omega_I \in \Omega^k_c(\cG_I)  \}
\]
is called a  {\bf $\Theta$-twisted   differential form}  on $\{\cG_I\}$ if $\{\omega_I\}$ satisfy
the following condition
\ba\label{Theta:twisted}
\omega_J |_{\cG_{J, I}} = \Phi_{I, J}^*( \omega_I  |_{\cG_{I, J}} )\wedge \Theta_{I, J}.
\na
\end{enumerate}
  \end{definition}

   The space of all  $\Theta$-twisted differential
   forms  with compact support on $\{\cG_I\}$ is denoted by
   $\Omega^*_c (\{ \cG_I\}, \Theta)$. The de Rham differential
    is well-defined on  $\Omega^*_c (\{ \cG_I\}, \Theta)$,
    so $(  \Omega^*_c (\{ \cG_I\}, \Theta), d)$ is a complex,  whose cohomology
   \[
   H^k_c(\{ \cG_I\}, \Theta) = \dfrac{\{ \text {closed $\Theta$-twisted  differential forms on $\{\cG_I\}$\}}} {\{
  \text {exact  $\Theta$-twisted  differential forms on $\{\cG_I\}$}\}},
  \]
is  called the $\Theta$-twisted cohomology of  $\{\cG_I\}$. Up to a canonical isomorphism, this cohomology 
is independent of the choice of the transition Thom form $\Theta  =\{\Theta_{I, J} |  I\subset J \}$.

   \subsection{Integration on  proper   \'etale oriented   virtual groupoids}

In this subsection, we define a notion of integrationcmap on the space of  $\Theta$-twisted virtual differential forms
\[
\int_{\{\cG_I\}}:  \Omega^*_c (\{ \cG_I\}, \Theta) \longrightarrow \R
\]
 for an  oriented  proper   \'etale   virtual groupoid  $\{\cG_I\}$ with  a  transition Thom form $\Theta  =\{\Theta_{I, J} |  I\subset J \}$.   We need to assume  that
a  partition of unity exists for  a topological space obtained from $\{\cG_I\}$.

For any   proper   \'etale   virtual groupoid   $\{\cG_I\}$, there is a topological space
\ba\label{support:vir}
|\{\cG_I\}| = \bigsqcup_I   | \cG_I |  /\sim
\na
where $\sim$ is an equivalence relation: for $x\in  | \cG_I |$ and $y\in  | \cG_J | $, $x\sim y$ if and only if
there exists a $K\subset I\cap J$ such that
\ba\label{equi:rel}
|\Phi_{K, I}|  (x) = |\Phi_{K, J} | (y)
\na
where $|\Phi_{K, I}|: |\cG_{I, K}| \to |\cG_{K, I}|$ and $|\Phi_{K, J}|: |\cG_{J, K}| \to |\cG_{K,J}|$ are the quotient maps of $\Phi_{K, I}$ and $\Phi_{K, J}$ respectively. Denote by $\pi_I:  |\cG_I| \to |\{\cG_I\}| $   the obvious projection map.  We call the space
$|\{\cG_I\}|$ the {\em support } of   $\{\cG_I\}$, a section of a (virtual)  vector bundle over  $\{\cG_I\}$ is called {\em compactly supported} if
it has   compact support in  $|\{\cG_I\}|$.
 We remark that the above constructions can be applied to topological virtual spaces.

\begin{definition} \label{POU:vir} Let $\{\cG_I\}$ be a  proper   \'etale   virtual groupoid.  A  collection of smooth functions
$
\{\rho_I \in C^\infty (\cG_I ) \}
$
is called  a partition of unity for
$\{\cG_I\}$  if the following conditions are satisfied.
\begin{enumerate}
\item each $\rho_I$ is invariant under the above equivalence relation, hence defines a continuous function $\bar\rho_I:   \pi_I ( |\cG_I|) \to \R$.
\item $\supp (\bar\rho_I)$ is compact, this condition can be omitted when the $\Theta$-twisted form  $\omega_I$ can be chosen such that the product $\rho_I \omega_I$ is compactly supported on $\cG_I$.
\item For every $x\in |\{\cG_I\}|$, $\sum_I \bar\rho_I (x) =1$.
\end{enumerate}
\end{definition}

Under the assumption  that a transition Thom form  $\Theta$ and    a partition of unity  $\{\rho_I \in C^\infty (\cG_I ) \}$
 exist   for   an  oriented  proper   \'etale  virtual groupoid $\{\cG_I\}$, we define
\ba\label{vir:int}
\int_{\{\cG_I\}}:  \Omega^*_c (\{ \cG_I\}, \Theta) \longrightarrow \R
\na
to be
\[
\int_{\{\cG_I\}} \omega = \sum_I \int_{\cG_I}  \rho_I \cdot \omega_I
\]
for any $\Theta$-twisted differential form
$ \omega =\{\omega_I \in \Omega^\ast_c(\cG_I)  \}  \in \Omega^*_c (\{ \cG_I\}, \Theta)$. In practice, one needs to establish the existence  of  a transition Thom form   $\Theta$ and    a partition of unity  $\{\rho_I \in C^\infty (\cG_I ) \}$
 exist   for   a  proper   \'etale oriented   virtual groupoid $\{\cG_I\}$.

\begin{theorem} \label{int:well-def} Given  an  oriented   proper   \'etale   virtual groupoid  $\{\cG_I\}$, assume that
a transition Thom form  $\Theta$ and    a partition of unity  $\{\rho_I \in C^\infty (\cG_I ) \}$
 exist.    The integration map  (\ref{vir:int}) is well-defined, that is, it is independent of
the choice of      a partition of unity  $\{\rho_I \in C^\infty (\cG_I ) \}$.  Assume further that 
$\{\cG_I\}$ is an  oriented   proper   \'etale   virtual groupoid  with boundary. Then 
the Stokes' formula hold 
\[
\int_{\{\cG_I\}}  d \omega  = \int_{\{\partial \cG_I\}}  \iota^*  \omega  
\]
for $\omega \in \Omega^*_c (\{ \cG_I\}, \Theta)$. Here $\iota: \{\partial \cG_I\} \to \{  \cG_I\}$ is the inclusion map.
\end{theorem}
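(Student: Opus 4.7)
The proof rests on a single identity that couples the $\Theta$-twistedness of $\omega$ to the projection formula for Thom forms: for each ordered pair $I \subset J$ and each compactly supported smooth function $f$ on $\cG_{J,I}$,
\[
\int_{\cG_J} f \cdot \omega_J \;=\; \int_{\cG_{I,J}} (\Phi_{I,J})_*\bigl(f \cdot \Theta_{I,J}\bigr) \cdot \omega_I,
\]
where $(\Phi_{I,J})_*$ is the fiber integration for the oriented vector bundle $\Phi_{I,J}: \cG_{J,I} \to \cG_{I,J}$. This is immediate from the compatibility relation (\ref{Theta:twisted}) together with the projection formula (\ref{proj}). The coherence of $\{\Theta_{I,J}\}$ in Definition \ref{Thom:form}(1) and the fiber-product relation (\ref{fiber:product}) then guarantee this identity is consistent across nested triples $I \subset J \subset K$ and across unordered pairs $I, J$.

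For independence of the partition of unity, let $\{\rho_I\}$ and $\{\rho'_J\}$ be two such partitions. The plan is to use $\sum_J \bar\rho'_J \equiv 1$ on $|\{\cG_I\}|$ together with the equivalence relation (\ref{equi:rel}) to decompose each $\rho_I \omega_I$ into pieces indexed by pairs $(I, J)$, lifting the cross-terms $\bar\rho_I \bar\rho'_J$ to the common chart $\cG_{I \cup J, I \cap J}$ through the fiber-product diagram (\ref{fiber:product}). Successive application of the key identity above, first along $\Phi_{I, I \cup J}$ and then along $\Phi_{J, I \cup J}$, expresses $\sum_I \int_{\cG_I} \rho_I \omega_I$ as a symmetric double sum over $(I, J)$ of integrals on $\cG_{I \cup J}$ weighted by the lift of $\bar\rho_I \bar\rho'_J$. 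Starting instead from $\{\rho'_J\}$ produces the same double sum, which yields well-definedness.

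For Stokes' formula, note that $\{\partial \cG_I\}$ is itself an oriented proper \'etale virtual groupoid by Remark \ref{virtual:bdy}, and $\{\rho_I|_{\partial \cG_I}\}$ is a partition of unity on it. Applying the classical Stokes' theorem on each $\cG_I$ to $\rho_I \omega_I$ and summing gives
\[
\int_{\{\cG_I\}} d\omega \;=\; \sum_I \int_{\partial \cG_I} \iota^*(\rho_I \omega_I) \;-\; \sum_I \int_{\cG_I} d\rho_I \wedge \omega_I .
\]
The first sum equals $\int_{\{\partial \cG_I\}} \iota^* \omega$ by the definition of virtual integration on the boundary. The second sum vanishes by applying the same double-sum argument to the identity $\sum_I \bar\rho_I \equiv 1$: differentiating this identity through the fiber-product charts yields $0$, and the projection-formula bookkeeping above packages $\sum_I \int_{\cG_I} d\rho_I \wedge \omega_I$ into a sum representing exactly this zero.

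The main obstacle is the absence of a single ambient smooth space containing all the $\cG_I$; every cross-comparison must be routed through the fiber-product chart $\cG_{I \cup J, I \cap J}$ and the transition bundles $\Phi_{I,J}$. The argument closes only because the multiplicative coherence of the transition Thom forms in Definition \ref{Thom:form}(1)(b) makes the iterated fiber pushforwards compose consistently, and because the compact support of either $\omega_I$ or of $\rho_I \omega_I$ keeps each such fiber integral finite.
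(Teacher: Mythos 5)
Your argument is correct in substance and rests on the same engine as the paper's --- the projection formula for the transition Thom forms combined with the $\Theta$-twisted relation (\ref{Theta:twisted}) --- but you organize the independence proof differently. The paper first reduces to a sufficiently small open set $U\subset |\{\cG_I\}|$ admitting a single maximal index $I_0$ with $\pi_{I_0}(\pi_{I_0}^{-1}(U))=U$, and then shows that $\sum_I\int_{\pi_I^{-1}(U)}\rho_I\omega_I$ collapses to the fixed quantity $\int_{\pi_{I_0}^{-1}(U)}\omega_{I_0}$ by pushing each term down along $\Phi_{I_0\cap I, I}$ and pulling it back up along $\Phi_{I_0\cap I, I_0}$; independence of $\{\rho_I\}$ is then manifest. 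You instead run the classical symmetric double-sum comparison of two partitions, routing each cross-term $\bar\rho_I\bar\rho'_J$ through $\cG_{I\cup J, I\cap J}$. Your route avoids the paper's implicit auxiliary partition of unity subordinate to the cover by small $U$'s, which is a genuine simplification of the bookkeeping; its cost is that you must lift the cross-terms \emph{upward} to $\cG_{I\cup J}$, whereas the equivalence relation (\ref{equi:rel}) only guarantees that equivalent points of $|\cG_I|$ and $|\cG_J|$ descend to some $\cG_K$ with $K\subset I\cap J$ --- so you should either verify that the support of $\bar\rho_I\bar\rho'_J$ lies over $\pi_{I\cup J}(|\cG_{I\cup J, I\cap J}|)$ (which the fiber-product condition (\ref{fiber:product}) supplies when the equivalence is witnessed at $K=I\cap J$), or, more safely, route the cross-terms downward through $\cG_{I\cap J}$ as the paper does. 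For the Stokes formula the paper simply defers to \cite{ChenTian}; your sketch (chartwise Stokes applied to $\rho_I\omega_I$, with the $\sum_I d\rho_I\wedge\omega_I$ term killed by differentiating $\sum_I\bar\rho_I\equiv 1$ after the same pushdown--pullup reduction, using that $d\rho_I$ is still a pullback from the base of each transition bundle) is a legitimate filling-in of that omitted argument.
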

\begin{proof}
It suffices to show that for every sufficiently small  open subset $U $ of $ |\{\cG_I\}|$,
\[
 \sum_I \int_{\pi_I^{-1}(U )}    \rho_I \cdot \omega_I
 \]
 is independent of
the choice of      a partition of unity  $\{\rho_I \in C^\infty
(\cG_I ) \}$ for any   $ \omega =\{\omega_I \in
\Omega^\ast_c(\cG_I) \}  \in \Omega^*_c (\{ \cG_I\}, \Theta)$.

 When $U$ small enough, there exists an $I_0$ such that
$\pi_{I_0}(\pi_{I_0}^{-1}(U))=U$. Then we claim that
\begin{equation}\label{eqn2.3.1}
\sum_I\int_{\pi_I^{-1}(U)} \rho_I\omega_I=\int_{\pi_{I_0}^{-1}(U)}
\omega_{I_0}.
\end{equation}
For simplicity, we introduce the  notation
$$
U_I=\pi_I^{-1}(U),\;\;\; U_{I_0,I}=\pi_{I_0}^{-1}(\pi_I(U_I)).
$$
Then we claim that
\begin{equation}\label{eqn2.3.2}
\int_{U_I} \rho_I\omega_I=
\int_{U_{I_0,I}}  \rho_I\omega_{I_0}.
\end{equation}
In fact, by the definition of $\Theta$ and $\Theta$-twisted virtual differential form, we have 
$$
\int_{U_I} \rho_I\omega_I=
\int_{\Phi_{I_0\cap I,I} (U_I)} \rho_I\omega_{I_0\cap I}
=\int_{\Phi_{I_0\cap I,I_0} (U_{I_0, I})} \rho_I\omega_{I_0\cap I}
=\int_{U_{I_0}} \rho_I\omega_{I_0}.
$$
Hence, the left hand side of \eqref{eqn2.3.1} becomes
$$
\sum_I\int_{U_{I_0,I}} \rho_I\omega_{I_0}
=\sum_I\int_{U_{I_0}} \rho_I\omega_{I_0}=\int_{U_{I_0}}\omega_{I_0}.
$$

The proof of the   Stokes' formula is straightforward, see  \cite{ChenTian} for example for the virtual manifold case. 
\end{proof}

\begin{remark} The assumption of the existence of a transition Thom form and  and    a partition of unity  in Theorem \ref{int:well-def}
will automatically be  satisfied for those  proper  \'etale  virtual groupoids arising from the Gromov-Witten theory.
\end{remark}

   \section{Virtual neighborhood technique for smooth orbifold  Fredholm systems}\label{3}

  In this section, we will introduce  a notion of virtual systems arising from  Fredholm systems
   following closely  \cite[Sections 5 and 6]{ChenTian}.
   Recall that  a {\bf Fredhom system}  in \cite{ChenTian} is a triple
$
  (\cB, \cE, S),
$
consisting of
  \begin{enumerate}
\item a smooth Banach manifold  $\cB$,
\item a smooth Banach bundle $\pi: \cE \to \cB$,
\item a  smooth   Fredholm section $S: \cB\to \cE$.
\end{enumerate}
The zeros of the section $S$, denoted by $M =S^{-1}(0)$, is called the moduli space of the Fredholm system 
$  (\cB, \cE, S).$ 
 We remark that a  section $S$ is called Fredholm if
 the differential in the fiber direction
as  a linear operator
\[
D_xS:   T_x\cB \longrightarrow T_{S(x)} \cE \longrightarrow \cE_x,
\]
for any $x$ in a small neighbourhood of $M$, is a Fredholm operator. This operator
depends on a  choice of a connection on $\cE$, but the
Fredholm property doesn't depend on this choice. In all geometric applications, the operator 
$D_xS$ is Fredholm for all $x\in \cB$. 
 On the other hand,
for  every $x\in M$, the  operator $D_xS$
is given by the linearization of $S$ which does not depend on  the 
choice of a connection on $\cE$.  If the section $S$ is transversal to the zero section, that is,
the
 operator
 \[
 D_xS:   T_x\cB \longrightarrow \cE_x
\]
is surjective at $x\in M$, then $M$ is a smooth manifold of dimension given by the Fredholm index of
$D_xS $.  When the transversality of $S$ fails, one needs to apply  the  virtual neighborhood technique to get a
virtual manifold as originally  proposed in \cite{ChenTian}.  We  assume    that $M= S^{-1}(0)$ is compact.

We remark that this Section only serves as a guideline to
 overcome the failure of transversality issues. In practice,such as for the  moduli spaces
of stable maps, the Fredholm system is often not smooth, just as we encountered in Section \ref{4}  for
the moduli spaces of pseudo-homolomorphic spheres. Even in the absence of pseudo-homolomorphic spheres,
the underlying Fredholm system is only  smooth  in the  stratified sense.  One would get a Fredholm system for each strata. These issues
of non-smoothness  and stratifications  pose some new  technical difficulties in obtaining virtual manifold/orbifolds in general. These issues have to be dealt with  in each  case individually  when they come up.   Nevertheless, the main idea to get a virtual system
   is essentially the  same.  The main objective  of this Section is to explain the  constructions in \cite{ChenTian}
    and show that  the existence of a transition Thom form and  and    a partition of unity  follow naturally from the infinite dimensional Banach system.

  \subsection{From  smooth Fredholm systems to finite dimensional  virtual systems}

  \begin{definition}  \label{virtual-system}
  ({\bf Finite dimensional virtual system})  A    {\em finite dimensional virtual system}   is a
  collection of  triples
 \[
  \{(\cV_I, \bE_I, \sigma_I)|  I\subset \{1, 2, \cdots, n\} \}
  \]
  indexed by a   partially ordered
set $(\cI =2^{\{1, 2, \cdots, n\}}, \subset )$, where
  \begin{enumerate}
\item $\cV=\{\cV_I \}$ is a fiinite dimensional virtual manifold;
\item $\bE=\{\bE_I\}$ is a  finite rank virtual vector bundle over $\{X_I\}$;
\item $\sigma=\{\sigma_I\}$ is a virtual section  of the  virtual vector bundle $\{\bE_I\}$.
 \end{enumerate}
  \end{definition}
\noindent Then by the arguments in  Example \ref{example:vir} (5),
 the  zero sets
 \[
 \{  \sigma_I^{-1}(0) |  I\subset \{1, 2, \cdots, n\} \}
\]
forms a topological   space, denoted by $|\{  \sigma_I^{-1}(0)\}|$.

   Let $ (\cB, \cE, S)$ be a Fredholm system such that $M=S^{-1}(0)$ is compact.  We will construct a   finite dimensional virtual system $ \{(X_I, \bE_I, \sigma_I)|  I\subset \{1, 2, \cdots, n\} \}$   using local and global stabilizations such that
   there exist a collection of homeomorphisms
   \[
   \{\psi_I:  \sigma_I^{-1}(0) \longrightarrow  U_I
   \}\]
  from   $\sigma_I^{-1}(0)$
 to an open subset $U_I\subset M$,
  where  $\{U_I\}$ is  an open cover of $M$.

  \subsubsection{Local stabilizations}\label{local:stab}

  \begin{definition} \label{l.v.n}
   Given a topological space $M$ and a point $x\in M$, a {\em  local  virtual neighbourhood  } at $x$ is a 4-tuple
   \[
   (\cV_x , \bE_x, \sigma_x, \psi_x)
   \]
   consisting of
   \begin{enumerate}
\item a smooth finite dimensional manifold $\cV_x$,
\item a smooth vector bundle $\bE_x$ with a section $\sigma_x$ and
\item a homeomorphism $\psi_x:  \sigma^{-1}(0) \to U_x$ for an open neighbourhood $U_x$ of $x$ in $M$.
\end{enumerate}

  \end{definition}

Given a Fredholm system $(\cB, \cE, S)$,   for $x\in M = S^{-1}(0)$, setting $F=\cE_x$ (the fiber of $\cE$ at $x$),  there    is
a neighborhood $U$ of $x$ such that $\cE$ is trivialized
over $U$, that is, a bundle isomorphism
\ba\label{trivial:cE}
\phi_x: \cE|_{U}\to  U\times F.
\na
 Denote by $\phi_{x;y}: \cE_y\to F$  the   isomorphism induced by (\ref{trivial:cE}) for $y\in U$.

Consider the Fredholm operator $DS_x: T_x\cB\to F$. Choose
$K_x\subset F$  to be a  finite dimensional subspace of $F$ such that
\ba\label{surj:S}
K_x+ D_xS  (T_x\cB)=F.
\na
Define a thickened Fredholm system $(U\times K_x, \cE|_{U}\times K_x, \tilde S_x)$, where
\[
\tilde S_x(y,k)= S(y)+\phi_{x;y}^{-1}(k)  \in  \cE_y =( \cE|_{U}\times K_x)_{(y, k)} .
\]
By (\ref{surj:S}), there exists a smaller  neighborhood $U$ of $x$
such that $D_{(y,k)}\tilde S_x$ is surjective
for any $y\in U$.

We impose  the following assumption on $ \cB$.

\begin{assumption}\label{assump}
There are open neighborhoods  $U_{x}^{(i)}$ of $x \in M$ for $i=1,2,3$ such that
$$
U_{x}^{(1)}\subset \overline{U_{x}^{(1)}}
\subset U_{x}^{(2)}\subset \overline{U_{x}^{(2)}}
\subset U_{x}^{(3)}\subset \overline{U_{x}^{(3)}}\subset U
$$
and there is
 a {\bf  smooth} cut-off function
$\beta_x:  U_{x}^{(3)} \to [0, 1]$ such that
$\beta_x\equiv1$ on  $U_x^{(1)}$ and
is supported in
$U_x^{(2)}$.  Here $\overline{U_{x}^{(i)}}$ is the closure of $U_{x}^{(i)}$.
\end{assumption}

Such a nested triple $U_{x}^{(1)}\subset  U_{x}^{(2)}\subset U_{x}^{(3)} $ in $U$  and a cut-off function $\beta_x$ can be found
as $U$ is an open  neighborhood  of $x$  in a Banach manifold $\cB$.  The only non-trivial part of Assumption
(\ref{assump})  is the smoothness
of the cut-off function.
 One  only needs to  verify the smoothness of this cut-off function in any  application of the virtual neighborhood technique.

\begin{definition}
A {\em local stabilization} of $ (\cB, \cE, S)$ at $x\in M$
  is defined to be  the   thickened Fredholm system
  \[
  (U^{(3)}_x\times K_x, \cE|_{U^{(3)}_x}\times K_x,
  S_x)
\]
with
$
  S_x(y,k)= S(y)+\beta_x (y)\phi_{x;y}^{-1}(k).
$
\end{definition}

The local stabilization process turns the Fredholm system $ (\cB, \cE, S)$ at each point  $x\in M$ to a local transversal Fredholm system in the sense that the local Fredholm system
\[
 (U^{(1)}_x\times K_x, \cE|_{U^{(1)}_x}\times K_x,
  S_x)
 \]
 is transversal.
 Let
\[
\cV_x = S_x^{-1}(0) \cap  \big( U^{(1)}_x\times K_x\big),
\]
 then $\cV_x$ is a smooth finite dimensional manifold of dimension given by $\ind (D_x S) + \dim K_x$. There is a trivial  vector bundle $\bE_x = \cV_x \times K_x \to \cV_x$ with a canonical section $\sigma_x$  given by the component  of
$K_x$ in $\cV_x$.
There is a homeomorphism
\[
\psi_x:  \sigma_x^{-1} (0) \longrightarrow M\cap  U^{(1)}_x.
\]
So the local stabilization of of $ (\cB, \cE, S)$ at $x\in M$ provides  a local  virtual neighbourhood
\ba\label{vir:x}
(\cV_x , \bE_x, \sigma_x, \psi_x)
\na
of $x \in M$  in the sense of  Definition \ref{l.v.n}.

\begin{remark}
We  point out that the   virtual neighbourhood
(\ref{vir:x}) provides a Kuranishi chart of $M$ at $x$ as in the sense of  \cite{FO99}.  Our construction of this
particular Kuranishi chart of $M$ at $x$ is different to the construction in \cite{FO99}.
\end{remark}

\subsubsection{Global stabilizations}\label{global:stab}

To patch together local  virtual neighbourhoods at every point $x\in M$ to get a virtual system on $M$, we need to introduce a notion of global stablization  as in \cite{ChenTian} for $M$.

As  $M$ is compact, there exist  finite points, say
 $x_i, 1\leq i\leq n,$ such that the collection $\{U_{x_i}^{(1)}\}$ in Assumption \ref{assump}  is an open cover of
$M$, and the local stabilizations at  $\{x_i\}$ provide a collection of  virtual neighbourhoods
\[
\{(\cV_{x_i}, \bE_{x_i}, \sigma_{x_i}, \psi_{x_i})\}.
\]

 Set
$$
X^{(1)}=\bigcup_{i=1}^n U_{x_i}^{(1)},\;\;\;
X^{(3)}=\bigcup_{i=1}^n U_{x_i}^{(3)}.
$$

  For any $I\subset \{1, 2,  \ldots,n\}$,  define
\[
X'_I= \left( \bigcap_{i\in I} U_{x_i}^{(3)}\right) \setminus
\left(\bigcup_{j\notin I}\overline{U^{(2)}_{x_i}}\right).
\]
Then $\beta_{x_j} =0$ on $X'_I$ for any $j\notin I$.  Note that in this case, $X_\emptyset' \cap M=\emptyset$, hence,  $X_\emptyset'$ does not play any role in the global stablization.

\begin{lemma}\label{cover:X_I}
$\{X'_I\}_{I\subset\{1, 2, \ldots,n\}}$ is an open covering of $X^{(3)}$.
\end{lemma}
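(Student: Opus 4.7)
My plan is to verify openness first and then construct a witnessing index $I(y)$ for each point $y\in X^{(3)}$.

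For openness, I will note that each $U_{x_i}^{(3)}$ is open in $\cB$, so the finite intersection $\bigcap_{i\in I} U_{x_i}^{(3)}$ is open. Simultaneously $\bigcup_{j\notin I}\overline{U^{(2)}_{x_j}}$ is a finite union of closed sets and hence closed. Therefore $X'_I$ is the intersection of an open set with the complement of a closed set, i.e.\ open in $\cB$. (For the degenerate case $I=\emptyset$, the intersection is interpreted as the ambient space following the convention used in Example~\ref{example:vir}(2), and the argument is identical.)

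For the covering property, given any $y\in X^{(3)}$ I will define
\[
I(y)=\{\,j\in\{1,2,\ldots,n\}: y\in\overline{U^{(2)}_{x_j}}\,\}
\]
and then check that $y\in X'_{I(y)}$. On the one hand, for every $j\notin I(y)$ we have $y\notin\overline{U^{(2)}_{x_j}}$ by construction, so $y\notin\bigcup_{j\notin I(y)}\overline{U^{(2)}_{x_j}}$. On the other hand, by Assumption~\ref{assump} we have $\overline{U^{(2)}_{x_i}}\subset U^{(3)}_{x_i}$, so for every $i\in I(y)$, $y\in\overline{U^{(2)}_{x_i}}\subset U^{(3)}_{x_i}$, giving $y\in\bigcap_{i\in I(y)}U^{(3)}_{x_i}$. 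Combining these yields $y\in X'_{I(y)}$. In the remaining case $I(y)=\emptyset$ (meaning $y$ lies in none of the closures $\overline{U^{(2)}_{x_j}}$), the same convention used above places $y$ in $X'_\emptyset$, since $y\in X^{(3)}$ by hypothesis.

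There is essentially no obstacle here; the only point requiring care is the handling of the empty-index case, which is resolved cleanly by the convention that an empty intersection equals the ambient space (as already flagged in Example~\ref{example:vir}(2)). The role of Assumption~\ref{assump} is precisely to guarantee the nesting $\overline{U^{(2)}_{x_i}}\subset U^{(3)}_{x_i}$ used in the second half, which is the one nontrivial input beyond basic point-set topology.
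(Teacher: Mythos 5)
Your proof is correct and rests on the same basic input as the paper's — the nesting $\overline{U^{(2)}_{x_i}}\subset U^{(3)}_{x_i}$ from Assumption \ref{assump} — but with the dual choice of witnessing index. The paper takes $I(y)=\{i: y\in U^{(3)}_{x_i}\}$, which is automatically nonempty for $y\in X^{(3)}$, and uses the nesting in the contrapositive direction ($y\notin U^{(3)}_{x_j}$ forces $y\notin\overline{U^{(2)}_{x_j}}$); you take the smaller set $I(y)=\{j: y\in\overline{U^{(2)}_{x_j}}\}$ and use the nesting in the forward direction. Both work — indeed any $I$ squeezed between your index set and the paper's does — but your choice can be empty even for $y\in X^{(3)}$, so you are forced to invoke the convention for $X'_\emptyset$, which the paper's choice sidesteps entirely (the paper only needs that convention in its separate remark that $X'_\emptyset\cap M=\emptyset$). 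Your explicit verification of openness (a finite intersection of open sets minus a finite union of closed sets) is omitted in the paper's proof but is of course correct and harmless to include.
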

\begin{proof}
For any $y\in X^{(3)}$ let
$I$ be the collection of $i\in\{1, 2, \ldots,n\}$ such that
$y\in U_{x_i}^{(3)}$. Then $y\in \bigcap_{i\in I}U_{x_i}^{(3)}$.
On the other hand, $y$ is not in $U_{x_j}^{(3)}, j\notin I$,
hence,  $y\notin \overline{U_{x_j}^{(2)}}$. Hence by the definition
of $X_I'$, we conclude that $y\in X_I'$.
\end{proof}

Define
$$
X_I=X'_I\cap X^{(1)}.
$$
As a corollary  of Lemma \ref{cover:X_I}, we have
\[
X^{(1)}=\bigcup_{I\subset\{1, 2, \ldots,n\}} X_I.
\]
 Set
$
X_{I,J}=X_{J,I}=X_I\cap X_J, $
then $\{X_I\}$ gives a virtual manifold structure for
$X^{(1)}$ (Cf.  Example  \ref{example:vir}  (2)). We remark that $X_\emptyset =\emptyset$. 

For any $I\subset \{1, 2, \ldots,n\}$,  setting  $K_I=\prod_{i\in I}K_{x_i}$,
we construct a  thickened Fredholm  system as follows.
Let
\[
(C_I, F_I, S_I)
\]
be a Fredholm system where
$
C_I=X_I\times K_I$ and $
F_I=\cE|_{X_I}\times K_I \to C_I
$  is a Banach bundle,
and   the  section $S_I: C_I\to F_I$ is given  by
$$
S_I(y, (k_i)_{i\in I})
=S(y)+\sum_{i\in I}\beta_{x_i}(y)\phi_{x_i;y}^{-1} (k_i)
\in \cE_y.
$$
Then $(C_I, F_I, S_I)$ is a Fredholm system.

\begin{proposition} \label{global:mfd} \begin{enumerate}
\item  The collection  $\cC=\{C_I\} $ forms
 an  infinite dimensional   virtual Banach manifold   by setting
 \begin{enumerate}
 \item
 $
C_{I,J}=X_{I,J}\times K_I
$ for any pair $I,J$, and
\item
 $
\Phi_{I, J}:
C_{J,I}=C_{I,J}\times \prod_{i\in J\setminus I}K_{x_i}
\to
C_{I,J}
$  is a  vector bundle  for any pair $I\subset J$.
 \end{enumerate}
 \item  $\cF =\{F_I\} $ is  a  Banach bundle over $\cC$ in the sense of Definition \ref{vir:bundle}.
 \item $\cS= \{S_I\}$ is  a   {\em transversal  Fredholm}   section  of  $\cF$.
\end{enumerate}
This collection  $\{(\cC, \cF, \cS)\}$ is
a Fredhom system  of infinite dimensional virtual manifolds and is
called a {\bf global stabilization} of the original Fredholm system
$(\cB, \cE, S)$.
\end{proposition}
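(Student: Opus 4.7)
The plan is to verify the three claims in order. Parts (1) and (2) are essentially bookkeeping that reduces to the virtual structure on the base $\{X_I\}$ from Example \ref{example:vir}(2), while part (3) is where the local stabilization data are actually used.

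For part (1), I would define the patching data fibered over the virtual manifold $\{X_I\}$ as follows. For any ordered pair $I\subset J$, the factor $K_J=\prod_{i\in J}K_{x_i}$ splits canonically as $K_J=K_I\oplus K_{J\setminus I}$, where $K_{J\setminus I}:=\prod_{i\in J\setminus I}K_{x_i}$. Set
\[
C_{I,J}=X_{I,J}\times K_I\subset C_I,\qquad C_{J,I}=X_{J,I}\times K_J\subset C_J,
\]
and let $\Phi_{I,J}\colon C_{J,I}\to C_{I,J}$ be the trivial vector bundle $X_{I,J}\times K_I\times K_{J\setminus I}\to X_{I,J}\times K_I$ given by dropping the $K_{J\setminus I}$-factor, with zero section $\phi_{I,J}\colon (y,k)\mapsto (y,k,0)$. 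The cocycle condition (\ref{cohe:1}) is then the associativity of the splittings $K_I\subset K_J\subset K_K$, and the fiber-product condition (\ref{fiber:product}) reduces to the canonical identification $K_{I\cup J}=K_{I\cap J}\oplus K_{I\setminus J}\oplus K_{J\setminus I}$ together with the set-theoretic relation $X_{I\cup J,I\cap J}=X_{I\cup J,I}\cap X_{I\cup J,J}$, which is inherited from the virtual structure on $\{X_I\}$.

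For part (2), I would verify (\ref{vir:bundle=1}) directly: over $X_{J,I}=X_I\cap X_J$ we have
\[
F_J|_{C_{J,I}}=\cE|_{X_{I,J}}\times K_J\;\cong\;\Phi_{I,J}^{\ast}\bigl(\cE|_{X_{I,J}}\times K_I\bigr)=\Phi_{I,J}^{\ast}\bigl(F_I|_{C_{I,J}}\bigr),
\]
using the splitting $K_J=K_I\oplus K_{J\setminus I}$ and pullback under the projection $\Phi_{I,J}$; compatibility with $\phi_{I,J}$ is automatic. For part (3), Fredholmness of $S_I$ follows because $S$ is Fredholm and the added term $(y,k)\mapsto\sum_{i\in I}\beta_{x_i}(y)\phi_{x_i;y}^{-1}(k_i)$ is a smooth family of finite-rank perturbations, which preserves Fredholmness (shifting the index by $\dim K_I$). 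For transversality, let $(y,(k_i))\in S_I^{-1}(0)$. Since $C_I$ projects to $X_I\subset X^{(1)}$, there exists $j$ with $y\in U_{x_j}^{(1)}$; the definition of $X_I=X'_I\cap X^{(1)}$ forces $j\in I$, since otherwise $y\in\overline{U_{x_j}^{(2)}}$ would be excluded from $X'_I$. On $U_{x_j}^{(1)}$ we have $\beta_{x_j}(y)=1$, so varying $k_j$ maps $K_{x_j}$ isomorphically onto $\phi_{x_j;y}^{-1}(K_{x_j})\subset\cE_y$. Together with $D_yS(T_y\cB)$, this covers $F$ provided $K_{x_j}+D_yS(T_y\cB)=F$ for $y$ near $x_j$, which is an open condition that holds at $y=x_j$ by (\ref{surj:S}).

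The main technical obstacle is this last open-condition step: one must ensure that the neighbourhoods chosen in the local stabilization are small enough that surjectivity at the base point propagates to all of $U_{x_j}^{(1)}$, and to arrange this shrinking compatibly with the nested triple $U_{x_i}^{(1)}\subset U_{x_i}^{(2)}\subset U_{x_i}^{(3)}$ and the cut-off $\beta_{x_i}$ of Assumption \ref{assump}. Once this is recorded, the remainder of the proof is a matching of direct-sum decompositions dictated by the statement itself.
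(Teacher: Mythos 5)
Your treatment of parts (1) and (2), and of the Fredholm and transversality claims in part (3), follows the paper's own route: the patching data are the trivial projections dropping the $K_{J\setminus I}$-factor, the coherence conditions (\ref{cohe:1}) and (\ref{fiber:product}) hold on the nose, and transversality comes from the observation that every $y\in X_I\subset X^{(1)}$ lies in some $U^{(1)}_{x_j}$ with necessarily $j\in I$, so $\beta_{x_j}(y)=1$ and the $k_j$-directions supply the cokernel of $D_yS$ via (\ref{surj:S}). The ``open-condition'' issue you flag at the end is already built into the local stabilization: the neighbourhood $U$ is shrunk so that the thickened operator is surjective throughout $U$ \emph{before} the nested triple and the cut-off of Assumption \ref{assump} are chosen, so nothing further needs to be arranged at this stage.

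There is, however, one part of the statement your argument does not address: to conclude that $\cS=\{S_I\}$ is a \emph{section of the bundle $\cF$ over the virtual manifold $\cC$} in the sense of Definition \ref{vir:bundle}, you must verify the compatibility condition (\ref{sect:vb}), namely $S_J|_{C_{J,I}}=\Phi_{I,J}^*\big(S_I|_{C_{I,J}}\big)$ for every ordered pair $I\subset J$. This is not automatic: it holds precisely because any $x\in X_{I,J}\subset X'_I$ satisfies $x\notin\overline{U^{(2)}_{x_j}}$ for every $j\notin I$, hence $\beta_{x_j}(x)=0$ and the extra summands $\beta_{x_j}(x)\phi_{x_j;x}^{-1}(k_j)$ with $j\in J\setminus I$ appearing in $S_J$ vanish identically on $C_{J,I}$; this is exactly the computation (\ref{section=}) in the paper. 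Without this check the $S_I$ are merely a family of unrelated transversal sections on the separate pieces, their zero sets do not inherit a virtual manifold structure via Proposition \ref{key:prop}, and the construction of $\{\cV_I\}$ that follows breaks down. In short, you use the exclusion of $\bigcup_{j\notin I}\overline{U^{(2)}_{x_j}}$ from $X'_I$ only for transversality, whereas it is equally essential for the coherence of the section.
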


\begin{proof}  \begin{enumerate}
 \item This follows as  $\{X_I\}$ with $ X_{I,J}=X_{J,I}=X_I\cap X_J$ is an infinite dimensional   virtual Banach manifold, where the patching data are all given by the identity map. The identification
\[
C_{J,I}=C_{I,J}\times \prod_{i\in J\setminus I}K_{x_i}
\]
is obvious  from the definition of $K_I$'s.   The coherent conditions (\ref{cohe:1}) and (\ref{fiber:product}) hold on the nose.
So $\cC=\{C_I =X_I\times K_I \}$ forms
 an  infinite dimensional   virtual Banach manifold.

\item For   $I\subset J$, it is straightforward to check that
$F_J|_{C_{J, I}} =\Phi_{I, J}^* \big( F_I|_{C_{I, J}} \big)$, so
 \[
 \cF =\{F_I = \cE|_{X_I}\times K_I \to C_I \}
 \]
  is  a  Banach bundle over $\cC$.

  \item We can check by  a direct calculation
 that
 \[
 S_J|_{C_{J, I}} =  \Phi_{I, J}^*\big (   S_I|_{C_{I, J}}\big),
 \]
that is,  for $(x,(k_j)_{j\in J})\in C_{J,I}  = X_{J, I} \times K_J$, note that $X_{I, J}
=X_{J,I}= X_I\cap X_J$, we have
\ba\label{section=}\begin{array}{lll}
S_J(x,(k_j)_{j\in J})&
=&S(x)+\sum_{j\in J}\beta_{x_j}(x)\phi_{x_j;y}^{-1} (k_j)\\
&=&S(x)+\sum_{i\in I}\beta_{x_i}(x)\phi_{x_i;y}^{-1} (k_i) \\
& = & S_I \circ \Phi_{J,I}(x,(k_j)_{j\in J}). \end{array}
\na
Here we use the fact that $\beta_{x_j}(x)=0$ when
$j\notin I$ and $x\in X_{I,J}$.
  The Fredholm property of $\cS$ follows from the Fredholm property of $S$ in  the Fredholm system
$(\cB, \cE, \cS)$. The transversality of each $S_I$ is due to the choices of $K_I$'s and the fact that there exists 
$i\in I$ such that $\beta_{x_i}(x)\not= 0$ for 
  $x\in X_{I}$.
This completes the proof. \end{enumerate}
\end{proof}

Given a  global stabilization  $\{(\cC, \cF, \cS)\}$  of  a Fredholm system
$(\cB, \cE, S)$,  there is a canonical  virtual vector bundle $\cO =\{O_I\}$  over $\cC$, defined by
\[
O_I=C_I\times K_I,
\]
with a canonical section $\sigma =\{\sigma_I\}: \cC \to \cO$ given by
\[
 \sigma_I (x,(k_i)_{i\in I}) = (x, (k_i)_{i\in I}, (k_i)_{i\in I})
 \]
  for $(x,(k_i)_{i\in I}) \in C_I =X_I \times K_I$.
    Define
\ba\label{cV_I}
\cV_I=S_I^{-1}(0) \subset C_I, \;\;\;
\cV_{I,J}=S_I^{-1}(0)\cap C_{I,J}.
\na
Being a transversal  Fredholm section of the vector bundle $\cF$ over a virtual manifold  $\cC$, its zero set
\ba\label{zero:set}
\{\cV_I = S_I^{-1}(0)  \}
\na
is a collection of finite dimensional smooth manifolds (cf. (4) in Example (2.4)). Note that  from the identity (\ref{section=}) we have
\[
\cV_{J, I} = \cV_{I, J} \times \prod_{j\in J\setminus I}K_{x_i}
\]
We still denote the bundle projection map $\cV_{J,I}\to
\cV_{I,J}$ by $\Phi_{I, J}$. Then
$\cV=\{\cV_I\}$ is a virtual manifold.

By restricting the  virtual  bundle $\cO$
and the section $\sigma$ to  $\cV$, we get a virtual
bundle
\ba\label{res:bE}
\bE = \{\bE_I  =   O|_{\cV_I}\}
\na
 over  $\cV$ and a section $\sigma  =\{\sigma_I\}$.  For a given $I$,  there is a canonical inclusion
 \[
\psi_I:    \sigma_I^{-1}(0) \cong  \{ x\in X_I| S_I(x) =0 \} \longrightarrow  X_I\cap M,
  \]
and  $M =\bigcup_I  \psi_I \big( \sigma_I^{-1}(0) \big) $.

In summary, we obtain   the following finite dimensional  virtual  system  from  a Fredholm system
$(\cB, \cE, S)$.

\begin{theorem} \label{thm:vir:system} Given a  Fredholm system
$(\cB, \cE, S)$ with a global stabilization   $\{(\cC, \cF, \cS)\}$  of $M =S^{-1}(0)$.  Then
 the collection of   triples
\[
\{( \cV_I,  \bE_I,  \sigma_I ) |   I\subset \{1, 2,  \ldots,n\} \}
\]
defines a virtual system in the sense of Definition \ref{virtual-system}.
Moreover, there exists a collection of
inclusions
\[
\{ \psi_I:  \sigma_I^{-1}(0)    \longrightarrow    M |  I\subset \{1, 2,  \ldots,n\} \}
\]
such that $M = \bigcup_I  \psi_I \big( \sigma_I^{-1}(0) \big) $.
\end{theorem}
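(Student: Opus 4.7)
The plan is to derive the theorem directly from Proposition \ref{global:mfd} and Proposition \ref{key:prop} applied to the zero locus of the transversal Fredholm section $\cS = \{S_I\}$ of the Banach bundle $\cF = \{F_I\}$ over the infinite-dimensional virtual Banach manifold $\cC = \{C_I\}$. Since Proposition \ref{global:mfd} has already done all the analytic work (transversality of $\cS$ and the virtual structure of $\cC$ and $\cF$), the entire task reduces to the bookkeeping verification that the patching data for $\{\cV_I\}$, $\{\bE_I\}$, and $\{\sigma_I\}$ satisfy the coherence axioms of Definitions \ref{def:virmfd} and \ref{vir:bundle}, followed by identifying $\sigma_I^{-1}(0)$ with $M \cap X_I$.

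First I would verify that $\{\cV_I\}$ is a finite-dimensional virtual manifold. Since $\cS$ is transversal by Proposition \ref{global:mfd}(3), each $\cV_I = S_I^{-1}(0)$ is a smooth finite-dimensional manifold of dimension $\mathrm{ind}(DS) + \dim K_I$. The key identity (\ref{section=}) --- which uses crucially that $\beta_{x_j} \equiv 0$ on $X_{I,J}$ for $j \notin I$ --- shows that $S_J|_{C_{J,I}} = S_I \circ \Phi_{I,J}$. Consequently the projection $\Phi_{I,J}: C_{J,I} \to C_{I,J}$ restricts to a vector bundle $\cV_{J,I} \to \cV_{I,J}$ with fiber $\prod_{j \in J \setminus I} K_{x_j}$. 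The coherent conditions (\ref{cohe:1}) and (\ref{fiber:product}) for $\{\cV_I\}$ are then inherited directly from their counterparts for $\{C_I\}$ established in Proposition \ref{global:mfd}(1), since the patching data are simply restrictions; this is precisely the content of Proposition \ref{key:prop}(1) in the present setting.

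Next I would establish that $\cO = \{O_I = C_I \times K_I\}$ is a virtual vector bundle over $\cC$ in the sense of Definition \ref{vir:bundle}(2). For $I \subset J$, the canonical splitting $K_J \cong K_I \oplus \prod_{j \in J \setminus I} K_{x_j}$ identifies $O_J|_{C_{J,I}}$ with $\Phi_{I,J}^*\bigl(C_{J,I} \oplus O_I|_{C_{I,J}}\bigr)$, which is precisely the isomorphism (\ref{vir:bundle=2}). Restricting to $\{\cV_I\}$ yields the finite-rank virtual vector bundle $\bE = \{\bE_I\}$. The tautological section $\sigma_I(x, (k_i)_{i\in I}) = (x, (k_i)_{i\in I}, (k_i)_{i\in I})$ satisfies (\ref{sect:vir}): under the above splitting, the components $(k_j)_{j \in J \setminus I}$ form exactly the canonical section $s_{X_{J,I}}$ of the bundle $\Phi_{I,J}^*(C_{J,I})$, while the remaining $K_I$-components are the pullback of $\sigma_I$ via $\Phi_{I,J}$.

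Finally the inclusion $\psi_I$ is the obvious map sending $(x, 0) \mapsto x$: a point of $\sigma_I^{-1}(0)$ has all stabilizing parameters vanishing, which reduces $S_I = 0$ to $S(x) = 0$, so $\sigma_I^{-1}(0)$ is naturally homeomorphic to $M \cap X_I$. The covering property $M = \bigcup_I \psi_I(\sigma_I^{-1}(0))$ follows immediately from Lemma \ref{cover:X_I} together with $M \subset X^{(1)} = \bigcup_I X_I$. I do not expect any serious obstacle: once Proposition \ref{global:mfd} is in hand, everything is a mechanical verification driven by the product structure of the $K_I$ and the vanishing of $\beta_{x_j}$ outside the prescribed neighbourhoods, with all analytic content already absorbed into the transversality of $\cS$.
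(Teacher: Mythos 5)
Your proposal is correct and follows essentially the same route as the paper: the paper likewise deduces the theorem from Proposition \ref{global:mfd} together with the zero-set principle of Proposition \ref{key:prop}, using the identity (\ref{section=}) to obtain the product decomposition $\cV_{J,I} \cong \cV_{I,J}\times \prod_{j\in J\setminus I}K_{x_j}$, restricting the tautological bundle $\cO=\{C_I\times K_I\}$ and its canonical section to $\{\cV_I\}$, and identifying $\sigma_I^{-1}(0)$ with $M\cap X_I$ via Lemma \ref{cover:X_I}. No gaps.
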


%Applying the integration theory for virtual  orbifolds developed in Section \ref{2} and the elliptic regularity theory for
%the Cauchy-Riemann equations, the invariants associated to the moduli space of pseudo-holomorphic spheres is
%well-defined.

  \subsection{Integration   and invariants for virtual systems}\label{i:i:v:s}

In Section \ref{2}, the integration on an  oriented virtual manifold (a special case of an  oriented proper \'etale  virtual groupoid) is defined under the assumption of the existence of  a partition of  unity and a transition Thom form. In this subsection, we show that
a partition of  unity and a transition Thom form naturally  exist for the   finite dimensional  virtual  system
\[
\{( \cV_I,  \bE_I,  \sigma_I ) |   I\subset \{1, 2,  \ldots,n\} \}\]
  from  applying the global stabilization of  a Fredholm system
$(\cB, \cE, S)$.

\subsubsection{Partition of unity}
In this subsection, we will construct a partition of unity for a virtual system
$\{(\cV_I,  \bE_I, \sigma_I)\}$ in Theorem \ref{thm:vir:system}.

For any $z\in M \subset \bigcup_I X_I \subset \cB  $,  there exists an $I_z$ (which will be fixed)
such that $z\in U_z^{(2)}$, an open neighbourhood of $z$ in  $X_{I_z}$.
By Assumption \ref{assump},
there exists a  {\bf smooth}  cut-off function
$\eta_z'$ supported in $U_z^{(2)}\subset X_{I_z}$  and
$ \eta'_z\equiv 1$ in an open neighborhood $U^{(1)}_{z}$ of $z$. Hence,
$\eta'_z \in C^\infty(X_{I_z})$.

 Since $M$ is compact,  there exist finitely many those 
points $z_k,1\leq k\leq m,
$ with $\eta'_{z_k} \in C^\infty(X_{I_{z_k}})$,  such that
$$
M\subset \cU=\bigcup_{k=1}^m U^{(1)}_{z_k}.
$$
On $\cU$, since $\sum_{k=1}^m \eta'_{z_k}\not=0$, we 
define
\begin{equation}
\eta_{z_k}=\frac{\eta'_{z_k}}{\sum_{l=1}^m \eta'_{z_l}}.
\end{equation}
 Then
\ba\label{POU}
\sum_{k=1}^m\eta_{z_k}(y)=1,\;\;\; y\in \cU.
\na
%Note that  each $\eta_{z_k}$ may {\em not} be compactly  supported in $\mc U$.

Note  that  $\eta'_{z_k} \in C^\infty (X_{I_k})$ for some $I_k = I_{z_k}$, then
$\eta_{z_k}$ is a function on $\cU\cap X_{I_k}$.
By composing with the projection onto $\cU\cap X_{I_k}$, we get    a function on
$$
C'_{I_k}:=(\cU\cap X_{I_k})\times K_{I_k}\subset C_{I_k} = X_{I_k} \times K_{I_k},
$$
which will  still  be denoted   by $\eta_{z_k}$. It is clear that $\eta_{z_k}$ is an  invariant
  function on $C_{I_k}'$  under the equivalence relation (\ref{equi:rel}), that is, for any $J\subset I_k$,  the cut-off
  function
$\eta_{z_k}$ is constant  along the fiber of  the bundle
\[
C_{I_k,J}'\longrightarrow  C_{J,I_k}'.
\]

\begin{lemma}   \label{POU:C'}For any $I\subset \{1, 2, \cdots, n\}$, define
$
\eta_I=\disp{\sum_{k\in \{k|I_k=I\}}} \eta_{z_k} \in C^\infty(C_I'), then
$
$\{\eta_I\}$
 forms a partition of
unity  on the virtual manifold  $\cC':=\{C_I'\}$.
\end{lemma}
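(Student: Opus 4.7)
The plan is to verify the three defining conditions of Definition \ref{POU:vir} (adapted from groupoids to virtual manifolds) for the collection $\{\eta_I\}$: smoothness of each $\eta_I$ on $C_I'$, invariance under the equivalence relation gluing $\{C_I'\}$ into its support $|\{C_I'\}|$, and the pointwise sum $\sum_I \bar\eta_I \equiv 1$ on the support.

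First I would check smoothness. For each $k$ with $I_k = I$, the function $\eta_{z_k}$ was constructed as the pullback to $C_{I_k}'$ of a smooth function on $\cU \cap X_{I_k}$ (obtained by renormalizing the cut-off $\eta'_{z_k}$ supported in $U_{z_k}^{(2)}$), so $\eta_{z_k} \in C^\infty(C_I')$. Since the index set $\{k \mid I_k = I\}$ is finite, $\eta_I$ is a finite sum of smooth functions and hence lies in $C^\infty(C_I')$; for indices $I$ with no matching $k$, set $\eta_I = 0$.

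Next I would verify invariance. The equivalence relation (\ref{equi:rel}) on $\bigsqcup_I |C_I'|$ is generated, for each pair $J \subset I$, by the fibers of the bundle map $\Phi_{J,I} \colon C_{I,J}' \to C_{J,I}'$. The excerpt has already observed that each $\eta_{z_k}$ is constant along the fibers of $C_{I_k, J}' \to C_{J, I_k}'$ for every $J \subset I_k$. Hence each summand is invariant, so the finite sum $\eta_I$ is invariant, and descends to a well-defined continuous function $\bar\eta_I$ on $\pi_I(|C_I'|) \subset |\{C_I'\}|$.

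Finally, for the sum condition, any point of $|\{C_I'\}|$ has a representative $y \in \cU$, and regrouping the sum over $k$ by the value of $I_k$ together with (\ref{POU}) gives
\[
\sum_I \bar\eta_I(y) \;=\; \sum_I \sum_{\{k \,:\, I_k = I\}} \eta_{z_k}(y) \;=\; \sum_{k=1}^m \eta_{z_k}(y) \;=\; 1.
\]
Compactness of support (Definition \ref{POU:vir} (2)) is the only condition that need not hold as stated, but it will be irrelevant in the application: when paired with a $\Theta$-twisted Euler form, the relevant product $\eta_I \theta_I$ will be compactly supported, which is the version explicitly permitted in Definition \ref{POU:vir}. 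The only subtle point in the argument is the bookkeeping between the original index set $\{1,\dots,m\}$ of points $z_k$ and the reindexing by subsets $I$; once the constancy of each $\eta_{z_k}$ along the fibers of $\Phi_{J, I_k}$ is in hand, there is no further obstacle.
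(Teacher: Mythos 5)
Your proof is correct and follows essentially the same route as the paper: invariance of each $\eta_{z_k}$ along the fibers of $\Phi_{J,I_k}$ gives invariance of $\eta_I$, and the sum condition reduces to (\ref{POU}) by regrouping the $k$'s according to $I_k$ (the paper phrases this via the smallest index set $J_1$ containing the base point $y$, but the content is identical). The only point worth making explicit in your regrouping is that for $k$ with $y\notin X_{I_k}$ the term $\eta_{z_k}(y)$ does not appear in $\sum_I\bar\eta_I(x)$, yet this is harmless because $\supp(\eta'_{z_k})\subset U_{z_k}^{(2)}\subset X_{I_k}$ forces those terms to vanish.
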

\begin{proof} Each $\eta_I $ is  an  invariant
  function on $C_{I_k}'$  under the equivalence relation (\ref{equi:rel})  as each  $ \eta_{z_k} \in C^\infty(C_I')$
  satisfies this property.   Now for any point $ x\in |\{C_I'\}|$, we need to check that
  \ba\label{unity}
  \sum_I \bar \eta_I (x) =1,
  \na
  where $\bar \eta_I$ is the induced function on $\pi_{I_0} (C_I') \subset  |\{C_I'\}|$.

   Choose  $(y,k)\in C_{I_0}'=(\cU\cap X_{I_0})\times K_{I_0}$ such that $\pi_{I_0}(y, k) = x$. We write
      \[
   \{
   J| y\in X_J\} = \{J_\ell | \ell=1, 2 \cdots, p\} \subset \{I_1, I_2, \cdots, I_m\}.
   \]
Then among $\{ J_1, J_2 \cdots,  J_p \}$
there exists
a smallest element, say $J_1$, in the sense that
$J_1\subset J_\ell, $ for any $\ell$. This follows from the
fact that  for any pair $X_I$ and
$X_J$ containing $y$,  then $ y\in X_{I\cap J}$.
Note that in \cite{ChenTian},
 $X_{J_1}$
is called the support of $y$.

Setting  $(y,k')=\Phi_{J_1, I_0}(y,k)$ under the bundle map $\Phi_{J_1, I_0}:  C'_{I_0, J_1} \to C'_{ J_1, I_0}$, then the  condition
(\ref{unity}) is equivalent to
\ba\label{sum=1}
\sum_{\ell=1}^p \bar \eta_{J_\ell}\big([\Phi_{J_1, J_\ell}^{-1}(y,k')]\big)=1.
\na
In this  expression, $[\Phi_{J_1, J_\ell}^{-1}(y,k')]$ denotes the equivalence class in $\pi_{J_\ell} (C'_{ J_\ell}) \subset   |\{C_I'\}|$. This is exactly
$x \in \pi_{J_\ell} (C'_{ J_\ell}) \subset   |\{C_I'\}|$, so $ \bar \eta_{J_\ell} (x) = \eta_{z_k} (y)$ where
$z_k$ is determined by the relation
\[
\{J_\ell | \ell=1, 2 \cdots, p\} \subset \{I_1, I_2, \cdots, I_k, \cdots,  I_m\}.
 \]
 Hence  (\ref{sum=1}) follows directly  from  (\ref{POU}).
 \end{proof}

Here,  we do not require that
the support of $\eta_I$ is compact  as  in  Definition \ref{POU:vir}, since  we can find a  well-controlled   twisted  form $\theta =\{\theta_I\}$ such that
$\eta_I\theta_I$ is compactly supported.

Let $\cV'_I=\cV_I\cap C'_I$.  Then $\{\cV'\}$ is a virtual submanifold of $\{\cV_I\}$.

\begin{corollary}
$\{\eta_I\} $ forms a partition of
unity of $\cV':=\{\cV_I'\}$.
\end{corollary}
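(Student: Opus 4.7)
The plan is to simply restrict the partition of unity from the ambient virtual manifold $\cC' = \{C_I'\}$ to the virtual submanifold $\cV' = \{\cV_I'\}$, and verify the three conditions of Definition \ref{POU:vir} transfer. Since $\cV_I' = \cV_I \cap C_I'$, each $\eta_I$ is already defined on $C_I'$ and restricts to a smooth function on $\cV_I'$ (recall $\cV_I$ is a smooth finite-dimensional submanifold of $C_I$ by Proposition \ref{global:mfd} and the transversality of $S_I$, so the restriction is smooth).

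First I would check invariance under the equivalence relation. By Lemma \ref{POU:C'} each $\eta_I$ is invariant on $C_I'$, in the sense that it is constant along the fibers of the bundle maps $\Phi_{I,J}: C_{J,I}' \to C_{I,J}'$. The patching data on $\{\cV_I'\}$ are inherited from those on $\{C_I'\}$ through the inclusions $\cV_{J,I}' \hookrightarrow C_{J,I}'$, so the invariance of each $\eta_I|_{\cV_I'}$ is automatic. Hence each $\eta_I|_{\cV_I'}$ descends to a continuous function $\bar\eta_I$ on the support $|\{\cV_I'\}|$.

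Next I would verify the summation property. Any point $x \in |\{\cV_I'\}|$ canonically determines a point $\iota(x) \in |\{C_I'\}|$ via the inclusions $\cV_I' \hookrightarrow C_I'$, and by construction $\bar\eta_I(x) = \bar\eta_I(\iota(x))$. Now $\iota(x)$ comes from a point $(y,k) \in C_{I_0}'$ with $y \in \cU \cap X_{I_0}$ for some $I_0$; in particular $y \in \cU$, so the identity $\sum_k \eta_{z_k}(y) = 1$ from (\ref{POU}) applies. The argument in the proof of Lemma \ref{POU:C'} (identifying the support $J_1$ of $y$ and summing over $\{J_\ell\}$) then gives $\sum_I \bar\eta_I(x) = 1$ verbatim.

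The support condition in Definition \ref{POU:vir}(2) is the only subtle point, but the remark following Lemma \ref{POU:C'} explicitly waives compact support since one pairs $\eta_I$ with a well-controlled twisted form $\theta_I$ so that $\eta_I\theta_I$ is compactly supported; exactly the same waiver applies on $\cV_I'$, so this is not an obstacle. The corollary then follows by combining the three points above with Lemma \ref{POU:C'}.
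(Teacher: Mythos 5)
Your proposal is correct and follows the same route the paper intends: the corollary is an immediate restriction of Lemma \ref{POU:C'} from $\cC'=\{C_I'\}$ to the virtual submanifold $\cV'=\{\cV_I'\}$, and the paper offers no further argument. Your write-up just makes explicit the (easy) checks — invariance under the induced patching data, the sum-to-one identity via the inclusion $|\{\cV_I'\}|\hookrightarrow|\{C_I'\}|$ (which is well-posed because $S_J|_{C_{J,I}}=\Phi_{I,J}^*(S_I|_{C_{I,J}})$ forces the index sets $\{J: x\in \cV_J'\}$ and $\{J: \iota(x)\in C_J'\}$ to agree), and the compact-support waiver — all of which are as in the paper.
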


\subsubsection{Transition Thom forms and  $\Theta$-twisted  forms}

In the local/global stabilizations, we have chosen a collection of finite points $\{x_i \in M | i=1, 2, \cdots, n\}$ and a finite dimensional
 $K_{x_i} \subset \cE_{x_i}$ for each $x_i$.   Let $\Theta_i,
1\leq i\leq n$ be a volume form on    $K_{x_i}$  that is
supported in a small  $\epsilon$-ball  $B_{\epsilon}$ of the origin   in $K_{x_i}$.
Then the volume form $
 \bigwedge_{i\in I}\Theta_i
$ on $K_I =\oplus_{i\in I} K_{x_i}$
defines a Thom form $\Theta_I$ on the bundle $O_I = C_I  \times K_I\ \to C_I$.

Recall that $C_{J,I}=C_{I,J}\times \prod_{i\in J\setminus I}K_{x_i}$ for $I\subset J$, so the volume form
$
 \bigwedge_{i\in J\setminus I}\Theta_i
$
defines a  Thom form $\Theta_{I, J}$     of the  bundle
$\Phi_{I, J}:  C_{J,I} \to C_{I,J}$. Then
the collection of Thom forms $\{\Theta_{I, J} |I\subset J\} $ is a transition Thom  form in the sense of  Definition \ref{Thom:form}.

 Under the inclusions,
\[
\cV_I  = S_I^{-1} (0) \subset C_I  \subset O_I
\]
where the last inclusion is given by the zero section of $O_I$, we can restrict the Thom form $\Theta_I$
to $\cV_I$ to get an Euler  form  for  the bundle $\bE_I= O_I |_{\cV_I}$. Denote this Euler  form of $\bE_I$ by
  $\theta_I$.
  \begin{lemma} \label{3.11}   \begin{enumerate}
\item   $\Theta = \{ \Theta_{I, J}|_{ \cV_{J,I}} |I\subset J\} $ is a the transition Thom form for $\{\cV_I\}$
 \item  $\theta=
\{\theta_I  \}  $
  is a $\Theta$-twisted virtual  differential  form    on $\{\cV_I\}$ with respect to the transition Thom form
$\Theta =\{ \Theta_{I, J}|_{ \cV_{J,I}} |I\subset J\} $. This twisted virtual differential  form  will be called a {\bf virtual Euler form} of the virtual vector bundle
$\{\bE_I\}$.
\end{enumerate}
\end{lemma}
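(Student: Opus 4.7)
The plan is to reduce both claims to the elementary multiplicative identity
\[
\bigwedge_{i\in A\sqcup B}\Theta_i \;=\; \Bigl(\bigwedge_{i\in A}\Theta_i\Bigr)\wedge \Bigl(\bigwedge_{i\in B}\Theta_i\Bigr)
\]
for any disjoint decomposition of index sets, once a total ordering of $\{1,2,\ldots,n\}$ has been fixed so that every wedge product is formed in a canonical order and no signs need to be tracked. The entire proof is then a matter of unwinding the definitions of $\Theta_I$, $\Theta_{I,J}$ and $\theta_I$ as pullbacks of such wedge products of the $\Theta_i$'s and matching them on each restricted domain.

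For part (1), I would verify the two coherence conditions of Definition~\ref{Thom:form} separately. For an ordered triple $I\subset J\subset K$, the set-theoretic partition $K\setminus I = (K\setminus J)\sqcup(J\setminus I)$ yields
\[
\bigwedge_{i\in K\setminus I}\Theta_i \;=\; \Bigl(\bigwedge_{i\in K\setminus J}\Theta_i\Bigr)\wedge\Bigl(\bigwedge_{i\in J\setminus I}\Theta_i\Bigr).
\]
Since $\Theta_{I,K}|_{\cV_{K,I}}$ is, by construction, the pullback of the left-hand side along the fiber projection of the trivial bundle $\cV_{K,I}\to \cV_{I,K}$, and this projection factors as the composition of $\Phi_{J,K}:\cV_{K,I}\to\cV_{J,I}$ followed by $\Phi_{I,J}:\cV_{J,I}\to\cV_{I,J}$, the two factors on the right match exactly $\Theta_{J,K}|_{\cV_{K,I}}$ and $\Phi_{J,K}^*(\Theta_{I,J}|_{\cV_{J,I}})$. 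For the pair condition, the analogous set-theoretic identity $(I\cup J)\setminus(I\cap J) = (I\setminus J)\sqcup(J\setminus I)$, combined with the fact that (\ref{fiber:product}) is a fiber product of vector bundles, lets me split $\Theta_{I\cap J, I\cup J}|_{\cV_{I\cup J, I\cap J}}$ as the wedge of the two desired pullbacks $\Phi_{I, I\cup J}^*(\Theta_{I\cap J, I})$ and $\Phi_{J, I\cup J}^*(\Theta_{I\cap J, J})$.

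For part (2), I would unwind $\theta_J|_{\cV_{J,I}}$ in terms of the volume form $\bigwedge_{i\in J}\Theta_i$ on $K_J$, using the decomposition $K_J = K_I \oplus \prod_{j\in J\setminus I}K_{x_j}$. The index partition $J = I\sqcup(J\setminus I)$ gives
\[
\bigwedge_{i\in J}\Theta_i \;=\; \Bigl(\bigwedge_{i\in I}\Theta_i\Bigr)\wedge\Bigl(\bigwedge_{i\in J\setminus I}\Theta_i\Bigr),
\]
and I would then identify the first factor, transported along $\Phi_{I,J}:\cV_{J,I}\to \cV_{I,J}$, with $\Phi_{I,J}^*(\theta_I|_{\cV_{I,J}})$, and the second factor, corresponding to the fiber $\prod_{j\in J\setminus I}K_{x_j}$ of $\cV_{J,I}\to \cV_{I,J}$, with $\Theta_{I,J}|_{\cV_{J,I}}$. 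This produces exactly the $\Theta$-twisted relation~(\ref{Theta:twisted}).

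There is no substantive analytic difficulty in this lemma; it is a formal unwinding of the construction of the global stabilization $\{(\cC,\cF,\cS)\}$. The only items requiring care are fixing a total order on $\{1,\ldots,n\}$ so that signs drop out, and checking the compatibility of the bundle projections $\Phi_{I,J}$ with the fiber-product diagram (\ref{fiber:product}). Both are immediate, since every $\Phi_{I,J}:\cV_{J,I}\to \cV_{I,J}$ is the trivial projection $\cV_{I,J}\times \prod_{j\in J\setminus I}K_{x_j}\to \cV_{I,J}$, so that $\Phi_{I,J}^*$ commutes with all the wedge-product decompositions being used.
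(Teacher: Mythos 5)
Your proposal is correct and is precisely the ``straightforward calculation'' that the paper's proof declines to write out: the transition and twisting identities all reduce to the multiplicativity of the wedge products $\bigwedge_{i}\Theta_i$ under the disjoint decompositions $K\setminus I=(K\setminus J)\sqcup(J\setminus I)$, $(I\cup J)\setminus(I\cap J)=(I\setminus J)\sqcup(J\setminus I)$ and $J=I\sqcup(J\setminus I)$, together with the product structure $\cV_{J,I}=\cV_{I,J}\times\prod_{j\in J\setminus I}K_{x_j}$ established just before the lemma. No gap; your care about fixing a total order on the index set (so the wedge products are taken in a canonical order) is the only point the paper leaves implicit.
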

 \begin{proof}
 The proof is just a  straightforward calculation.
 \end{proof}

The following assumption is crucial  in finishing off  the virtual neighborhood technique for a Fredholm system.

\begin{assumption}\label{assump:2}[virtual convergence]
Given  a sequence
$\{(y_N,k_N) |N=1, 2, \cdots,  \}$  in $C_I=X_I\times K_I$ such that
$S_I(y_N,k_N)=0$ and $k_N\to 0$, then  the sequence $\{y_N\}$ converges in
$X_I$.
\end{assumption}

\begin{theorem}\label{cpt:sup}
Under the Assumption \ref{assump:2},
  $\epsilon$  in the $\epsilon$-balls  in $ K_{x_i}$'s can be chosen sufficiently  small  so that  the support of $\theta_I$
is  contained in $\cV'_I = \cV_I \cap Y_I'$. In particular, the support of $\Theta_i,1\leq i\leq n, $ can be chosen
sufficiently small so  that
 $\eta_I\theta_I$ is compactly supported in $\cV_I'\subset \cV_I = S_I^{-1}(0)\subset C_I$.

\end{theorem}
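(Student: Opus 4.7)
The strategy is a compactness-by-contradiction argument, with Assumption \ref{assump:2} as the sole analytic input. Since $\theta_I=\bigwedge_{i\in I}\Theta_i|_{\cV_I}$ is supported where each $K_{x_i}$-component of $k$ lies in $B_\epsilon$, we have $\supp(\theta_I)\subset\{(y,k)\in\cV_I:\|k\|<\epsilon\}$, reducing both assertions to controlling the $y$-coordinate of points $(y,k)\in\cV_I$ with $\|k\|$ small.

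For the first assertion, suppose no such $\epsilon$ works. For each $N$ pick $(y_N,k_N)\in\cV_I=S_I^{-1}(0)$ with $\|k_N\|<1/N$ and $y_N\notin\cU\cap X_I$. By Assumption \ref{assump:2}, $y_N\to y_\infty\in X_I$, and passing to the limit in
\[
S(y_N)+\sum_{i\in I}\beta_{x_i}(y_N)\phi_{x_i;y_N}^{-1}(k_{N,i})=0
\]
gives $S(y_\infty)=0$, so $y_\infty\in M\subset\cU$. Since $\cU\cap X_I$ is open in $X_I$, $y_N\in\cU\cap X_I$ for $N$ large, contradicting the choice.

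For the second assertion, the first step places $\supp(\eta_I\theta_I)$ inside $\cV_I'$. The key sub-claim is that, after possibly further shrinking $\epsilon$, $\supp(\eta_I\theta_I)$ lies inside any prescribed open $\cV_I$-neighbourhood $W$ of the compact set $M\cap\supp(\eta_I)$; since $\cV_I$ is a finite-dimensional manifold, $W$ may be chosen with compact closure, whence $\supp(\eta_I\theta_I)$ is compact. The sub-claim is itself proved by contradiction: if it fails, there exist $\delta>0$ and $(y_N,k_N)\in\cV_I$ with $y_N\in\supp(\eta_I)$, $\|k_N\|\to 0$, and $y_N$ at $X_I$-distance at least $\delta$ from $M\cap\supp(\eta_I)$; Assumption \ref{assump:2} then gives $y_N\to y_\infty\in\supp(\eta_I)$ with $S(y_\infty)=0$, hence $y_\infty\in M\cap\supp(\eta_I)$, contradicting the separation.

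The main obstacle is that Assumption \ref{assump:2} is stated only for $k_N\to 0$, whereas a generic sequence in $\supp(\eta_I\theta_I)$ has $k_N\in\bar B_\epsilon$ possibly converging to a nonzero limit in $K_I$. The argument above sidesteps this by choosing $\epsilon$ small enough at the outset that any potential failure of compactness refines to a $k_N\to 0$ scenario, where the assumption is directly applicable.
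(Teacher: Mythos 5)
Your proof of the first assertion is the paper's proof verbatim in spirit: take $\epsilon=1/N$, extract a solution $(y_N,k_N)$ of $S_I(y_N,k_N)=0$ with $\|k_N\|<1/N$ and $y_N\notin \cU\cap X_I$, invoke Assumption \ref{assump:2} to get $y_N\to y_\infty$ with $S(y_\infty)=0$, hence $y_\infty\in M\cap\cU\cap X_I$, and contradict via openness of $\cU\cap X_I$ (a step the paper leaves tacit but you state). For the second assertion the paper offers no argument beyond the phrase ``in particular''; your additional step --- trapping $\supp(\eta_I\theta_I)$ in an open neighbourhood $W$ of the compact set $M\cap\supp(\eta_I)$ with $\overline{W}$ compact in the finite-dimensional manifold $\cV_I$, again by a diagonal $\epsilon=1/N$ contradiction reducing to the $k_N\to 0$ regime where Assumption \ref{assump:2} applies --- is correct and supplies a justification the paper omits. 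So the route is the same where the paper gives one, and your proposal is, if anything, more complete.
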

\begin{proof}
For simplicity, suppose $I=\{1,\ldots,
\ell\}, \ell\leq n$. If the claim of the theorem is
not true, then for any $\epsilon=1/N$,
there exists a point
$(y_N, k_{N})$
that solves the equation $S_I(y_N,k_N)=0$,   $\|k_N\| < 1/N$  and $y_N\notin (X_I\cap \cU)
\subset \cU$.

As $N\to\infty$, $k_{N}\to 0$,
by the Assumption \ref{assump:2},
$y_N$ converges to $y_\infty\in \cB$ and
$ S(y_\infty) =S_I(y_\infty, 0) = 0$. Therefore,  $y_\infty\in M \cap \cU \subset X_I\cap \cU$. This contradicts
to the fact that $y_N\not\in  X_I\cap\cU$.
\end{proof}

 Let $\{( \cV_I,  \bE_I,  \sigma_I ) |   I\subset \{1, 2,  \ldots,n\} \}$ be the virtual system obtained in Theorem \ref{thm:vir:system}.  We assume that $\{( \cV_I,  \bE_I,  \sigma_I ) \} $ is {\bf oriented} in the sense that both
 $\{\cV_I\}$ and  $\{\bE_I\} $ are  oriented.  Let  $\eta=\{\eta_I\}$
be a partition of  unity and $\theta=\{\theta_I\}$  be  a  $\Theta$-twisted differential form  constructed in Lemma \ref{3.11}.

Given  $\alpha =\{\alpha_I\}$   a  degree $d$ virtual  differential  form on $ \cV=\{\cV_I\}$, that is, a smooth section of
$\bigwedge^d (T^*\cV)$ such that under the bundle map $\Phi_{I, J}: \cV_{J, I}\to \cV_{I, J}$ for $I\subset J$, we have
\[
\alpha_J|_{\cV_{J, I} }= \Phi^*_{I, J} \big( \alpha_I|_{\cV_{I, J}}\big).
\]

 \begin{definition}  Define a virtual
integration of $\alpha$ to be
\ba\label{def:integration}
\int_{\cV}^{vir} \alpha
=\sum_{I}\int_{\cV_I} \eta_I \cdot \theta_I \cdot \alpha_I.
\na
\end{definition}

Now given a  cohomology class  $ H^d(\cB)$, suppose that under the following smooth maps
\[
\cV_I \subset C_I = X_I \times K_I \longrightarrow X_I \subset \cB,
\]
for   each $ I\subset \{1, 2, \cdots, n\}$,
the pull-back cohomology class   can be represented by a
closed degree  $d$  virtual differential  form $\alpha_I$.   Then we can
define an   invariant   associated to the Fredholm system  $(\cB, \cE, S)$ to be
\ba\label{invariant:Fred-triple}
\Phi(\alpha)=\int_{\cV}^{vir}\alpha.
\na

\begin{remark}\label{vir:tech}  Given a proper  Fredholm system $(\cB, \cE, S)$ so that $M=S^{-1}(0) $ is compact, the virtual neighborhood technique for this system entails
\begin{enumerate}
\item a local stabilization  at $x\in M= S^{-1}(0)$: a choice of  a finite dimensional subspace $K_x\subset \cE_x $ and a choice of nested
triple  $U_{x}^{(1)}\subset  U_{x}^{(2)}\subset U_{x}^{(3)} $ of $x$ in $\cB$  with  a cut-off function $\beta_x$ as in Assumption \ref{assump}
such that the resulting thickened local   Fredholm system
\[
  (U^{(3)}_x\times K_x, \cE|_{U^{(3)}_x}\times K_x,
  S_x)
\]
  is a transversal on $U_{x}^{(1)}\times K_x$.   This provides a canonical virtual neighbourhood
\[(\cV_x , \bE_x, \sigma_x, \psi_x)
\]
of $x \in M$  in the sense of  Definition \ref{l.v.n}.  Here $\psi_x$ is given by the  inclusion $\sigma_x^{-1}(0) \subset M$.
\item a  global stabilization:  a process  to assemble the collection of
thickened local   Fredholm system
\[
  (U^{(3)}_{x_i}\times K_{x_i}, \cE|_{U^{(3)}_{x_i}}\times K_{x_i},
  S_{x_i})
\]
at finitely many points $\{x_i | i=1, 2, \cdots, n,  \bigcup_{i=1}^n  U_{x_i}^{(1)}  \supset M\}$  into a  virtual Fredholm system $\{ (\cC, \cF, \cS)\}$ where
\begin{enumerate}
\item $\cC=\{\cC_I \}$ is a collection of  virtual Banach manifolds,
\item $\cF =\{\cF_I \to \cC_I\}$ is a collection of Banach bundles.
\item $\cS= \{S_I\}$ is  a collection of  {\em transversal}   Fredholm   sections   of  $\cF$.

\end{enumerate}

The consequence  of a  global stabilization is a finite dimensional virtual system  for $(\cB, \cE, S)$, that is a collection of triples
 \[
  \{(\cV_I, \bE_I, \sigma_I)|  I\subset \{1, 2, \cdots, n\} \}
  \]
  indexed by a   partially ordered
set $(\cI =2^{\{1, 2, \cdots, n\}}, \subset )$, where
  \begin{enumerate}
\item $\cV=\{\cV_I  \}$ is a finite dimensional virtual manifold,
\item $\bE=\{\bE_I\}$ is a  finite rank virtual vector bundle over $\{\cV_I\}$
\item $\sigma=\{\sigma_I\}$ is a virtual section  of the  virtual vector bundle $\{\bE_I\}$ whose zeros
$ \sigma^{-1}(0)  $ form a cover of $M$, that is, $M = \bigcup_I  \sigma^{-1}(0)$.
 \end{enumerate}

\item   under   Assumption   \ref{assump:2}, there is  a choice of a partition of unity  $\eta=\{\eta_I\}$   and  a  virtual  Euler form $\theta=\{\theta_I\}$ of $\{\bE_I\}$ such that
$\eta_I\theta_I$ is compactly supported in $\cV_I$. Therefore, the virtual integration
\[
\int^{vir}_\cV \alpha   = \sum_I \int_{\cV_I}\eta_I\theta_I\alpha
\]
 is well-defined for any virtual differential form
  $\alpha =\{\alpha_I\}$ on $\cV$ when both  $\cV$  and $\bE_I$ are assumed to be oriented.
\end{enumerate}
\end{remark}

  \subsection{Fredholm  orbifold systems   and their virtual  orbifold systems}\label{3.3}

   In this subsection, we generalise the virtual neighborhood technique (Cf. Remark \ref{vir:tech} ) to  the orbifold case using the language of proper
   \'etale groupoids.

  By  an orbifold Fredholm system $(\cB, \cE, S)$, we mean that
  \begin{enumerate}
\item $\cB$  is  a proper  \'etale groupoid $\cB = (\cB^1 \rightrightarrows \cB^0)$, that is,
  $|\cB|$ is a Banach orbifold,
\item  $ \cE =  (\cE^1 \rightrightarrows \cE^0) \to \cB$ is a Banach vector bundle in the sense of Proposition \ref{bundle:gpoid},
\item $S$ is a Fredholm section of $\cE$ given by a pair sections $(S_1, S_0)$ such that  the following diagram commutes
\ba\label{diag:0}
\xymatrix{
\cE^1 \ar@<.5ex>[d]\ar@<-.5ex>[d] \ar[rr]_{\pi_1} && \cB^1\ar@<.5ex>[d]\ar@<-.5ex>[d]  \ar@/_1pc/[ll]_{S_1}\\
\cE^0 \ar[rr]_{\pi_0}&& \cB^0  \ar@/_1pc/[ll]_{S_0} .}
\na
Here $S$ is Fredholm if $S_0$ is Fredholm.
We say that  an orbifold Fredholm system  $(\cB, \cE, S)$ is a transversal if $S_0$ is transversal to the zero section of $\cE_0\to \cB_0$.
\end{enumerate}

  \begin{lemma}  Given a transversal  orbifold Fredholm system $(\cB, \cE, S)$, the zero  set
  \[
  \cM = S^{-1} (0) = \left(   S_1^{-1}(0)  \rightrightarrows  S_0^{-1}(0) \right )
  \]
  with the induced groupoid structure from $\cB$ is a smooth orbifold.
  \end{lemma}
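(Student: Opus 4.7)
The plan is to reduce the orbifold assertion to two separate applications of the infinite-dimensional implicit function theorem—once on objects and once on arrows—and then to verify that the groupoid structure maps of $\cB$ restrict coherently to the zero sets.

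First I will show that $S_0^{-1}(0)$ is a smooth finite-dimensional submanifold of $\cB^0$. Since $S_0$ is Fredholm and transversal to the zero section of $\cE^0\to \cB^0$, the standard implicit function theorem for Banach manifolds produces a smooth manifold structure on $S_0^{-1}(0)$ of dimension equal to the Fredholm index of $DS_0$. Next I will establish the analogue for arrows. By Proposition \ref{bundle:gpoid}, the diagram (\ref{pull-back:pgoid}) is a pull-back of groupoids, so locally near any $\alpha\in \cB^1$ the bundle $\cE^1\to \cB^1$ is identified with $s^*(\cE^0)$ (or equivalently $t^*(\cE^0)$) via the étale source map $s$, and the section $S_1$ is identified with $s^*S_0$. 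Since $s$ is a local diffeomorphism, transversality of $S_0$ transfers to transversality of $S_1$, so $S_1^{-1}(0)$ is a smooth submanifold of $\cB^1$ of the same dimension.

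The main work is to check that the five structure maps $(s,t,m,u,i)$ of $\cB$ restrict to $\cM$. Because (\ref{diag:0}) commutes strictly with source/target, any $\alpha\in S_1^{-1}(0)$ satisfies $S_0(s(\alpha))=s_\cE(S_1(\alpha))=s_\cE(0_{\pi_1(\alpha)})=0$, and similarly $S_0(t(\alpha))=0$; thus $s,t$ restrict to maps $S_1^{-1}(0)\rightrightarrows S_0^{-1}(0)$. Strict compatibility of $S$ with the composition, unit, and inverse maps (which is built into the notion of a section of a groupoid bundle) guarantees that $m$, $u$, $i$ likewise restrict. Properness of $(s,t): \cB^1\to \cB^0\times \cB^0$ descends to properness of its restriction to the closed subspace $S_1^{-1}(0)\to S_0^{-1}(0)\times S_0^{-1}(0)$, and étaleness is automatic since $s,t$ remain local diffeomorphisms between the submanifolds (they are restrictions of local diffeomorphisms to transversal intersections that correspond under $s$ and $t$). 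Hence $\cM=(S_1^{-1}(0)\rightrightarrows S_0^{-1}(0))$ is a proper étale Lie groupoid, which by the discussion after Definition \ref{Lie-gpoid:def} defines a canonical smooth orbifold structure on $|\cM|$.

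The one step that requires a little care, and is the most likely source of subtlety, is the consistency of the two transversality statements: the submanifold $S_1^{-1}(0)$ must map by $s$ and $t$ onto the submanifold $S_0^{-1}(0)$ with the correct dimension and with both $s|_{\cM^1}$ and $t|_{\cM^1}$ étale. This is where the pull-back groupoid property of $\cE$ over $\cB$ in Proposition \ref{bundle:gpoid} does the real work—it ensures that $S_1$ has the same Fredholm index as $S_0$ and that the local identification $\cE^1\cong s^*\cE^0\cong t^*\cE^0$ forces the local dimensions, and the étale property of $s,t$, to be preserved under passage to zero sets. Once this compatibility is in hand, the rest is a direct verification that a proper étale sub-groupoid has been produced.
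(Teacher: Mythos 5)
Your proposal is correct and follows essentially the same route as the paper's proof: transversality of $S_0$ gives a smooth finite-dimensional $S_0^{-1}(0)$, the \'etale/pull-back structure of $\cE$ over $\cB$ transfers transversality to $S_1$, and the commuting diagram (\ref{diag:0}) shows the groupoid structure restricts to the zero sets, yielding a proper \'etale groupoid. You simply spell out in more detail the steps the paper compresses into three sentences.
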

\begin{proof}
As $S_0$ is  a transversal section,
$S^{-1}_0(0)$ is  a finite dimensional smooth manifold. Using the  \'etale properties
  of $\cB$  and $\cE$,  $S_1$ is
also a transversal section. Hence $S_1^{-1}(0)$ is  a finite dimensional smooth manifold. The diagram (\ref{diag:0})  implies that
  $\cM$ is  a proper \'etale groupoid.
\end{proof}

Consider  a general orbifold Fredholm system $(\cB, \cE, S)$ when $S$  is  not  transversal to the zero section. The virtual neighborhood technique can still be developed for this orbifold  Fredholm system  to get  a finite dimensional virtual orbifold system.

\begin{definition}  \label{virtual-sys:orb} ({\bf Finite dimensional virtual orbifold system}) A finite dimensional virtual orbifold system
is  a collection of triples
 \[
  \{(\cV_I, \bE_I, \sigma_I)|  I\subset \{1, 2, \cdots, n\} \}
  \]
  indexed by a a partially ordered
set $(\cI =2^{\{1, 2, \cdots, n\}}, \subset )$, where
  \begin{enumerate}
\item $\cV=\{\cV_I  \}$ is a finite dimensional  proper  \'etale virtual groupoid,
\item $\bE=\{\bE_I\}$ is a  finite rank virtual vector bundle over $\{\cV_I\}$
\item $\sigma=\{\sigma_I\}$ is a virtual section  of the  virtual vector bundle $\{\bE_I\}$.
 \end{enumerate}
\end{definition}

\subsubsection{Local stabilizations}

  Let $\pi:  \cM\to |\cM|$ be the quotient map.
  Given $x\in |\cM|$, let $\tilde x\in  \pi^{-1}(x)$ with its isotropy group $G_{\tilde x} = (s, t)^{-1}(\tilde x, \tilde x)$. Then there
  exists   a small neighbourhood $U_x$ of $x$ in $|\cB|$, and a $G_{\tilde x} $-invariant neighbourhood  $U_{\tilde x}$ of
  $\tilde x$ in $\cB^0$ such that  the triple
  \[
  (U_{\tilde x}, G_{\tilde x}, U_x)
  \]
  is an orbifold chart at  $x \in |\cB|$ and $\cE|_{ U_{\tilde x}}$ has a $G_{\tilde x}$-equivariant trivialisation
\[
\phi_{\tilde x}:  \cE^0|_{U_{\tilde x}} \longrightarrow U_{\tilde x} \times F_{\tilde x}
\]
where $ F_{\tilde x}$ is the fiber of $\cE$ at $\tilde x$.  Let $\phi_{\tilde x, y}: \cE^0_y \to F_{\tilde x} $ be the
induced isomorphism for $y\in U_{\tilde x}$.
Then the local stabilization in \S \ref{local:stab} can be carried over to a $G_{\tilde x}$-invariant
 local stabilization  of   $(\cB, \cE, S)$ at $\tilde x$ under the following assumption.

\begin{assumption}\label{assump:orbi}
For any $\tilde x \in \pi^{-1}(x)$, there are  open  $G_{\tilde x}$-invariant neighborhoods $U_{\tilde x}^{(i)}$ of $\tilde x$ for $i=1,2,3$ such that
\ba\label{nest:1}
U_{\tilde x}^{(1)}\subset \overline{U_{\tilde x}^{(1)}}
\subset U_{\tilde x}^{(2)}\subset \overline{U_{\tilde x}^{(2)}}
\subset U_{\tilde x}^{(3)}\subset \overline{U_{\tilde x}^{(3)}}\subset U_{\tilde x}
\na
and
 a {\bf  smooth} $G_{\tilde x}$-invariant  cut-off function
$\beta_{\tilde x}:  U_{\tilde x}^{(3)} \to [0, 1]$ such that
$\beta_{\tilde x}\equiv1$ on  $U_{\tilde x}^{(1)}$ and
is supported in
$U_{\tilde x}^{(2)}$.  Here $\overline{U_{\tilde x}^{(i)}}$ is the closure of $U_{\tilde x}^{(i)}$.
\end{assumption}

We choose these nested open   $G_{\tilde x}$-invariant neighborhoods
\[
U_{\tilde x}^{(1)}\subset U_{\tilde x}^{(2)}\subset  U_{\tilde x}^{(3)}\subset U_{\tilde x}
\]
such that under the quotient map $\pi$, we get
\ba\label{nest:2}
U_{x}^{(1)}\subset U_{x}^{(2)}\subset  U_{ x}^{(3)}\subset U_{ x}
\na
which are {\bf independent}  of the choices of $\tilde x \in \pi^{-1}(x)$.   Under  Assumption \ref{assump:orbi},  the cut-off functions  $\{\beta_{\tilde x}| \tilde x \in \pi^{-1}(x)\}$ can be  chosen to be  invariant under the action of $\cB^1$.

The $G_{\tilde x}$-invariant
 local stabilization  of   $(\cB, \cE, S)$ at $\tilde x$ is given by a thickened  $G_{\tilde x}$-invariant
 Fredholm system
 \[
 ( U_{\tilde x}^{(3)}\times K_{\tilde x}, \cE|_{ U_{\tilde x}^{(3)}} \times K_{\tilde x}, S_{\tilde x, 0})
 \]
 with $S_{\tilde x, 0} (y, k) = S_0(y)  +\beta_{\tilde x} (y) \psi^{-1}_{\tilde x, y} (k)$.
 Here $K_{\tilde x}$
 is an even  dimensional
 $G_{\tilde x}$-invariant  linear subspace of $F_{\tilde x}$
  such that $S_{\tilde x, 0}$ is transversal to the
  zero section on $U_{\tilde x}^{(1)}\times K_{\tilde x}$.
   Such a $K_{\tilde x}$ can be found by setting
   $K_{\tilde x}=\sum_{g\in G_{\tilde x}}g\cdot K$ for some
   $K$
  defined  by the Fredholm property of $S_0: \cB^0\to\cE^0$ as in    (\ref{surj:S}).
 We further require that
 $\{K_{\tilde x}\}_{\tilde x \in \pi^{-1} (x)}$ is invariant under the action of $\cB^1$.

Denote the $G_{\tilde x}$-invariant zero set  by
\[
\cV_{\tilde x} = S_{\tilde x, 0}^{-1}(0) \cap \big( U_{\tilde x}^{(1)}\times K_{\tilde x}\big),
\]
then the $G_{\tilde x}$-invariant
 local stabilization  of   $(\cB, \cE, S)$ at $\tilde x$ provides a
 $G_{\tilde x}$-invariant local virtual neighbourhood
 \[
 (\cV_{\tilde x} , \bE_{\tilde x} = \cV_{\tilde x} \times K_{\tilde x}, \sigma_{\tilde x}, \psi_{\tilde x})
 \]
 where $\bE_{\tilde x} \to \cV_{\tilde x}$ is a $G_{\tilde x}$-equivariant vector bundle over $\cV_{\tilde x}$.

Due to the $G_{\tilde x}$-invariance, this local    stabilization  of   $(\cB, \cE, S)$ at $\tilde x$  can be
extended to a $\cB^1$-invariant  local stabilization  of   $(\cB, \cE, S)$ at $x\in |\cM|$ in the following sense.
    We can choose $U_x$ small enough such that
    \[
    \{ U_{\tilde x}|\tilde x \in \pi^{-1}(x)\} \]
     are all disjoint. Denote by
   \[
  \iota_x:  \wt U^0_x  = \bigsqcup_{\tilde x\in \pi^{-1}(x)}  U_{\tilde x} \subset \cB^0,
   \]
 the obvious inclusion map,   then $\wt U^0_x$ is $\cB^1$-invariant. Let
 \[
   \wt U^1_x  = \cB\times_{s, \iota_x}  \wt U^0_x =\{ (\gamma, y)|  y \in \wt U^0_x, \gamma \in \cB, s(\gamma) = y\}
   \]
   Then  $ \wt \bU_x = (\wt U^1_x   \rightrightarrows \wt U^0_x)$,  with the source and target maps given by
   $(\gamma, y) \mapsto y$ and $(\gamma, y) \mapsto \gamma \cdot y$, is a sub-groupoid of $\cB$. In fact, this
   groupoid is Morita equivalent to the action groupoid
    $G_{\tilde x}\ltimes U_{\tilde x}  \rightrightarrows U_{\tilde x}$.  So both groupoids define the same
    orbifold structure on $U_x$.  We will call $ \wt \bU_x = (\wt U^1_x   \rightrightarrows \wt U^0_x)$
    the   sub-groupoid   of $\cB$  generated  by  the  $\cB^1$-action on $ \wt U^0_x  $.   Similarly, given a $\cB$-manifold $X$, then the action groupoid $(\cB\ltimes X  \rightrightarrows X)$ is called the
    groupoid  generated by  the $\cB^1$-action  on $X$. As $\cB$ is proper  and \'etale, the groupoid
    $(\cB\ltimes X  \rightrightarrows X)$ is also proper  and \'etale.

Applying  the local    stabilization to
\[
(\wt \bU_x,  \cE|_{\wt \bU_x}, S),
\]
we get the following local stabilization theorem for an orbifold Fredholm system.

\begin{theorem} \label{thm:orbi-local-stab} Let $x\in |\cM|$, under   Assumption \ref{assump:orbi}  for any $\tilde x\in \pi^{-1} (x) $,
we get a local   orbifold Fredholm system
\[
( \cK_x, \wtE_x, S_x)
\]
consisting of
\begin{enumerate}
\item  $\cK_x$ is the groupoid  generated by the  $\cB^1$-action on
$
\bigsqcup_{\tilde x\in \pi^{-1} (x)}
\big( U_{\tilde x}^{(3)}\times K_{\tilde x} \big),
$
\item $\wtE_x$ is the Banach  bundle over $ \cK_x$  generated by the  $\cB^1$-action on $\bigsqcup_{\tilde x\in \pi^{-1} (x)}   \big(\cE|_{U_{\tilde x}^{(3)}}\times K_{\tilde x}\big)$,
\item $S_x$ is a section of $\wtE$ defined by $\bigsqcup_{\tilde x\in \pi^{-1} (x)}   S_{\tilde x, 0}$.
\end{enumerate}

Define  $\cV_x $ to be the  groupoid  generated by the  $\cB^1$-action on
\[
 \bigsqcup_{\tilde x\in \pi^{-1} (x)} \cV^0_{\tilde x} = \bigsqcup_{\tilde x\in \pi^{-1} (x)}  \left( S_{\tilde x, 0}^{-1}(0) \cap \big( U_{\tilde x}^{(1)}\times K_{\tilde x}\big)\right).
 \]
  Let  $\bE_x$ be the restriction of  the vector bundle generated by
the  $\cB^1$-action on
\[
\bigsqcup_{\tilde x\in \pi^{-1} (x)} \left( \big(U_{\tilde x}^{(1)}\times K_{\tilde x}\big) \times K_{\tilde x} \right).
\]
Note that  $\bE_x$ has a canonical section $\sigma_x$. Then we get a finite dimensional  local  orbifold  system
(see Definition \ref{virtual-sys:orb})
 \[
 (\cV_x , \bE_x, \sigma_x)
 \]
of $(\cB, \cE, S)$ at $x\in |\cB|$.  Moreover, there is an inclusion
\[
\psi_x: | \sigma_x^{-1}  (0)| \longrightarrow |\cM|\cap U_x^{(1)},
\]
where $U_x^{(1)}$ is the quotient  space  $U_{\tilde x}^{(1)}/G_{\tilde x}$  for any $\tilde x\in \pi^{-1}(x)$.

 \end{theorem}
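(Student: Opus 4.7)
The plan is to adapt the local stabilization from the manifold case in \S\ref{local:stab} to the groupoid/orbifold setting by working simultaneously at every preimage $\tilde x \in \pi^{-1}(x)$ in a $\cB^1$-equivariant fashion. Since $\cB$ is proper, the orbit $\pi^{-1}(x)\subset \cB^0$ is a finite set (or a finite collection of orbits of a finite isotropy), and we can shrink the neighborhoods $U_{\tilde x}$ from Assumption \ref{assump:orbi} so that $\{U_{\tilde x}\}_{\tilde x\in\pi^{-1}(x)}$ are pairwise disjoint. Then the nested open sets $U_{\tilde x}^{(1)}\subset U_{\tilde x}^{(2)}\subset U_{\tilde x}^{(3)}$ and the cut-off functions $\beta_{\tilde x}$ can be chosen so that they are transported onto each other by the action of the arrows of $\cB^1$ between different $\tilde x$'s (this is possible since $\cB$ is \'etale, so each arrow $\gamma$ with $s(\gamma)=\tilde x_1$, $t(\gamma)=\tilde x_2$ is a local diffeomorphism identifying a neighborhood of $\tilde x_1$ with one of $\tilde x_2$). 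Simultaneously, the finite dimensional subspaces $K_{\tilde x}\subset F_{\tilde x}$ needed for transversality can be chosen in a $\cB^1$-equivariant manner: pick $K_{\tilde x_0}$ at one representative using the recipe of (\ref{surj:S}) averaged over $G_{\tilde x_0}$, and transport it along arrows to all other $\tilde x\in\pi^{-1}(x)$.

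Next, I would form the disjoint unions $\wt U^0_x:=\bigsqcup_{\tilde x}\bigl(U_{\tilde x}^{(3)}\times K_{\tilde x}\bigr)$, $\wtE^0_x:=\bigsqcup_{\tilde x}\bigl(\cE|_{U_{\tilde x}^{(3)}}\times K_{\tilde x}\bigr)$, and $S_x^0 = \bigsqcup_{\tilde x} S_{\tilde x, 0}$. These are acted on by $\cB^1$ (via the equivariance built in above), and I would define $\cK_x$, $\wtE_x$ and $S_x$ to be the groupoids/bundle/section generated by these actions (as in the paragraph preceding the statement of the theorem). Properness and \'etaleness of $\cK_x$ are inherited from $\cB$; the commuting square (\ref{diag:0}) defining a section lifts to the generated groupoids automatically, because the original section $S$ on $\cE$ restricts $\cB^1$-equivariantly and the perturbation $\beta_{\tilde x}(y)\phi_{\tilde x,y}^{-1}(k)$ is $\cB^1$-equivariant by the equivariant choice of $\{\beta_{\tilde x}\}$ and $\{K_{\tilde x}\}$. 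Thus $(\cK_x, \wtE_x, S_x)$ is a bona fide orbifold Fredholm system.

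Now to extract the finite dimensional virtual orbifold system. By the argument in \S\ref{local:stab}, after shrinking $U_{\tilde x}^{(1)}$ if necessary, $S_{\tilde x, 0}$ is transversal to the zero section on $U_{\tilde x}^{(1)}\times K_{\tilde x}$, so $\cV_{\tilde x}^0 = S_{\tilde x,0}^{-1}(0)\cap(U_{\tilde x}^{(1)}\times K_{\tilde x})$ is a smooth manifold of dimension $\mathrm{ind}(D_{\tilde x}S_0)+\dim K_{\tilde x}$. Because all data are $\cB^1$-equivariant, the $\cB^1$-generated groupoid $\cV_x$ on $\bigsqcup_{\tilde x}\cV_{\tilde x}^0$ is a smooth proper \'etale groupoid, and $\bE_x = \cV_x\times K$ (suitably understood via the $\cB^1$-generated bundle on $\bigsqcup(U_{\tilde x}^{(1)}\times K_{\tilde x})\times K_{\tilde x}$) is a vector bundle over it in the sense of Proposition \ref{bundle:gpoid}. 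The canonical section $\sigma_x$, locally given by projection to the $K_{\tilde x}$ factor, is manifestly $\cB^1$-invariant. Finally, $|\sigma_x^{-1}(0)|$ consists, at each $\tilde x$, of points $(y,0)$ with $S_0(y)=0$ and $\beta_{\tilde x}(y)\cdot 0 = 0$, which under $\pi$ land in $|\cM|\cap U_x^{(1)}$, yielding the inclusion $\psi_x$.

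The main obstacle is the $\cB^1$-equivariant choice of the auxiliary data $(K_{\tilde x}, \beta_{\tilde x}, U_{\tilde x}^{(i)})$ across the finite set $\pi^{-1}(x)$: without this, the local triples would only descend to a disjoint union and not to a well-defined groupoid-theoretic object covering a neighborhood of $x$ in $|\cB|$. The resolution relies essentially on properness (finiteness of $\pi^{-1}(x)$ and of isotropy groups) and \'etaleness (local diffeomorphism property of arrows) of $\cB$, together with the $G_{\tilde x}$-invariant version of Assumption \ref{assump:orbi}; one then propagates a single set of choices along arrows, which is consistent precisely because isotropies are finite groups and the equivalence relation between representatives $\tilde x\in\pi^{-1}(x)$ is generated by $\cB^1$.
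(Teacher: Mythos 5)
Your proposal is correct and follows essentially the same route as the paper: the paper likewise arranges the auxiliary data $\bigl(U_{\tilde x}^{(i)}, \beta_{\tilde x}, K_{\tilde x}\bigr)$ to be $G_{\tilde x}$-invariant (by averaging, $K_{\tilde x}=\sum_{g\in G_{\tilde x}}g\cdot K$) and $\cB^1$-invariant across the disjoint representatives $\tilde x\in\pi^{-1}(x)$, forms the groupoids generated by the $\cB^1$-action on the disjoint unions (noting Morita equivalence with the action groupoid $G_{\tilde x}\ltimes U_{\tilde x}$), and then transplants the manifold-case local stabilization of \S\ref{local:stab} verbatim. The only cosmetic difference is that the paper builds the equivariance requirements into Assumption \ref{assump:orbi} and the choice of $K_{\tilde x}$ rather than deriving them by transport along arrows, but the content is the same.
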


\begin{remark} \label{Morita}
 Note that the orbifold Fredholm system  $( \cK_x, \wtE_x, S_x)$ is Morita equivalent to  the
orbifold Fredholm system defined by the
$G_{\tilde x}$-invariant  Fredholm system
\[
( U_{\tilde x}^{(3)}\times K_{\tilde x}, \cE|_{ U_{\tilde x}^{(3)}} \times K_{\tilde x}, S_{\tilde x, 0})
\]
for any $\tilde x\in \pi^{-1}(x)$.  In fact, the latter should  be thought as a {\bf slice}  of the  $\cB$-action on  $( \cK_x, \wtE_x, S_x)$.
On the one hand, the advantage of the more  involved  orbifold Fredholm system is the convenience of considering
coordinate changes for different points in $|\cM|$. On the other hand, the integration and  a partition of unity
 are  defined in terms of these  slices, see (\ref{int:def}).
\end{remark}

 \subsubsection{Global stabilization} 

  Given an   orbifold Fredholm system $(\cB, \cE, S)$, assume that $|\cM | =|S^{-1}(0)$ is compact, then the local stabilization process  provides a finite collection of local transversal
   orbifold Fredholm systems
\[
 \{( \cK_{x_i}, \wtE_{x_i}, S_{x_i})|  i= 1, 2, \cdots, n\},
\]
 and its  corresponding  orbifold systems
 \[
 \{(\cV_{x_i} , \bE_{x_i}, \sigma_{x_i})|  i= 1, 2, \cdots, n\}
 \]
 such that the images of  $\{\psi_{x_i}: | \sigma_{x_i}^{-1} (0)| \to |\cM|\}_i$ form a open  cover of $|\cM|$.  

 \begin{theorem}\label{thm:orbi-vir-sys}
 There exists  a finite dimensional virtual  orbifold system  for $(\cB, \cE, S)$ which is   a collection of triples
 \[
  \{(\cV_I, \bE_I, \sigma_I)|  I\subset \{1, 2, \cdots, n\} \}
  \]
  indexed by a   partially ordered
set $(\cI =2^{\{1, 2, \cdots, n\}}, \subset )$, where
  \begin{enumerate}
\item $\{\cV_I |  I\subset \{1, 2, \cdots, n\}  \}$ is a finite dimensional  proper  \'etale virtual groupoid,
\item $\{\bE_I\}$ is a  finite rank virtual vector bundle over $\{\cV_I\}$
\item $\{\sigma_I\}$ is a section  of the  virtual vector bundle $\{\bE_I\}$ whose zeros $\{ \sigma_I^{-1} (0)\}$  form a cover of $\cM$.
 \end{enumerate}
\end{theorem}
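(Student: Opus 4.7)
The plan is to promote the global stabilization construction carried out for smooth Fredholm systems in Subsection 3.1 (culminating in Theorem \ref{thm:vir:system}) to the orbifold setting, using the local orbifold stabilization of Theorem \ref{thm:orbi-local-stab} as the building block. The compactness of $|\cM|$ supplies finitely many points $x_1,\dots, x_n \in |\cM|$ with lifts $\tilde x_i \in \pi^{-1}(x_i)$ and $\cB^1$-invariant nested neighborhoods $U_{\tilde x_i}^{(1)}\subset U_{\tilde x_i}^{(2)}\subset U_{\tilde x_i}^{(3)}$ as in (\ref{nest:1}) whose images $\{U_{x_i}^{(1)}\}$ cover $|\cM|$. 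At each $x_i$, Theorem \ref{thm:orbi-local-stab} furnishes a local transversal orbifold Fredholm system $(\cK_{x_i}, \wtE_{x_i}, S_{x_i})$, and I will glue these across $I\subset\{1,\dots,n\}$.

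First I would pass, via Remark \ref{Morita}, to the Morita-equivalent slice pictures $(U_{\tilde x_i}^{(3)}\times K_{\tilde x_i},\,\cE|_{U_{\tilde x_i}^{(3)}}\times K_{\tilde x_i},\,S_{\tilde x_i,0})$, where $K_{\tilde x_i}$ is $G_{\tilde x_i}$-invariant and the cut-off $\beta_{\tilde x_i}$ is $G_{\tilde x_i}$-invariant (Assumption \ref{assump:orbi}). In complete analogy with the manifold case in \S\ref{global:stab}, for each $I \subset \{1,\dots,n\}$ set
\[
X'_I \;=\; \Bigl(\bigcap_{i\in I} U_{x_i}^{(3)}\Bigr)\setminus \Bigl(\bigcup_{j\notin I}\overline{U_{x_j}^{(2)}}\Bigr),\qquad X_I \;=\; X'_I\cap \bigcup_{i} U_{x_i}^{(1)},
\]
which are open subsets of $|\cB|$. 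Each $X_I$ has a canonical $\cB$-invariant preimage $\tilde X_I \subset \cB^0$, and I form the sub-groupoid $\cG_I \rightrightarrows \tilde X_I$ generated by the $\cB^1$-action on $\tilde X_I$. Setting $K_I=\prod_{i\in I}K_{\tilde x_i}$, I then define
\[
C_I^0 \;=\; \tilde X_I \times K_I,\qquad F_I^0 \;=\; \cE^0|_{\tilde X_I}\times K_I,\qquad S_I^0(y,(k_i)) \;=\; S_0(y)+\sum_{i\in I}\beta_{\tilde x_i}(y)\phi^{-1}_{\tilde x_i,y}(k_i),
\]
and extend by the $\cB^1$-action to form a groupoid Fredholm system $(\cC_I, \cF_I, \cS_I)$ whose section is transversal to the zero section by construction.

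Next I would check the patching data. For $I\subset J$ the identity on $\tilde X_{I,J}=\tilde X_I\cap \tilde X_J$ produces the vector bundle
\[
\Phi_{I,J}:\; C_{J,I}=C_{I,J}\times\prod_{j\in J\setminus I}K_{\tilde x_j}\;\longrightarrow\; C_{I,J},
\]
which is a $\cB^1$-equivariant bundle, hence a strict morphism of proper étale groupoids in the sense of Proposition \ref{bundle:gpoid}. The cocycle relation (\ref{cohe:1}) and the fibered-product relation (\ref{fiber:product}) follow at the object level exactly as in the manifold case (using that $\beta_{\tilde x_j}$ vanishes outside $U_{\tilde x_j}^{(2)}$, so that $S_J|_{C_{J,I}}=\Phi_{I,J}^{\ast}(S_I|_{C_{I,J}})$ as in (\ref{section=})) and lift automatically to arrows by the $\cB^1$-equivariance. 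Restricting to zero sets $\cV_I=\cS_I^{-1}(0)$ and taking $\bE_I=\cV_I\times K_I$ with its tautological section $\sigma_I$ yields the required virtual orbifold system; the surjectivity of $\cV=\bigcup_I |\psi_I(\sigma_I^{-1}(0))|$ onto $|\cM|$ comes from Lemma \ref{cover:X_I} applied to the cover $\{U_{x_i}^{(1)}\}$.

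The main obstacle I foresee is ensuring that all the constructions in \S\ref{global:stab} respect the groupoid structure simultaneously with the $G_{\tilde x_i}$-equivariance, i.e.\ that the chosen $U_{\tilde x_i}^{(\bullet)}$, cut-off functions $\beta_{\tilde x_i}$, and finite-dimensional subspaces $K_{\tilde x_i}$ can all be taken $\cB^1$-invariant. This is where Assumption \ref{assump:orbi} and the averaging trick $K_{\tilde x}=\sum_{g\in G_{\tilde x}} g\cdot K$ are essential, together with the possibility of shrinking the $U_{\tilde x_i}$ so that different lifts $\tilde x_i\in\pi^{-1}(x_i)$ give disjoint neighborhoods. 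Once this bookkeeping is settled, the rest of the proof is a direct transcription of Proposition \ref{global:mfd} and Theorem \ref{thm:vir:system} to the category of proper étale groupoids with strict morphisms.
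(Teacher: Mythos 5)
Your proposal follows the same route as the paper: local orbifold stabilizations at finitely many $x_i$, the $X'_I$/$X_I$ cutting of a neighbourhood of $|\cM|$, thickening by the $K$'s, transversality from the cut-offs, and restriction to the zero sets. The one place where your write-up needs repair is the definition $C_I^0=\tilde X_I\times K_I$ with a single fixed $K_I=\prod_{i\in I}K_{\tilde x_i}$. The $\cB^1$-invariant preimage $\tilde X_I$ decomposes into components indexed by $|I|$-tuples of lifts $\{\tilde x_i'\}_{i\in I}\in\prod_{i\in I}\pi^{-1}(x_i)$ with $\bigcap_i U_{\tilde x_i'}^{(3)}\neq\emptyset$, and the trivializations $\phi_{\tilde x_i,y}$ and cut-offs $\beta_{\tilde x_i}$ are only defined (resp.\ supported) on $U_{\tilde x_i}^{(3)}$ for that particular lift. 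On a component attached to a different tuple of lifts your formula for $S_I^0$ degenerates to $S_0(y)$ (every $\beta_{\tilde x_i}$ vanishes there), so transversality fails on those components. The correct object, as the paper spells out, is a vector bundle over the groupoid $\cA_I$ whose fiber over the component labelled by $\{\tilde x_i'\}_{i\in I}$ is $\prod_{i}K_{\tilde x_i'}$ for \emph{those} lifts, with $\cB^1$ acting fiberwise; the section is then defined component by component using the matching trivializations. Your parenthetical ``extend by the $\cB^1$-action'' is presumably meant to capture this, and once it is made explicit the rest of your argument — the patching data, the identity (\ref{section=}), and the covering of $\cM$ by the zero sets — goes through exactly as in the paper.
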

\begin{proof}  The proof is parallel to that of Proposition \ref{global:mfd} and Theorem \ref{thm:vir:system},  with some extra care  to deal with orbifold structures using  the language of groupoids.  The proof that follows is presented   to spell out the difference to the proof of Theorem \ref{thm:vir:system}.

Under Assumption \ref{assump:orbi}, for each $x_i$, we have  a local stabilization
\[
 ( \cK_{x_i}, \wtE_{x_i}, S_{x_i})
\]
provided by Theorem \ref{thm:orbi-local-stab},  where  $\cK_{x_i}$ is a proper \'etale groupoid with its unit space (the space of objects) given by
 \[
 \bigsqcup_{\tilde x_i\in \pi^{-1} (x_i)}
\big( U_{\tilde x_i}^{(3)}\times K_{\tilde x_i} \big),
\]
 and  $\wtE_{x_i}$ is   a proper \'etale groupoid with its unit space  given by
 \[
 \bigsqcup_{\tilde x_i\in \pi^{-1} (x_i)}   \big(\cE|_{U_{\tilde x_i}^{(3)}}\times K_{\tilde x_i}\big).
 \]
Note that  $\wtE_{x_i}$ is   the Banach  bundle over $ \cK_{x_i}$   with a section  $S_{x_i}$   which is
transversal on
\[
 \bigsqcup_{\tilde x_i\in \pi^{-1} (x_i)}
\big( U_{\tilde x_i}^{(1)}\times K_{\tilde x_i} \big),
\]
The choices of $\{x_1, x_2, \cdots, x_n\}$ are  such that
\ba\label{good:choice}
\bigcup_{i=1}^n \left(  \bigsqcup_{\tilde x_i\in \pi^{-1} (x_i)}
  U_{\tilde x_i}^{(1)} \right) \supset \cM.
  \na
which implies $   |\cM| \subset \bigcup_{i=1}^n
  U_{ x_i}^{(1)} \subset |\cB|$.

Notice that,  from the choices of nested neighbourhoods in (\ref{nest:1}) and (\ref{nest:2}), we have the following inclusions of  proper \'etale groupoids
\[
\wt \bU_{x_i}^{(1)} \subset   \wt \bU_{x_i}^{(2)} \subset \wt \bU_{x_i}^{(3)}
\]
which are defined by the inclusions  of unit spaces
\[
\bigsqcup_{\tilde x_i\in \pi^{-1} (x_i)}
 U_{\tilde x_i}^{(1)}  \subset \bigsqcup_{\tilde x_i\in \pi^{-1} (x_i)}
  U_{\tilde x_i}^{(2)}   \subset \bigsqcup_{\tilde x_i\in \pi^{-1} (x_i)}
  U_{\tilde x_i}^{(3)}.
\]
Similarly we have
\[
\cK_{x_i}^{(1)} \subset \cK_{x_i}^{(2)}  \subset \cK_{x_i}
\]
which are defined by  the inclusions  of unit spaces
\[
\bigsqcup_{\tilde x_i\in \pi^{-1} (x_i)}
\big( U_{\tilde x_i}^{(1)}\times K_{\tilde x_i} \big) \subset \bigsqcup_{\tilde x_i\in \pi^{-1} (x_i)}
\big( U_{\tilde x_i}^{(2)}\times K_{\tilde x_i} \big)  \subset \bigsqcup_{\tilde x_i\in \pi^{-1} (x_i)}
\big( U_{\tilde x_i}^{(3)}\times K_{\tilde x_i} \big).
\]
Therefore,  we have the inclusions of local Fredholm orbifold systems
\[
( \cK^{(1)}_{x_i}, \wtE^{(1)}_{x_i}, S_{x_i}) \subset   (\cK^{(2)}_{x_i}, \wtE^{(2)}_{x_i}, S_{x_i}) \subset   ( \cK_{x_i}, \wtE_{x_i}, S_{x_i}).
\]

Define
\ba\label{A'_I}
\cA'_I = \left( \bigcap_{i\in I}  \wt \bU_{x_i}^{(3)}  \right) \setminus
\left(\bigcup_{j\notin I}\overline{\wt\bU^{(2)}_{x_i}}\right),
\na
where $\overline{\wt\bU^{(2)}_{x_i}}$ is the closure of the groupoid $\wt\bU^{(2)}_{x_i}$.  Then
$\cA'_I$ is the groupoid generated by the $\cB^1$-action on
\[
\pi^{-1} (   \left(\bigcap_{i\in I}  U_{x_i}^{(3)} \right)   \setminus
\left(\bigcup_{j\notin I}\overline{ U^{(2)}_{x_i}}\right) ).
\]

Set
\ba\label{A_I}
\cA_I = \cA'_I \cap \left(  \bigcup_i   \wt \bU_{x_i}^{(1)}\right).
\na
Let $\cA_{I, J} = \cA_{J, I} =\cA_I \cap \cA_J$ with $\Phi_{I, J} = Id$ for any $I\subset J$. Then
$ \{\cA_I\}$ covers $\cM$,
so $\{\cA_I\}$ provides a virtual orbifold structure for $ \bigcup_i   U_{x_i}^{(1)}$, a small
neighbourhood of $|\cM|$ in  $|\cB|$.  We remark that the collection of $\{\cA_I\}$ plays exactly the same role as
$\{X_I\}$ in Proposition \ref{global:mfd}.

Following the constructions in Proposition \ref{global:mfd}, we have a collection of  (thickened) {\bf transversal} Fredholm
orbifold systems
\[
\{(\cC = \{\cC_I\}, \cF = \{\cF_I \to \cC_I\}, \cS= \{S_I\})\}
\]
which is a global stabilization of the original  orbifold Fredholm system $(\cB, \cE, S)$. Here
$\cC_I$ is  the total space of a finite rank vector bundle over $\cA_I$.    Note that  $\cA_I^0$,  the unit space of
$\cA_I $,  is  a subset of
\ba\label{multi:cap}
 \bigsqcup_{\{ \tilde x_i\}_{i\in I} }  \left(  \bigcap_{\{\tilde x_i\}_{i\in I}   } U_{\tilde x_i}^{(3)} \right),
 \na
 where the disjoint union is taken over  the $|I|$-tuple of points
 \[
 \{ \tilde x_i\}_{i\in I} \in  \prod_{i\in I} \pi^{-1}(x_i)
 \]
  and  given a collection of points $\{ \tilde x_i\}_{i\in I} $ the intersection inside
 the bracket is taken over those $\tilde x_i$'s.
 Over the non-empty  component $ \bigcap_{\{\tilde x_i\}_{i\in I}   } U_{\tilde x_i}^{(3)}$  labelled by $(\tilde x_i)_{i\in I}$, the restriction of  the bundle $\cC_I$ is defined to be
 \[
\left( \bigcap_{\{\tilde x_i\}_{i\in I}   } U_{\tilde x_i}^{(3)} \right) \times K_{\{\tilde x_i\}_{i\in I} }
\]
with  $ K_{\{\tilde x_i\}_{i\in I} } = \prod_{\{\tilde x_i\}_{i\in I} }  K_{\tilde x_i}$ where the product is taken over
those $\tilde x_i$'s in the $|I|$-tuple  $\{\tilde x_i\}_{i\in I} $ with the non-empty  component $ \bigcap_{\{\tilde x_i\}_{i\in I}   } U_{\tilde x_i}^{(3)}$.  We remark that unlike  the constructions in
Theorem \ref{thm:vir:system}, here we have a groupoid action on  $ K_{\{\tilde x_i\}_{i\in I} }$,
  It is easy to verify that $\cC = \{\cC_I\}$ is an infinite dimensional virtual orbifold Banach manifold.

The Banach bundle $\cF_I  \to \cC_I$ can be constructed in a similar manner, that is, over
a non-empty  component $  \left(  \bigcap_{\{\tilde x_i\}_{i\in I}   } U_{\tilde x_i}^{(3)} \right)  \times K_{\{\tilde x_i\}_{i\in I} }$,
$\cF_I$ is given by
\[
\cE|_{\bigcap_{\{\tilde x_i\}_{i\in I}   } U_{\tilde x_i}^{(3)}} \times  K_{\{\tilde x_i\}_{i\in I} }.
\]
Over this component, the section $S_I$ is defined  by
\[
(y, (k_{\tilde x_i})_{\{\tilde x_i\}_{i\in I} }) \mapsto S (y) + \sum_{\{\tilde x_i\}_{i\in I} } \beta_{\tilde x_i} (y) \phi^{-1}_{\tilde x_i , y} (k_{\tilde x_i}).
\]
 The  transversality  of $\cS=\{S_I\}$ follows from the same arguments  as in the proof of Proposition \ref{global:mfd}.

  With this  understanding of  the collection of    transversal  Fredholm
orbifold systems
\[
\{(\cC = \{\cC_I\}, \cF = \{\cF_I \to \cC_I\}, \cS= \{S_I\})\},
\]
define $\cV_I = S_I^{-1}(0) \subset \cC_I$. Then $\cV_I$  is a finite dimensional proper  \'etale groupoid. There is a canonical
finite rank vector bundle  $\bE_I$  over $\cV_I$ whose fiber at
\[
(y,  (k_{\tilde x_i})_{\{\tilde x_i\}_{i\in I} }) \in S_I^{-1}(0) \cap  \left( \big( \bigcap_{\{\tilde x_i\}_{i\in I}   } U_{\tilde x_i}^{(3)}\big)  \times K_{\{\tilde x_i\}_{i\in I} } \right)
\]
is $ \prod_{\{\tilde x_i\}_{i\in I} }  K_{\tilde x_i}$.
  Then $\bE_I$ has  a canonical section $\sigma_I: (y,  (k_{\tilde x_i})_{\{\tilde x_i\}_{i\in I} }) \mapsto
  (y,  (k_{\tilde x_i})_{\{\tilde x_i\}_{i\in I} },   (k_{\tilde x_i})_{\{\tilde x_i\}_{i\in I} })$.

The rest of the proof is just an obvious adaptation of the proof of
Theorem \ref{thm:vir:system} to the proper \'etale groupoid   cases.
 \end{proof}

\subsection{Integration and invariants for virtual orbifold systems}\label{3.4}

The existence of a partition of unity for a virtual system arising from a Fredholm
system  in  \S \ref{i:i:v:s}   can be  also adapted with some minor changes  to get a partition of unity for the  virtual  orbifold
system
\[
 \{(\cV_I, \bE_I, \sigma_I)|  I\subset \{1, 2, \cdots, n\} \}
  \]
obtained in  Theorem \ref{thm:orbi-vir-sys}.

 For any $z\in |\cM| \subset \bigcup_I  | \cA_I |   \subset  |\cB|  $,  there exists an $I_z$ (which will be fixed)
such that $z\in U_z^{(2)}$, an open neighbourhood of $z$ in  $|\cA_{I_z}|$.
By Assumption \ref{assump:orbi},
there exists a  $\cB^1$-invariant,  smooth   cut-off function
$\eta_z' \in C^\infty(\cA_{I_z}) $ such that  the induced function $\bar\eta_z'$  on the orbit space $|\cA_I|$ is  supported in $U_z^{(2)}\subset |\cA_{I_z}|$  and
$ \eta'_z\equiv 1$ in an open neighborhood $U^{(1)}_{z}$ of $z$.   We denote the corresponding
proper \'etale groupoids  by
\[
\bU^{(1)}_{z} \subset \bU^{(2)}_{z}
\]
as sub-groupoids of $\cA_I$.

   Since we assume that  $|\cM|$ is compact,  there exist  finitely many
points $z_k,1\leq k\leq m,
$ such that
$$
 \cM  \subset \cU=\bigcup_{k=1}^m \bU^{(1)}_{z_k}.
$$
On  the proper \'etale groupoid $\cU$, since $\sum_{k=1}^m \eta'_{z_k}\not=0$,
define
\begin{equation}
\eta_{z_k}=\frac{\eta'_{z_k}}{\sum_{l=1}^m \eta'_{z_l}}.
\end{equation}
 Then
\ba\label{POU:orbi}
\sum_{k=1}^m\eta_{z_k}(y)=1,\;\;\; y\in \cU.
\na
%Note that  each $\eta_{z_k}$ may {\em not} be compactly  supported in $\mc U$.

Suppose that  $\eta'_{z_k} \in C^\infty (\cA_{I_k})$ for $I_k\subset \{1, 2, \cdots, n\}$, then
$\eta_{z_k}$ is a function on $\cU\cap \cA_{I_k}$.
By composing with the projection onto $\cU\cap \cA_{I_k}$, we get    a  smooth function on
$$
\cC'_{I_k}:=  p_{I_k} ^{-1} (\cU\cap \cA_{I_k})  \subset \cC_{I_k},
$$
where $p_{I_k}: \cC_{I_k} \to \cA_{I_k}$ is the projection map associated to the bundle $\cC_{I_k} \to \cA_{I_k}$
constructed in the proof of Theorem \ref{thm:orbi-vir-sys}.

 We will still  denote    the resulting function on $\cC'_{I_k}$ as  $\eta_{z_k}$. Then we have a collection of
 smooth functions
 \[
 \{\eta_{z_k} \in C^\infty (\cC'_{I_k})|  k= 1, 2, \cdots m\}
 \]
 for a sub-collection  $\{I_k   | k=1, 2, \cdots, m\} $ of  the index set $2^{\{1, 2, \cdots, n\}}$.
   It is clear that $\eta_{z_k}$ is an  invariant
  function on $\cC_{I_k}'$  under the equivalence relation (\ref{equi:rel}) , that is, for any $J\subset I_k$,  the cut-off
  function
$\eta_{z_k}$ is constant  along the fiber of bundle
\[
\cC_{I_k,J}'\longrightarrow  \cC_{J,I_k}'.
\]

\begin{lemma}    For any $I\subset \{1, 2, \cdots, n\}$, define
$
\eta_I=\disp{\sum_{k\in \{k|I_k=I\}}} \eta_{z_k} \in C^\infty(\cC_I').
$, then
$\{\eta_I\}$
is  a partition of
unity  on the virtual manifold  $\cC':=\{\cC_I'\}$  in the sense of Definition \ref{POU:vir}.  Let
$
\cV'_I  =  \cC'_I \cap \cV_I,
$
then  the restriction of $\{\eta_I\}$
is   a partition of
unity  on the virtual manifold   $\{\cV'_I  \}$.
\end{lemma}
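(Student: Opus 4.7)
The plan is to verify the three conditions of Definition \ref{POU:vir} for $\{\eta_I\}$ on $\cC' = \{\cC_I'\}$, following the proof of Lemma \ref{POU:C'} with the added bookkeeping for the groupoid action. First, I would check invariance under the equivalence relation (\ref{equi:rel}): by Assumption \ref{assump:orbi} each $\eta'_{z_k}$ is $\cB^1$-invariant and hence descends to the orbit space $|\cA_{I_k}|$. Pulling back along the bundle projection $p_{I_k}: \cC_{I_k} \to \cA_{I_k}$ preserves this invariance, so each $\eta_{z_k}$, and therefore each $\eta_I$, is constant along the fibers of the transition bundles $\Phi_{J, I_k}: \cC_{I_k, J}' \to \cC_{J, I_k}'$ for every $J \subset I_k$. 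Hence $\eta_I$ descends to a continuous $\bar\eta_I$ on $\pi_I(|\cC_I'|)$. The compact support condition is dispensed with via the remark in Definition \ref{POU:vir}, since the orbifold analogue of Theorem \ref{cpt:sup} will later guarantee that $\eta_I\theta_I$ is compactly supported.

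The core step is the normalization $\sum_I \bar\eta_I(x) = 1$ for every $x \in |\{\cC_I'\}|$. Given a representative $(y, k) \in \cC_{I_0}'$ of $x$, let $\{J_1, \ldots, J_p\} \subset \{I_k\}_{k=1}^m$ be the indices $J$ with $y \in \cA_J$. The compatibility property $\cA_I \cap \cA_J \subset \cA_{I\cap J}$ built into (\ref{A_I}) forces this collection to contain a unique minimal element, say $J_1$ (the analogue of the ``support'' of $y$ in the sense of \cite{ChenTian}). Setting $(y, k') = \Phi_{J_1, I_0}(y, k)$ and tracking the identification of $x$ with the classes $[\Phi_{J_1, J_\ell}^{-1}(y, k')]$ under $\sim$, the sum reduces to
\[
\sum_I \bar\eta_I(x) = \sum_{\ell=1}^p \bar\eta_{J_\ell}\bigl([\Phi_{J_1, J_\ell}^{-1}(y, k')]\bigr) = \sum_{k=1}^m \eta_{z_k}(y) = 1,
\]
where the last equality is (\ref{POU:orbi}) together with $y \in \cU$. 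The restriction to $\cV_I' = \cC_I' \cap \cV_I$ is then immediate, since $\{\cV_I'\}$ inherits its virtual groupoid structure from $\{\cC_I'\}$ and both $\sim$-invariance and the summation identity descend by restriction.

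The main obstacle I anticipate is the groupoid bookkeeping around the smallest-element argument: one must know that the local neighborhoods $U_{\tilde x_i}^{(2)}$ for distinct $\tilde x_i \in \pi^{-1}(x_i)$ are disjoint (as arranged in Theorem \ref{thm:orbi-local-stab}), so that a single point cannot simultaneously lie over several $\tilde x_i$'s above the same $x_i$ and spoil the uniqueness of $J_1$. Once this disjointness and the $\cB^1$-invariance of the cut-off system are in hand, no new ideas beyond those of Lemma \ref{POU:C'} are required.
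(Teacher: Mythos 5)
Your proposal is correct and follows exactly the route the paper intends: the paper's own "proof" simply states that the argument is analogous to that of Lemma \ref{POU:C'} and leaves it as an exercise, and what you have written is precisely that analogous argument (invariance under the relation (\ref{equi:rel}) via $\cB^1$-invariance of the cut-offs, the minimal-element/support argument for the normalization, and restriction to $\{\cV_I'\}$). No discrepancy to report.
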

\begin{proof} The proof of this lemma is analogous to the proof of Lemma \ref{POU:C'}.
We leave it to
interested readers as an exercise.
\end{proof}

 Integrations and invariants for a  virtual orbifold system  can be obtained just as in
\S \ref{i:i:v:s},
under a   technical assumption which is the orbifold version of Assumption \ref{assump:2}.

\begin{assumption} \label{assump:orbi-2}
Given a sequence $\{ y_N | N=1, 2, \cdots, \}$ in $\cC^0_I$ (the unit space of $\cC_I$)  such that
\begin{itemize}
\item $S^0_I (y_N) =0$.
\item $\lim_{N\to \infty } \|k_N\| =0$  where $\| k_N\|$  is  the Banach norm of the fiber component of $ y_N$ over $p_I(y_N)$. Here $p_I$ is the projection for the bundle $\cC_I \to \cA_I$.
\end{itemize}
Then  the sequence $\{p_I(y_N)\}$ converges in $|\cC_I|$.
\end{assumption}

As pointed out in  Remark \ref{Morita}, to define an integration over a proper \'etale  groupoid
$\cG = (\cG^1 \rightrightarrows \cG^0)$,
 we only need to cover the orbit space by a collection of
 groupoids from a sub-collection of orbifold charts, rather than taking   a collection of
 $\cG$-invariant subgroupoids. From this viewpoint,  instead of considering  the groupoid $\cA_I$, we shall
 only need to  study the  non-empty components  in (\ref{multi:cap}). For  this purpose, we fix an $|I|$-tuple
 \[
 \{\tilde x_i\}_{i\in I}  \in \prod_{i\in I} \pi^{-1}(x_i)
 \]
 such that $\bigcap_{\{\tilde x_i\}_{i\in I}   } U_{\tilde x_i}^{(3)}\neq \emptyset$.  
 Let  
 \[
 \cY_I = (\cY_I^1  \rightrightarrows  \cY_I ^0)
 \] 
 be the  full sub-groupoid of $\cA_I$ with the unit space  $\cY^0_I = \cA^0_I  \cap \left( \bigcap_{\{\tilde x_i\}_{i\in I}   } U_{\tilde x_i}^{(3)}\right)$. 
  Denote by
\[
 K_I := \prod_{   \{\tilde x_i\}_{i\in I} }K_{\tilde x_i},
\]
 then $
 \cC^0_I |_{\cY^0_I }  = \cY^0_I  \times K_I \longrightarrow \cY^0_I
$
 generates through $\cA_I$-action a vector bunlde $\cC_I|_{\cY_I }$  over $\cY_I$.  For any ordered pair $I\subset J$, we may choose
 the $|I|$-tuple
$ \{\tilde x_i\}_{i\in I}$ and the $|J|$-tuple
$ \{\tilde x_i\}_{i\in J}$ such that
\[
\{\tilde x_i\}_{i\in I} \subset  \{\tilde x_i\}_{i\in J}.
\]
We set $ \cY^0_{I, J} =\cY^0_{J, I} = \cY^0_I\cap \cY_J^0$, then
\ba\label{J-I}
\cC^0_J |_{\cY^0_{J, I}} = \cC^0_I|_{\cY^0_{I, J}}  \times  \prod_{i\in J\setminus I}K_{\tilde x_i},
\na
generates the bundle $\cC_{J, I}|_{\cY _{J, I}}  \to  \cC_{I, J}|_{\cY _{I, J}}$ so that 
$\{\cC_I|_{\cY_I }\}$ is a virtual proper \'etale groupoid.

 Define $\cW_I = \cV_I \cap \big( \cC_I|_{\cY_I } \big)   \subset \cV_I$, then the  virtual  orbifold
system
\[
 \{(\cV_I, \bE_I, \sigma_I)|  I\subset \{1, 2, \cdots, n\} \}
  \]
arising  from Theorem \ref{thm:orbi-vir-sys} is Morita equivalent to  the  virtual  orbifold
system
\[
\{(\cW_I, \bE_I|_{\cW_I}, \sigma_I)|  I\subset \{1, 2, \cdots, n\} \}.
  \]

Given $I$ with a fixed  $|I|$-tuple
$
 \{\tilde x_i\}_{i\in I}  \in \prod_{i\in I} \pi^{-1}(x_i)
$
 such that $\bigcap_{\{\tilde x_i\}_{i\in I}   } U_{\tilde x_i}^{(3)}\neq \emptyset$.
For $i\in I$, let  $\Theta_i$ be a volume form on  the finite dimensional  $G_{\tilde x_i}$-invariant  linear space $K_{\tilde x_i}$  that is
supported in a small  $\epsilon$-ball  $B_{\epsilon}$ of $0$ in $K_{\tilde x_i}$.
Then the volume form $
 \bigwedge_{i\in I}\Theta_i
$ on $K_I =\oplus_{i\in I} K_{\tilde x_i}$
defines a Thom form $\Theta_I$ on bundle $  \cC_I|_{\cY_I } \ \to \cY_I$.

By (\ref{J-I}),    the volume form
$
 \bigwedge_{i\in J\setminus I}\Theta_i,
$
defines a  Thom form $\Theta_{I, J}$     of the  bundle
\[
\cC_{J, I}|_{\cY _{J, I}}  \to  \cC_{I, J}|_{\cY _{I, J}}.
  \]
 Then one can check  that
the collection of Thom forms $\{\Theta_{I, J} |I\subset J\} $ is a transition Thom  form
for $\{\cC_I |_{\cY_I}\}$.

 Under the inclusions,
\[
\cW_I  = S_I^{-1} (0) \cap \big( \cC_I|_{\cY_I }\big)    \subset     \bE_I|_{\cW_I}
\]
where the last inclusion is given by the zero section of $\bE_I$, we can restrict the Thom form $\Theta_I$
to $\cW_I$ to get an Euler  form  for  the bundle $\bE_I|_{\cW_I}$. Denote this Euler  form of  $\bE_I|_{\cW_I}$  by
  $\theta_I$.
  Then
  \[
  \theta=
\{\theta_I  | I\subset \{1, 2, \cdots, n\}  \}
\]
  is a  twisted virtual  form on $\{\cW_I\}$  with respect to the transition Thom form
$\Theta =\{ \Theta_{I, J}|_{ \cW_{J,I}}:  I\subset J\} $. This twisted virtual form will be called a
{\bf virtual Euler form }  of $\{\bE_I\}$.

 \begin{theorem}
Under the Assumption \ref{assump:orbi-2},  for any $I$, the vertical support of $\Theta_I$   can be chosen sufficiently  small  such that
 $\eta_I\theta_I$ is compactly supported in $\cW_I$.
\end{theorem}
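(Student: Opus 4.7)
The plan is to adapt the proof of Theorem \ref{cpt:sup} from the virtual manifold setting to the proper \'etale virtual groupoid setting. The statement is local in $I$, so I fix $I$ together with the $|I|$-tuple $\{\tilde x_i\}_{i\in I}$ that was used to define $\cY_I$ and the Thom form $\Theta_I$. Exploiting the Morita equivalence described in Remark \ref{Morita}, it suffices to verify the claim on the unit space $\cW_I^0\subset \cV_I^0$, which sits inside the $G_{\{\tilde x_i\}_{i\in I}}$-invariant slice
\[
\Bigl(\bigcap_{\{\tilde x_i\}_{i\in I}} U_{\tilde x_i}^{(3)}\Bigr)\times K_I,
\]
exactly as in the manifold case of Theorem \ref{cpt:sup}.

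The proof then proceeds by contradiction. Suppose that no matter how small the supports of the $\Theta_i$ are taken, $\eta_I\theta_I$ fails to be compactly supported in $\cW_I$. Then for each integer $N\geq 1$ we can choose a point $y_N=(z_N,k_N)$ in the unit space of $\cW_I$ such that $\eta_I(y_N)\theta_I(y_N)\neq 0$, $\|k_N\|<1/N$, and the sequence $\{y_N\}$ eventually leaves every relatively compact subgroupoid of $\cW_I$. Since $y_N\in\cV_I=S_I^{-1}(0)$, we have $S_I(y_N)=0$, which by the defining formula reduces at $k_N=0$ to $S(z_N)=0$ in the limit. Passing to a $\cB^1$-orbit representative in the slice, Assumption \ref{assump:orbi-2} forces $\{p_I(y_N)\}$ to converge in $|\cC_I|$; its limit $y_\infty$ satisfies $S_0(y_\infty)=0$, so $[y_\infty]\in|\cM|$.

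Next, I use the support structure of $\eta_I$. By construction, $\eta_I=\sum_{k:I_k=I}\eta_{z_k}$, and each $\eta_{z_k}$ has $\bar\eta_{z_k}$-support contained in $U_{z_k}^{(2)}$, which by Assumption \ref{assump:orbi} sits with compact closure inside $U_{z_k}^{(3)}\subset |\cA_I|$. Therefore $\operatorname{supp}(\eta_I\theta_I)$, restricted to $\cW_I$, is contained in the preimage of a relatively compact subset of $|\cA_I|$ under $p_I$. Combined with the convergence $[p_I(y_N)]\to[y_\infty]\in |\cM|\cap U_{z_k}^{(2)}$ for some $k$ with $I_k=I$ (recall that the cover property \eqref{good:choice} and the definition of $\cU$ ensure such a $z_k$ exists), this contradicts the assumption that $\{y_N\}$ escapes every relatively compact subgroupoid of $\cW_I$. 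Hence for sufficiently small $\epsilon$-balls in each $K_{\tilde x_i}$ the required compactness holds.

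The main obstacle is the translation between unit-space sequences and orbit-space convergence in the groupoid setting, since $\cW_I$ is only a proper \'etale groupoid, not a manifold. This is handled by using properness of $\cA_I$ (inherited from $\cB$), which guarantees that a sequence in the unit space whose image converges in $|\cA_I|$ has, after passing to a $\cB^1$-equivalent subsequence, a convergent representative; the \'etale property then transfers convergence from the slice model back to $\cW_I^0$. Once this passage is in place, the remaining argument is the word-for-word analogue of Theorem \ref{cpt:sup}.
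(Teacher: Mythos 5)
Your proposal follows the same route as the paper: the paper's own proof of this statement is literally ``the same as the proof of Theorem~\ref{cpt:sup}'', namely the contradiction argument with a sequence of zeros $(z_N,k_N)$ of $S_I$ satisfying $\|k_N\|<1/N$, to which the virtual convergence assumption is applied; your reduction to the slice via the Morita equivalence of Remark~\ref{Morita} and your handling of unit-space versus orbit-space convergence are exactly the adaptations the paper leaves to the reader.

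One assertion in your write-up is false, although it turns out not to be load-bearing. You claim that $U_{z_k}^{(2)}$ ``sits with compact closure inside $U_{z_k}^{(3)}$'' by Assumption~\ref{assump:orbi}. That assumption only gives the containment $\overline{U_{z_k}^{(2)}}\subset U_{z_k}^{(3)}$; these are open subsets of a Banach orbifold, so their closures are not compact, and consequently $\supp(\eta_I\theta_I)$ is \emph{not} contained in the $p_I$-preimage of a relatively compact subset of $|\cA_I|$. (If it were, compactness would follow immediately from finite-dimensionality of $K_I$, and Assumption~\ref{assump:orbi-2} would be superfluous.) What the nesting of the $U_{z_k}^{(i)}$'s actually buys you is that the limit point produced by Assumption~\ref{assump:orbi-2} cannot escape to the boundary of $\cA_I$, i.e.\ it lands in $|\cM|\cap U_{z_k}^{(2)}$ and hence in $\cW_I$; the compactness itself comes solely from the convergence forced by Assumption~\ref{assump:orbi-2} together with $k_N\to 0$, exactly as in your final contradiction. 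With the word ``compact'' replaced by ``contained in'', your argument closes. (You should also justify, as you implicitly do by diagonalizing over an exhaustion of the finite-dimensional, $\sigma$-compact unit space of $\cW_I$, that a single escaping sequence with $\|k_N\|<1/N$ can be extracted from the failure of compactness for every $\epsilon$.)
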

\begin{proof} The proof  is the same as the proof  of Theorem \ref{cpt:sup}. We omit it  here.
\end{proof}

 We assume that the virtual orbifold system  $\{( \cV_I,  \bE_I,  \sigma_I ) \} $ is {\bf oriented}.  Let $\theta=\{\theta_I\}$ and $\eta=\{\eta_I\}$
be the  partition of  unity and  the  virtual Euler form  constructed as above.

Given  $\alpha =\{\alpha_I\}$   a  degree $d$   differential  form on $ \cV=\{\cV_I\}$, we define a virtual
integration of $\alpha$ to be
\ba\label{def:integration:orbi}
\int_{\cV}^{vir} \alpha
=\sum_{I}\int_{\cW_I} \eta_I \cdot \theta_I  \cdot (\alpha_I)|_{\cW_I}.
\na

Now,  given   cohomology class  $ H^d(\cB)$, suppose that under the following smooth maps
\[
\cV_I \subset C_I = X_I \times K_I \longrightarrow X_I \subset \cB,
\]
for   each $ I\subset \{1, 2, \cdots, n\}$,
the pull-back cohomology class   can be represented by a
closed degree  $d$  virtual differential  form $\alpha_I$.   Then we can
define an   invariant   associated to the Fredholm system  $(\cB, \cE, S)$ to be
\ba\label{invariant:orbi}
\Phi(\alpha)=\int_{\cV}^{vir}\alpha.
\na

\begin{remark}\label{rem:ind}
\begin{enumerate}
\item    Given two sets
of local stabilisations with the resulting orbifold virtual systems denoted by
\[
 \{(\cV'_I, \bE'_I, \sigma'_I)|  I\subset \{1, 2, \cdots, m\} \}  \qquad \text{and} \qquad 
 \{(\cV''_I, \bE''_I, \sigma''_I)|  I\subset \{1, 2, \cdots, n\} \}
  \]
  respectively. 
  We can constructed an orbifold  virtual system with boundary from the Fredholm system  $(\cB\times [0, 1], \cE\times [0, 1], S
  \times [0, 1])$, equipped with a  local stabilisation extending two  local stabilisations at the two ends.
 Then we apply the Stokes' formula for the virtual integration in Theorem \ref{int:well-def} to show that 
the invariant defied in (\ref{invariant:orbi}) does not depend on the choice of local stabilisations.
  
\item When the expected dimension of $\cM$,  the difference between the  dimension of $\cV$ and  the virtual rank of $\bE$, is zero, then
$\Phi (1)$ should be thought of
   a virtual Euler number of $\bE$.   If  it happens that $\cM$ consists of  a collection of  smooth   orbifold
   points $\{(x_1, G_{x_1}), (x_2, G_{x_2}), \cdots , (x_n, G_{x_n})\}$, then
   \[
   \Phi (1) = \sum_{i=1}^n \dfrac{1}{|G_{x_i}|},
   \]
     agrees with the  orbifold
   Euler characteristic of $\cM$ (a rational number in general).

\end{enumerate}

\end{remark}

\begin{remark}
In the Gromov-Witten theory, there are some further technical issues in applying
the virtual neighborhood technique to  the moduli spaces of stable maps. There are two main issues:
\begin{enumerate}
\item
the underlying space of
stable maps  is  often stratified,
\item  some of the stratum (like  the domain curves being
spheres with less than three marked point)
 are not smooth due to the fact that the reparametrization
group action is not differentiable.
\end{enumerate}
So in general, the virtual neighborhood technique developed in this section should be thought as a guiding principle
rather than a complete recipe to define the Gromov-Witten invariant. In the remaining part of the paper,
we will explain how to overcome the non-differentiability issue of a reparametrization group and how to get a virtual orbifold system for the  presence of spheres with no marked points.
In this case, as explained in the introduction, the reparametrization group is $PSL(2, \C)$ acting non-smoothly
on  the underlying Fredholm system $(\wtB, \wtE, \dbar_J)$.  It turns out that we can apply the
virtual neighborhood technique directly to $(\wtB, \wtE,  \dbar_J)$  in a  $PSL(2, \C)$-equivariant way, even though the action is not smooth.
In order to establish a $G$-equivariant virtual neighborhood technique when the $G$-action is not smooth, we establish  a slice and tubular neighbourhood theorem for
the  $PSL(2, \C)$-action  on $\wtB$, and propose a notion of smoothness on these slices even though
coordinate changes between these slices are only continuous.  We establish these analytical  results in the next Section.

\end{remark}

\section{Analytical foundation for pseudo-holomorphic spheres}
\label{4}

As in the introduction, let $\wtB$ be the  space
 of $W^{1,p}$-maps from  the standard Riemann sphere $\Sigma$ to
  a compact symplectic manifold  $(X, \omega,J)$   with a fixed homology class $A\in H_2(X, \Z)$,  that is,
\[
\wtB =\{ u\in W^{1, p}(\Sigma, X)| f_*([\Sigma]) =A\},
\]
where $p$ is an even integer greater than $2$.
Then $\wtB$  is an   infinite dimensional Banach manifold whose tangent space at a point $u:\Sigma\to X$ is the Banach space
$
T_u\wtB = W^{1, p}(\Sigma, u^*TX),
$
the space of $W^{1, p}$-sections of the pull-back tangent bundle $u^*TX$.  The exponential map for the Riemannian manifold
$(X, \omega, J)$ provides a coordinate chart  at $u$
\[
\exp_u:   T_u\wtB \longrightarrow \wtB
\]
which is injective on an $\epsilon$ -ball in $T_u\wtB $ for small $\epsilon$.
There is a natural almost complex structure on $\wtB$ induced by $J$.

  Let $\wtE$ be the infinite dimensional Banach bundle over $\wtB$ whose fiber at $u$ is $L^p (\Sigma, \Lambda^{0,1}  T^* \Sigma\otimes_\C u^*TX)$, the space of
$L^p$-section of the bundle $ \Lambda^{0,1} T^*\Sigma\otimes_\C u^*TX$.  The Cauchy-Riemann operator defines a  smooth  Fredholm   section $\dbar_J:  \wtB \to \wtE$.  The  zero set of this section
\[
\wt\cM =\dbar_J^{-1}(0) \subset \wtB
\] is our moduli space of parametrized pseudo-holomorphic spheres in $(X,\omega, J)$ with homology class
$A$.

  The actual moduli space of unparametrized pseudo-holomorphic spheres in $(X,\omega, J)$ is
  the quotient of $\wt\cM$ under the action of the reparametrization group $PSL(2, \C)$.
 Let $G=PSL(2, \C)$ act on $\wtB$ as reparametrization of the domain $\Sigma$
\ba\label{action}
\begin{array}{lll}
G\times \wtB &\longrightarrow & \wtB \\
(g, u) &\mapsto & g\cdot u = u\circ g^{-1}
\end{array}
\na
This action can be lifted to an action on $\wtE$
\[
G\times \wtE \longrightarrow   \wtE
\]
which is given by the pull-back $(g^{-1})^* (\eta)   \in \wtE_{g\cdot u}$ for any $g\in G$ and $\eta \in \wtE_{u}$.
The moduli space $\wt\cM$ is $G$-invariant as the section $\dbar_J$ is $G$-invariant.
The main technical issue in implementing the virtual neighborhood technique is that  the action (\ref{action})  is    continuous but   not  differentiable, as in \cite{Ruan} and \cite{McW}.
 So the bundle $\wtE\to \wtB$ is only a topological $G$-equivariant bundle with a $G$-invariant section
 $\dbar_J$.  Denote by
 \[
 \cB =\wtB/G, \qquad   \cE =\wtE/G, \qquad \cM = \wtM/G
 \]
 the corresponding quotient spaces. Then $\cB$ is a topological orbifold  and $\cE$ is a  topological  orbifold vector bundle with a continuous section $\dbar_J: \cB \to \cE$. We will assume that
\[
\cM= \cM_{0, 0}(X, A, J)
\]
is compact which is guaranteed by some conditions on $A$, such as $A$ is indecomposable, see
Theorem 4.3.4 in \cite{McS}.

Therefore, we obtain a smooth Fredholm system
\[
(\wtB, \wtE, \dbar)
\]
whose quotient  by the reparametrization group $G$ is only a topological orbifold
 Fredholm system
\[
(\cB, \cE, \dbar),
\]
A priori,  we can not  apply the virtual neighborhood technique developed in Section \ref{3} to get a virtual system. As $\wtM$
consists of smooth maps, the key observation in this paper is that we can establish a slice and
 tubular neighbourhood theorem at any point in $\wtM$.   This enables us to
apply the principle of virtual neighborhood technique to $(\wtB, \wtE, \dbar)$ in a $G$-equivariant way.

In this section, we will  prove the slice theorem  for the   $G$-action at each smooth point $u_0\in \wtB$, that means,
$u_0\in C^\infty(\Sigma, X)$,   and establish a tubular neighbourhood for the $G$-orbit through  $ u_0$.  Each slice along a point on the orbit
$\cO_{u_0}= G_\bullet u_0$ is homeomorphic an open neighbourhood of $[u_0]\in \cB = \wtB/G$.   Theorem B in
the Introduction will be proved in Subsection \ref{4.1}, see Theorem \ref{slice-theorem:orbifold}.

We remark  that these slices  are not diffeomorphic and do not form a compatible smooth  orbifold structure near $[u_0]\in \cB$, but their union is  a $G$-invariant smooth neighbourhood of the $G$-orbit in $\wtB$.  This enables us to  propose  certain classes of smooth functions/sections on a small neighbourhood of the moduli space $\cM_{0, 0}(X, A, J)$ in
Subsection \ref{4.2}.   In particular, we construct certain cut-off functions  which fulfill the requirement of Assumption \ref{assump:orbi} for local stabilizations and
a partition of unity for the virtual orbifold system.

To demonstrate how these slices   and  tubular neighbourhoods   can be applied to study the moduli space, we
introduce  a local smooth  obstruction   bundle  $\bE_u$  over  each slice at any point $u \in \wtM$, and show that it can be extended to a smooth sub-bundle of the restriction of $\wtE$ to the tubular neighbourhood of the orbit through $u$.  This is crucial to carry out the local and global  stabilizations in $G$-equivariant way, even though
the  $G$-action
is not smooth.

\subsection{Slice and tubular neighbourhood theorem} \label{4.1}

Given $u_0 \in C^\infty(\Sigma, X)$, for simplicity, we assume that the isotropy group $G_{u_0}$ is trivial. In general,
$G_{u_0}$  is  a finite group, we will return to this general case later.  We will define  a coordinate chart
for a neighbourhood of $[u_0]$ in $\cB$ which is achieved by constructing a slice at $u_0$ for the
$G$-action on $\wtB$.

\begin{lemma} \label{smooth-orbit} The $G$-orbit $\cO_{u_0}= G_\bullet u_0 \subset \wtB$ is a smooth submanifold of $\wtB$.
 In particular,
the tangent bundle $T\cO_{u_0}$ is a smooth sub-bundle of $T\wtB|_{\cO_{u_0}}$.
\end{lemma}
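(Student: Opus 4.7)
The plan is to realize $\cO_{u_0}$ as the image of the orbit map
\[
\mu_{u_0}: G \longrightarrow \wtB, \qquad g \longmapsto g\cdot u_0 = u_0 \circ g^{-1},
\]
and to verify that $\mu_{u_0}$ is a smooth embedding of the finite dimensional Lie group $G$ into the Banach manifold $\wtB$. Although the evaluation map $G \times \wtB \to \wtB$ fails to be differentiable in general, the restricted map $\mu_{u_0}$ at a \emph{fixed} smooth $u_0$ is of class $C^\infty$: for any $g \in G$ and any $k$-tuple of tangent vectors at $g$, identified via left-translation with polynomial vector fields on $\Sigma = \C\cup\{\infty\}$, the $k$-th derivative of $g \mapsto u_0 \circ g^{-1}$ is a polynomial expression in iterated derivatives of $u_0$ composed with $g^{-1}$, which exists in $C^\infty(\Sigma, u_0^*TX) \hookrightarrow W^{1,p}(\Sigma, u_0^*TX)$. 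This is the key step that exploits the hypothesis $u_0 \in C^\infty(\Sigma, X)$, and is precisely what fails when $u_0$ is only of Sobolev class.

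Next I would compute the differential at the identity,
\[
d\mu_{u_0}|_e : \fg = \mathfrak{psl}(2,\C) \longrightarrow T_{u_0}\wtB, \qquad \xi \longmapsto -du_0(\xi),
\]
where $\xi$ is identified with a holomorphic vector field on $\Sigma$. Because $u_0$ is non-constant (since $A\neq 0$) and $G_{u_0}$ is trivial, this linear map is injective, with image a $6$-dimensional smooth subspace of $C^\infty(\Sigma, u_0^*TX)$. Composing with left translations in $G$ and the induced push-forwards on $\wtB$ shows that $d\mu_{u_0}|_g$ is injective for every $g\in G$, so $\mu_{u_0}$ is a smooth injective immersion.

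To upgrade the injective immersion to an embedding with closed image, I would show that $\mu_{u_0}$ is proper: if $g_n \to \infty$ in $G$, then, up to a subsequence, the M\"obius transformations $g_n^{-1}$ degenerate and concentrate away from at most two bubble points of $\Sigma$, so that $u_0 \circ g_n^{-1}$ cannot converge in the $W^{1,p}$-topology because the energy of $u_0 \circ g_n^{-1}$ concentrates in arbitrarily small discs while the total energy of $u_0$ is bounded; the only possible weak limit is a non-constant bubble, not a point of the orbit. Combined with injectivity, this proves that $\mu_{u_0}$ is a smooth embedding, so $\cO_{u_0}$ is a $6$-dimensional smooth submanifold of $\wtB$. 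The tangent bundle assertion is then immediate, since $T\cO_{u_0}$ is the image of $TG$ under the fibrewise injective smooth bundle map $d\mu_{u_0}$, hence a smooth rank-$6$ sub-bundle of $T\wtB|_{\cO_{u_0}}$. For general finite isotropy $G_{u_0}$, the orbit map factors through the smooth quotient $G/G_{u_0}$, and the same argument goes through.

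The principal obstacle is the first step, verifying that the orbit map through a smooth basepoint is genuinely $C^\infty$ despite the non-differentiability of the total action; this is where the assumption $u_0\in C^\infty$ is essential and where the difficulty pointed out in \cite{McW} is circumvented on individual smooth orbits. Properness and the sub-bundle statement then reduce to standard finite-dimensional Lie group arguments together with elementary bubbling analysis on $\Sigma$.
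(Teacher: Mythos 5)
Your proposal follows essentially the same route as the paper: the key step in both is the observation that the map $(g,x)\mapsto u_0(g^{-1}\cdot x)$ from $G\times\Sigma$ to $X$ is jointly smooth because $u_0$ is smooth and $G$ acts smoothly on $\Sigma$, so the orbit map $G\to\wtB$ is $C^\infty$ with differential $\xi\mapsto -du_0(\xi)$ at the identity, even though the global $G$-action on $\wtB$ is not differentiable. The paper's proof essentially stops there, whereas you additionally and correctly verify that the differential is injective (which really follows from finiteness of $G_{u_0}$, since a nonzero $\xi$ with $du_0(\xi)\equiv 0$ would generate a one-parameter subgroup of the isotropy group) and that the orbit map is proper (via concentration of energy, which forces the $W^{1,p}$-norm to blow up along a divergent sequence in $PSL(2,\C)$); these steps are needed to conclude that the image is an embedded submanifold and are left implicit in the paper.
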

\begin{proof} The inclusion map $\phi:  \cO_{u_0} \to \wtB$ is given by
\[
\phi(g\cdot u_0) = u_0\circ g^{-1} : \Sigma \longrightarrow X,
\]
whose  differential   $d\phi$  at   $u_0$ is
\[
  d\phi:  T_{u_0}\cO_{u_0} = T_{Id}G \longrightarrow  T_{u_0 } \wtB
  \]
  mapping $\xi\in T_{Id}G$ to $ - du_0 (\xi)$. Here $\xi$ is identified with a holomorphic vector field on $\Sigma$
generated by the infinitesimal action of $\xi$, and $du_0$ is the differential of $u_0$.
Consider the map
$$
\Phi: G\times \Sigma\xto{\ H \ } \Sigma\xto{\ u_0 \ }X
$$
where $H(g,x)=g^{-1}\cdot x$. Then  $\Phi$ is  a smooth map. This implies that
$d\phi$ is smooth.
\end{proof}

 Next we define  a
$G$-invariant  $L^2$-metric  on $T\wtB|_{\cO_{u_0}}$ so that the  $L^2$-orthogonal bundle of
$ T\cO_{u_0}$   in $T\wtB|_{\cO_{u_0}}$ can be defined.  Recall that there is an
 $L^2$-inner product  $\Omega_{u_0}:  T_{u_0} \wtB \times T_{u_0}\wtB  \to \R
$ that
is given by
\ba\label{at u_0}
\Omega_{u_0}( v, w) = \int_\Sigma h_{u_0(x)}(v(x),w(x)) dvol_\Sigma(x),
\na
for $v, w\in T_{u_0}\wtB$.
Here $h$ is the Riemannian metric on $X$ induced by $(\omega, J)$ and  $dvol_\Sigma(x)$ is  the volume element  for  the round  metric on  $\Sigma = S^2$.  In order to  get a $G$-invariant $L^2$-metric on $T\wtB|_{\cO_{u_0}}$, we only need
to define an $L^2$-inner product on
$
T_{g\cdot u_0} \wtB
$
as
\ba\label{invariant:metric}
\Omega_{g\cdot  u_0}( v, w) = \Omega_{u_0}( g^{-1}\cdot v,  g^{-1}\cdot w)
\na
for $v, w\in T_{g\cdot u_0} \wtB = W^{1, p}(\Sigma, (u_0\circ g^{-1})^* TX)$.
Note that when $g$ varies in $G$,   $g^{-1}\cdot v$  and $g^{-1}\cdot w$ are not smooth sections of  $T\wtB|_{\cO_{u_0}}$, so  the following  lemma is   surprisingly pleasant  and is one of key lemmas in this paper.

\begin{lemma} \label{L^2-metric}  The $L^2$-metric $\Omega$ is a $G$-invariant smooth metric on $T\wtB|_{\cO_{u_0}}$.
\end{lemma}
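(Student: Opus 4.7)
The plan is to address $G$-invariance and smoothness separately: invariance is a formal consequence of the definition, while smoothness reduces, via a change of variables on $\Sigma$, to a classical smoothness-under-the-integral argument once the correct smooth bundle structure on $T\wtB|_{\cO_{u_0}}$ is set up.

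For $G$-invariance, take any $u=g_0\cdot u_0\in\cO_{u_0}$ and $v,w\in T_u\wtB$. The cocycle identity $(gg_0)^{-1}\cdot(g\cdot v)=g_0^{-1}\cdot v$ for the left $G$-action on sections, combined with the defining relation (\ref{invariant:metric}) applied to $gg_0$, gives
\begin{equation*}
\Omega_{g\cdot u}(g\cdot v,g\cdot w)=\Omega_{u_0}\bigl((gg_0)^{-1}\cdot(g\cdot v),(gg_0)^{-1}\cdot(g\cdot w)\bigr)=\Omega_{u_0}(g_0^{-1}\cdot v,g_0^{-1}\cdot w)=\Omega_u(v,w).
\end{equation*}

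For smoothness I would first perform the change of variables $y=g(x)$ in the defining integral. Using $(g^{-1}\cdot v)(x)=v(g(x))$ and writing $J_{g^{-1}}$ for the Jacobian determinant of the diffeomorphism $g^{-1}:\Sigma\to\Sigma$ with respect to the round volume form, this produces the transparent expression
\begin{equation*}
\Omega_{g\cdot u_0}(v,w)=\int_\Sigma h_{(g\cdot u_0)(y)}\bigl(v(y),w(y)\bigr)\, J_{g^{-1}}(y)\, dvol_\Sigma(y).
\end{equation*}
The pointwise coefficients $(g,y)\mapsto h_{(g\cdot u_0)(y)}$ and $(g,y)\mapsto J_{g^{-1}}(y)$ are jointly smooth because $u_0\in C^\infty(\Sigma,X)$ and $G=PSL(2,\C)$ acts on $\Sigma$ by smooth M\"obius transformations; the lack of differentiability of the $G$-action on $\wtB$ plays no role here, since both factors depend on $g$ only through smooth finite-dimensional data.

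To convert this fiberwise formula into smoothness of $\Omega$ as a section of the bundle of symmetric bilinear forms on $T\wtB|_{\cO_{u_0}}$, I would invoke Lemma \ref{smooth-orbit}: near any fixed $g_0\in G$ the smooth embedding $g\mapsto u_0\circ g^{-1}$ lifts, via the exponential map of $\wtB$ at $u_0\circ g_0^{-1}$, to a smoothly varying $\xi(g)\in T_{u_0\circ g_0^{-1}}\wtB$ with $u_0\circ g^{-1}=\exp_{u_0\circ g_0^{-1}}\xi(g)$, and parallel transport along the short geodesics $t\mapsto\exp_{u_0\circ g_0^{-1}}t\xi(g)$ furnishes a smooth local trivialization of $T\wtB|_{\cO_{u_0}}$ over a neighbourhood of $g_0$. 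Pulled back through this trivialization, the above integral becomes a $g$-dependent symmetric bilinear form on the fixed Banach space $T_{u_0\circ g_0^{-1}}\wtB$, and it remains to differentiate under the integral. The main technical step, which I expect to be the only real obstacle, is verifying uniform bounds on the integrand and its $g$-derivatives to justify this differentiation; these follow from the smoothness of $u_0$, the compactness of $\Sigma$, and the fact that the parallel transport together with the M\"obius action and its derivatives are bounded uniformly on compact subsets of $G\times\Sigma$.
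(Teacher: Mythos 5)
Your proposal is correct and follows essentially the same route as the paper: invariance from the definition, then the substitution $y=g(x)$ to rewrite $\Omega_{g\cdot u_0}(v,w)$ as an integral whose coefficients $h_{(g\cdot u_0)(y)}$ and the Jacobian factor depend smoothly on $(g,y)$ because $u_0$ is smooth and $G$ acts smoothly on $\Sigma$. The paper stops at that observation, whereas you additionally spell out the local trivialization and differentiation under the integral; this is extra rigor on the same argument, not a different approach.
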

\begin{proof} The $G$-invariant property
$
\Omega_{u_0}( v, w) = \Omega_{g\cdot  u_0}( g\cdot v, g\cdot w)
$
follows from the definition.
For  $v, w\in T_{g\cdot u_0} \wtB$,
 \ba\label{at gu_0}
\Omega_{g\cdot  u_0}( v, w)  =  \int_\Sigma h_{u_0(x)}(v(g(x)),w(g(x))) dvol_\Sigma(x).
\na
Using the  substitution $y=g( x)$, we get
\[
\Omega_{g\cdot  u_0}( v, w)  =  \int_\Sigma h_{u_0(g^{-1}(x))}(v(y),w(y)) Jac_g^{-1} dvol_\Sigma(y)
\]
where $Jac_g$ is the Jacobian associated to the automorphism $g$ of $\Sigma$. As $u_0$ is a smooth map
and  the $G$-action on  $\Sigma$ is smooth, so $\Omega$ is a smoothly varying metric  on $T\wtB|_{\cO_{u_0}}$.
\end{proof}

The main result of this section is  the following theorem about the existence of slices and tubular neighbourhoods  for a smooth $G$-orbit.

\begin{theorem} \label{slice-theorem}  Let $u_0: \Sigma \to X$ be a smooth  map in $\wtB$ with trivial isotropy group.  Let $\pi_\cB$ be the   quotient map $\pi_\cB: \wtB \to \cB$.
\begin{enumerate}
\item The $G$-orbit $\cO_{u_0}$ through $u_0$ is a smooth sub-manifold of $\wtB$ whose normal bundle  $\cN_{\cO_{u_0}}$ can be identified with a
smooth sub-bundle of $T\wtB|_{\cO_{u_0}}$.
\item  There is  a sufficiently small  $\eps_0>0$ and  an   open $\epsilon_0$-ball  $  \cN^{\eps_0}_{u_0}$ in the fiber of $ \cN_{\cO_{u_0}}$
at $u_0$
with respect to  the $W^{1, p}$-norm  such that   $\cS^{\eps_0}_{u_0} = \exp_{u_0} (  \cN^{\eps_0}_{u_0})  $  is a
{\bf slice}  at $u_0$ for the  $G$-action in the sense that
\[
\pi_\cB:  \cS^{\eps_0}_{u_0}  \to \cB
\]
is a homeomorphism onto its image $  \pi_\cB(\cS^{\eps_0}_{u_0}) \subset \cB$ for which we use the notation   $\bU^{\eps_0}_{u_0}$.
The  map
\[
\phi_{u_0}:  \cN^{\eps_0}_{u_0}  \xto{\ \ \exp_{u_0} \ \ } \cS^{\eps_0}_{u_0}  \xto{\pi_\cB}  \bU^{\eps_0}_{u_0}
\]
is  called a {\bf  Banach coordinate chart} of $\cB$  at $[u_0]$.
\item  There exists  a  tubular neighbourhood, denoted by $\cT^{\eps_0}_{u_0}$, which is  a $G$-invariant open set in $\wtB$ containing $\cO$.
  The restricted quotient map $ \pi_\cB:  \cT^{\eps_0}_{u_0} \longrightarrow  \bU^{\eps_0}_{u_0}$ is a topologically trivial principal $G$-bundle with a trivialising  section
given by the slice $ \cS^{\eps_0}_{u_0}$.
\end{enumerate}
\end{theorem}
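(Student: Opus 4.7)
The plan is to assemble the three statements from the two preceding lemmas and a careful transversality/injectivity argument, keeping in mind that the only smoothness available along an orbit is the one produced in Lemma~\ref{L^2-metric}.

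For part~(1), Lemma~\ref{smooth-orbit} already gives that $\cO_{u_0}$ is a smooth finite dimensional submanifold and that $T\cO_{u_0}$ is a smooth subbundle of $T\wtB|_{\cO_{u_0}}$. I would take the normal bundle $\cN_{\cO_{u_0}}$ to be the fiberwise $L^2$-orthogonal complement of $T\cO_{u_0}$ with respect to the $G$-invariant inner product $\Omega$ constructed in Lemma~\ref{L^2-metric}. At each point $g\cdot u_0$ the splitting
\[
T_{g\cdot u_0}\wtB = T_{g\cdot u_0}\cO_{u_0}\oplus \cN_{\cO_{u_0}, g\cdot u_0}
\]
is well-defined because $T_{g\cdot u_0}\cO_{u_0}$ is finite dimensional and $\Omega$ is positive definite on it; the orthogonal projection depends smoothly on $g$ since both $T\cO_{u_0}$ and $\Omega$ do. This gives a smooth, topologically $G$-equivariant subbundle of $T\wtB|_{\cO_{u_0}}$.

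For part~(2), set $\cS^{\eps_0}_{u_0}=\exp_{u_0}(\cN^{\eps_0}_{u_0})$ where $\cN^{\eps_0}_{u_0}$ is the $W^{1,p}$-ball of radius $\eps_0$ in the fiber over $u_0$. I would first observe that $\exp_{u_0}$ is a diffeomorphism from a neighborhood of the origin in $T_{u_0}\wtB$ onto a neighborhood of $u_0$ in $\wtB$, so $\cS^{\eps_0}_{u_0}$ is a smooth Banach submanifold for $\eps_0$ small. The essential point is that $\pi_\cB|_{\cS^{\eps_0}_{u_0}}$ is injective for small $\eps_0$. Consider the map
\[
F: G \times \cN^{\eps_0}_{u_0}  \longrightarrow  \wtB,\qquad F(g, v) = g\cdot \exp_{u_0}(v).
\]
At the point $(\mathrm{Id}, 0)$ the differential of $F$ into $T_{u_0}\wtB$ is $(\xi, v) \mapsto -du_0(\xi) + v$, which is an isomorphism onto $T_{u_0}\cO_{u_0}\oplus \cN_{\cO_{u_0}, u_0}=T_{u_0}\wtB$ by construction of the splitting. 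Since $u_0$ is smooth, the map $F$ is continuously differentiable in a neighborhood of $(\mathrm{Id},0)$ (the lack of smoothness of the $G$-action is only felt in directions transverse to smooth maps, but here we compose a smooth $u_0$ with a smooth $g$-dependent reparametrization of $\Sigma$). The inverse function theorem then yields neighborhoods $V\subset G$ of $\mathrm{Id}$ and $\cN^{\eps}_{u_0}$ on which $F$ is a homeomorphism, and in particular $g\cdot \exp_{u_0}(v)=\exp_{u_0}(v')$ with $v,v'\in \cN^{\eps}_{u_0}$ forces $g=\mathrm{Id}$ and $v=v'$. For $g$ outside $V$ a separate compactness/contradiction argument, using that $\cO_{u_0}$ is a smooth embedded submanifold and $G_{u_0}$ is trivial, rules out $g\cdot \exp_{u_0}(v) \in \cS^{\eps_0}_{u_0}$ for small enough $\eps_0$; otherwise one extracts a sequence $(g_n, v_n, v_n')$ with $v_n, v_n'\to 0$ but $g_n\notin V$, and after passing to a limit one contradicts properness of the $G$-action on smooth maps. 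Putting these together gives that $\pi_\cB$ restricted to $\cS^{\eps_0}_{u_0}$ is a continuous bijection, and openness (hence homeomorphism) follows from the open-mapping property of the quotient along the slice.

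For part~(3), define $\cT^{\eps_0}_{u_0}=F(G\times \cN^{\eps_0}_{u_0})$, which is $G$-invariant by construction. The same $F$ from part~(2) is, after the injectivity argument, a continuous bijection $G\times \cS^{\eps_0}_{u_0}\to \cT^{\eps_0}_{u_0}$; shrinking $\eps_0$ if necessary and using local properness of the $G$-action one verifies it is an open map, hence a homeomorphism. This exhibits $\cT^{\eps_0}_{u_0}\to \bU^{\eps_0}_{u_0}$ as a topologically trivial principal $G$-bundle with trivializing section $\cS^{\eps_0}_{u_0}$. Finally, when $G_{u_0}$ is a nontrivial finite group one repeats the same construction $G_{u_0}$-equivariantly, replacing $\cS^{\eps_0}_{u_0}$ by a $G_{u_0}$-invariant slice obtained by averaging under $G_{u_0}$, which handles the general case stated in Theorem~\ref{slice-theorem:orbifold}.

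The main obstacle is establishing the global injectivity on the slice and the homeomorphism property of $F$: although $F$ is only continuous on all of $G\times \wtB$, along the smooth orbit and its normal directions I can recover just enough $C^1$-regularity to invoke the inverse function theorem, and away from a neighborhood of the identity in $G$ I must argue by contradiction using properness and the smooth embedding of $\cO_{u_0}$. This is the delicate point where Lemmas~\ref{smooth-orbit} and \ref{L^2-metric} are crucial.
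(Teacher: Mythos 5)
Parts (1) and (3) of your outline follow the paper's route: the normal bundle is the $L^2$-orthogonal complement with respect to the invariant metric of Lemma~\ref{L^2-metric}, and the tubular neighbourhood is the $G$-saturation of the slice. However, your central analytic step in part (2) has a genuine gap. You apply the inverse function theorem to $F(g,v)=g\cdot\exp_{u_0}(v)$ on a neighbourhood of $(\mathrm{Id},0)$ in $G\times\cN^{\eps_0}_{u_0}$, justified by the parenthetical claim that $F$ is $C^1$ there because ``the lack of smoothness of the $G$-action is only felt in directions transverse to smooth maps.'' This is exactly backwards. For a fixed $v\neq 0$ the element $u=\exp_{u_0}(v)$ is a generic $W^{1,p}$-map, and differentiating $g\mapsto u\circ g^{-1}$ in $g$ produces terms of the form $du(\xi)$, which lie only in $L^p$ and not in $W^{1,p}$; hence $F(\cdot,v)$ is not differentiable as a map into $\wtB$ for any such $v$, and $F$ is not $C^1$ on any neighbourhood of $(\mathrm{Id},0)$. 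Differentiability of $F$ holds only along the set where $\exp_{u_0}(v)$ is smooth, which is not open, so the inverse function theorem cannot be invoked for $F$ with the product smooth structure. This is precisely the non-differentiability obstacle the whole section is designed to circumvent, and it cannot be waved away.

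The repair is the paper's actual mechanism: do not work with $G\times\cN^{\eps_0}_{u_0}$ at all, but with the total space of the normal bundle $\cN^{\eps_0}_{\cO_{u_0}}\to\cO_{u_0}$, which by part (1) and Lemma~\ref{L^2-metric} carries a \emph{smooth} Banach bundle structure (as a sub-bundle of $T\wtB|_{\cO_{u_0}}$, equipped with the $G$-invariant $W^{1,p}$-norm of (\ref{new:metric})). The fiberwise exponential $\exp:\cN^{\eps_0}_{\cO_{u_0}}\to\wtB$ is smooth because the base orbit consists of smooth maps, it coincides with your $F$ as a set map via the equivariance identity (\ref{G:equ}) $g\cdot\exp_{u_0}(v)=\exp_{g\cdot u_0}(g\cdot v)$, and its differential along the zero section is the isomorphism $T_{u'}\cO_{u_0}\oplus\cN_{u'}\cong T_{u'}\wtB$; the inverse function theorem is legitimately applied to \emph{this} map. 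This also supplies what your part (3) omits: the paper must (and does) show that $\cT^{\eps_0}_{u_0}=\exp(\cN^{\eps_0}_{\cO_{u_0}})$ is a \emph{smooth} open subset of $\wtB$, not merely a topological $G$-invariant neighbourhood, and that is exactly what smoothness of $\exp$ on the smooth ball bundle delivers. Your properness argument for $g$ far from the identity is a reasonable supplement (the paper is terse on global injectivity of $\pi_\cB$ on the slice), but as written the local step of your proof does not stand.
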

\begin{proof} The first part of Claim (1) follows from Lemma \ref{smooth-orbit}.    Let $\cN$ be the normal bundle of $\cO$ in $\wtB$ defined by the following
exact sequence of vector bundles
\[
0\to T\cO_{u_0} \to T\wtB|_{\cO_{u_0}} \to \cN_{\cO_{u_0}} \to 0.
\]
Applying  the $G$-invariant $L^2$-metric $\Omega$ defined in (\ref{at gu_0}),  we have
\[
\cN_{\cO_{u_0}}  \cong (T \cO_{u_0})^\perp,
\]
 the $L^2$-orthoginal  bundle  to $T \cO_{u_0}$ in    $T\wtB|_{\cO_{u_0}}$.  Here  we use the fact that $W^{1, p}\subset L^2$ as $\Sigma$ is  compact.  As the $L^2$-metric $\Omega$ is  a smooth metric on $T\wtB|_{\cO_{u_0}}$,   the normal bundle
$\cN$ can be identified as  a smooth sub-bundle of $T\wtB|_{\cO_{u_0}}$ such that
\[
  T\wtB|_{\cO_{u_0}} \cong T\cO_{u_0} \oplus \cN_{\cO_{u_0}}.
  \]

(2) Given     a  sufficiently small positive number $\eps_0 $ .  Let $\cN^{\eps_0}_{u_0}$ be the open  $\epsilon_0$-ball
in the fiber $\cN_{u_0}= \cN_{\cO_{u_0}}|_{u_0}$ with respect to the induced  $W^{1,p}$-metric from $T_{u_0}\wtB$, that is,
\[
\cN^{\eps_0}_{u_0}=\{ v\in \cN_{u_0}|  \|v\|_{W^{1, p}(\Sigma, u_0^*TX)} < \eps_0 \}.
\]
  Then the exponential map $\exp_{u_0}: \cN^{\eps_0}_{u_0} \to \wtB$ is injective. Define
  \[\cS^{\eps_0}_{u_0} = \exp_{u_0} ( \cN^{\eps_0}_{u_0}).
  \]
  Then $ \pi_\cB:  \cS^{\eps_0}_{u_0}  \to \cB$
is a homeomorphism onto its image $\bU^{\eps_0}_{u_0}$.    So  $\cS^{\eps_0}_{u_0} = \exp_{u_0} (  \cN^{\eps_0}_{u_0})  $  is a
{\bf slice}  at $u_0$ for the  $G$-action.   The map
\[
\phi_{u_0}= \pi_\cB\circ \exp_{u_0}:   \cN^{\eps_0}_{u_0}   \to \bU^{\eps_0}_{u_0}
\]
is a homeomorphism,  which is  a  Banach  coordinate chart.

(3) The  tubular neighbourhood   $\cT^{\eps_0}_{u_0}$  of $\cO_{u_0}$ in $\wtB$ is defined to be  the $G$-saturation of the slice $\cS^{\eps_0}_{u_0}$, which is
\[
\cT^{\eps_0}_{u_0} = G\cdot  \cS^{\eps_0}_{u_0} =\pi^{-1}_\cB (\bU_{[u_0]}).
\]
Due to the non-differentiablity of  the $G$-action, we need to show that $\cT^{\eps_0}_{u_0}$ is a smooth neighbourhood of
$\cO$.  As $\cN_{\cO_{u_0}}$ is a smooth sub-bundle of $T\wtB|_{\cO_{u_0}}$, the exponential map $\exp: \cN_{\cO_{u_0}} \to \wtB$ defined by $\exp_{u'}$ along the fiber at each $u' \in\cO_{u_0}$  is a smooth map.

We claim that the map $\exp: \cN_{\cO_{u_0}} \to \wtB$  is  also $G$-equivariant in the following sense, for any
$v\in \cN_{u_0}$ and any $g\in G$, we have
\ba\label{G:equ}
g\cdot \exp_{u_0} (v) = \exp_{g\cdot u_0} (g\cdot v):  \Sigma\longrightarrow X,
\na
which can be verified directly.

 Using the $W^{1, p}$-norm on $T_{u_0}\wtB$, we can define a new
$W^{1, p}$-norm on $ T_{g\cdot u_0}\wtB$ satisfying the $G$-invariant property, that is, for $v\in T_{g\cdot u_0}\wtB$
\ba\label{new:metric}
\|v\|'_{W^{1, p}} = \|g^{-1}\cdot v\|_{W^{1, p}(\Sigma, u_0^*TX)}.
\na
 The proof of Lemma \ref{L^2-metric}
can be employed here to show that this new $W^{1, p}$-norm  on $T\wtB|_{\cO_{u_0}}$  is a smooth $G$-invariant
norm.

Let $\cN_{\cO_{u_0}}^{\eps_0}$ be the  open $\eps_0$-ball bundle of $\cN_{\cO_{u_0}}$ with respect to this  $G$-invariant $W^{1, p}$-norm.
Then $\cN_{\cO_{u_0}}^{\eps_0}  = G\cdot \cN^{\eps_0}_{u_0}$ is a  $G$-invariant  smooth Banach submanifold of $ \cN$.  From
the G-equivariant property (\ref{G:equ}), we obtain
\[
\cT^{\eps_0}_{u_0} = G\cdot  \cS^{\eps_0}_{u_0}  = G\cdot \exp_{u_0}(\cN^{\eps_0}_{u_0})   =\exp (\cN_{\cO_{u_0}}^{\eps_0})
\]
is a smooth tubular neighbourhood of $\cO$ which is $G$ invariant.  The principal
$G$-bundle $\cT^{\eps_0}_{u_0} \to \bU^{\eps_0}_{u_0}$ is topological trivial, as the slice at $u_0$ defines a global trivialising section.
 \end{proof}

\begin{remark} \begin{enumerate}
\item   The obvious $L^2$-metric on $T\wtB|_\cO$ induced from its own $W^{1, p}$-norm is not $G$-invariant, as at $g\cdot u_0$ and for $v, w\in T_{g\cdot u_0}\wtB$, the inherited  inner product is given by
 \[
  \Omega_{g\cdot  u_0}( v, w)   =   \disp{ \int_\Sigma} h_{g\cdot u_0(x)}(v(x),w( x) ) dvol_\Sigma(x)  \]
  which is different to the  $G$-invariant $L^2$-metric $\Omega$ at $g\cdot u_0$ defined by  (\ref{at gu_0}).
 \item
 Note  that $\cN_{\cO_{u_0}}$ is only a topological
$G$-equivariant Banach bundle over $\cO$ due to the non-smooth action of $G$ on $\wtB$. The exponential map
\[
\exp: \cN_{\cO_{u_0}} \longrightarrow \wtB
\]
is a smooth map, with respect to the topological $G$-actions, it is also $G$-equivariant. We will use the notion of
{\em topological $G$-equivariant smooth map} to describe this property, so it does not cause any confusion with the usual
smooth $G$-equivariant map.
\item The normal bundle $\cN_{\cO_{u_0}}$  depends on the choice of
$u_0$  as  the $L^2$-metric (\ref{at u_0})  depends on $u_0$.
 If we take another  point  $u'\in  cO_{u_0} $, the $G$-invariant $L^2$-metric on  $T\wtB|_{\cO_{u_0}}$ induced from the
$L^2$-metric on $T_{u'}\wtB$ would be different to the metric (\ref{at u_0}). The normal bundle $\cN_{\cO_{u_0}}$ would be   a different sub-bundle  of $T\wtB|_{\cO_{u_0}}$.  Then  this difference will result in  a different slice $\cS^{\eps'}_{u'}$  at $u'$,  a different  tubular
neighbourhood $\cT^{\eps'}_{u'}$, and a different Banach coordinate chart for $\bU^{\eps'}_{u'}$ at $[u']=[u_0]$. On the overlap
$\bU^{\eps_0}_{u_0}\cap \bU^{\eps'}_{u'}$, the coordinate change
\[
\phi_{u_0, u'} = \phi_{u'}\circ \phi_{u_0}^{-1}: \phi_{u_0}^{-1} (\bU^{\eps_0}_{u_0}\cap \bU^{\eps'}_{u'}) \longrightarrow \phi_{u'}(\bU^{\eps_0}_{u_0}\cap \bU^{\eps'}_{u'})
\]
is only a homeomorphism, so there is no canonical smooth Banach structure near  $[u_0]\in \cB$ for any smooth map $u_0\in \wtB$.
By the same argument, given two smooth maps $u$ and $u'$ in $\wtB$ such that $[u]\neq [u'] \in \cB$,  then the coordinate change
on the overlap  $\bU^{\eps_0}_{u}\cap \bU^{\eps'}_{u'}$ is only a homeomorphism.
\end{enumerate}
\end{remark}

Now suppose that the finite group $G_{u_0}$ is not trivial, then the above constructions still work with some modifications at a few places. Lemma \ref{smooth-orbit} still holds with the $G$-orbit $\cO_{u_0} = G_\bullet u_0$ parametrized by  the homogenous space $G/G_{u_0}$.
The $L^2$-inner product on $T_{u_0}\wtB$, previously defined for the trivial isotropy case, has to be averaged over the $G_{u_0}$-action,  that is,
\ba\label{ave:at u_0}
\Omega_{u_0}( v, w) = \sum_{\gamma \in G_{u_0}}  \int_\Sigma h_{u_0(x)}(\gamma \cdot v(x), \gamma \cdot w(x)) dvol_\Sigma(x),
\na

The proof of Lemma \ref{L^2-metric} can be carried over to this situation, so the resulting metric $\Omega$ is a $G$-invariant
smooth metric on $T\wtB|_{\cO_{u_0}}$.  Taking the $L^2$-orthogonal  bundle of $T\cO_{u_0}$ in $T\wtB|_{\cO_{u_0}}$, we get a smooth decomposition
\[
T\wtB|_{\cO_{u_0}}  \cong T \cO_{u_0} \oplus \cN_{\cO_{u_0}}.
\]
This provides a smooth sub-bundle structure on the normal bundle $ \cN_{\cO_{u_0}}$.

Similarly, there is a
$G$-invariant smooth $W^{1, p}$-norm on $\cN_{\cO_{u_0}} \subset T\wtB |_{\cO_{u_0}}$ given  by taking
\ba\label{new:2}
\|v\|'_{W^{1, p}} =\sum_{\gamma\in G_{u_0}}  \| \gamma\cdot g^{-1}\cdot v\|_{W^{1, p}(\Sigma, u_0^*TX)}.
\na
for $v\in T_{g\cdot u_0}\wtB$.
 Let $\epsilon_0 $ be  a positive real number smaller than
the injective radius of $X$ at $u_0(x)$ for any $x\in \Sigma$. Let $\cN_{\cO_{u_0}}^{\epsilon_0}$ be the open  $\epsilon_0$-ball bundle
in $\cN_{\cO_{u_0}}$ with respect to the new $G$-invariant smooth $W^{1, p}$-norm (\ref{new:2}). Then both  the  fiber $\cN^{\eps_0}_{\cO_{u_0}}|_{u_0}$ and  the
 $\epsilon_0$-slice   $\cS_{u_0}^{\eps_0} = \exp_{u_0} (\cN^{\eps_0}_{\cO_{u_0}}|_{u_0})$  are $G_{u_0}$-invariant, and the map
 \[
 \exp_{u_0}: \qquad  \cN_{u_0}^{\epsilon_0} := \cN^{\eps_0}_{\cO_{u_0}}|_{u_0} \longrightarrow  \cS_{u_0}^{\eps_0}
 \]
 is $G_{u_0}$-equivariant.  This provides a local Banach orbifold chart of $\cB$ at $[u_0]$.  The proof of Theorem \ref{slice-theorem} can be carried over to establish the   corresponding slice and tubular neighbourhood structures near each topological  orbifold point  $[u_0]\in \cB$.

\begin{theorem} \label{slice-theorem:orbifold}   Let $u_0: \Sigma \to X$ be a smooth  map in $\wtB$ with  a finite isotropy group $G_{u_0}$. Let $\pi_\cB$ be the   quotient map $\pi_\cB: \wtB \to \cB$.
\begin{enumerate}
\item The $G$-orbit $\cO_{u_0}$ through $u_0$ is a smooth sub-manifold of $\wtB$. The  normal bundle  $\cN_{\cO_{u_0}}$ is a smooth
Banach bundle and  can be identified with a topological  $G$-equivariant
smooth sub-bundle of $  T\wtB|_{\cO_{u_0}}$.

\item   There exists a sufficiently small $\eps_0$  such that the exponential map $\exp_{u_0}:  \cN^{\eps_0}_{\cO_{u_0}} \to \wtB$
is well-defined. Here $\cN^{\eps_0}_{\cO_{u_0}}$ is the $\eps_0$-ball bundle of $\cN_{\cO_{u_0}}$. Moreover,
the tubular neighbourhood   
\[
\cT^{\eps_0}_{u_0} = \exp (\cN^{\eps_0}_{\cO_{u_0}})
\]
  is a $G$-invariant open set in $\wtB$ containing
$\cO_{u_0}$.

\item There exists  a  $G_{u_0}$-invariant slice  $\cS^{\epsilon_0}_{u_0} \subset \wtB$ at $u_0$
such that the quotient space  $\cS^{\epsilon_0}_{u_0}/G_{u_0}$ is homeomorphic to
$ \pi_\cB(\cS^{\epsilon_0}_{u_0})$,   denoted by
$\bU_{u_0}^{\epsilon_0}$.
The  map
\[
\phi_{u_0}: \qquad  \cN^{\epsilon_0}_{u_0}:   =   \cN^{\epsilon_0}_{\cO_{u_0}}|_{u_0}  \xto{\ \ \exp_{u_0} \ \ } \cS^{\epsilon_0}_{u_0}  \xto{\pi_\cB}  \bU_{u_0}^{\epsilon_0}
\]
provides  a {\bf Banach  orbifold  chart} $( \cN^{\epsilon_0}_{u_0}, G_{u_0}, \phi_{u_0})$  for $\bU_{u_0}^{\epsilon_0}$.
\end{enumerate}

\end{theorem}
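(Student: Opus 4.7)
The plan is to reduce to Theorem \ref{slice-theorem} by a careful averaging procedure, then upgrade the pointwise slice to a $G_{u_0}$-invariant Banach orbifold chart. First I would verify that the averaged $L^2$-inner product (\ref{ave:at u_0}) is well-defined, positive definite, and smoothly varying along the orbit $\cO_{u_0}$; the smoothness step reuses the substitution argument of Lemma \ref{L^2-metric}, applied once for each $\gamma \in G_{u_0}$, since $u_0$ is smooth and $\gamma$ acts smoothly on $\Sigma$. By construction the averaged metric is automatically $G_{u_0}$-invariant at $u_0$, and extending it to $\cO_{u_0}$ by the rule $\Omega_{g\cdot u_0}(v,w) = \Omega_{u_0}(g^{-1}\cdot v, g^{-1}\cdot w)$ makes it globally $G$-invariant (the ambiguity in the representative $g$ is washed out by the $G_{u_0}$-averaging). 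Taking the $\Omega$-orthogonal complement of $T\cO_{u_0}$ in $T\wtB|_{\cO_{u_0}}$ gives a smooth identification $\cN_{\cO_{u_0}} \subset T\wtB|_{\cO_{u_0}}$ which is $G$-invariant in the topological sense, proving (1).

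Next, for (2), I would introduce the averaged $W^{1,p}$-norm (\ref{new:2}) on fibers of $\cN_{\cO_{u_0}}$, verify by the same substitution trick that it is smoothly varying and $G$-invariant, and let $\cN^{\eps_0}_{\cO_{u_0}}$ be its open $\eps_0$-ball bundle. Choosing $\eps_0$ smaller than the injectivity radius of $(X,h)$ along $u_0$ and small enough so that $\exp_{u_0}$ is injective on the fiber $\cN^{\eps_0}_{u_0}$, the $G$-equivariance identity $g\cdot \exp_{u_0}(v) = \exp_{g\cdot u_0}(g\cdot v)$ (now applied for the full $G$-action covering the $G/G_{u_0}$-parametrization of $\cO_{u_0}$) extends $\exp_{u_0}$ to a well-defined map on the whole ball bundle, with image $\cT^{\eps_0}_{u_0} = G\cdot \cS^{\eps_0}_{u_0}$ a $G$-invariant open neighborhood of $\cO_{u_0}$.

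For (3), the fiber $\cN^{\eps_0}_{u_0}$ is $G_{u_0}$-invariant because $G_{u_0}$ preserves $T_{u_0}\wtB$ and the averaged norm is $G_{u_0}$-invariant by construction; hence $\cS^{\eps_0}_{u_0} = \exp_{u_0}(\cN^{\eps_0}_{u_0})$ is $G_{u_0}$-invariant, and the $G_{u_0}$-equivariance of $\exp_{u_0}$ shows that $\phi_{u_0}$ descends to a continuous map $\cN^{\eps_0}_{u_0}/G_{u_0} \to \bU^{\eps_0}_{u_0}$. To see this descent is a homeomorphism I would argue as follows: surjectivity is immediate from the definition of $\bU^{\eps_0}_{u_0} = \pi_\cB(\cS^{\eps_0}_{u_0})$; for injectivity, if $\exp_{u_0}(v)$ and $\exp_{u_0}(v')$ lie in the same $G$-orbit with both $v,v' \in \cN^{\eps_0}_{u_0}$, then the element $g \in G$ taking one to the other preserves the slice and hence (by $G$-equivariance of $\exp$ and shrinking $\eps_0$ if needed to avoid other orbit intersections) lies in $G_{u_0}$; continuity of the inverse follows from compactness of $G_{u_0}$ and properness of $\exp_{u_0}$ on the closed ball.

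The main obstacle is the last injectivity step: one must ensure that no $g \in G \setminus G_{u_0}$ can carry one point of the small slice $\cS^{\eps_0}_{u_0}$ to another. This is where the non-differentiability of the $G$-action makes the usual argument delicate, since we cannot simply linearize. The cleanest resolution is a standard compactness/contradiction argument: if the claim fails, one extracts a sequence $g_n \in G\setminus G_{u_0}$ and $v_n, v'_n \to 0$ in $\cN_{u_0}$ with $g_n \cdot \exp_{u_0}(v_n) = \exp_{u_0}(v'_n)$; properness of the $G$-action on $\wtB$ near a smooth map (which follows from the fact that $u_0$ is not a constant map with infinite symmetry, after a standard argument using the three-marked-points trick or the somewhere-injective property in the pseudo-holomorphic setting) forces $g_n$ to subsequentially converge in $G$, and the limit must lie in $G_{u_0}$, contradicting the assumption. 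Once this injectivity is established, the triple $(\cN^{\eps_0}_{u_0}, G_{u_0}, \phi_{u_0})$ is a Banach orbifold chart by definition.
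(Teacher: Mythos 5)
Your proposal follows the paper's route for this theorem essentially verbatim: average the $L^2$-inner product as in (\ref{ave:at u_0}) and the $W^{1,p}$-norm as in (\ref{new:2}) over the finite group $G_{u_0}$, observe that the change-of-variables argument of Lemma \ref{L^2-metric} applies term by term to the averaged sums because $u_0$ is smooth and $G$ acts smoothly on $\Sigma$, and then rerun the trivial-isotropy construction of Theorem \ref{slice-theorem} with the $G_{u_0}$-equivariance of $\exp_{u_0}$ supplying the orbifold chart. Parts (1) and (2) are handled exactly as in the paper.

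The one place you go beyond the paper is the injectivity of the descended map $\cN^{\eps_0}_{u_0}/G_{u_0}\to \bU_{u_0}^{\eps_0}$ in part (3), which the paper simply asserts. Your properness/compactness step is in the right spirit (for $p>2$ a sequence $g_n$ leaving every compact subset of $PSL(2,\C)$ cannot keep $u_0\circ g_n^{-1}$ in a bounded $W^{1,p}$-neighbourhood of the non-constant map $u_0$, so $g_n$ subconverges), but the conclusion you draw is not yet a contradiction: $g_n\in G\setminus G_{u_0}$ converging to some $g_\infty\in G_{u_0}$ is perfectly consistent with your hypotheses. To close the argument you must also exclude elements $g$ close to, but distinct from, elements of $G_{u_0}$; after composing with $\gamma^{-1}\in G_{u_0}$ this reduces to $g$ near the identity, where one uses that $g\cdot\exp_{u_0}(v)-\exp_{u_0}(v)$ has leading term $-du_0(\xi)$ (with $\xi=\log g$) tangent to the orbit and hence $L^2$-orthogonal to $\cN_{u_0}$, so it cannot equal a purely normal displacement unless $\xi=0$. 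This local transversality step is the substantive content of the slice property; it is absent from the paper's proof as well, so your writeup is no weaker than the original, but as stated your contradiction argument does not quite terminate.
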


 Due to the non-differentiability action of $G$, those Banach orbifold charts from Theorem \ref{slice-theorem:orbifold} only provide a topological orbifold structure on a neighbourhood of $\cM$ in $\cB$
\[
\cW= \bigcup_{[u_0]\in \cM}  \bU_{u_0}^{\eps_0}.
\]
To see this,  suppose that
 $ \bU_{u_0}^{\eps_0} \subset  \bU_{u_1}^{\eps_1}$ for $u_0, u_1 \in \wtM$.
 There exists  the topological   embedding of orbifold charts
 \ba\label{orbi:change}
 \phi_{u_0, u_1}:  ( \cN^{\epsilon_0}_{u_0}, G_{u_0}, \phi_{u_0}) \longrightarrow ( \cN^{\epsilon_1}_{u_1}, G_{u_1}, \phi_{u_1})
 \na
 that is, there exists  a topological embedding
 \[
 \phi_{u_0, u_1}: \cN^{\epsilon_0}_{u_0} \longrightarrow \cN^{\epsilon_1}_{u_1}
 \]
 which is equivariant with respect to  an injective   group homomorphism  $\iota_{u_0, u_1}:  G_{u_0} \to G_{u_1}$ such that the following diagram commutes
 \[\xymatrix{
  \cN^{\epsilon_0}_{u_0}\ar[rr]^{\phi_{u_0, u_1}}\ar[d]_{\phi_{u_0}}& & \cN^{\epsilon_1}_{u_1}\ar[d]
  ^{\phi_{u_1}} \\
   \bU_{u_0}^{\eps_0} \ar[rr]^{\subset} & &  \bU_{u_1}^{\eps_1}.
 }\]
 Therefore,   the induced embedding of their slices
 \ba\label{orbi:slice:change}
 \Phi_{u_0, u_1} = \exp_{u_1} \circ  \phi_{u_0, u_1} \circ \exp_{u_0}^{-1}:
 \cS_{u_0}^{\eps_0}  \longrightarrow  \cS_{u_1}^{\eps_1}
 \na
 is  only a  topological embedding,  which is also equivariant with respect to     $\iota_{u_0, u_1}$.

Though the orbifold coordinate changes (\ref{orbi:change}) and  (\ref{orbi:slice:change})  are only   homeomorphisms,  we have   a smooth inclusion
\[
\cT^{\eps_0}_{u_0}\subset \cT^{\eps_1}_{u_1}
\]
 for   their  tubular neighbourhoods as  prescribed
by Theorem \ref{slice-theorem:orbifold}. Given a
smooth function  $f$ on $\cS_{u_1}^{\eps_1}$, then under (\ref{orbi:slice:change}),    a priori,
$f \circ \Phi_{u_0, u_1}$
 might not be a  smooth function on $\cS_{u_0}^{\eps_0}$.
 We remark that when $f$  has a $G$-invariant smooth  extension  $\hat f:  \cT^{\eps_1}_{u_1} \to \R$,
 then  $f \circ \Phi_{u_0, u_1}$ is actually a smooth function, as it is the restriction of
 $\hat f$  to  $\cS_{u_0}^{\eps_0}$.
 This simple observation plays a crucial role in the notion of
 smooth functions/bundles on a neighbourhood of $\cM$.

 \subsection{Smooth functions on a neighbourhood of $\cM$} \label{4.2}

  By our  assumption, the moduli space $\cM = \cM_{0, 0}(X, J, A)$ is a compact subset of $\cB$. There exist  finitely  many  points $\{[u_1], [u_2],  \cdots, [u_n]\}$ in $\cM$ with their smooth representatives
 \ba\label{choose:ui}
 u_1,  u_2,  \cdots, u_n
 \na
 in $\wtB$  whose   slices  $\{\cS_{u_i}^{\eps_i}\}_i$   and  tubular neighbourhoods
 $\{\cT_{u_i}^{\eps_i}\}_i$,  prescribed by Theorem \ref{slice-theorem:orbifold},   define
  a system of Banach orbifold charts
    \[
 (\cN_{u_i}^{\eps_i}, G_{u_i}, \phi_{u_i}, \bU^{\eps_i}_{u_i})  \qquad i=1, \cdots, n,
 \]
  covering an open neighbourhood  of $\cM$.  By a routine procedure, we can choose $\{[u_1], [u_2],  \cdots, [u_n]\}$ such that
 \ba\label{smooth:cover}
   \bigcup_{i=1}^n \bU ^{\eps_i/3}_{u_i} \supset \cM.
 \na

 For simplicity again, we first  assume that all isotropy groups involved are trivial. Denote by
 \[
 \cU =  \bigcup_{i=1}^n \bU ^{\eps_i}_{u_i}.
 \]
 Each $\bU ^{\eps_i}_{u_i}$ is endowed with a Banach coordinate chart $(\cN_{u_i}^{\eps_i}, \phi_{u_i})$, a tubular neighbourhood
 $\cT^{\eps_i}_{u_i}$ for $\cO_{u_i} = G\bullet u_i$ in $\wtB$ as in Theorem \ref{slice-theorem}.

 \begin{definition}\label{smooth}  Given an  open set $\cV\subset \cU$,
 a function   $f: \cV \to \R$ is called smooth with respect to the   coordinate system
$ \{(\phi_{u_i}:  \cN^{\eps_i}_{u_i} \longrightarrow \bU ^{\eps_i}_{u_i})\}_i$ if for each $i$, the coordinate function
\[
f \circ  \phi_{u_i}: \qquad   \phi_{u_i}^{-1} (\cV\cap \bU_{u_i}^{\eps_i}) \longrightarrow \cV  \longrightarrow \R
\]
is a smooth function.
  \end{definition}

  Note that the coordinate system $ \{(\phi_{u_i}:  \cN^{\eps_i}_{u_i} \longrightarrow \bU ^{\eps_i}_{u_i})\}$
is only a topological atlas, not a smooth atlas, as the transition functions  are only homeomorphisms.   At first  sight,
Definition \ref{smooth}  seems inconsistent with the non-smooth coordinate changes.
We will give  some  interesting smooth
functions on some open set in $\cU$ to show that Definition \ref{smooth} provides a correct notion of smoothness for functions defined on a neighbourhood of  $\cM$.

\begin{lemma} \label{smooth:ontube}
 Any $G$-invariant smooth function on the  tubular neighbourhood  $\cT_{u_i}^{\eps_i}$  defines a smooth  function on
$\bU_{u_i}^{\eps_i} \subset  \cU$ in the sense of Definition \ref{smooth}.
\end{lemma}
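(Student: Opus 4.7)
The plan is to exploit the smoothness of the exponential maps $\exp_{u_j}:\cN_{u_j}^{\eps_j}\to\wtB$ from Theorem \ref{slice-theorem:orbifold}(2). Although the orbifold coordinate changes $\Phi_{u_i,u_j}$ between distinct slices are only homeomorphisms, each $\exp_{u_j}$ is itself a smooth map into $\wtB$, so a smooth function defined on an open subset of $\wtB$ pulls back smoothly through any such chart. The role of the $G$-invariance hypothesis on $F$ is precisely to let $F$ descend to $\bU_{u_i}^{\eps_i}$ so that the resulting function $f$ can then be probed by arbitrary Banach charts $\phi_{u_j}$.

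First, since $F:\cT_{u_i}^{\eps_i}\to\R$ is $G$-invariant and the restricted quotient $\pi_\cB:\cT_{u_i}^{\eps_i}\to\bU_{u_i}^{\eps_i}$ is a topological principal $G$-bundle by Theorem \ref{slice-theorem:orbifold}, $F$ descends uniquely to a continuous function $f:\bU_{u_i}^{\eps_i}\to\R$ with $F=f\circ\pi_\cB$. To verify Definition \ref{smooth}, fix $j$ and set
\[
W_{ij}=\phi_{u_j}^{-1}\bigl(\bU_{u_i}^{\eps_i}\cap\bU_{u_j}^{\eps_j}\bigr)\subset\cN_{u_j}^{\eps_j}.
\]
Because $\phi_{u_j}=\pi_\cB\circ\exp_{u_j}$, any $v\in W_{ij}$ satisfies $\pi_\cB(\exp_{u_j}(v))\in\bU_{u_i}^{\eps_i}$, hence $\exp_{u_j}(v)\in\pi_\cB^{-1}(\bU_{u_i}^{\eps_i})=\cT_{u_i}^{\eps_i}$. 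Thus $\exp_{u_j}(W_{ij})\subset\cT_{u_i}^{\eps_i}$, which is exactly what is required for $F$ to be defined on the image.

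The conclusion then follows in one line: on $W_{ij}$ we have $f\circ\phi_{u_j}=f\circ\pi_\cB\circ\exp_{u_j}=F\circ\exp_{u_j}$, the composition of a smooth Banach map with a smooth Banach function, hence smooth. This argument never invokes the non-smooth transition $\Phi_{u_i,u_j}$, which is precisely the observation emphasised by the authors just after Theorem \ref{slice-theorem:orbifold}. For nontrivial isotropy, the same identity works verbatim because $\exp_{u_j}$ is a smooth $G_{u_j}$-equivariant map in that setting as well. The main point to stress is that there is no genuine technical obstacle here; what looks threatening (the non-smoothness of the transitions between slices) is bypassed entirely because Definition \ref{smooth} tests $f$ by pulling back through each individual smooth exponential chart, and the $G$-invariant extension $F$ provides a single smooth object on the ambient tube from which every such pull-back inherits smoothness.
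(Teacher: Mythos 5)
Your proof is correct and follows essentially the same route as the paper's: descend the $G$-invariant function to $\bU_{u_i}^{\eps_i}$, observe that $\exp_{u_j}$ carries $\phi_{u_j}^{-1}(\bU_{u_i}^{\eps_i}\cap\bU_{u_j}^{\eps_j})$ into $\cT_{u_i}^{\eps_i}\cap\cS_{u_j}^{\eps_j}$, and conclude via the identity $\bar f\circ\phi_{u_j}=F\circ\exp_{u_j}$, a composition of smooth maps. The paper's argument is the same, including the observation that the non-smooth transitions $\Phi_{u_i,u_j}$ never enter.
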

\begin{proof} Let $f$ be a  $G$-invariant smooth function on the  tubular neighbourhood  $\cT_{u_i}^{\eps_i}$. Then
$f$ defines a function $\bar f$ on $\bU_{u_i}^{\eps_i} $ and
\[
\bar f\circ \phi_{u_i} = f|_{\cS_{u_i}^{\eps_i}} \circ \exp_{u_i}:  \cN_{u_i}^{\eps_i} \to \R
\]
 is a smooth function. It suffices to show that
\[
\bar f \circ  \phi_{u_j}: \qquad   \phi_{u_j}^{-1}  (\bU_{u_i}^{\eps_i}  \cap \bU_{u_j}^{\eps_j}) \longrightarrow\bU_{u_i}^{\eps_i}  \longrightarrow \R
\]
is smooth  for all $j\neq i$ with a non-empty intersection $\bU_{u_i}^{\eps_i} \cap \bU_{u_j}^{\eps_j}$. Note that
 $f$ restricts to a smooth function
on $\cT_{u_i}^{\eps_i}\cap \cS_{u_j}^{\eps_j}$.  As $\cT_{u_i}^{\eps_i}\cap \cS_{u_j}^{\eps_j}$ is mapped to
$ \phi_{u_j}^{-1}  (\bU_{u_i}^{\eps_i}  \cap \bU_{u_j}^{\eps_j})$ under the diffeomorphism $\exp_{u_j}$. We have
\[
\bar  f\circ \phi_{u_j} = f |_{\cT_{u_i}^{\eps_i}\cap S_{u_j}^{\eps_j}} \circ \exp_{u_j}:   \phi_{u_j}^{-1}  (\bU_{u_i}^{\eps_i}  \cap \bU_{u_j}^{\eps_j}) \longrightarrow \cT_{u_i}^{\eps_i}\cap \cS_{u_j}^{\eps_j}   \longrightarrow \R
\]
 which is smooth.  So $\bar f: \bU_{u_i}^{\eps_i}  \to \R$ is smooth in the sense of Definition \ref{smooth}.
\end{proof}

With this lemma,  in order to show that  the set of  smooth functions on $\cU$ near the moduli space $\cM$  as defined in Definition
\ref{smooth} is not empty,   we need to  introduce
a special set of $G$-invariant smooth functions on  some tubular neighbourhoods $\{\cT_{u_i}^{\eps_i}\}$. These include  the cut-off functions used in applying the virtual neighborhood technique to   $(\wtB, \wtE, \dbar_J)$.
We begin with the norm function on some $W^{k, p}$-spaces.

\begin{proposition} \label{norm:function}
  For any {\bf even}  positive   integer $p>2$, the norm function $f(v) = \|v\|^p$
 is a smooth function on the Banach space $ W^{k, p}(\Sigma, u_i^*TX) $ for each  $u_i$ in (\ref{choose:ui}).
\end{proposition}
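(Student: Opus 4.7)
The plan is to exploit the key hypothesis that $p$ is an \emph{even} integer so that the $p$-th power of the norm becomes a genuine polynomial of degree $p$ on the Banach space, and then invoke the standard fact from Banach calculus that bounded multilinear forms restrict to smooth (indeed real-analytic) diagonal maps. Write $p=2m$. Fixing the Riemannian metric $h$ on $u_i^*TX$ induced by $J$, a convenient choice of norm on $W^{k,p}(\Sigma, u_i^*TX)$ is
\[
\|v\|_{W^{k,p}}^p \;=\; \sum_{|\alpha|\le k}\int_\Sigma \bigl(h(\nabla^\alpha v,\nabla^\alpha v)\bigr)^m \, dvol_\Sigma,
\]
where $\nabla$ is a fixed connection. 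Since $m$ is a positive integer, each integrand is a homogeneous polynomial of degree $p=2m$ in $\nabla^\alpha v(x)$. So $f$ is a finite sum of integrals of pointwise polynomials of degree $p$ in $v$ and its covariant derivatives up to order $k$.

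The second step is to express each term as a continuous symmetric $p$-multilinear form. For each multi-index $\alpha$, let $Q_\alpha$ denote the symmetric $p$-linear form on fibers of the appropriate tensor bundle whose restriction to the diagonal equals $w\mapsto h(w,w)^m$ (obtained by polarization of a polynomial of degree $p$, possible since $p\ge 2$). Set
\[
L_\alpha(v_1,\dots,v_p) \;=\; \int_\Sigma Q_\alpha\bigl(\nabla^\alpha v_1(x),\dots,\nabla^\alpha v_p(x)\bigr)\, dvol_\Sigma(x).
\]
Then $L_\alpha$ is symmetric and $p$-multilinear on $W^{k,p}(\Sigma, u_i^*TX)$, and $f(v)=\sum_{|\alpha|\le k}L_\alpha(v,\dots,v)$.

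The third step is to verify that each $L_\alpha$ is a bounded (continuous) multilinear form on $W^{k,p}$. Since $Q_\alpha$ is a continuous $p$-linear form on the (finite rank) tensor bundle, there is a pointwise estimate $|Q_\alpha(w_1,\dots,w_p)|\le C|w_1|\cdots|w_p|$. Applying H\"older's inequality with $p$ equal exponents (so the reciprocals sum to $1$), we get
\[
|L_\alpha(v_1,\dots,v_p)| \;\le\; C\int_\Sigma |\nabla^\alpha v_1|\cdots|\nabla^\alpha v_p|\, dvol_\Sigma \;\le\; C\prod_{j=1}^{p} \|\nabla^\alpha v_j\|_{L^p} \;\le\; C\prod_{j=1}^{p} \|v_j\|_{W^{k,p}}.
\]

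The final step is to invoke the standard fact that for any continuous symmetric $p$-multilinear form $L\colon E^p\to \R$ on a Banach space $E$, the induced homogeneous polynomial $P(v)=L(v,\dots,v)$ is real-analytic (hence $C^\infty$) on $E$, with Fr\'echet derivatives
\[
D^{j}P(v)(h_1,\dots,h_j) \;=\; \frac{p!}{(p-j)!}\, L(v,\dots,v,h_1,\dots,h_j) \qquad (1\le j\le p),
\]
and $D^{j}P\equiv 0$ for $j>p$. Summing over the finitely many multi-indices $\alpha$ with $|\alpha|\le k$ yields that $f=\sum_\alpha L_\alpha(\cdot,\dots,\cdot)$ is smooth (in fact a polynomial of degree $p$) on $W^{k,p}(\Sigma, u_i^*TX)$. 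The only place where the parity of $p$ is used is in step one, where evenness turns the pointwise $p$-th power of the norm into a polynomial rather than a function with a square-root-type singularity along the zero section; this is also where the main (though mild) obstacle lies, since for odd or non-integer $p$ the function $w\mapsto |w|_h^p$ fails to be smooth at $w=0$ and the entire polynomial structure collapses.
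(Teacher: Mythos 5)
Your proof is correct, but it takes a genuinely different route from the paper's. The paper differentiates $f$ directly: it computes $D_w f(v)=p\int_\Sigma |v|^{p-2}h(v,w)\,dvol_\Sigma$ and the second derivative explicitly, bounds each by $C_p\|v\|_{L^p}^{p-j}\prod\|w_i\|_{L^p}$ via H\"older's (and the generalized H\"older) inequality, and observes that because $p$ is an even integer this iteration terminates after finitely many steps, each derivative being continuous. You instead use the evenness of $p=2m$ once and for all to recognize $f$ as a finite sum of homogeneous polynomials of degree $p$, polarize each integrand $w\mapsto h(w,w)^m$ to a symmetric $p$-linear form, check boundedness of the resulting multilinear form on $W^{k,p}$ by a single application of H\"older with $p$ equal exponents, and then quote the standard Banach-calculus fact that bounded symmetric multilinear forms have smooth (indeed real-analytic) diagonal restrictions. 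Both arguments rest on the same two pillars (evenness of $p$ and H\"older), but yours packages the iteration into the polarization lemma and buys a stronger conclusion — real-analyticity and closed formulas for all Fr\'echet derivatives, with $D^jf\equiv 0$ for $j>p$ — at the cost of invoking the abstract lemma, whereas the paper's computation is self-contained and more elementary. Two minor points worth making explicit in your write-up: the forms $Q_\alpha$ vary with $x\in\Sigma$ through $h_{u_i(x)}$ (and through the induced metric on $u_i^*TX\otimes (T^*\Sigma)^{\otimes|\alpha|}$), so the uniform pointwise bound $|Q_\alpha(w_1,\dots,w_p)|\le C|w_1|\cdots|w_p|$ uses the smoothness of $u_i$ and the compactness of $\Sigma$; and your chosen expression for $\|v\|_{W^{k,p}}^p$ agrees with the one the paper actually differentiates (the sum over derivatives up to order $k$ of $p$-th powers of $L^p$-norms), which matters since smoothness of "the" norm function is sensitive to the choice of equivalent norm.
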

\begin{proof}  Assume that $k=0$, then for $v\in W^{k, p}(\Sigma, u_i^*TX) $,
\[
f(v) =  \int_{\Sigma} |v(x)|_{h(u_i(x))}^p dvol_\Sigma (x) =\int_\Sigma h_{u_i(x)}(v(x), v(x))^{p/2} dvol_\Sigma(x).
\]
Here $h$ is the Riemannian metric on $X$ defined by the symplectic form $\omega$ and its compatible almost complex structure $J$. The  first   derivative in  the direction  $w\in L^p(\Sigma, u_i^*TX)$ is
\[\begin{array}{lll}
D_w f(v) &= &\left. \dfrac{d}{dt} \right\vert_{t=0} \disp{\int_\Sigma  } h_{u_i(x)}(v(x) +  t w(x), v(x)+t w(x))^{p/2} dvol_\Sigma(x)\\[3mm]
&=&p  \disp{\int_\Sigma  }    h_{u_i(x)}(v(x), v(x))^{p/2-1}   h_{u_i(x)}(v(x), w(x))  dvol_\Sigma(x)\\[3mm]
&=&   \disp{\int_\Sigma  }  p   |v(x)|_{h(u_i(x))}^{p-2}   h_{u_i(x)}(v(x), w(x))  dvol_\Sigma(x).
\end{array}
\]
So
\[|D_w f(v)|  \leq    \disp{\int_\Sigma  }   p   |v(x)|_{h(u_i(x))}^{p-1}  \cdot    |w(x)|_{h(u_i(x))}  dvol_\Sigma(x).
\]
Applying H\"older's inequality, we obtain
\[
|D_w f(v)|  \leq \left(\disp{\int_\Sigma  }    \big(  p   |v(x)|_{h(u_i(x))}^{p-1}  \big)^{\frac{p}{p-1}} dvol_\Sigma(x)\right)^{\frac{p-1}{p}}
   \left(\disp{\int_\Sigma  }   |w(x)|^p _{h(u_i(x))   } dvol_\Sigma(x) \right)^{\frac{1}{p}}.
   \]
The right hand side is, for some constant $C_p$  depending on $p$,
 \[\begin{array}{lll}
&&  C_p  \left(\disp{\int_\Sigma  }     |v(x)|_{h(u_i(x))}^p   dvol_\Sigma(x)\right)^{\frac{p-1}{p}}
   \left(\disp{\int_\Sigma  }   |w(x)|^p _{h(u_i(x))   } dvol_\Sigma(x) \right)^{\frac{1}{p}}\\[3mm]
   &=&  C_p \left(  \big(\disp{\int_\Sigma  }     |v(x)|_{h(u_i(x))}^p   dvol_\Sigma(x)\big)^{\frac{1}{p}} \right)^{p-1}
   \left(\disp{\int_\Sigma  }   |w(x)|^p _{h(u_i(x))   } dvol_\Sigma(x) \right)^{\frac{1}{p}}\end{array}
 \]
  which implies
   \[
   |D_w f(v)|  \leq  C_p\|v\|^{p-1}_{L^p} \|w\|_{L^p}.
   \]
   Hence, the first derivative is continuous.  The second derivative of $f(v)$  for $w_1, w_2 \in L^p(\Sigma, u_i^*TX)$ is
   \[\begin{array}{lll}
   D_{w_1}D_{w_2} f(v) &=& \disp{\int_\Sigma  }  p   |v(x)|_{h(u_i(x))}^{p-2}   h_{u(x)}(w_1(x), w_2(x)) dvol_\Sigma(x) \\[3mm]
   &&  +
    \disp{\int_\Sigma  }  p (p-2)   |v(x)|_{h(u_i(x))}^{p-4}   h_{u_i(x)}(v(x), w_1(x)) h_{u_i(x)}(v(x), w_2(x))dvol_\Sigma(x) .
    \end{array}
    \]
    So
    \[
      | D_{w_1}D_{w_2} f(v)|   \leq C_p   \disp{\int_\Sigma  }     |v(x)|_{h(u_i(x))}^{p-2}  \cdot    |w_1(x)|_{h(u_i(x))}
       |w_2(x)|_{h(u_i(x))}dvol_\Sigma(x).\]
    Applying  the generalized  H\"older's inequality  for
   $
    \dfrac{p-2}{p} + \dfrac{1}{p} + \dfrac{1}{p} = 1,
  $
     we get
    \[
      | D_{w_1}D_{w_2} f(v)|   \leq C_{p}\|v\|^{p-2}_{L^p} \|w_1\|_{L^p} \|w_2\|_{L^p},
      \]
      which implies that the second derivative is also continuous.
   Repeating this procedure, which will automatically  terminate after a few steps for  any even positive integer $p$,
   we establish that $f$ is a smooth function.

   For $k=1$, the function $f$ is given by
\[
f(v) =  \int_{\Sigma} |v(x)|_{h(u_i(x))}^p dvol_\Sigma (x)  +
\int_\Sigma  |\nabla v(x) |_{h(u_i(x))}^p dvol_\Sigma (x).
\]
 Then just repeating the above calculations for each of these two terms, we get the smoothness for $f$. Similar arguments  can be applied for  all other $k$.
   \end{proof}

 From now on, we will assume that $p>2$ is an even integer  for the $W^{1, p}$ spaces.
   In particular, the norm function $f_{u_i}:  T_{u_i} \wtB = W^{1, p}(\Sigma, u_i^*TX)  \to \R$ defined by
   \ba\label{norm:f-u-i}
   f_{u_i}(v) =  \int_{\Sigma} |v(x)|_{h(u_i(x))}^p dvol_\Sigma (x)  +
\int_\Sigma  |\nabla v(x) |_{h(u_i(x))}^p dvol_\Sigma (x)
\na
    is a smooth function.  The next proposition  says that the
   $G$-invariant extension of this norm function $f_{u_i}$ is a smooth function on the smooth Banach bundle $\cN_{u_i}$ and hence  defines
   a smooth function $\bU_{u_i}^{\eps_i} \subset \cU$ by Lemma \ref{smooth:ontube}.

   \begin{proposition} \label{smooth:ext}   Let $\tilde{F}_{u_i}$ be the $G$-invariant extension of the norm function $f_{u_i}:
   T_{u_i} \wtB  \to \R$
  to $T  \wtB  | _{\cO_{u_i}}$.  Then  $\tilde{F}_{u_i}$  is  a smooth function on $T  \wtB  | _{\cO_{u_i}}$.  Let
 $\tilde{f}_{u_i}$  be the  restriction of    $\tilde{F}_{u_i}$ to $\cN_{\cO_{u_i}}^{\eps_i} \subset T  \wtB  | _{\cO_{u_i}} $. Then
   \[
   \tilde f_{u_i}\circ \exp^{-1}:  \cT_{u_i}^{\eps_i} \longrightarrow  \cN_{\cO_{u_i}}^{\eps_i}  \longrightarrow  \R
   \]
    is a smooth  $G$-invariant function.
   \end{proposition}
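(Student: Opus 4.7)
The plan is to combine the substitution-of-variables trick employed in the proof of Lemma \ref{L^2-metric} with the derivative estimates of Proposition \ref{norm:function}, so that the $G$-invariant extension $\tilde F_{u_i}$ is exhibited as an integral over $\Sigma$ of an integrand that depends polynomially on the fiber variable and smoothly on the base variable in $\cO_{u_i}$.

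First I would unravel the definition. By the required $G$-invariance, for any $v\in T_{g\cdot u_i}\wtB$ we must set
\[
\tilde F_{u_i}(v)\;=\;f_{u_i}\bigl(g^{-1}\cdot v\bigr)\;=\;\int_\Sigma|v(g(x))|^p_{h(u_i(x))}\,dvol_\Sigma(x)\;+\;\int_\Sigma|\nabla_x(v\circ g)(x)|^p_{h(u_i(x))}\,dvol_\Sigma(x),
\]
where we use $(g^{-1}\cdot v)(x)=v(g(x))\in T_{u_i(x)}X$. Performing the substitution $y=g(x)$ exactly as in the proof of Lemma \ref{L^2-metric}, and applying the chain rule $\nabla_x(v\circ g)(x)=(\nabla v)(g(x))\circ dg(x)$ in the gradient term, one rewrites
\[
\tilde F_{u_i}(v)=\int_\Sigma\Bigl[\,|v(y)|^p_{h((g\cdot u_i)(y))}+|(\nabla v)(y)\circ dg(g^{-1}(y))|^p_{h((g\cdot u_i)(y))}\Bigr]\,\mathrm{Jac}_{g^{-1}}(y)\,dvol_\Sigma(y).
\]
The crucial point is that in this form the only appearances of $g$ are through the expressions $g\cdot u_i=u_i\circ g^{-1}$, $dg\circ g^{-1}$, and $\mathrm{Jac}_{g^{-1}}$, all of which depend smoothly on the point $g\cdot u_i\in\cO_{u_i}$ because $u_i$ is a smooth map from $\Sigma$ to $X$ and the $G$-action on $\Sigma$ is smooth; the vector $v$ appears only polynomially of degree $p$.

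Having established this explicit form, smoothness of $\tilde F_{u_i}$ in the smooth Banach bundle structure on $T\wtB|_{\cO_{u_i}}$ (see Theorem \ref{slice-theorem:orbifold}(1)) follows by differentiating under the integral. The derivatives in the fiber direction $v$ are controlled exactly as in Proposition \ref{norm:function} via H\"older's inequality, and terminate after $p$ steps since $p$ is an even integer; the derivatives in the base direction along $\cO_{u_i}$ reduce, by the smoothness of $u_i$ and of the $G$-action on $\Sigma$, to bounded integrals involving the derivatives of $h(u_i\circ g^{-1}(\cdot))$, $dg$, and $\mathrm{Jac}_{g^{-1}}$, combined with the same polynomial dependence on $v$. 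Combining mixed-direction derivatives with the generalized H\"older inequality gives uniform continuity and hence smoothness of $\tilde F_{u_i}$.

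For the last claim, $\tilde f_{u_i}$ is the restriction of the smooth function $\tilde F_{u_i}$ to the smooth open sub-bundle $\cN_{\cO_{u_i}}^{\eps_i}\subset T\wtB|_{\cO_{u_i}}$, hence smooth, and $G$-invariance is inherited by construction. Since by Theorem \ref{slice-theorem:orbifold}(2) the exponential map $\exp:\cN_{\cO_{u_i}}^{\eps_i}\to\cT_{u_i}^{\eps_i}$ is a $G$-equivariant smooth diffeomorphism (for $\eps_i$ sufficiently small), the composition $\tilde f_{u_i}\circ\exp^{-1}:\cT_{u_i}^{\eps_i}\to\R$ is a $G$-invariant smooth function, as required. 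The main obstacle I anticipate is the careful bookkeeping of the gradient term under substitution in the $W^{1,p}$-norm, ensuring that after transport all $g$-dependence is smooth at the level of integrands and that the bounds on higher derivatives remain uniform; once this is in place the argument reduces to the estimates already carried out in Proposition \ref{norm:function}.
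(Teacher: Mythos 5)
Your proposal is correct and follows essentially the same route as the paper: the $G$-invariant extension is rewritten via the substitution $y=g(x)$ so that all $g$-dependence enters through smooth data ($u_i\circ g^{-1}$, $dg$, the Jacobian), smoothness in the fiber direction is then handled by the H\"older estimates of Proposition \ref{norm:function}, and the final claim follows since $\exp$ is a smooth, topologically $G$-equivariant diffeomorphism. Your explicit tracking of the chain-rule factor $dg$ in the gradient term is a point the paper's proof glosses over, but it does not change the argument.
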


    \begin{proof}
   As  $\tilde F_{u_i}$ is  the $G$-invariant extension of $f_{u_i}: T_{  u_i} \wtB \to \R$, we have, for
  $w\in  T_{g\cdot u_i} \wtB$,
  \[\begin{array}{lll}
  \tilde F_{u_i} (g\cdot u_i, w)  &=& f_{u_i} (g^{-1}\cdot w)\\[3mm]
  &=&\disp{ \int_\Sigma} \big( |w(g(x))|_{h(u_i(x))}^p +  |\nabla w(g(x))|_{h(u_i(x))}^p\big) dvol_\Sigma(x)\\[3mm]
   &=&\disp{ \int_\Sigma} \big( |w(y)|_{h(u_i(g^{-1}(y)))}^p +  |\nabla w(y)|_{h(u_i(g^{-1}(y)))}^p\big) Jac_g^{-1} dvol_\Sigma(y)
   \end{array}
\]
where $Jac_g$ is the Jacobian associated to the automorphism $g$ of $\Sigma$. Using the same argument as in Lemma
\ref{L^2-metric} and Proposition  \ref{norm:function}, we know that $\tilde F_{u_i}$ is smooth in $g\in G$ and $w\in  \cN^{\eps_i}_{g\cdot u_i}$. Therefore  $\tilde{f}_{u_i}  = \tilde{F}_{u_i}|_{\cN_{\cO_{u_i}}^{\eps_i}} $ is a $G$-invaraint  smooth function.
As $\exp:  \cN_{\cO_{u_i}}^{\eps_i}\to  \cT_{u_i}^{\eps_i} $ is a diffeomorphism and $G$-equivariant as a homeomorphism,  hence  $\tilde{f}_{u_i} \circ \exp^{-1}$ is  a smooth function on
   $ \cT_{u_i}^{\eps_i}$.

   \end{proof}

\begin{remark} \label{cut-off} Let $\beta_i: \R \to [0, 1] $ be a cut-off function such that
 \begin{enumerate}
\item  $\beta_i(x) =1$ for $|x| <\eps_i/3$,
 \item  $\beta_i(x) =0$ for
 $|x|>2\eps_i/3$.
 \end{enumerate}
 Then by  Proposition \ref{smooth:ext}  and  Lemma \ref{smooth:ontube}, the composition of the norm function $f_{u_i}$ on $\cN^{\eps_i}_{u_i}$
 and $\beta_i$ defines a  smooth  function on $\cU$
  in the sense of Definition \ref{smooth}. The resulting cut-off function will also be denoted by
  \[
  \beta_i:  \cU= \bigcup_{j=1}^n \bU_{u_j}^{\eps_j} \longrightarrow [0, 1]
  \]
  with its support $supp(\beta_i) \subset  \bU_{u_i}^{2\eps_i/3}$, and $\beta_i|_{\bU_{u_i}^{\eps_i/3}}=1$. The corresponding $G$-invariant smooth  function on
  $\pi^{-1}_\cB(\cU)$ has its support contained in $\cT_{u_i}^{2\eps_i/3}$.
 \end{remark}

 In the presence of non-trivial isotropy groups, then  $\cU=\bigcup_{i=1}^n \bU_{u_i}^{\eps_i}$ is equipped with a
 system of  orbifold Banach charts
 \[
 \{ (\cN_{u_i}^{\eps_i}, G_{u_i}, \phi_{u_i}, \bU^{\eps_i}_{u_i})| i= 1, 2,  \cdots, n\}.
 \]
 As in the  trivial isotropy case,
 the system of  Banach  orbifold  charts
\[
 \{ (\cN_{u_i}^{\eps_i}, G_{u_i}, \phi_{u_i}, \bU^{\eps_i}_{u_i})| i= 1, 2,  \cdots, n\}.
 \]
 only provides a topological orbifold structure.  We remark that
the above constructions   still hold  after  some modifications to take care of the  non-trivial isotropy group action.

 \begin{definition}\label{smooth:orbifold}  Given an  open set $\cV\subset \cU$,
 a function   $f: \cV \to \R$ is called smooth with respect to the
 system of  orbifold Banach charts
 $
 \{ (\cN_{u_i}^{\eps_i}, G_{u_i}, \phi_{u_i}, \bU^{\eps_i}_{u_i})\}_i
$
 if for each $i$, the orbifold coordinate function
\[
f \circ  \phi_{u_i}: \qquad   \phi_{u_i}^{-1} (\cV\cap \bU_{u_i}^{\eps_i}) \longrightarrow \cV  \longrightarrow \R
\]
is a  $G_{u_i}$-invariant smooth function.
  \end{definition}

The statements in Lemma \ref{smooth:ontube}, Proposition \ref{norm:function}  still hold without any modification for orbifold cases. The  proofs are almost identical except the proof of Lemma \ref{smooth:ontube} which  needs to be changed slightly to suit  Definition \ref{smooth:orbifold}. To get similar results in   Proposition \ref{smooth:ext}  and Remark
 \ref{cut-off}, we only need to make the norm function $f_{u_i}: \cN_{u_i}\to \R$   $G_{u_i}$-invariant. This can be easily achieved by averaging over $G_{u_i}$ as follows
 \ba\label{norm_ave}
   f_{u_i}(v) =\sum_{g\in G_{u_i}} \big(   \int_{\Sigma} |v(g(x))|_{h(u_i(x))}^p dvol_\Sigma (x)  +
\int_\Sigma  |\nabla v(g(x)) |_{h(u_i(x))}^p dvol_\Sigma (x) \big).
\na

   \begin{proposition} \label{smooth:ext:orbi}
Let $\tilde{f}_{u_i}:  \cN^{\eps_i}_{\cO_{u_i}}\to \R$ be the $G$-invariant extension of the $G_{u_i}$-invariant  norm function $f_{u_i}$ on $\cN^{\eps_i}_{u_i}$. Then
   \[
   \tilde f_{u_i}\circ \exp^{-1}:  \cT_{u_i}^{\eps_i} \to \R
   \]
    is a smooth  $G$-invariant function.
   \end{proposition}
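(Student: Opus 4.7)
My plan is to imitate the proof of Proposition \ref{smooth:ext} with the only substantive modification being the bookkeeping needed to handle the non-trivial finite stabilizer $G_{u_i}$; the averaging in (\ref{norm_ave}) is precisely engineered so that this bookkeeping works.

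First I would verify that the prescription $\tilde f_{u_i}(g\cdot v) := f_{u_i}(v)$ for $v\in\cN^{\eps_i}_{u_i}$, $g\in G$, is well-defined on $\cN^{\eps_i}_{\cO_{u_i}}=G\cdot\cN^{\eps_i}_{u_i}$. If $g_1\cdot v_1=g_2\cdot v_2$ with $v_1,v_2\in\cN^{\eps_i}_{u_i}$, then $g_2^{-1}g_1$ fixes $u_i$, so $\gamma:=g_2^{-1}g_1\in G_{u_i}$ and $v_2=\gamma\cdot v_1$. The $G_{u_i}$-invariance of $f_{u_i}$ built into (\ref{norm_ave}) then forces $f_{u_i}(v_2)=f_{u_i}(v_1)$, establishing that $\tilde f_{u_i}$ is a well-defined $G$-invariant function.

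Next I would show smoothness by the same pointwise calculation used in the trivial-isotropy case. For $w\in T_{g\cdot u_i}\wtB$, write
\[
\tilde F_{u_i}(g\cdot u_i,w) \;=\; f_{u_i}(g^{-1}\cdot w) \;=\; \sum_{\gamma\in G_{u_i}} \int_\Sigma\Bigl(|w(g(\gamma(x)))|_{h(u_i(x))}^p + |\nabla w(g(\gamma(x)))|_{h(u_i(x))}^p\Bigr)\,dvol_\Sigma(x),
\]
and then, changing variables $y=g(\gamma(x))$ in each summand, the integrand depends smoothly on $g\in G$ (through $u_i\circ\gamma^{-1}\circ g^{-1}$ and the smooth Jacobian $\mathrm{Jac}_{g\circ\gamma}$) and on $w$ in the $W^{1,p}$-topology. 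Since $G_{u_i}$ is finite, the sum is a finite sum of smooth terms; smoothness in $w$ for each fixed $(g,\gamma)$ is exactly the content of Proposition \ref{norm:function}, and joint smoothness in $(g,w)$ is obtained by the same reasoning as in Lemma \ref{L^2-metric} and Proposition \ref{smooth:ext}. Restricting $\tilde F_{u_i}$ to the $G$-invariant smooth sub-bundle $\cN^{\eps_i}_{\cO_{u_i}}\subset T\wtB|_{\cO_{u_i}}$ yields $\tilde f_{u_i}$ as a smooth $G$-invariant function.

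Finally, since Theorem \ref{slice-theorem:orbifold} gives that $\exp:\cN^{\eps_i}_{\cO_{u_i}}\to\cT^{\eps_i}_{u_i}$ is a (topologically $G$-equivariant) diffeomorphism, the composition $\tilde f_{u_i}\circ\exp^{-1}$ is smooth on the tubular neighbourhood, and its $G$-invariance is inherited from that of $\tilde f_{u_i}$ together with the $G$-equivariance of $\exp$. The only real obstacle in the argument is checking that the $G_{u_i}$-averaging does not destroy smoothness in the slice direction, but because $G_{u_i}$ acts by bounded linear maps on $W^{1,p}(\Sigma,u_i^*TX)$ and the sum is finite, this is immediate from Proposition \ref{norm:function} applied termwise.
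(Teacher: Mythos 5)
Your proposal is correct and follows exactly the route the paper intends: the paper states Proposition \ref{smooth:ext:orbi} without a separate proof, deferring to the argument of Proposition \ref{smooth:ext} after replacing $f_{u_i}$ by its $G_{u_i}$-average (\ref{norm_ave}), which is precisely what you carry out. Your explicit check that the extension is well-defined (via $g_2^{-1}g_1\in G_{u_i}$ and the $G_{u_i}$-invariance of the averaged norm) is a detail the paper leaves implicit, and is a worthwhile addition.
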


\begin{remark} \label{orbi:cut-off}
We have the   corresponding cut-off functions   similarly constructed  as in Remark \ref{cut-off}  when  there are non-trivial isotropy groups. These constructions provide     a system of   cut-off  functions
\[
\{\beta_i:  \bU_{u_i}^{\eps_i}  \to [0, 1]  | i= 1, \cdots, n \}
\]
 with respect to the above
system of Banach orbifold charts such that, for $\tilde\beta_i = \beta\circ \pi_\cB: \cT_{u_i}^{\eps_i}  \to [0, 1]$,  we have
\begin{enumerate}
\item[(i)] $\tilde \beta_i|_{ \cT_{u_i}^{\eps_i/3}}\equiv 1$,
\item[(ii)]  $\supp (\tilde \beta_i)\subset \cT_{u_i}^{2\eps_i/3}$,
 \item[(iii)]   $\tilde \beta_i|_{ \cS_{u_i}^{\eps_i}}$ is $G_{u_i}$-invariant.
  \item[(iv)]   $\tilde \beta_i|_{ \cT_{u_i}^{\eps_i}}$ is $G$-invariant.
\end{enumerate}
\end{remark}

\subsection{Local obstruction bundles}\label{4.3}

Recall that $\wtE$ is  the infinite dimensional Banach bundle over $\wtB$ whose fiber at $u$ is $L^p (\Sigma, \Lambda^{0,1} T^*\Sigma\otimes_\C u^*TX)$, the space of
$L^p$-sections  of the bundle $ \Lambda^{0,1} T^*\Sigma\otimes_\C u^*TX$.  The Cauchy-Riemann operator defines a  smooth   section $\dbar_J:  \wtB \to \wtE$.  The  zero set of this section
\ba\label{kernel}
\wt\cM =\dbar_J^{-1}(0) \subset \wtB
\na
is our moduli space of parametrized pseudo-holomorphic spheres in $(X,\omega, J)$ with homology class
$A$.
Life would be much simpler  and  dull if $\dbar_J$ were  transversal to the zero section, that would mean,   at any point $u_0 \in \wt\cM$, the linearization of the Cauchy-Riemann operator defines a surjective
linear operator
\ba\label{D:u}
D\dbar_J (u_0): T_{u_0} \wtB = W^{1, p}(\Sigma, u_0^*TX) \longrightarrow  \wtE_{u_0} = L^p(\Sigma,  \Lambda^{0,1} T^*\Sigma\otimes_\C u^*TX).
\na

However, $D\dbar_J (u_0)$ is not surjective in general. As $D\dbar_J (u_0)$ is a first order elliptic differential operator on $\Sigma$, the standard elliptic regularity for elliptic differential operators implies that
$D\dbar_J (u_0)$ is a Fredholm operator, and the kernel of $D\dbar_J (u_0)$ and the kernel of
its adjoint  operator $(D\dbar_J (u_0))^*$ are  finite dimensional and consist of smooth sections of $u_0^*TX$ and
$ \Lambda^{0,1} T^*\Sigma\otimes_\C u_0^*TX$ respectively. Let
\[
E_{u_0} = Ker (D\dbar_J (u_0))^*  \subset   \wtE_{u_0}
\]
  be the kernel of $(D\dbar_J (u_0))^*$. Clearly, the linear operator
  \ba\label{surj:u_0}
  D\dbar_J (u_0) + Id:  T_{u_0}\wtB \oplus E_{u_0} \longrightarrow   \wtE_{u_0}
  \na
  is surjective.  Hence, the space $E_{u_0}$ is called the obstruction space at $u_0$, which is
  $G_{u_i}$-invariant.

   The main result of this Section is to extend this obstruction space $E_{u_0}$ to a smooth  complex
   vector bundle over
  the tubular neighbourhood $\cT_{u_0}^{\eps_0}$ provided by Theorem  \ref{slice-theorem:orbifold}.  This
  obstruction bundle defines a smooth complex
   vector bundle over $\bU_{u_0}^{\eps_0}$ such that under the pull-back of the non-smooth coordinate changes it is still smooth.

  For simplicity, we start  with  the trivial  isotropy group case. The constructions can be easily adapted to the general case  of  non-trivial isotropy groups.

 With the assumption of the isotropy group $G_{u_0}$ being trivial, we know
 that $\wtE|_{\cO_{u_0}}$ is a smooth Banach bundle over the $G$-orbit $\cO_{u_0}$, which is
 $G$-equivariant  under the topological action  of $G$  on $\wtE|_{\cO_{u_0}}$.  In particular, as topological vector
 bundles, we have
 \[
 \wtE|_{\cO_{u_0}} = \bigcup_{g\in G} g\cdot \wtE_{u_0} = \Phi (G\times  \wtE_{u_0})
  \]
 where  the map  $\Phi:  G\times \wtE|_{\cO_{u_0}}  \to \wtE|_{\cO_{u_0}}$ is defined by  the action of $G$.

 \begin{lemma}   \label{obs:orbit}  Define $E_{\cO_{u_0}} = \Phi  (G \times  E_{u_0}  ) $ as a topological
 sub-bunlde of $\wtE|_{\cO_{u_0}}$.
 The bundle $E_{\cO_{u_0}} $ is a smooth sub-bundle of  $\wtE|_{\cO_{u_0}}$, which is
also  $G$-equivariant.
 \end{lemma}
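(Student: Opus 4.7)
My plan is to exhibit an explicit smooth local frame for $E_{\cO_{u_0}}$ built from a basis of the fiber $E_{u_0}$, and then verify that the sections in this frame are smooth sections of the ambient smooth Banach bundle $\wtE|_{\cO_{u_0}}$. The elliptic regularity of the first-order operator $D\dbar_J(u_0)$ guarantees that $E_{u_0} = \ker (D\dbar_J(u_0))^{*}$ is finite dimensional and consists of smooth sections of $\Lambda^{0,1}T^*\Sigma \otimes_\C u_0^*TX$. Fix a basis $\eta_1, \ldots, \eta_k$ of $E_{u_0}$ by such smooth sections, and consider the map
\[
\tau: G \longrightarrow \bigoplus_{j=1}^k \wtE|_{\cO_{u_0}}, \qquad \tau(g) = \bigl((g\cdot u_0,\,(g^{-1})^*\eta_1),\ldots,(g\cdot u_0,\,(g^{-1})^*\eta_k)\bigr).
\]
By construction the image of each component $\tau_j$ lies in $E_{g\cdot u_0} = g\cdot E_{u_0}$, and together $\{\tau_1, \ldots, \tau_k\}$ is a pointwise linearly independent collection spanning $E_{\cO_{u_0}}$ at every point of the orbit.

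The decisive step is showing that each $\tau_j$ is a \emph{smooth} section of $\wtE|_{\cO_{u_0}}$, even though the ambient $G$-action on $L^p$-sections is only continuous. This is achieved by the same change-of-variable trick used in the proof of Lemma \ref{L^2-metric} and in Proposition \ref{smooth:ext}: for a test vector field $w$ paired against $(g^{-1})^*\eta_j$ one substitutes $y=g(x)$ and picks up the smoothly varying Jacobian of the M\"obius transformation $g$, while the values of $\eta_j$ are pulled back along the smooth map $G\times \Sigma \to \Sigma$, $(g,y)\mapsto g^{-1}(y)$, composed with the smooth section $\eta_j$. Because $\eta_j$ is smooth on $\Sigma$, derivatives in $g$ can be transferred onto derivatives of the smooth group action and of $\eta_j$, and the resulting expressions depend smoothly on $g$ in the $L^p$-norm on the fibers. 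Hence $\tau_j$ is smooth once we compare with a local smooth trivialisation of $\wtE|_{\cO_{u_0}}$, for instance the one obtained by parallel transporting fibers of $\wtE$ along the geodesics $t\mapsto \exp_{u_0}(t\xi)$ and further along the smooth submanifold $\cO_{u_0}$ of $\wtB$ (which is smooth by Lemma \ref{smooth-orbit}).

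With the smoothness of the frame $\{\tau_j\}$ established, standard arguments realise $E_{\cO_{u_0}}$ as a smooth finite-rank sub-bundle of $\wtE|_{\cO_{u_0}}$. The $G$-equivariance $h\cdot E_{g\cdot u_0} = E_{(hg)\cdot u_0}$ is tautological from the very definition $E_{\cO_{u_0}} = \Phi(G\times E_{u_0})$. When the isotropy group $G_{u_0}$ is non-trivial, $E_{u_0}$ is $G_{u_0}$-invariant (since $(D\dbar_J(u_0))^*$ intertwines the $G_{u_0}$-action), so $\tau$ descends to a well-defined smooth frame over the quotient $G/G_{u_0} \cong \cO_{u_0}$, and the argument is unchanged.

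The main obstacle is precisely the smoothness claim for the frame: \emph{a priori} the topological $G$-action on $\wtE$ forbids us from differentiating $(g^{-1})^*\eta$ in $g$ for a general $\eta \in L^p$. The point is that elliptic regularity upgrades every element of $E_{u_0}$ to a smooth section, and smoothness of $\eta_j$ is exactly what allows the change-of-variable computation to convert the apparent non-differentiability in $g$ into derivatives of smooth data on $\Sigma$. This is the same mechanism that makes the exponential map $\exp:\cN_{\cO_{u_0}} \to \wtB$ a smooth (not merely continuous) map in Theorem \ref{slice-theorem:orbifold}, and it is the reason that the obstruction bundle can be extended smoothly to the tubular neighbourhood $\cT_{u_0}^{\eps_0}$ in the sequel.
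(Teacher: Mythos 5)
Your proposal is correct and follows essentially the same route as the paper: the paper's proof likewise rests on the two facts that $E_{u_0}=\ker(D\dbar_J(u_0))^*$ consists of finitely many smooth sections (elliptic regularity) and that the $G$-action on $\Sigma$, hence on the smooth orbit $\cO_{u_0}$, is smooth, and then invokes "the direct calculation of the differentials of the actions" — which is precisely the change-of-variables computation you carry out explicitly for the frame $\tau_j(g)=(g\cdot u_0,(g^{-1})^*\eta_j)$. Your write-up simply supplies more of the detail (the explicit frame, the comparison with a local trivialisation of $\wtE|_{\cO_{u_0}}$, and the isotropy case) that the paper leaves implicit.
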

  \begin{proof}
   Note that the action of
 $G$ on $\cO_{u_0}$ is smooth and $E_{u_0} = ker ((D\dbar_J (u_0))^*)$ is finite dimensional and consists of smooth sections of $ \Lambda^{0,1}\Sigma\otimes_\C u_0^*TX$. By the  direct calculation  of the differentials of the actions in the following diagram,
 \[
 \xymatrix{
 G\times E_{\cO_{u_0}} \ar[r] \ar[d] & E_{\cO_{u_0}}\ar[d]  \\
  G\times  \cO_{u_0}  \ar[r]   & {\cO_{u_0}}
 }
 \]
 we know that $E_{\cO_{u_0}}$ is a $G$-equivariant smooth
  vector bundle over  $\cO_{u_0}$, in particularly, it is  a smooth sub-bundle of  $\wtE|_{\cO_{u_0}}$.
  \end{proof}

  Let $\cN^{\eps_0}_{\cO_{u_0}}$ be the  open $\eps_0$-ball bundle of $\cN_{\cO_{u_0}}$ with respect to this  $G$-invariant $W^{1, p}$-norm as in the proof of Theorem \ref{slice-theorem}. Let $\pi : \cN_{\cO_{u_0}}^{\eps_0} \to \cO_{u_0}$  be the projection. Then the pull-back bundle $\pi ^*  E_{\cO_{u_0}}$ defines a smooth bundle over
 $\cN^{\eps_0}_{\cO_{u_0}}$.  Applying the inverse map to the diffeomorphism
$\exp: \cN^{\eps_0}_{\cO_{u_0}} \to \cT_{u_i}^{\eps_0}$, we get a smooth bundle $\widehat E_{\cT_{u_0}}$  over $\cT_{u_0}^{\eps_0}$.
 As $\cN_{\cO_{u_0}}^{\eps_0}$  and  $\cT_{u_0}^{\eps_0}$ only admit  a topological $G$-action, the resulting bundles
 $\pi ^*  E_{\cO_{u_0}} \to \cN_{\cO_{u_0}}^{\eps_0} $ and  $\widehat  E_{\cT_{u_0}} \to \cT_{u_0}^{\eps_0}$ are $G$-equivariant as topological vector bundles.

   \begin{lemma}\label{obs:id}      There is a canonical bundle  pull-back diagram
   \ba\label{pull-back}
\xymatrix{
\pi^* (\wtE|_{\cO_{u_0}})  \ar[r]^{ \text{Exp}}   \ar[d] & \wtE|_{\cT_{u_0}^{\eps_0}} \ar[d]  \\
\cN_{\cO_{u_0}}^{\eps_0}  \ar[r]^{\exp}_\cong    & \cT^{\eps_0}_{ u_0}
 }
   \na
   such  that $\widehat E_{\cT_{u_0}}$ is  canonically embedded into $\wtE|_{\cT_{u_0}^{\eps_0}}$ as a smooth sub-bundle
 over $ \cT^{\eps_0}_{ u_0}$.
   \end{lemma}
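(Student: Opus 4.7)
The plan is to construct the horizontal arrow $\text{Exp}$ explicitly by fiberwise parallel transport, verify it is a smooth Banach bundle isomorphism covering the diffeomorphism $\exp$, and then obtain $\widehat E_{\cT_{u_0}}$ as the image of $\pi^* E_{\cO_{u_0}}$ under this isomorphism. Concretely, given $u' \in \cO_{u_0}$ and a normal vector $v \in \cN^{\eps_0}_{\cO_{u_0}}|_{u'} \subset W^{1,p}(\Sigma, u'^*TX)$, after shrinking $\eps_0$ if necessary so that $\|v(x)\|$ is smaller than the injectivity radius of $(X,h)$ for every $x$, the geodesic $t \mapsto \exp_{u'(x)}(tv(x))$ is well defined, and the Levi-Civita parallel transport along it produces a linear isometry $P_{u',v}(x) : T_{u'(x)}X \to T_{\exp_{u'(x)}(v(x))}X$ depending smoothly on $x$ and $v$. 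Tensoring with the identity on $\Lambda^{0,1}T^*_x\Sigma$ gives a $\C$-linear isometry of the corresponding fibers, and pointwise application sends $\eta \in \wtE_{u'}$ to a new $L^p$-section $\text{Exp}(v,\eta)$ lying in $\wtE_{\exp_{u'}(v)}$.

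Because $P_{u',v}$ is a pointwise isometry, the resulting map is a Banach space isomorphism on each fiber, so the assignment lifts $\exp$ to a bundle map
\[
\text{Exp} : \pi^*(\wtE|_{\cO_{u_0}}) \longrightarrow \wtE|_{\cT^{\eps_0}_{u_0}}
\]
that is an isomorphism covering the diffeomorphism $\exp : \cN^{\eps_0}_{\cO_{u_0}} \to \cT^{\eps_0}_{u_0}$ of Theorem \ref{slice-theorem:orbifold}. That the square in the statement is a pull-back square is then tautological: the fiber of $\wtE$ over $\exp_{u'}(v)$ is identified via $\text{Exp}$ with the fiber of $\wtE$ over $u'$, which is exactly the fiber of $\pi^*(\wtE|_{\cO_{u_0}})$ at $(u',v)$. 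Smoothness of $\text{Exp}$ as a Banach bundle map reduces to smooth dependence on parameters of the parallel transport ODE together with the fact that the induced operator on $L^p$ sections has bounded derivatives of all orders; this uses that $\cN^{\eps_0}_{\cO_{u_0}}$ carries the smooth Banach manifold structure constructed in the proof of Theorem \ref{slice-theorem:orbifold} and that $\wtE \to \wtB$ is a smooth Banach bundle.

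With $\text{Exp}$ in hand, Lemma \ref{obs:orbit} gives $E_{\cO_{u_0}}$ as a finite-rank smooth sub-bundle of $\wtE|_{\cO_{u_0}}$, so $\pi^*E_{\cO_{u_0}}$ is a smooth sub-bundle of $\pi^*(\wtE|_{\cO_{u_0}})$, and its image
\[
\widehat E_{\cT_{u_0}} \;=\; \text{Exp}\bigl(\pi^* E_{\cO_{u_0}}\bigr)
\]
is canonically a smooth finite-rank sub-bundle of $\wtE|_{\cT^{\eps_0}_{u_0}}$, which is exactly the embedding claimed. The main obstacle I expect is not the construction of $\text{Exp}$ but the careful verification that $\text{Exp}$ is smooth as a \emph{Banach} bundle map rather than only continuous: one must trivialise $\wtE$ over small neighbourhoods in $\cO_{u_0}$ by a smooth family of parallel transports from a fixed $u'$, and then check that the transition maps on $L^p$-sections are smooth in the $W^{1,p}$-parameter $v$; this is a standard but slightly technical application of smooth dependence for linear ODEs combined with the Banach algebra estimates on pointwise multiplication that hold under our assumption $p > 2$.
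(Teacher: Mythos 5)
Your proposal is correct and follows essentially the same route as the paper: both construct $\text{Exp}$ by fiberwise Levi--Civita parallel transport along the geodesics $t\mapsto \exp_{u'(x)}(tv(x))$, observe that it is a fiberwise isomorphism covering the diffeomorphism $\exp$, and identify $\widehat E_{\cT_{u_0}}$ with the image of the smooth sub-bundle $\pi^*E_{\cO_{u_0}}$ from Lemma \ref{obs:orbit}. Your additional remarks on verifying smoothness of $\text{Exp}$ as a Banach bundle map (smooth dependence of the transport ODE plus the $L^p$ estimates) address a point the paper's proof leaves implicit.
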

 \begin{proof}
 Note that $\pi^*  E_{\cO_{u_0}}$ is a smooth sub-bundle of $ \pi ^* (\wtE|_{\cO_{u_0}} )$ by Lemma
\ref{obs:orbit}. We only need to  define a canonical bundle map $\text{Exp}: \pi^* (\wtE|_{\cO_{u_0}})  \to  \wtE|_{\cT_{u_0}^{\eps_0}}$  and  show that the diagram (\ref{pull-back}) is a pull-back
diagram.

Given
$(u, w, \eta)$ in the fiber of $\pi^* (\wtE|_{\cO_{u_0}})$ over a point $(u, w)\in \cN_{\cO_{u_0}}^{\eps_0}$,   $w\in \cN_{u_0}^{\eps_0}$, then we have  $u=g\cdot u_0 \in \cO_{u_0}$,
$w\in \cN_{u}^{\eps_0} \subset W^{1, p}(\Sigma, u^*TX)$ with the $G$-invariant $W^{1, p}$-norm $\|w\|' < \eps_0$ and $\eta\in L^p(\Sigma,  \Lambda^{0,1} T^*\Sigma\otimes_\C u^*TX)$. In particular, for any
$x\in \Sigma$, $w(x) \in T_{u(x)}X$ and $\eta(x)$ is a $T_{u(x)}X$-valued $(0, 1)$-form at $x$. As
\[
 \exp_{u}(w) =   \exp_{g\cdot u_0}(w) =  g\cdot \exp_{u_0} (g^{-1}\cdot w)   \in g\cdot \cS_{u_0}^{\eps_0} \subset \cT^{\eps_0}_{ u_0},
 \]
 we can apply the parallel transport  map $\tau \big(u(x), \exp_{u(x)}(w(x))\big):  T_{u(x)} X \to T_{\exp_{u(x)} (w(x))}X$
 to $\eta (x) $  along the geodesic
 $\exp_{u(x)} (tw(x))$ for $t\in [0, 1]$ to get an element in $\wtE_{\exp_u(w)}$.  Here the parallel transport  map  is defined  with respect to the Levi-Civita connection for the Riemannian metric on $X$.   Define
 \[
 \text{Exp}:  \pi^* (\wtE|_{\cO_{u_0}})  \to  \wtE|_{\cT_{u_0}^{\eps_0}}
 \]
by $ \text{Exp}(u, w, \eta) (x)  = \tau \big(u(x), \exp_{u(x)}(w(x))\big) (\eta(x)).$
 It is easy to see that $\text{Exp}$ is an isomorphism along the fiber, and $\pi^* (\wtE|_{\cO_{u_0}})
 \cong \exp^*( \wtE|_{\cT_{u_0}^{\eps_0}})$. Therefore, the diagram \ref{pull-back} is a pull-back diagram of vector bundles.
 \end{proof}

  With these two lemmas  understood, we can define local obstructions for  each coordinate chart  in Theorem \ref{slice-theorem}.

   \begin{definition} \label{obs:local}   The local obstruction  bundle $\widehat E_{\cT_{u_0}}$ over
  $  \cT_{u_0}^{\eps_0}$ (the tubular neighbourhood of $\cO_{u_0}$)  is defined to be $ \widehat{E}_{\cT_{u_0}} = (\exp^{-1})^* \pi^*  E_{\cO_{u_0}}$ where $\eps_0$ is chosen such that under the identification of   Lemma  \ref{obs:id}, the linear operator
  \[
  D\dbar_J(u) + Id:  T_u\wtB \oplus \widehat{E}_u \longrightarrow \wtE_u
  \]
  is surjective for any $u\in  \cT_{u_0}^{\eps_0}$.
  The local obstruction bundle $\bE_{u_0}$   over $ \bU_{u_0}^{\eps_0}$ is defined in terms  of its  Banach coordinate chart
  $( \cN_{u_0}^{\eps_0}, \phi_{u_0})$ by the restriction of  $\pi ^* E_{\cO_{u_0}}$ to $ \cN_{u_0}^{\eps_0}$.
   \end{definition}

  \begin{remark}\label{obs:remark}
   \begin{enumerate}
\item
 From the definition, the local  obstruction bundle over $\bU_{u_0}^{\eps_0}$ is trivial in the sense that
  \[
(  \pi ^* E_{\cO_{u_0}} ) |_{ \cN_{u_0}^{\eps_0}} \cong  \cN_{u_0}^{\eps_0}  \times  E_{u_0}.
  \]
 The  local obstruction  bundle over $  \cT_{u_0}^{\eps_0}$ is not trivial  as a smooth sub-bundle of  $\wtE|_{\cT_{u_0}^{\eps_0}}$, though it is   so as a topological vector  bundle,  due to the  non-differential action of $G$ on $\cT_{u_0}$.
 \item These  local obstruction bundles are smooth bundles in the sense that given a local obstruction bundle
 $\bE_{u_0}$  over $\bU_{u_0}^{\eps_0}$ and another Banach coordinate chart
 $( \cN_{u_1}^{\eps_1}, \phi_{u_1})$ for $ \bU_{u_1}^{\eps_1}$ with
 \[
 \bU_{u_0}^{\eps_0} \cap \bU_{u_1}^{\eps_1} \neq \emptyset,
 \]
 even though the coordinate change
 \[
 \phi_{u_1, u_0}: \phi_1^{-1} ( \bU_{u_0}^{\eps_0} \cap \bU_{u_1}^{\eps_1} ) \longrightarrow \phi_{u_0} ( \bU_{u_0}^{\eps_0} \cap \bU_{u_1}^{\eps_1} )
 \]
 is only a homeomorphism, the pull-back bundle $ \phi_{u_1, u_0}^*\bE_{u_0}$ is a smooth vector bundle. This is because
 under the exponential map $\exp_{u_1}:   \cN_{u_1}^{\eps_1} \to \cS_{u_1}^{\eps_1}$,
$ \phi_{u_1, u_0}^*\bE_{u_0}$ is given by the restriction of the local  obstruction  bundle $\widehat E_{\cT_{u_0}}$ over   $  \cT_{u_0}^{\eps_0}$ to the slice $\cS_{u_1}^{\eps_1}\cap  \cT_{u_0}^{\eps_0}$.
\end{enumerate}
  \end{remark}

Now we discuss  the general cases with non-trivial isotropy groups,  where we have a collection of  orbifold
Banach charts
\[
(\cN_{u_i}^{\eps_i}, G_{u_i},  \phi_{u_i},  \bU_{u_i}^{\eps_i} ), \qquad  i=1, \cdots, n,
\]
as given by Theorem \ref{slice-theorem:orbifold},
together with  $E_{u_i} = Ker  (D\dbar_J (u_i))^* \subset \wtE_{u_i}$.  Then each $E_{u_i}$ is a $G_{u_i}$-vector space, hence the local bundle  defined by
\ba\label{obs:local-orbi}
\bE_{u_i} = (\cN_{u_i}^{\eps_i} \times E_{u_i}, G_{u_i},  \phi_{u_i},  \bU_{u_i}^{\eps_i} )
\na  is an orbifold vector bundle over $\bU_{u_i}^{\eps_i}$. Using the topological action of
$G$ on $\wtE|_{\cO_{u_i}}$, we can define $E_{\cO_{u_i}}$ as in Lemma \ref{obs:orbit}. The same proof there implies that $E_{\cO_{u_i}}$ is a smooth $G$-equivariant vector bundle over $\cO_{u_i}$.   Proceeding as in Lemma \ref{obs:id} and Definition \ref{obs:local}, we get a local obstruction bundle $\widehat{E}_{\cT_{u_i}}$ over the tubular neighbourhood $\cT_{u_i}^{\eps_i}$ such that $\widehat{E}_{\cT_{u_i}}$ is a smooth sub-bundle of
$\wtE|_{\cT_{u_i}^{\eps_i}}$. The restriction  of  $\widehat{E}_{\cT_{u_i}}$  to  the  slice
\[
\cS_{u_i}^{\eps_i} = \exp_{u_i} (\cN_{u_i}^{\eps_i})
\]
is isomorphic to $\bE_{u_i}$ under
the exponential map given by (\ref{pull-back}). We often use the same notation $\bE_{u_i}$ to denote the
local obstruction bundle over the slice $\cS_{u_i}^{\eps_i}$. This local obstruction bundle $\bE_{u_i}$ is a smooth $G_{u_i}$-equivariant vector bundle.

\begin{remark} We remark that  $ (u_i, \eps_i), i=1, \cdots, n  $ in (\ref{obs:local-orbi})   are chosen with
  the following two  constraints:
\begin{enumerate}
\item Each $\eps_i$ is chosen such that  the linear operator
  \[
  D\dbar_J(u) + Id:  T_u\wtB \oplus \widehat{E}_u \longrightarrow \wtE_u
  \]
  is surjective for any $u\in  \cT_{u_i}^{\eps_i}$.   Here $ \widehat{E}_u$ is the fiber of $\widehat E_{\cT_{u_i}}$ at $u$.
  \item $\cM \subset \bigcup_{i=1}^n   \bU_{u_i}^{\eps_i/3}$.
\end{enumerate}
\end{remark}

In summary, we have the following system of local obstruction bundles for the moduli space
$\cM$.

\begin{theorem} \label{obs:coherent}  Assume that the moduli space $\cM = \cM_{0, 0}(X, J, A)$ is compact, there exists a system of Banach orbifold charts
\[
\{(\cN_{u_i}^{\eps_i}, G_{u_i},  \phi_{u_i},  \bU_{u_i}^{\eps_i} )|   i=1, \cdots, n
\}
\]
provided by Theorem \ref{slice-theorem:orbifold} with the following  system of local obstruction bundles.
\begin{enumerate}
\item For each $i$, $ E_{u_i} =  Ker (D\dbar_J(u_i))^* $ defines an orbifold vector bundle $\bE_{u_i}$  over
$  \bU_{u_i}^{\eps_i}$ which is represented by a $G_{u_i}$-equivariant vector bundle over the slice $\cS_{u_i}^{\eps_i}$. The   bundle $\pi_{u_i}:  \bE_{u_i} \to \cS_{u_i}^{\eps_i}$   is   called the {\bf  local obstruction bundle}.
\item Each local obstruction bundle $\pi_{u_i}: \bE_{u_i} \to \cS_{u_i}^{\eps_i}$ extends to a smooth bundle  over $\cT_{u_i}^{\eps_i}$ (the tubular neighbourhood of the orbit $\cO_{u_i}$). The resulting bundle $\widehat E_{\cT_{u_i}}$
 is a smooth sub-bundle of  $\wtE|_{\cT_{u_i}^{\eps_i}}$, which is also  topologically $G$-equivariant.
\item For any  $u\in \cT_{u_i}^{\eps_i}$, the linear operator
  \[
  D\dbar_J(u) + Id:  T_u\wtB \oplus \widehat{E}_u \longrightarrow \wtE_u
  \]
  is surjective. Here $ \widehat{E}_u$ is the fiber of $\widehat E_{\cT_{u_i}}$  at $u$.
\end{enumerate}
\end{theorem}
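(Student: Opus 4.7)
The plan is to assemble the theorem as a compactness consequence of the preceding local constructions: Lemma \ref{obs:orbit}, Lemma \ref{obs:id} and Definition \ref{obs:local}, followed by a perturbative surjectivity argument. First I would fix any point $[u_0] \in \cM$ and choose a smooth representative $u_0 \in \wtM$ (this uses elliptic regularity of $\dbar_J$, which forces $W^{1,p}$-solutions to be smooth). By Theorem \ref{slice-theorem:orbifold}, there exist an $\eps > 0$, a normal slice $\cS_{u_0}^{\eps}$, a tubular neighbourhood $\cT_{u_0}^{\eps}$ and a Banach orbifold chart $(\cN_{u_0}^{\eps}, G_{u_0}, \phi_{u_0}, \bU_{u_0}^{\eps})$. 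I would then set $E_{u_0} = \ker(D\dbar_J(u_0))^*$, a finite dimensional $G_{u_0}$-invariant subspace of smooth sections of $\Lambda^{0,1}T^*\Sigma \otimes_\C u_0^* TX$, and apply Lemma \ref{obs:orbit} to extend it to a smooth topologically $G$-equivariant sub-bundle $E_{\cO_{u_0}}$ of $\wtE|_{\cO_{u_0}}$. Pulling back via $\exp^{-1}$ using the canonical diagram in Lemma \ref{obs:id} yields a smooth sub-bundle $\widehat E_{\cT_{u_0}} \subset \wtE|_{\cT_{u_0}^{\eps}}$, and restricting to $\cS_{u_0}^{\eps}$ produces the $G_{u_0}$-equivariant orbifold bundle $\bE_{u_0}$ of item (1).

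The main work is item (3), the surjectivity of $D\dbar_J(u) + \mathrm{Id}$ for every $u$ in a sufficiently small $\cT_{u_0}^{\eps}$. I would argue as follows. At $u_0$ itself, (\ref{surj:u_0}) gives surjectivity. Then, since $D\dbar_J$ depends continuously on $u$ in operator norm (the Cauchy-Riemann operator is a smooth section of a smooth Banach bundle along each fixed slice, and the $G$-action extends it continuously across the orbit by the parallel-transport identification of Lemma \ref{obs:id}), and since surjectivity of a Fredholm operator is open in the operator norm topology, there exists $\eps_{u_0} > 0$ such that $D\dbar_J(u) + \mathrm{Id}: T_u\wtB \oplus \widehat E_u \to \wtE_u$ is surjective for all $u \in \cT_{u_0}^{\eps_{u_0}}$. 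The subtle point here is that although the $G$-action on $\wtB$ is not smooth, the bundles $T\wtB|_{\cO_{u_0}}$, $\wtE|_{\cO_{u_0}}$ and the sub-bundle $\widehat E$ have been identified as \emph{smooth} bundles over $\cT_{u_0}^{\eps}$ via $\exp$, and the operator $D\dbar_J$ expressed in these smooth trivializations varies continuously; this is what makes the perturbation argument legitimate despite the non-differentiability of the reparametrization action.

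Once each $[u_0] \in \cM$ has such an $\eps_{u_0}$, I would extract a finite subcover of $\cM$ by the smaller open sets $\bU_{u_i}^{\eps_i/3}$ using compactness of $\cM$, obtaining finitely many points $u_1, \ldots, u_n$ satisfying the two constraints displayed just before the theorem. The three claimed properties then follow: (1) is the orbifold bundle structure from $E_{u_i}$, (2) is the globalization via Lemmas \ref{obs:orbit} and \ref{obs:id} (noting that the $G_{u_i}$-equivariance upgrades to topological $G$-equivariance on $\cT_{u_i}^{\eps_i}$ since the whole construction is natural with respect to the $G$-action on $\cO_{u_i}$), and (3) holds by the choice of $\eps_i$.

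I expect the main obstacle to be a clean justification that $D\dbar_J$ depends continuously (indeed smoothly in the appropriate sense of Section \ref{4.2}) on $u$ across the tubular neighbourhood, given that the $G$-action is only topological. The key is that after pulling back by the smooth map $\exp$, one is comparing fibers over different points of $\cO_{u_i}$ using the parallel transport of Lemma \ref{obs:id}, which is smooth because $u_i$ is smooth; the resulting family of Fredholm operators is a smooth family in the genuine Banach sense, and openness of surjectivity applies verbatim. After this point, everything reduces to the compactness argument and the bookkeeping already encapsulated in Theorem \ref{slice-theorem:orbifold} and Definition \ref{obs:local}.
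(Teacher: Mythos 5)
Your proposal is correct and follows essentially the same route as the paper: the theorem is assembled exactly as a packaging of Lemma \ref{obs:orbit}, Lemma \ref{obs:id} and Definition \ref{obs:local}, with the $\eps_i$ shrunk so that surjectivity of $D\dbar_J(u)+\mathrm{Id}$ holds on each $\cT_{u_i}^{\eps_i}$ and a finite subcover extracted by compactness of $\cM$. Your explicit openness-of-surjectivity argument (via the smooth trivializations coming from $\exp$ and parallel transport) is the step the paper leaves implicit in the phrase ``where $\eps_0$ is chosen such that \dots is surjective,'' and it is the right justification.
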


   \subsection{Invariant  local virtual    neighbourhoods}\label{4.4}

   Having prepared the analytical foundation for the moduli spaces of pseudo-holomoprhic spheres, we demonstrate
   in this subsection how to get a system of
    $G$-invariant   local virtual neighbourhoods for  $\wtM$ whose slices provide  a system of  smooth  virtual orbifold neighbourhoods
   for $\cM=\wtM/G$. In order to patch these  virtual orbifold neighbourhoods together to get a finite dimensional virtual orbifold system as
   in Definition \ref{virtual-sys:orb} so that the virtual integration is well-defined, we need to resort to the full machinery of the virtual neighborhood technique:
   local and global stabilizations. This will be accomplished in Section \ref{5}.

Denote by $\wt\cU^{\eps_i}_{u_i}$ the total space of the local obstruction  bundle  $\hat \pi_{u_i}:  \widehat E_{\cT^{\eps_i}_{u_i}} \to \cT_{u_i}^{\eps_i}$ in Theorem \ref{obs:coherent}.  Then  $\wt\cU^{\eps_i}_{u_i}$ is a smooth Banach manifold with an induced  topological $G$-action.   The projection map $\wt\cU^{\eps_i}_{u_i} \to  \cT_{u_i}^{\eps_i}$  is still denoted by $\hat \pi_{u_i}$. The pull-back bundle
$
 \hat \pi_{u_i}^*\big( \wtE|_{\cT^{\eps_i}_{u_i}} \big)  \longrightarrow \wt\cU_{u_i},
$
has    a  section
 \ba\label{Xi_ui}
 \xymatrix{  \hat\pi_{u_i}^*\wtE|_{\cT_{u_i}^{\eps_i} }  \ar[d]    \\
   \wt\cU_{u_i}   \ar@/_1pc/[u]_{\Xi_{u_i}} }
     \na
     defined by
\[
\eta \mapsto \dbar_J \big(  \hat \pi_{u_i} (\eta)\big)  + \eta
\]
for  any $\eta \in \wt\cU^{\eps_i}_{u_i}$, which is identified with an element in  the fiber of   $\wtE|_{\cT^{\eps_i}_{u_i}}$ at
$\hat \pi_{u_i} (\eta)$. Here  we use the fact that $ \widehat E_{\cT_{u_i}} $ is a smooth sub-bundle of
 $\wtE|_{\cT^{\eps_i}_{u_i}}$.

  \begin{proposition}  \label{G-virtual:neigh} Let  $(\wt\cU^{\eps_i}_{u_i}, \hat \pi_{u_i}^*\big( \wtE|_{\cT^{\eps_i}_{u_i}} \big), \Xi_{u_i})$ be  the triple constructed as above.  Denote by
  \[
  \wt\cV_{u_i} = \Xi_{u_i}^{-1}(0) \cap  \wt\cU^{\eps_i/3}_{u_i}
  \]
     the  zero set of   the section  $\Xi_{u_i}$. Then
  we have a  $G$-invariant virtual neighbourhood
  \[
  (\wt\cV_{u_i},  \wt E_{u_i},\wt\sigma_{u_i} )
  \]
  of $\wtM \cap  \cT_{u_i}^{\eps_i}$  in the following sense.
  \begin{enumerate}
\item    $\wt\cV_{u_i}$ is  a  finite dimensional  smooth  $G$-manifold.
\item  $\wt E_{u_i}$ is a   $G$-equivariant bundle given by the restriction of the pull-back bundle $\hat \pi_{u_i}^*\big(  \widehat E_{\cT^{\eps_i}_{u_i}}  \big)$.
\item   $\wt\sigma_{u_i}$ is a canonical $G$-invaraint section of  $\wt E_{u_i} $ whose zero set
$  \wt\sigma_{u_i}^{-1}(0)$ is $ \wtM \cap  \cT_{u_i}^{\eps_i}$.
\end{enumerate}
\end{proposition}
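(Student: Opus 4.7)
The plan is to apply the implicit function theorem for Banach manifolds to the smooth section $\Xi_{u_i}$, exhibit $\wt\cV_{u_i}$ as a finite-dimensional smooth Banach submanifold of $\wt\cU^{\eps_i/3}_{u_i}$, and then use elliptic regularity to upgrade the \emph{a priori} only-topological $G$-action to a smooth one.

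First I would establish transversality of $\Xi_{u_i}$ along its zero set. At $\eta_0 \in \Xi_{u_i}^{-1}(0)$ with base point $u_0 = \hat\pi_{u_i}(\eta_0) \in \cT_{u_i}^{\eps_i}$, the vertical differential of $\Xi_{u_i}$ in directions $(v,\xi) \in T_{u_0}\wtB \oplus \widehat E_{u_0}$ is, after the parallel-transport identification built into the pull-back bundle $\hat\pi_{u_i}^*\wtE|_{\cT_{u_i}^{\eps_i}}$, given by $D\dbar_J(u_0)v + \xi$. Theorem \ref{obs:coherent}(3) guarantees that $D\dbar_J(u) + \mathrm{Id}: T_u\wtB \oplus \widehat E_u \to \wtE_u$ is surjective for every $u \in \cT_{u_i}^{\eps_i}$, hence the vertical differential of $\Xi_{u_i}$ is surjective at every zero, with finite-dimensional kernel of dimension $\mathrm{ind}(D\dbar_J(u_i)) + \mathrm{rank}\,\widehat E_{\cT_{u_i}}$. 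The Banach implicit function theorem then identifies $\wt\cV_{u_i}$ as a finite-dimensional smooth submanifold of $\wt\cU^{\eps_i/3}_{u_i}$.

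The hard part is showing that the \emph{topological} $G$-action on $\wt\cU^{\eps_i}_{u_i}$ restricts to a \emph{smooth} action on $\wt\cV_{u_i}$. The key observation is elliptic regularity: any $\eta \in \wt\cV_{u_i}$ sets up an equation $\dbar_J u + \eta = 0$ in which the inhomogeneous term $\eta$ is automatically a smooth section. Indeed, the fiber $\widehat E_u$ is obtained by parallel-transporting the finite-dimensional space $E_{u_i} = \ker (D\dbar_J(u_i))^\ast$ along geodesics, and elements of $E_{u_i}$ are smooth by elliptic regularity for the formal adjoint operator, while the parallel transport preserves smoothness since $X$ and the Levi--Civita connection are smooth. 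The usual elliptic bootstrap for $\dbar_J$ then forces $u \in C^\infty(\Sigma, X)$, so $\wt\cV_{u_i}$ lies entirely inside the space of smooth maps. On smooth maps the reparametrization action $(g,u) \mapsto u \circ g^{-1}$ is smooth (by the argument used in the proof of Lemma \ref{smooth-orbit}); this lifts the ambient topological $G$-action on $\wt\cU_{u_i}$ and restricts to a smooth $G$-action on $\wt\cV_{u_i}$.

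Finally, I would take $\wt E_{u_i}$ to be the restriction of $\hat\pi_{u_i}^*(\widehat E_{\cT_{u_i}})$ to $\wt\cV_{u_i}$; $G$-equivariance is built in since both $\hat\pi_{u_i}$ and $\widehat E_{\cT_{u_i}}$ are topologically $G$-equivariant and the action is now smooth on the base. The tautological section $\wt\sigma_{u_i}(\eta) = \eta$ is canonical and manifestly $G$-invariant. Its zero set consists of those $\eta \in \wt\cV_{u_i}$ with $\eta = 0$, which are exactly the solutions $u \in \cT_{u_i}^{\eps_i}$ of $\dbar_J u = 0$, recovering $\wtM \cap \cT_{u_i}^{\eps_i}$.
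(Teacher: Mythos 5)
Your proposal is correct and follows essentially the same route as the paper's proof: transversality of $\Xi_{u_i}$ from the surjectivity of $D\dbar_J(u)+\mathrm{Id}$ in Theorem \ref{obs:coherent}, the implicit function theorem to get the finite-dimensional smooth structure on $\wt\cV_{u_i}$, elliptic regularity to show every zero consists of a smooth map paired with a smooth section so that the reparametrization action restricts to a smooth $G$-action, and the tautological section whose zeros recover $\wtM\cap\cT_{u_i}^{\eps_i}$. You are in fact slightly more explicit than the paper at two points (the computation of the vertical differential at a zero, and the reason elements of $\widehat E_u$ are smooth), which is a welcome clarification rather than a deviation.
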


  \begin{proof}  By our construction,  $\wt\cU^{\eps_i}_{u_i}$ is an infinite dimensional Banach manifold whose tangent space at $\eta$ is given by
  \[
  T_\eta \wt\cU^{\eps_i}_{u_i} \cong T_{\hat \pi_{u_i} (\eta)}  \wtB \oplus  \widehat{E}_{\hat \pi_{u_i} (\eta)},
  \]
  and $ \hat \pi_{u_i}^*\big( \wtE|_{\cT^{\eps_i}_{u_i}} \big)$ is a Banach bundle  over  $\wt\cU^{\eps_i}_{u_i}$ with a Fredholm section $\Xi_{u_i}$.  From  the property of local obstruction bundles in Theorem \ref{obs:coherent}, we know that the section $\Xi_{u_i}$ is transversal to the zero section.  Though  the bundle $\hat \pi_{u_i}^*\big( \wtE|_{\cT_{u_i}} \big)  \to \wt\cU_{u_i}$ is  smooth, it is  $G$-equivalent only as  a topological bundle. So a priori, the zero set $ \wt\cV_{u_i}$ is  only a topological $G$-space.
  Being a solution to an elliptic differential equation, each point in  $ \wt\cV_{u_i}$ consists of
  a smooth map $u:  \Sigma \to X$ and an element $\eta_u$ is a smooth section of the  bundle $ \Lambda^{0,1} T^*\Sigma\otimes_\C u^*TX$, therefore  the  $G$-action on $ \wt\cV_{u_i}$  is smooth.   This implies that the zero
  set
  \[
  \wt\cV_{u_i} = \Xi_{u_i}^{-1}(0) \subset \wt\cU_{u_i}
  \]
  is a finite  dimensional  smooth manifold.

  By the same argument, the restriction of $\hat \pi_{u_i}^*\big(  \widehat E_{\cT^{\eps_i}_{u_i}}  \big)$ to $\wt\cU_{u_i}$ is a smooth $G$-equivariant vector bundle.   The bundle $\wt E_{u_i} \to \wt\cV_{u_i} $ has a canonical $G$-invariant section $\wt \sigma_{u_i}$ given by
  \[
  \eta = (u=   \hat \pi_{u_i} (\eta), \eta_u)\mapsto \eta_u
  \]
  for $u: \Sigma \to X$  and $\eta_u  \in \wt E_{u_i} |_{\eta} =  \widehat E_{\cT_{u_i}} |_u \subset \C^\infty (\Sigma,  \Lambda^{0,1} T^*\Sigma\otimes_\C u^*TX)$.  A zero point $(u, \eta_u)$  in  $   \wt\sigma_{u_i}^{-1}(0)$ is  a solution to
  the equations
  \[
 \xi_{u_i} (u, \eta_u) =  \dbar_J \big(  u )  + \eta_u =0, \qquad   \wt \sigma_{u_i}(u, \eta_u) =  \eta_u =0.
 \]
 for $u\in \cT_{u_i}^{\eps_i}$, which implies $\dbar_J \big(  u )=0$.  This gives a point in $\wtM \cap  \cT_{u_i}^{\eps_i}$. This correspondence defines a homeomorphism
 \[
 \wt\psi_{u_i}:    \wt\sigma_{u_i}^{-1}(0)  \to \wtM \cap  \cT_{u_i}^{\eps_i}
 \]
 with the $G$-equivariant condition automatically satisfied from the construction.  This completes the proof.
  \end{proof}

Even though the action of $G$ on $ \wt\cU^{\eps_i}_{u_i}$ is not differentiable, it does admit
 a slice  $\hat\pi_{u_i}^{-1} (\cS_{u_i}^{\eps_i})$ inherited from the $G$-slice $\cS_{u_i}^{\eps_i}$.  In fact, we have  the following proposition about the $G$-slice for the action on the virtual neighbourhood
$
  (\wt\cV_{u_i},  \wt E_{u_i},\wt\sigma_{u_i})
$
 in Proposition \ref{G-virtual:neigh}.

     \begin{proposition}  \label{virtual:neigh}
     The smooth $G$-action on the  the
 virtual neighbourhood
$
  (\wt\cV_{u_i},  \wt E_{u_i},\wt\sigma_{u_i} )
$
 in Proposition \ref{G-virtual:neigh} admits a slice which provides a $G_{u_i}$-invariant virtual  neighbourhood
 \[
 (\cV_{u_i},    E_{u_i}, \sigma_{u_i} )
 \]
  for $\wtM\cap  \cS_{u_i}^{\eps_i}$   in the following  sense.
  \begin{enumerate}
\item   The $G$-slice  $ \cV_{u_i}$ is  a  finite dimensional  smooth  $G_{u_i}$-manifold.
\item    $E_{u_i}$ is a   $G_{u_i}$-equivariant  vector bundle given by the restriction of $\wt E_{u_i}$.
\item   $\sigma_{u_i}$ is a canonical $G_{u_i}$-invaraint section of $  E_{u_i}$ whose zero set
   $ \sigma_{u_i}^{-1}(0)$ is  $ \wtM \cap  \cS_{u_i}^{\eps_i}$.
  \end{enumerate}
\end{proposition}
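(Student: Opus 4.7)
The plan is to obtain the $G$-slice by restricting the entire construction of Proposition \ref{G-virtual:neigh} along the $G_{u_i}$-invariant slice $\cS_{u_i}^{\eps_i} \subset \cT_{u_i}^{\eps_i}$ supplied by Theorem \ref{slice-theorem:orbifold}. Concretely, I would first set
\[
\cU_{u_i}^{\eps_i} := \hat\pi_{u_i}^{-1}(\cS_{u_i}^{\eps_i}) \subset \wt\cU_{u_i}^{\eps_i},
\]
which is the total space of the local obstruction bundle $\bE_{u_i} \to \cS_{u_i}^{\eps_i}$. By Theorem \ref{obs:coherent}, $\bE_{u_i}$ is a smooth $G_{u_i}$-equivariant bundle, so $\cU_{u_i}^{\eps_i}$ is a smooth $G_{u_i}$-invariant Banach sub-manifold of $\wt\cU_{u_i}^{\eps_i}$. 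The bundle $\hat\pi_{u_i}^*(\wtE|_{\cT_{u_i}^{\eps_i}})$ restricts to a smooth $G_{u_i}$-equivariant Banach bundle over $\cU_{u_i}^{\eps_i}$, and the section $\Xi_{u_i}$ from (\ref{Xi_ui}) restricts to a $G_{u_i}$-invariant Fredholm section.

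Next I define
\[
\cV_{u_i} := \wt\cV_{u_i} \cap \cU_{u_i}^{\eps_i/3}, \qquad E_{u_i} := \wt E_{u_i}|_{\cV_{u_i}}, \qquad \sigma_{u_i} := \wt\sigma_{u_i}|_{\cV_{u_i}}.
\]
For (1), I need to check that the restricted section $\Xi_{u_i}|_{\cU_{u_i}^{\eps_i}}$ is still transversal to the zero section: this follows from the transversality of $\Xi_{u_i}$ on $\wt\cU_{u_i}^{\eps_i}$ established in the proof of Proposition \ref{G-virtual:neigh}, together with the tangent space decomposition $T\wtB|_{\cO_{u_i}} \cong T\cO_{u_i} \oplus \cN_{\cO_{u_i}}$ from Theorem \ref{slice-theorem:orbifold}, which tells me that the directions lost by passing to the slice lie along the $G$-orbit and are complementary to those needed for transversality. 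Consequently $\cV_{u_i}$ is a finite-dimensional smooth manifold of dimension $\dim\wt\cV_{u_i}-\dim G$, and it carries a smooth $G_{u_i}$-action inherited from the smooth $G$-action on $\wt\cV_{u_i}$. Property (2) is then immediate since $\wt E_{u_i}$ restricted to the $G_{u_i}$-invariant $\cV_{u_i}$ is a smooth $G_{u_i}$-equivariant vector bundle. For (3), a point in $\sigma_{u_i}^{-1}(0)$ is a pair $(u,\eta_u)$ with $u\in\cS_{u_i}^{\eps_i}$ satisfying both $\Xi_{u_i}(u,\eta_u)=\dbar_J(u)+\eta_u=0$ and $\eta_u=0$, so $u\in\wtM\cap\cS_{u_i}^{\eps_i}$; conversely any such $u$ gives $(u,0)\in\sigma_{u_i}^{-1}(0)$, establishing the required identification.

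The main obstacle will be verifying that $\cU_{u_i}^{\eps_i}$ genuinely plays the role of a $G$-slice inside $\wt\cU_{u_i}^{\eps_i}$, i.e., that the natural map $G\times_{G_{u_i}}\cV_{u_i}\to\wt\cV_{u_i}$ is a $G$-equivariant diffeomorphism. The subtlety is that the $G$-action on the ambient Banach manifold $\wt\cU_{u_i}^{\eps_i}$ is only topological, so I cannot invoke a smooth slice theorem at that level. The resolution comes from elliptic regularity for the equation $\Xi_{u_i}=0$: every element of $\wt\cV_{u_i}$ lies over a smooth map $u:\Sigma\to X$, and $\eta_u$ is automatically a smooth section of $\Lambda^{0,1}T^*\Sigma\otimes_\C u^*TX$. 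Hence the $G$-action restricted to $\wt\cV_{u_i}$ becomes smooth (as already observed in the proof of Proposition \ref{G-virtual:neigh}), and combined with the identification $\cT_{u_i}^{\eps_i}=G\cdot\cS_{u_i}^{\eps_i}$ from Theorem \ref{slice-theorem:orbifold}, the standard slice theorem for smooth actions of Lie groups with compact isotropy on Banach manifolds yields the $G$-equivariant diffeomorphism $G\times_{G_{u_i}}\cV_{u_i}\cong\wt\cV_{u_i}$, completing the proof.
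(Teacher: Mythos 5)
Your proposal follows essentially the same route as the paper: restrict the total space of the local obstruction bundle, the pulled-back bundle, and the section $\Xi_{u_i}$ to the $G_{u_i}$-invariant slice $\cS_{u_i}^{\eps_i}$, observe that the restricted data form a $G_{u_i}$-equivariant transversal Fredholm system, and repeat the argument of Proposition \ref{G-virtual:neigh}. Your extra discussion of why transversality survives restriction to the slice (the orbit directions lie in the kernel of the linearization at zeros of $\dbar_J$) and of the identification $G\times_{G_{u_i}}\cV_{u_i}\cong\wt\cV_{u_i}$ supplies details the paper leaves implicit, but it is the same proof.
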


  \begin{proof}   From Theorem  \ref{slice-theorem:orbifold}, we know that the slice of  the $G$-action at $u_i$ is $\cS_{u_i}^{\eps_i}$ given by the exponential map $\exp: \cN_{u_i}^{\eps_i}\to \cS_{u_i}^{\eps_i}$ where
 $(\cN_{u_i}^{\eps_i}, G_{u_i},  \phi_{u_i},  \bU_{u_i}^{\eps_i})$ is a Banach orbifold chart for $\bU_{u_i}^{\eps_i}$.
 Let $\wt\cS_{u_i}^{\eps_i}$ be the  total space of the local obstruction bundle $\bE_{u_i} =  \widehat E_{\cT_{u_i}}|_{\cS_{u_i}^{\eps_i}}$. Then we have the corresponding restrictions of the bundle $  \hat \pi_{u_i}^*\big( \wtE|_{\cT_{u_i}} \big)$ and  its section
  $ \Xi_{u_i}$. Denote the resulting triple by
  $(\wt\cS_{u_i}^{\eps_i}, \wt\bE_{u_i},  \Xi_{u_i})$ which is $G_{u_i}$-equivariant in the sense that
  $\wt\cS_{u_i}^{\eps_i}$ is a Banach $G_{u_i}$-manifold, and $\wt\bE_{u_i}$ is a $G_{u_i}$-equivariant vector bundle,
  and $ \Xi_{u_i}$ is a $G_{u_i}$-invariant  Fredholm section  which is transversal to the zero section. Then by the same procedure as in
  Proposition  \ref{G-virtual:neigh},  we get \begin{enumerate}
\item  The $G$-slice $ \cV_{u_i} = \wt\cV_{u_i} \cap \wt\cS_{u_i}^{\eps_i} $ is  finite dimensional  smooth  $G_{u_i}$-manifold.
\item    $E_{u_i} = \wt E_{u_i}|_{ \cV_{u_i}}$   is a   $G_{u_i}$-equivariant  vector bundle.
\item $\sigma_{u_i}=   \wt\sigma_{u_i}|_{ \cV_{u_i}}$ is a canonical $G_{u_i}$-invariant section $\sigma_{u_i}$.
\item The restriction of $ \wt\psi_{u_i}$ to $  \wt\sigma_{u_i}^{-1}(0)\cap  \cV_{u_i}$ is the homeomorphism
 $\psi_{u_i}:     \sigma_{u_i}^{-1}(0)  \to \wtM \cap  \cS_{u_i}^{\eps_i}$.
\end{enumerate}
     \end{proof}

  \begin{remark}  It is clear that the $G_{u_i}$ -invariant virtual neighbourhood  $ (\cV_{u_i},    E_{u_i}, \sigma_{u_i} )$  in Proposition    \ref{virtual:neigh}  provides  a Kuranishi neighbourhood for  $\cM\cap \bU_{u_i}^{\eps_i}$ as defined in \cite{FO99}. Our  construction here comes from a $G$-invariant Kuranishi neighbourhood  $ (\wt\cV_{u_i},  \wt E_{u_i},\wt\sigma_{u_i} )$ for
  $\wtM\cap \cT_{u_i}^{\eps_i}$.
  Note that  these triples
  \[
  \{(\cV_{u_i},    E_{u_i}, \sigma_{u_i})  | i= 1, 2, \cdots, n\}
  \]
  for a system of
Banach orbifold charts
\[
\{(\cN_{u_i}^{\eps_i}, G_{u_i},  \phi_{u_i},  \bU_{u_i}^{\eps_i} )|   i=1,2,  \cdots, n
\}
\]
are  not compatible as a  Kuranishi  structure for $\cM$.  In the next Section, we will  apply  the  virtual neighborhood technique (local and global stabilizations) developed in Section \ref{3} in a $G$-equivariant way to get
   the virtual orbifold  system in Definition \ref{virtual-sys:orb} for $(\cB, \cE, \dbar)$.
\end{remark}

\section{Application of the virtual neighborhood technique  and Genus zero Gromov-Witten invariants} \label{5}

 In this section, we apply the virtual neighborhood technique developed in Section  \ref{3}
  to get a   $G$-invariant  virtual   system for   the Fredholm system  $(\wtB, \wtE, \dbar_J)$,
 whose slice  forms a  virtual  orbifold   system  for   $(\cB,  \cE,   \dbar_J)$.

\subsection{Local  stabilizations for pseudo-holomorphic spheres}

In this subsection, we will  apply the local stabilization process to the topological $G$-equivaraint orbifold
Fredholm system $(\wtB, \wtE, S)$ to  obtain a collection of thickened orbifold
Fredholm systems.

Recall in Remark \ref{orbi:cut-off}, there exist   smooth cut-off functions
\[
\{\wt \beta_i:  \cT_{u_i}^{\eps_i} \to [0 ,1] |  i= 1,2,  \cdots, n\}
 \]
such that \begin{enumerate}
\item[(i)] $\tilde \beta_i|_{ \cT_{u_i}^{\eps_i/3}}\equiv 1$,
\item[(ii)]  $supp (\tilde \beta_i)\subset \cT_{u_i}^{2\eps_i/3}$,
 \item[(iii)]   $\tilde \beta_i|_{ \cS_{u_i}^{\eps_i}}$ is $G_{u_i}$-invariant.
  \item[(iv)]   $\tilde \beta_i|_{ \cT_{u_i}^{\eps_i}}$ is $G$-invariant.
  \end{enumerate}

  These  cut-off functions  fulfil the requirement in Assumption \ref{assump:orbi}.  Here we treat  the nested  $G_{u_i}$-invariant neighbourhoods
  \[
  \cS_{u_i}^{\eps_i/3}\subset \cS_{u_i}^{2\eps_i/3} \subset \cS_{u_i}^{\eps_i}
  \]
  as nested orbifold charts for $U_{[u_i]}^{\eps_i/3}\subset U_{[u_i]}^{2\eps_i/3}\subset  U_{[u_i]}^{\eps_i}\subset \cB$.
 The collection of triples
  \[
\{  ( \cS_{u_i}^{\eps_i},  G_{u_i},  U_{[u_i]}^{\eps_i})|  i= 1,2,  \cdots, n\}
  \]
 generates  a  proper \'etale groupoid.   Set $\overrightarrow\eps =\{\eps_1, \eps_2, \cdots, \eps_n\}$.
  The space of units consists of the disjoint union
\[
\bU_{\overrightarrow\eps}^0 :=  \bigsqcup_i \cS_{u_i}^{\eps_i},
\]
 the space of arrows consist of  triples
 \ba\label{gen:gpoid}
\bU_{\overrightarrow\eps}^1 :=  \{ (u, g, v) |u, v \in  \bigsqcup_i \cS_{u_i}^{\eps_i}, \text{ and  there exists } g\in G \text{ \ such that\ } g\cdot u =v
 \},
 \na
 This groupoid will be  denoted by
  $  \bU_{\overrightarrow\eps}  $,   Similarly, we have groupoids
  $\bU_{\overrightarrow\eps/3}$ and $\bU_{2\overrightarrow\eps/3}$.
  Note  that due to the non-smooth action of $G$, these groupoids
  \[
  \bU_{\overrightarrow\eps/3} , \qquad  \bU_{2\overrightarrow\eps/3}  \quad \text{and} \quad
  \bU_{\overrightarrow\eps}
  \]
  are only  topological proper \'stale  groupoids and
  are   Morita equivalent to
  the action groupoids
  \[
  \cT_{\overrightarrow\eps/3} , \qquad  \cT_{2\overrightarrow\eps/3}  \quad \text{and} \quad
  \cT_{\overrightarrow\eps}
  \]
  associated the $G$-actions on
  \[
  \bigcup_{i=1}^n   \cT_{u_i}^{\eps_i/3}, \qquad  \bigcup_{i=1}^n   \cT_{u_i}^{3\eps_i/3} \quad \text{and} \quad   \bigcup_{i=1}^n
    \cT_{u_i}^{ \eps_i}
  \]
  respectively.

   \begin{remark} \label{generate} This method of generating groupoids  specifying  the  space  of units for a groupoid,  with the corresponding space of arrows defined as
  (\ref{gen:gpoid})  will be applied repeatedly
 in this section.   With this convention understood, the local and global  stabilizations below will be applied to the
  space of units.
  \end{remark}

  Now we can recast Propositions \ref{G-virtual:neigh} and \ref{virtual:neigh} in terms of the $G$-invariant  local stabilization of
  $(\wtB, \wtE, \dbar_J)$.
       Consider the  action  groupoids  $\cT_{ \eps_i/3} $ and $ \cT_{ \eps_i/3} $   for the $G$-action on $ \cT_{u_i}^{\eps_i/3}$  and   $ \cT_{u_i}^{\eps_i}$.    Then  the construction
    in (\ref{Xi_ui}) where the section $\Xi_{u_i}$ is modified by
    \ba\label{Xi:modify}
    \Xi_{u_i} (\eta)  =   \dbar_J \big(  \hat \pi_{u_i} (\eta)\big)  + \tilde \beta_{u_i} ( \eta)
\na
provides a $G$-invariant local stabilization for $(\wtB, \wtE, \dbar_J)$ in the following sense.
The thickened  Fredholm system
\[
 (\wt\cU^{\eps_i}_{u_i}, \hat \pi_{u_i}^*\big( \wtE|_{\cT_{u_i}} \big), \Xi_{u_i})
 \]
  constructed for
Proposition   \ref{G-virtual:neigh}
with the modified section (\ref{Xi:modify})  is $G$-invaraint under the topological $G$-actions on $\wt\cU^{\eps_i}_{u_i}$  and $\hat \pi_{u_i}^*\big( \wtE|_{\cT^{\eps_i}_{u_i}} \big)$ such that
\begin{enumerate}
\item the action groupoid   generated by the $G$-action on
\[
\wt\cV_{u_i}  =  \Xi_{u_i}^{-1}(0) \cap  \wt\cU^{\eps_i/3}_{u_i}
\]
is a finite dimensional  smooth proper Lie groupoid,
\item the action  groupoid $G\ltimes \wt E_{u_i}$ is a finite rank vector bundle over $G\ltimes \wt\cV_{u_i}$ with a canonial
smooth section.
\end{enumerate}
Moreover, the  triple $( G\ltimes \wt\cV_{u_i}, G\ltimes \wt E_{u_i}, \sigma_{u_i})$ is Morita equivalent to
  the orbifold triple
  \[
  (G_{u_i}\ltimes \cV_{u_i}, G_{u_i}\ltimes E_{u_i}, \sigma_{u_i})
  \]
  constructed in     Proposition    \ref{virtual:neigh}

\subsection{Global stabilizations  and invariants for pseudo-homolomorphic spheres}

    Given a non-empty subset $I  \subset \{1,   2, \cdots, n\}$,   define
 \ba\label{cA_I}
 \cA_I = \big(  (  \bigcap_{i\in I} \bU_{u_i}^{\eps_i} ) - (\bigcup_{j\notin I}  \overline{\bU}_{u_j}^{2\eps_j/3} ) \big) \cap
 \big(\bigcup_{i =1}^n   \bU_{u_i}^{\eps_i/3} \big) \subset \cB,
 \na
 where $ \overline{\bU}_{u_j}^{2\eps_j/3}$ is the closure of  $\bU _{u_j}^{2\eps_j/3}$. Then $\cA_I \subset \bU_{u_i}^{\eps_i}$ for every $i \in I$.

 \begin{lemma}
 The collection $\{\cA_I | I\subset \{1, 2, \cdots, n\}, I \neq \emptyset\}$ forms a cover of $\bigcup_{i =1}^n   \bU_{u_i}^{\eps_i/3}$.
 \end{lemma}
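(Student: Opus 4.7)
The plan is to proceed in direct analogy with the proof of Lemma \ref{cover:X_I}, where the virtual manifold cover $\{X_I'\}$ of $X^{(3)}$ was built from nested open neighbourhoods. The key point is that the nested chain $\bU_{u_i}^{\eps_i/3} \subset \bU_{u_i}^{2\eps_i/3} \subset \bU_{u_i}^{\eps_i}$ (together with the closure relation $\overline{\bU}_{u_j}^{2\eps_j/3} \subset \bU_{u_j}^{\eps_j}$) lets us read off membership in $\cA_I$ directly from which charts contain a given point.

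First I would fix an arbitrary $y \in \bigcup_{i=1}^n \bU_{u_i}^{\eps_i/3}$ and define
\[
I(y) = \{\, i \in \{1,\dots,n\} \mid y \in \bU_{u_i}^{\eps_i} \,\}.
\]
Since $\bU_{u_i}^{\eps_i/3} \subset \bU_{u_i}^{\eps_i}$ and $y$ lies in some $\bU_{u_i}^{\eps_i/3}$, the set $I(y)$ is nonempty. By construction $y \in \bigcap_{i \in I(y)} \bU_{u_i}^{\eps_i}$, so the first factor in the definition \eqref{cA_I} is taken care of.

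Next I would verify that $y$ avoids $\overline{\bU}_{u_j}^{2\eps_j/3}$ for every $j \notin I(y)$. By choice of $I(y)$, such a $j$ satisfies $y \notin \bU_{u_j}^{\eps_j}$. Using the nesting $\overline{\bU}_{u_j}^{2\eps_j/3} \subset \bU_{u_j}^{\eps_j}$ (guaranteed by Theorem \ref{slice-theorem:orbifold} and the construction of the slices/tubular neighbourhoods in Section \ref{4.1}), we conclude $y \notin \overline{\bU}_{u_j}^{2\eps_j/3}$. Hence $y \notin \bigcup_{j \notin I(y)} \overline{\bU}_{u_j}^{2\eps_j/3}$. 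Combined with the assumption $y \in \bigcup_{i=1}^n \bU_{u_i}^{\eps_i/3}$, this yields $y \in \cA_{I(y)}$.

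I do not anticipate any genuine obstacle: the argument is combinatorial and mirrors Lemma \ref{cover:X_I} almost verbatim, the only subtlety being to notice that the nested inclusion of closed and open balls in the orbifold charts passes through the quotient map $\pi_\cB$ without any non-differentiability concern (it is a statement at the topological level in $\cB$). The only point worth emphasizing in the write-up is that $I(y)$ is automatically nonempty, which is exactly why the empty index set may be excluded from the cover.
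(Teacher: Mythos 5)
Your proof is correct and is exactly the "simple set theoretical argument" the paper alludes to: it mirrors the proof of Lemma \ref{cover:X_I}, taking $I(y)$ to be the set of indices $i$ with $y\in\bU_{u_i}^{\eps_i}$ and using the nesting $\overline{\bU}_{u_j}^{2\eps_j/3}\subset\bU_{u_j}^{\eps_j}$ to exclude the complementary indices. No substantive difference from the paper's (omitted) argument.
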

 \begin{proof}  The proof follows from a simple set theoretical argument.
 \end{proof}

 Over each $ \cA_I$, we have
 \ba\label{T_I}
 \cT_I : = \pi_\cB^{-1} ( \cA_I ) =  \big(  (  \bigcap_{i\in I} \cT_{u_i}^{\eps_i} ) - (\bigcup_{j\notin I}  \overline{\cT}_{u_j}^{2\eps_j/3} ) \big) \cap
 \big(\bigcup_{i =1}^n   \cT_{u_i}^{\eps_i/3} \big).
 \na
 So $\cT_I \subset \cT_{u_i}^{\eps_i}$ for every $i \in I$.  For  each $\cT_I$ , we have a smooth Banach bundle $\wtE_I = \wtE|_{\cT_I} $  and a smooth local obstruction bundle
 \ba\label{E_I}
\hat\pi_I:   \widehat E_I =  \bigoplus \widehat E_{\cT_{u_i}}|_{\cT_I } \longrightarrow \cT_I.
 \na
  Both  $\wtE_I$ and  $ \widehat E_I $  are topological $G$-equivariant bundles.

  Denote by  $\wt\cU_I$  the total space   of $ \widehat E_I \to \cT_I$.
    Over  $\wt\cU_I$, we have two pull-back  bundles  $\hat\pi_I^* \widehat E_I $ and $\hat\pi_I^*\wtE_I$. They are smooth Banach bundles over the Banach manifold $\wt\cU_I$.  As constructed earlier in this Section, the bundle   $\hat\pi_I^*\wtE_I \to \wt\cU_I$ has a canonical section
  \ba\label{Xi_I}
 \xymatrix{  \hat\pi_I^*\wtE_I   \ar[d]    \\
   \wt\cU_I   \ar@/_1pc/[u]_{\Xi_I}  }
     \na
  given by
   \ba\label{Xi_I:def}
\eta=   (u,  (\eta_i)_{i\in I}) \mapsto \dbar_J (u) +  \sum_{ i\in I} \wt\beta_i (u) \eta_i.
\na

  \begin{theorem}  \label{global:virtual:neigh}
  Denote by $\wt\cV_I= \Xi_I^{-1}(0)$   the zero set of the section $\Xi_I$. Then
  we have a collection of   $G$-invariant virtual neighbourhoods
  \[
  (\wt\cV_I,  \wt E_I,\wt\sigma_I)
  \]
  of $\wtM \cap  \cT_I$
  such that   \begin{enumerate}
\item    $\wt\cV_I$ is  a  finite dimensional  smooth  $G$-manifold.
\item  $\wt E_I$ is a   $G$-equivariant bundle given by the restriction of the pull-back bundle $\hat \pi_I^*\big(  \widehat E_I  \big)$.
\item  $\wt\sigma_I$ is   a canonical $G$-invariant section of $\wt E_I$  whose
zero  set $ \wt\sigma_I^{-1}(0) $ is $ \wtM \cap  \cT_I$.
\item The zero  sets $\{\wt\sigma_I^{-1}(0) \}_I$ form a cover of $\wtM$.
\end{enumerate}
Moreover, fix $i\in I$,   let $\cS_{I, i} = \cT_I\cap \cS_{u_i}^{\eps_i}$, then  the smooth $G$-action on $ (\wt\cV_I,  \wt E_I,\wt\sigma_I,  \wt\psi_I)$ admits a slice
 \[
 (\cV_{I, i},    E_{I, i}, \sigma_{I, i},  \psi_{I, i})
 \] which   provides a $G_{u_i}$-virtual  neighbourhood
 for $\wtM\cap \cS_{I, i}$ such that
  \begin{enumerate}
\item   The $G$-slice  $ \cV_{I, i}$ is  a  finite dimensional  smooth  $G_{u_i}$-manifold.
\item    $E_{I,  i}$ is a   $G_{u_i}$-equivariant  vector bundle given by the restriction of $\wt E_{I}$.
\item  $\sigma_{I, i}$ is a canonical $G_{u_i}$-invariant section  of  $  E_{I, i}$  whose  zeros  $\sigma_{I, i}^{-1}(0)$  is
$\wtM \cap  \cS_{I, i}$.
  \end{enumerate}
  \end{theorem}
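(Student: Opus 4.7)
The plan is to extend the single-index constructions of Propositions \ref{G-virtual:neigh} and \ref{virtual:neigh} to an arbitrary nonempty $I\subset\{1,\ldots,n\}$ by combining the local obstruction bundles $\widehat E_{\cT_{u_i}}$ for $i\in I$ into $\widehat E_I$ and then running the same local stabilization argument in a $G$-equivariant way on $\cT_I$. First I would establish that the section $\Xi_I$ defined by \eqref{Xi_I:def} is Fredholm and transversal to the zero section. Its linearization at a point $\eta=(u,(\eta_i)_{i\in I})$ is
\[
L(\delta u,(\delta\eta_i)) \;=\; D\dbar_J(u)\delta u \;+\; \sum_{i\in I}\bigl(d\wt\beta_i(u)\delta u\bigr)\eta_i \;+\; \sum_{i\in I}\wt\beta_i(u)\,\delta\eta_i.
\]
The key observation is that, by the definition \eqref{cA_I} of $\cA_I$, every $u\in\cT_I$ automatically lies in $\cT_{u_k}^{\eps_k/3}$ for some $k\in I$, so $\wt\beta_k(u)=1$. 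Combined with the surjectivity of $D\dbar_J(u)+Id\colon T_u\wtB\oplus \widehat E_{\cT_{u_k}}|_u\to\wtE_u$ from Theorem \ref{obs:coherent}(3), this makes the leading operator $L_0(\delta u,(\delta\eta_i))=D\dbar_J(u)\delta u+\sum_i\wt\beta_i(u)\delta\eta_i$ surjective, so $L$ is surjective up to a finite-rank perturbation valued in $\mathrm{span}\{\eta_i\}_{i\in I}$.

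Second, I would identify the zero set $\wt\cV_I=\Xi_I^{-1}(0)$ as a finite-dimensional smooth $G$-manifold. The topological $G$-action on $\wt\cU_I$ descends to $\wt\cV_I$; a priori this is only continuous because the action of $PSL(2,\C)$ on $\wtB$ fails to be differentiable, but any zero of $\Xi_I$ satisfies $\dbar_J(u)=-\sum_{i\in I}\wt\beta_i(u)\eta_i$, and since each $\eta_i$ is a smooth section of the local obstruction bundle $\widehat E_{\cT_{u_i}}$, elliptic regularity forces $u\in C^\infty(\Sigma,X)$, exactly as in the proof of Proposition \ref{G-virtual:neigh}. On smooth maps the reparametrisation action is smooth, so $G$ acts smoothly on $\wt\cV_I$. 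The bundle $\wt E_I=\hat\pi_I^*\widehat E_I|_{\wt\cV_I}$ inherits a $G$-equivariant smooth structure, and the canonical section $\wt\sigma_I\colon(u,(\eta_i))\mapsto(\eta_i)$ is $G$-invariant with zero set precisely those $(u,0)$ satisfying $\dbar_J(u)=0$, which is $\wtM\cap\cT_I$. The covering property $\bigcup_I\wt\sigma_I^{-1}(0)=\wtM$ then follows from the fact that $\{\cT_I\}_I$ covers the open neighbourhood $\bigcup_{i=1}^n\cT_{u_i}^{\eps_i/3}$ of $\wtM$.

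Third, for the slice assertion I would fix $i\in I$ and restrict the $G$-invariant triple $(\wt\cV_I,\wt E_I,\wt\sigma_I)$ along the smooth inclusion $\cS_{I,i}=\cT_I\cap\cS_{u_i}^{\eps_i}\hookrightarrow\cT_I$ furnished by Theorem \ref{slice-theorem:orbifold}. Since $\cS_{u_i}^{\eps_i}$ is a genuine $G_{u_i}$-invariant smooth Banach slice through $u_i$ and $\widehat E_{\cT_{u_i}}|_{\cS_{u_i}^{\eps_i}}\cong\bE_{u_i}$ is a smooth $G_{u_i}$-equivariant bundle (Theorem \ref{obs:coherent}(2)), pulling back gives a smooth $G_{u_i}$-equivariant sub-triple $(\cV_{I,i},E_{I,i},\sigma_{I,i})$; the $G$-equivariance of the exponential map in Theorem \ref{slice-theorem:orbifold} then identifies $\sigma_{I,i}^{-1}(0)$ with $\wtM\cap\cS_{I,i}$, and $(\wt\cV_I,\wt E_I,\wt\sigma_I)$ is recovered as the $G$-saturation of $(\cV_{I,i},E_{I,i},\sigma_{I,i})$.

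The main obstacle is the transversality of $L$, because the finite-rank derivative term $K(\delta u,\cdot)=\sum_i(d\wt\beta_i(u)\delta u)\eta_i$ is a compact perturbation of $L_0$, and compact perturbations of surjective Fredholm operators need not remain surjective in general. I expect to overcome this by exploiting the constraint $\sum_{i\in I}\wt\beta_i(u)\eta_i=-\dbar_J(u)$ on $\wt\cV_I$: by shrinking the $\eps_i$ if necessary, $\|\dbar_J(u)\|_{L^p}$ and hence each $\|\eta_i\|$ can be made arbitrarily small on a neighbourhood of $\wtM\cap\cT_I$, so $\|K\|$ becomes small relative to the bounded inverse of $L_0$ and surjectivity is preserved by a standard Neumann-series perturbation argument. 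Outside such a neighbourhood one uses the cutoff support structure of $\wt\beta_i$ to confine the analysis to where $\wt\beta_k\equiv 1$, reducing to the single-chart case already treated.
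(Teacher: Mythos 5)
Your overall route is the same as the paper's: check that $\Xi_I$ is a transversal Fredholm section using the fact that every $u\in\cT_I$ lies in some $\cT_{u_{i_0}}^{\eps_{i_0}/3}$ with $i_0\in I$ (so $\wt\beta_{i_0}(u)=1$ and Theorem \ref{obs:coherent}(3) applies), deduce smoothness of the $G$-action on $\wt\cV_I$ from elliptic regularity exactly as in Proposition \ref{G-virtual:neigh}, and obtain the slice triple by restricting to $\cS_{I,i}$ as in Proposition \ref{virtual:neigh}. The paper's proof is precisely this reduction, and it silently drops the derivative-of-cutoff term in the linearization; you are right to flag that term, and your identification of $L=L_0+K$ with $K$ of finite rank is correct.

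However, your proposed resolution of that term does not work as stated. On $\wt\cV_I$ the equation $\sum_{i\in I}\wt\beta_i(u)\eta_i=-\dbar_J(u)$ controls only the weighted sum of the $\eta_i$, not the individual norms $\|\eta_i\|$: the subspaces $\widehat E_{\cT_{u_i}}|_u\subset\wtE_u$ for different $i$ need not be transverse, so a zero can have large $\eta_i$'s that cancel. Moreover the theorem asserts that all of $\wt\cV_I=\Xi_I^{-1}(0)$ is a smooth manifold, so you need surjectivity at every zero, not only those in a small neighbourhood of $\wtM\cap\cT_I$; shrinking the $\eps_i$ also perturbs the entire covering construction. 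The correct fix is elementary and requires no smallness: since each $\wt\beta_i$ takes values in $[0,1]$, any point with $\wt\beta_i(u)=0$ is an interior minimum, so $d\wt\beta_i(u)=0$ there; hence $K(\delta u)=\sum_{i\in I}(d\wt\beta_i(u)\delta u)\,\eta_i$ is supported on the indices with $\wt\beta_i(u)>0$, and for each such $i$ the correction $(d\wt\beta_i(u)\delta u)\,\eta_i$ already lies in $\wt\beta_i(u)\cdot\widehat E_{\cT_{u_i}}|_u$, i.e.\ in the image of the fibre direction $\delta\eta_i\mapsto\wt\beta_i(u)\delta\eta_i$. Therefore $\mathrm{Im}(L)=\mathrm{Im}(L_0)=\wtE_u$ at every point of $\wt\cU_I$, and transversality holds without any perturbation estimate. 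With that substitution the rest of your argument goes through and coincides with the paper's.
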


  \begin{proof}  Note that   $\wt\cU_I$  is an infinite dimensional  Banach manifold whose tangent bundle at $ \eta= (u, (\eta_i)_{i\in I})$,  with
 each $\eta_i\in \widehat{E}_u$,  is
 \[
 T_{\eta} \wt\cU_I = T_u \wtB \oplus \bigoplus_{i\in I} \widehat{E}_u,  \]
and $ \wt E_I$ is a Banach bundle with  a Fredholm section $\Xi_I$. Then
this section is transversal to the zero section.  To check this claim, for any point $ \eta= (u, (\eta_i)_{i\in I}) \in \wt\cU_I$, there exists
$i_0\in I$ such that $u\in  \cT_{u_{i_0}}^{\eps_{i_0}/3}$, hence, $\wt\beta_{i_0}  (u)=1$. Let $\eta_i$ be zero except for $i= i_0$,  then we know
that $\Xi_I$ is surjective at $(u, (\eta_i)_{i\in I})$ as  for $u\in   \cT_{u_{i_0}}^{\eps_{i_0}/3}$  the linear operator
\[
D\dbar_I (u) + Id:  T_u\wtB \oplus \widehat{E}_{u_i}|_u \longrightarrow \wtE_u
\]
is surjective.
Then the proofs of  Propositions \ref{G-virtual:neigh} and
  \ref{virtual:neigh} can be carried over   for the  triple $(\wt\cU_I, \hat\pi_I^*\wtE_I, \Xi_I)$ in (\ref{Xi_I}) to obtain the results in the theorem.
    \end{proof}

 Associated to a system of Banach orbifold charts
 \[
\{(\cN_{u_i}^{\eps_i}, G_{u_i},  \phi_{u_i},  \bU_{u_i}^{\eps_i} )|   i=1, \cdots, n
\}
\]
prescribed by Theorem  \ref{slice-theorem:orbifold},  together with local obstruction bundles $\{\bE_{u_i}\}_i$ given by
Theorem \ref{obs:coherent},   we obtain a system of virtual  orbifold  neighbourhoods
   \[
\{ (\cV_{I, i},    E_{I, i}, \sigma_{I, i},  \psi_{I, i})|  I \subset \{1, 2, \cdots, n\}, i\in I\neq \emptyset\}
 \]
 as in Theorem \ref{global:virtual:neigh}, for any $I\subset \{1, 2, \cdots, n\}$ and $i\in I$.

 Let  $\cV_I$  and $\bE_I$ be the
 proper \'etale groupoids  generated by the action of elements in $G$ which preserve
 \[
 \bigsqcup_{i\in I} \cV_{I, i} \qquad \text{and} \qquad  \bigsqcup_{i\in I} E_{I, i}
\]
in the sense of  Remark \ref{generate}.  Then $\bE_I$ is  a vector bundle over $\cV_I$ with a canonical   section  $\sigma_I$.   We remark that as
proper Lie groupoids,
\[
G\ltimes \wt\cV_I, \qquad  G_{u_i} \ltimes \cV_{I, i}, \quad \text{and} \quad \cV_I
\]
are all Morita equivalent, both $\cV_I$ and $G_{u_i} \ltimes \cV_{I, i}$ are proper and \'etale.

 \begin{theorem}  \label{main:thm}
  Let $(\wtB, \wtE, \dbar_J)$ be the Fredholm system  associated to pseudo-holomorphic spheres with the topological action
 of the reparametrization group $G=PSL(2, \C)$. Then the virtual  neighborhood
 technique of Section \ref{3} can be applied in a $G$-equivariant way such that the following two statements hold.
  \begin{enumerate}
\item  The collection of triples $\{(\wt\cV_I, \wt  E_I,\wt\sigma_I)| I\subset  \{1, 2, \cdots, n\}\}$ is a finite dimensional virtual system  in the $G$-equivariant sense of Definition \ref{virtual-system} such the zero  sets $\{\wt\sigma_I^{-1}(0) \}_I$ form a cover of $\wtM$.
\item The collection of triples $\{( \cV_I, \bE_I, \sigma_I)| I\subset  \{1, 2, \cdots, n\}\}$ is a finite dimensional virtual orbifold
 system  in the  sense of Definition \ref{virtual-sys:orb} such the zero  sets $\{\sigma_I^{-1}(0) \}_I$ form a cover of $\cM$.
\end{enumerate}
\end{theorem}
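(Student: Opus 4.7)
The plan is to verify that the $G$-equivariant local virtual neighbourhoods already produced in Theorem \ref{global:virtual:neigh} assemble into a virtual system, by identifying them as the output of the orbifold virtual neighbourhood technique of Section \ref{3} applied to $(\wtB,\wtE,\dbar_J)$ in a $G$-equivariant manner. The data that play the role of $K_{x_i}$ in the abstract machine (see Proposition \ref{global:mfd} and Theorem \ref{thm:orbi-vir-sys}) are the finite rank local obstruction spaces $E_{u_i}=\mathrm{Ker}(D\dbar_J(u_i))^{*}$ assembled into the bundles $\widehat{E}_{\cT_{u_i}}$ of Theorem \ref{obs:coherent}; the role of the nested neighbourhoods $U^{(1)}_{x_i}\subset U^{(2)}_{x_i}\subset U^{(3)}_{x_i}$ is played by the tubular neighbourhoods $\cT_{u_i}^{\eps_i/3}\subset \cT_{u_i}^{2\eps_i/3}\subset \cT_{u_i}^{\eps_i}$ of Theorem \ref{slice-theorem:orbifold}, while the cut-off functions $\tilde\beta_i$ of Remark \ref{orbi:cut-off} supply the smooth $G$-invariant bumps required by Assumption \ref{assump:orbi}. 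The thickened section (\ref{Xi_I:def}) on $\wt\cU_I$ is then exactly the $G$-equivariant analogue of the transversal Fredholm section $S_I$ in Proposition \ref{global:mfd}.

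First I would verify that Assumption \ref{assump:orbi-2} holds in this context: given a sequence $(u_N,(\eta_{i,N})_{i\in I})\in \wt\cU_I$ with $\Xi_I(u_N,(\eta_{i,N}))=0$ and $\|\eta_{i,N}\|\to 0$ for every $i\in I$, I would invoke Gromov compactness applied to the $W^{1,p}$-maps $u_N$ (using compactness of $\cM$ and the fact that the perturbation terms $\tilde\beta_i(u_N)\eta_{i,N}$ vanish in $L^p$) together with the elliptic regularity of $\dbar_J$ to conclude $u_N\to u_\infty$ in $W^{1,p}$, so the sequence converges in $\wt\cU_I$. Second, I would establish the patching data: for $I\subset J$, the bundle projection $\widehat{E}_J = \bigoplus_{i\in J}\widehat{E}_{\cT_{u_i}}|_{\cT_J}\to \widehat{E}_I|_{\cT_J}$ obtained by forgetting the $i\in J\setminus I$ factors gives a vector bundle map $\Phi_{I,J}\colon \wt\cU_{J,I}\to \wt\cU_{I,J}$, where $\wt\cU_{I,J}=\wt\cU_{J,I}=\hat\pi_I^{-1}(\cT_I\cap\cT_J)\cap \hat\pi_J^{-1}(\cT_I\cap\cT_J)$; restriction of $\Xi_J$ along $\Phi_{I,J}$ returns $\Xi_I$ plus the fiberwise factor in $\prod_{i\in J\setminus I}\widehat{E}_{\cT_{u_i}}$, as in (\ref{section=}). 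This identifies $\wt\cV_{J,I}\to \wt\cV_{I,J}$ as a vector bundle and shows that $\wt E_J|_{\wt\cV_{J,I}}\cong \Phi_{I,J}^{*}(\wt\cV_{J,I}\oplus \wt E_I|_{\wt\cV_{I,J}})$ with $\wt\sigma$ transforming according to (\ref{sect:vir}), giving the virtual vector bundle structure. The coherent conditions (\ref{cohe:1}) and (\ref{fiber:product}) then follow because the projections $\Phi_{I,J}$ are just forgetting direct summands of $\bigoplus_{i}\widehat{E}_{\cT_{u_i}}$, so they compose strictly and commute in the obvious fibre-product square.

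Third, I would verify $G$-equivariance. Even though the ambient $G$-action on $\wtB$ and $\wtE$ is only continuous, every point $(u,(\eta_i)_{i\in I})$ of $\wt\cV_I$ consists of a smooth map $u\colon \Sigma\to X$ and smooth sections $\eta_i$ of $\Lambda^{0,1}T^{*}\Sigma\otimes_{\C} u^{*}TX$ (by elliptic regularity applied to $\dbar_J u + \sum_i\tilde\beta_i(u)\eta_i=0$ with $\eta_i\in E_{u_i}$ smooth). On such smooth data the $G$-action is smooth, so $\wt\cV_I$ is a smooth $G$-manifold, $\wt E_I$ a smooth $G$-equivariant vector bundle, and $\wt\sigma_I$ a smooth $G$-invariant section; the maps $\Phi_{I,J}$ are $G$-equivariant by construction. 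This establishes (1). For (2), taking the slice at each $u_i$ along the $G$-orbit direction, as carried out in the second half of Theorem \ref{global:virtual:neigh}, converts $(\wt\cV_I,\wt E_I,\wt\sigma_I)$ into the orbifold groupoid data $(\cV_I,\bE_I,\sigma_I)$ as in Remark \ref{Morita}; the groupoid $\cV_I$ is generated from $\bigsqcup_{i\in I}\cV_{I,i}$ by the $G$-action and is Morita-equivalent to the transformation groupoid $G\ltimes \wt\cV_I$, so the coherent patching data descend. The covers $\wtM=\bigcup_I \wt\sigma_I^{-1}(0)$ and $\cM=\bigcup_I\sigma_I^{-1}(0)$ follow from the fact that $\bigcup_{i=1}^{n}\bU^{\eps_i/3}_{u_i}\supset \cM$ (see (\ref{smooth:cover})) and that whenever $u\in \wtM\cap \cT_{u_i}^{\eps_i/3}$ the corresponding $I$ is the set of indices $j$ with $u\in \cT_{u_j}^{\eps_j}$, for which $(u,0)\in \wt\cV_I$.

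The main obstacle I anticipate is ensuring that the smooth structures on $\wt\cV_I$ patch genuinely smoothly across different $I$, given that the overlap structure is dictated by the continuous (not smooth) $G$-action on $\wtB$. The resolution is precisely the slice/tubular neighbourhood theorem: on each $\cT_{u_i}^{\eps_i}$ the $G$-action is smooth in the tubular-product sense, and $\wt\cV_I\subset \bigcap_{i\in I}\cT_{u_i}^{\eps_i}$, so within a single chart all constructions are smooth. The patching maps $\Phi_{I,J}$ act only on fibre directions in $\bigoplus \widehat{E}_{\cT_{u_i}}$, which are smooth subbundles of $\wtE$ by Theorem \ref{obs:coherent}(2), so the smoothness propagates. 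Once this is in place, the remaining verifications are formal adaptations of Proposition \ref{global:mfd} and Theorem \ref{thm:orbi-vir-sys} with $G$-equivariance carried along throughout.
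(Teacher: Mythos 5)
Your proposal is correct and follows essentially the same route as the paper: the paper's own proof simply reduces to Theorems \ref{thm:vir:system} and \ref{thm:orbi-vir-sys} via the constructions of $\cT_I$, $\widehat E_I$ and $\Xi_I$ in Theorem \ref{global:virtual:neigh}, checks that the patching data (forgetting the $J\setminus I$ obstruction factors, with $\tilde\beta_j\equiv 0$ on $\cT_I$ for $j\notin I$) are $G$-invariant, and passes to slices for the orbifold statement — exactly the steps you spell out in greater detail. Your extra verification of Assumption \ref{assump:orbi-2} is not needed for the theorem as stated (the paper defers it to the discussion of invariants after the proof), but it does no harm.
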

\begin{proof}   With the detailed proofs of Theorems \ref{thm:vir:system} and  \ref{thm:orbi-vir-sys},  it is straightforward to check
that
\[
\{ (\wt\cV_I, \wt  E_I,\wt\sigma_I)| I\subset  \{1, 2, \cdots, n\}\}
\]
 is a finite dimensional virtual system. As $\wt E_I$ is a $G$-equivariant
vector bundle for each $I$, it  remain  to check that the patching datum are $G$-invariant. This
can be easily verified.
The collection   $\{\cV_I\}$ of  proper   \'etale   groupoids  with the patching induced from
the $G$-invariant patching datum for $\{\wt\cV_I\}$ ensures that it is a  proper   \'etale   virtual groupoid.  Similarly, we get a
finite rank virtual vector bundle  $\{\bE_I\}$ over $\{\cV_I\}$.
\end{proof}

  In order to define invariants for the moduli space of pseudo-holomorphic spheres, we need to check
  that  $\{ \cV\}$ and $\{\bE_I\}$ are oriented,  and an analog of  Assumption \ref{assump:orbi-2}  is satisfied for
  $(\wtB, \wtE, \dbar_J)$.    The orientation condition can be checked using the classical argument as in page 32 of
   \cite{McS}.  Assumption  \ref{assump:orbi-2}  follows from the standard elliptic regularity theory for
   the perturbed Cauchy-Riemann equations.

   So we can choose a partition of unity $\{\eta_I\}$ on $\cV= \{\cV_I\}$ and a virtual Euler form $\{\theta_I\}$ of $\bE=\{\bE_I\}$.
   Then given a virtual differential form
   $\alpha=\{\alpha_I\}$ on  $\cV$, we can define the invariant to be
   \ba\label{final:inv}
   \Phi (\alpha) = \int^{vir}_\cV \alpha = \sum_{I} \int_{\cV_I} \eta_I\cdot \theta_I \cdot \alpha_I.
   \na
   One could take  $\alpha$  the constant function  $1$,
   then
   \[
   \Phi (1) =  \sum_{I} \int_{\cV_I} \eta_I\cdot \theta_I
   \]
   which will be zero if the virtual dimension of $\cV$ is not the same as the virtual rank of $\bE$. By Remark
   \ref{rem:ind}, we know that  the invariant (\ref{final:inv}) is indeed well-defined, that is, it does not depend
   on various choices in the  process.

   \begin{remark} We can consider the moduli space of pseudo-holomorphic spheres with  $k$ marked points. Assume that $\cM$ is compact. If $k\geq 3$, then $(\cB, \cE, \dbar_J)$ itself is an orbifold Fredholm system. The virtual neighborhood technique developed in Section \ref{3}
   can be applied directly to get a virtual orbifold system. When $k\leq 2$, and so  the reparametrization group is a  Lie subgroup  $G$ of
   $PSL(2, \C)$,  then the slice and tubular neighbourhood theorem (Theorem \ref{slice-theorem}) still  hold. The virtual neighborhood technique can
   still be applied in principle as in Section \ref{4} and this Section to obtain  a well-defined  virtual orbifold system.  Now we can take many interesting virtual differential forms obtained by the pull-back closed  differential forms on $X$ using the evaluation maps.  We remark
   that by an appropriate implementation of the  virtual neighborhood technique, the evaluation maps can be made into  smooth maps.  We will discuss these issues in details in \cite{CLW3}. 
     \end{remark}

\vskip .2in
\noindent
{\bf Acknowledgments}  This work is  supported by   the Australian Research Council   Grant
(DP1092682) and  the National Natural Science Foundation of China Grant.   Chen likes to express his deep appreciations to Yongbin Ruan and Gang Tian for their
 continuous supports   and encouragements.    Wang likes to thank Alan Carey for his unyielding
 support over many years.  We  thank   Huijun Fan,   Jianxun Hu
 and Kaoru Ono for many interesting
  discussions.  We also  thank Dusa McDuff for her  careful reading and  useful comments  on the manuscript. The material  in this paper was  presented in the workshops  at BICMR in May 2013 and at SCGP in March 2013, we like to thank these institutes for 
  their hospitalities. In particular, we thank all the participants at the SCGP workshop for their pointed questions,  feedbacks and encouragements.

\end{document}